\documentclass[10pt,leqno]{article}

\frenchspacing
\linespread{1.05}

\usepackage[%
  tmargin=1in,bmargin=1in,%
  lmargin=1.3in,rmargin=1.3in,%
  marginparsep=0.2in, marginparwidth=1.0in%
]{geometry}
\setlength{\skip\footins}{3.0ex}

\AtBeginDocument{%
  \setlength{\abovedisplayskip}{1.5ex plus 0.3ex minus 0.3ex}%
  \setlength{\abovedisplayshortskip}{1.0ex plus 0.3ex minus 0.3ex}%
  \setlength{\belowdisplayskip}{1.5ex plus 0.3ex minus 0.3ex}%
  \setlength{\belowdisplayshortskip}{1.0ex plus 0.3ex minus 0.3ex}%
}


\usepackage{datetime2}
\usepackage{fancyhdr}
\usepackage[hyphens]{url}
\usepackage[style=alphabetic,backend=bibtex8]{biblatex}
\usepackage{titlesec}
\usepackage[dotinlabels]{titletoc}
\usepackage{appendix}
\usepackage[letterspace=50]{microtype}
\usepackage{stmaryrd}
\usepackage[bottom]{footmisc}
\usepackage[inline]{enumitem}
\usepackage{xspace}
\usepackage{calc}
\usepackage{etoolbox}
\usepackage{ifthen}
\usepackage{tikz}
\usepackage{tikz-cd}


\definecolor{cite}{rgb}{0.5,0.5,0.8}
\definecolor{link}{rgb}{0.6,0.3,0.5}
\definecolor{url}{rgb}{0.0,0.2,0.5}
\usepackage[%
  hyperfootnotes=false,
  colorlinks,%
  linkcolor=link,%
  citecolor=cite,%
  urlcolor=url,%
]{hyperref}
\urlstyle{rm}

\setlength{\biblabelsep}{0.5em}
\setlength{\bibitemsep}{0.6ex}
\renewbibmacro{in:}{}
\DeclareFieldFormat*{title}{\mkbibitalic{#1}}
\DeclareFieldFormat*{journaltitle}{#1\isdot}
\DeclareFieldFormat*[online,misc]{date}{(#1)}
\DeclareFieldInputHandler{pages}{}

\newbibmacro*{addendum+pubstate+date}{%
  \printfield{addendum}%
  \newunit\newblock
  \printfield{pubstate}
  \usebibmacro{date}}

\newbibmacro*{doi+eprint+url+date}{%
  \iftoggle{bbx:doi}
  {\printfield{doi}}
  {}%
  \newunit\newblock
  \iftoggle{bbx:eprint}
  {\usebibmacro{eprint}}
  {}%
  \iftoggle{bbx:url}
  {\usebibmacro{url+urldate}}
  {}
  \usebibmacro{date}}

\DeclareBibliographyDriver{online}{%
  \usebibmacro{bibindex}%
  \usebibmacro{begentry}%
  \usebibmacro{author/editor+others/translator+others}%
  \setunit{\printdelim{nametitledelim}}\newblock
  \usebibmacro{title}%
  \newunit
  \printlist{language}%
  \newunit\newblock
  \usebibmacro{byauthor}%
  \newunit\newblock
  \usebibmacro{byeditor+others}%
  \newunit\newblock
  \printfield{version}%
  \newunit
  \printfield{note}%
  \newunit\newblock
  \printlist{organization}%
  \newunit\newblock
  \usebibmacro{doi+eprint+url+date}%
  \newunit\newblock
  \usebibmacro{addendum+pubstate}%
  \setunit{\bibpagerefpunct}\newblock
  \usebibmacro{pageref}%
  \newunit\newblock
  \iftoggle{bbx:related}
    {\usebibmacro{related:init}%
     \usebibmacro{related}}
    {}%
  \usebibmacro{finentry}}

\DeclareBibliographyDriver{misc}{%
  \usebibmacro{bibindex}%
  \usebibmacro{begentry}%
  \usebibmacro{author/editor+others/translator+others}%
  \setunit{\printdelim{nametitledelim}}\newblock
  \usebibmacro{title}%
  \newunit
  \printlist{language}%
  \newunit\newblock
  \usebibmacro{byauthor}%
  \newunit\newblock
  \usebibmacro{byeditor+others}%
  \newunit\newblock
  \printfield{howpublished}%
  \newunit\newblock
  \printfield{type}%
  \newunit
  \printfield{version}%
  \newunit
  \printfield{note}%
  \newunit\newblock
  \usebibmacro{doi+eprint+url}%
  \newunit\newblock
  \usebibmacro{addendum+pubstate+date}%
  \setunit{\bibpagerefpunct}\newblock
  \usebibmacro{pageref}%
  \newunit\newblock
  \iftoggle{bbx:related}
  {\usebibmacro{related:init}%
    \usebibmacro{related}}
  {}%
  \usebibmacro{finentry}}

\DeclareFieldFormat{eprint:ktheory}{%
  K-theory preprint archives: \href{https://faculty.math.illinois.edu/K-theory/#1}{ #1}%
}


\usepackage{amsmath,amsthm}

\usepackage[T1]{fontenc}
\usepackage{lmodern}
\usepackage{MnSymbol}

\DeclareMathSymbol{A}{\mathalpha}{operators}{65}
\DeclareMathSymbol{B}{\mathalpha}{operators}{66}
\DeclareMathSymbol{C}{\mathalpha}{operators}{67}
\DeclareMathSymbol{D}{\mathalpha}{operators}{68}
\DeclareMathSymbol{E}{\mathalpha}{operators}{69}
\DeclareMathSymbol{F}{\mathalpha}{operators}{70}
\DeclareMathSymbol{G}{\mathalpha}{operators}{71}
\DeclareMathSymbol{H}{\mathalpha}{operators}{72}
\DeclareMathSymbol{I}{\mathalpha}{operators}{73}
\DeclareMathSymbol{J}{\mathalpha}{operators}{74}
\DeclareMathSymbol{K}{\mathalpha}{operators}{75}
\DeclareMathSymbol{L}{\mathalpha}{operators}{76}
\DeclareMathSymbol{M}{\mathalpha}{operators}{77}
\DeclareMathSymbol{N}{\mathalpha}{operators}{78}
\DeclareMathSymbol{O}{\mathalpha}{operators}{79}
\DeclareMathSymbol{P}{\mathalpha}{operators}{80}
\DeclareMathSymbol{Q}{\mathalpha}{operators}{81}
\DeclareMathSymbol{R}{\mathalpha}{operators}{82}
\DeclareMathSymbol{S}{\mathalpha}{operators}{83}
\DeclareMathSymbol{T}{\mathalpha}{operators}{84}
\DeclareMathSymbol{U}{\mathalpha}{operators}{85}
\DeclareMathSymbol{V}{\mathalpha}{operators}{86}
\DeclareMathSymbol{W}{\mathalpha}{operators}{87}
\DeclareMathSymbol{X}{\mathalpha}{operators}{88}
\DeclareMathSymbol{Y}{\mathalpha}{operators}{89}
\DeclareMathSymbol{Z}{\mathalpha}{operators}{90}

\usepackage[%
  cal=euler,    calscaled=1.0,%
  bb=boondox,   bbscaled=1.03,%
]{mathalfa}  

\makeatletter
\g@addto@macro\bfseries{\boldmath}
\makeatother


\usepackage[textsize=footnotesize,backgroundcolor=orange!50]{todonotes}
\usepackage[compress,capitalize]{cleveref}


\let\theoldbibliography\thebibliography
\renewcommand{\thebibliography}[1]{%
  \theoldbibliography{#1}%
  \setlength{\parskip}{0ex}
  \setlength{\itemsep}{0.5ex plus 0.2ex minus 0.2ex}
  \small
}
\apptocmd{\thebibliography}{\raggedright}{}{}

\pagestyle{fancy}

\fancyhf{}
\fancyfoot[C]{\small\thepage}

\renewcommand{\title}[1]{\newcommand{\thetitle}{#1}}
\renewcommand{\author}[1]{\newcommand{\theauthor}{#1}}

\renewcommand{\maketitle}{%
  \begin{center}
    {\linespread{1.15}%
      \bfseries\MakeUppercase%
      \thetitle\par} \vspace{4.0ex}
    \fontsize{8.5}{12.0}\selectfont
    {\MakeUppercase \theauthor}
  \end{center}
  \vspace{3.0ex}
  \thispagestyle{fancy}
}

\renewenvironment{abstract}{\noindent\begin{center}\begin{minipage}{0.8\linewidth}\small{\scshape Abstract.}}{\end{minipage}\end{center}}


\newlength{\tagsep}
\setlength{\tagsep}{0.3em}

\makeatletter
\def\fullwidthdisplay{\displayindent\z@ \displaywidth\columnwidth}
\edef\@tempa{\noexpand\fullwidthdisplay\the\everydisplay}
\everydisplay\expandafter{\@tempa}
\makeatother

\titleformat{\section}{\centering}{\S\thesection.}{1.5\tagsep}{\scshape}
\titleformat{\subsection}[runin]{}{\fontseries{b}\selectfont\S\bfseries\thesubsection.}{1.5\tagsep}{\bfseries}[.]

\titlespacing*{\section}{0pt}{8ex}{2.5ex}
\titlespacing*{\subsection}{0pt}{4ex}{0.5em}

\dottedcontents{section}[1.3em]{\vspace{1ex}}{1.3em}{0.7em}
\dottedcontents{subsection}[3.4em]{}{2.0em}{0.7em}

\newcommand{\crefeqfmt}[1]{
  \crefformat{#1}{(##2##1##3)}
  \Crefformat{#1}{(##2##1##3)}
  \crefrangeformat{#1}{(##3##1##4--##5##2##6)}
  \Crefrangeformat{#1}{(##3##1##4--##5##2##6)}
  \crefmultiformat{#1}{(##2##1##3}{, ##2##1##3)}{, ##2##1##3}{, ##2##1##3)}
  \Crefmultiformat{#1}{(##2##1##3}{, ##2##1##3)}{, ##2##1##3}{, ##2##1##3)}
  \crefrangemultiformat{#1}{(##3##1##4--##5##2##6}{, ##3##1##4--##5##2##6)}{, ##3##1##4--##5##2##6}{, ##3##1##4--##5##2##6)}
  \Crefrangemultiformat{#1}{(##3##1##4--##5##2##6}{, ##3##1##4--##5##2##6)}{, ##3##1##4--##5##2##6}{, ##3##1##4--##5##2##6)}
}
\newcommand{\crefsecfmt}[1]{%
  \crefformat{#1}{\S##2##1##3}
  \Crefformat{#1}{\S##2##1##3}
  \crefrangeformat{#1}{\S\S##3##1##4--##5##2##6}
  \Crefrangeformat{#1}{\S\S##3##1##4--##5##2##6}
  \crefmultiformat{#1}{\S\S##2##1##3}{--##2##1##3}{, ##2##1##3}{ and~##2##1##3}
  \Crefmultiformat{#1}{\S\S##2##1##3}{--##2##1##3}{, ##2##1##3}{ and~##2##1##3}
  \crefrangemultiformat{#1}{\S\S##3##1##4--##5##2##6}{ and~##3##1##4--##5##2##6}{, ##3##1##4--##5##2##6}{ and~##3##1##4--##5##2##6}
  \Crefrangemultiformat{#1}{\S\S##3##1##4--##5##2##6}{ and~##3##1##4--##5##2##6}{, ##3##1##4--##5##2##6}{ and~##3##1##4--##5##2##6}
}

\numberwithin{equation}{block}

\crefeqfmt{equation}
\crefeqfmt{enumi}
\crefeqfmt{enumii}
\crefsecfmt{section}
\crefsecfmt{subsection}
\crefsecfmt{appendix}
\crefname{part}{Part}{Parts}
\crefname{chapter}{Chapter}{Chapters}
\crefname{figure}{Figure}{Figures}

\makeatletter

\newcommand{\blocknumfont}{\bfseries}
\newcommand{\blockheadfont}{\bfseries}
\newcommand{\blocknotefont}{\normalfont}
\newcommand{\blockspecialfont}{\itshape}
\newcommand{\blockhorizspace}{0.4em}
\newcommand{\blocknotespace}{0.25em}
\newcommand{\blocknumsep}{}
\newcommand{\blocksep}{.}
\newcommand{\blockvertspace}{1.75ex plus 0.1ex minus 0.1ex}

\newtheoremstyle{block}%
  {\blockvertspace}
  {\blockvertspace}
  {}
  {}
  {} 
  {}
  {0em}
  {\thmname{\blockheadfont#1}%
    \@ifnotempty{#1}{\@ifnotempty{#2}{ }}%
    \thmnumber{{\blocknumfont #2\blocknumsep}}%
    \thmnote{\hspace{\blocknotespace}\blocknotefont(#3)}%
    \@ifnotempty{#1#2#3}{\blockheadfont\blocksep}\hspace{\blockhorizspace}%
  }

\newtheoremstyle{blockspecial}%
  {\blockvertspace}
  {\blockvertspace}
  {\blockspecialfont}
  {}
  {} 
  {}
  {0em}
  {\thmname{\blockheadfont#1}%
    \@ifnotempty{#1}{\@ifnotempty{#2}{ }}%
    \thmnumber{{\blocknumfont #2\blocknumsep}}%
    \thmnote{\hspace{\blocknotespace}\blocknotefont(#3)}%
    \@ifnotempty{#1#2#3}{\blockheadfont\blocksep}\hspace{\blockhorizspace}%
  }
  
\renewenvironment{proof}[1][Proof]{\par
  \pushQED{\qed}%
  \normalfont%
  \topsep\blockvertspace\relax%
  \labelsep \blockhorizspace\relax%
  \trivlist
  \item[\@ifnotempty{#1}{\hskip\labelsep\blockheadfont#1\@addpunct{\blocksep}}]\ignorespaces
}{%
  \popQED\endtrivlist\@endpefalse%
}

\makeatother

\newcommand{\defblock}[2]{%
  \theoremstyle{block}
  \newtheorem{#1}[block]{#2}%
  \Crefname{#1}{#2}{{#2}s}
  \newtheorem*{#1*}{#2}%
}

\newcommand{\defblockspecial}[2]{%
  \theoremstyle{blockspecial}
  \newtheorem{#1}[block]{#2}%
  \Crefname{#1}{#2}{{#2}s}
  \newtheorem*{#1*}{#2}%
}

\setlist{%
  parsep=0ex, listparindent=\parindent,%
  itemsep=0.75ex, topsep=0.75ex,%
  leftmargin=2.5em,%
}

\setlist[enumerate, 1]{%
  label={\upshape(\emph{\alph*})},%
  ref={{\itshape\alph*}},%
  widest=f,
}
\setlist[enumerate, 2]{%
  leftmargin=*,%
  label={\upshape(\theenumi.\roman*)},%
  ref=\theenumi.\roman*,%
}
\setlist[itemize, 1]{%
  label=$\bullet$,%
}
\setlist[itemize, 2]{%
  label=--,%
}


\usetikzlibrary{calc,decorations.pathmorphing,shapes,arrows}
\tikzcdset{
  arrow style=tikz,
  diagrams={>={stealth}},
}
  
\newcommand{\from}{\leftarrow}
\newcommand{\fromto}{\rightleftarrows}
\newcommand{\lblto}[1]{\xrightarrow{#1}}
\newcommand{\isoto}{\lblto{\raisebox{-0.3ex}[0pt]{$\scriptstyle \sim$}}}
\newcommand{\inj}{\hookrightarrow}

\newcommand{\iso}{\simeq}
\newcommand{\lbliso}[1]{\overset{#1}{\simeq}}

\newcommand{\cosimpl}[3]{
  \begin{tikzcd}[ampersand replacement=\&, column sep=small]
    #1 \ar[r, shift right=0.35ex]
       \ar[r, shift left=0.35ex] \&
    #2 \ar[r, shift right=0.70ex]
       \ar[r, shift left=0.70ex]
       \ar[r] \&
    #3 \ar[r, shift right=0.35ex]
       \ar[r, shift left=0.35ex]
       \ar[r, shift right=1.05ex]
       \ar[r, shift left=1.05ex] \&
    \cdots
  \end{tikzcd}
}


\title{Hochschild homology and the derived de Rham complex revisited}
\author{Arpon Raksit}

\bibliography{ref}


\numberwithin{block}{subsection}

\defblock{construction}{Construction}
\defblockspecial{corollary}{Corollary}
\defblock{definition}{Definition}
\defblock{example}{Example}
\defblockspecial{lemma}{Lemma}
\defblock{notation}{Notation}
\defblockspecial{proposition}{Proposition}
\defblock{recollection}{Recollection}
\defblock{remark}{Remark}
\defblockspecial{theorem}{Theorem}
\defblock{variant}{Variant}
\defblockspecial{variantspecial}{Variant}
\defblock{warning}{Warning}


\let\slasho\o

\newcommand{\num}[1]{\mathbb{#1}}
\newcommand{\cat}[1]{\mathcal{#1}}

\newcommand{\Alg}{\mathrm{Alg}}
\newcommand{\Assoc}{\mathrm{Assoc}}

\renewcommand{\Bar}{\operatorname{Bar}}
\newcommand{\BMod}[2]{{}_{#1}\mathrm{BMod}_{#2}}
\newcommand{\CAlgMod}{\mathrm{CAlgMod}}
\newcommand{\CAlg}{\mathrm{CAlg}}
\newcommand{\CSym}{\operatorname{CSym}}
\newcommand{\Cat}{\mathrm{Cat}}
\newcommand{\Cobar}{\mathrm{Cobar}}
\newcommand{\DAlgMod}{\mathrm{DAlgMod}}
\newcommand{\DAlg}{\mathrm{DAlg}}
\newcommand{\DG}{\operatorname{DG}}
\newcommand{\Einfty}{\mathrm{E}_\infty}
\newcommand{\End}{\operatorname{End}}
\newcommand{\Exc}{\mathrm{Exc}}
\newcommand{\Fil}{\mathrm{Fil}}
\newcommand{\Fun}{\mathrm{Fun}}
\newcommand{\Gm}{\num{G}_{\mathrm{m}}}

\newcommand{\Gr}{\mathrm{Gr}}
\renewcommand{\H}{\mathrm{H}}
\newcommand{\HC}{\mathrm{HC}}
\newcommand{\HH}{\mathrm{HH}}
\newcommand{\HN}{\mathrm{HC}^-}
\newcommand{\HP}{\mathrm{HP}}
\newcommand{\Hom}{\operatorname{Hom}}

\newcommand{\LMod}{\mathrm{LMod}}
\newcommand{\LOmega}{\mathrm{L}\Omega}
\newcommand{\LSym}{\operatorname{LSym}}
\newcommand{\LM}{\mathrm{LM}}
\newcommand{\Map}{\operatorname{Map}}
\newcommand{\Mod}{\mathrm{Mod}}
\newcommand{\Nm}{\mathrm{Nm}}
\newcommand{\Op}{\mathrm{Op}}
\newcommand{\PrL}{\mathrm{Pr^L}}
\newcommand{\PSh}{\cat{P}}
\newcommand{\Picspt}{\operatorname{pic}}
\newcommand{\Pic}{\operatorname{Pic}}
\newcommand{\Poly}{\mathrm{Poly}}
\newcommand{\RMod}{\mathrm{RMod}}

\newcommand{\SSeq}{\mathrm{SSeq}}
\newcommand{\Set}{\mathrm{Set}}
\newcommand{\Spc}{\mathrm{Spc}}
\newcommand{\Spt}{\mathrm{Spt}}
\newcommand{\Sym}{\operatorname{Sym}}
\newcommand{\Tor}{\operatorname{Tor}}

\newcommand{\apoly}{{\mathrm{a}\text{-}\mathrm{poly}}}
\newcommand{\aug}{\mathrm{aug}}
\newcommand{\bAlg}{\mathrm{bAlg}}
\newcommand{\beil}{\mathrm{B}}
\newcommand{\bul}{{\raisebox{0.1ex}{\scalebox{0.6}{\hspace{0.08em}$\bullet$}}}}
\newcommand{\cAlg}{\mathrm{cAlg}}
\newcommand{\cCAlg}{\mathrm{cCAlg}}
\newcommand{\cBMod}[2]{{}_{#1}\mathrm{cBMod}_{#2}}
\newcommand{\cLMod}{\mathrm{cLMod}}
\newcommand{\cMod}{\mathrm{cMod}}
\newcommand{\cRMod}{\mathrm{cRMod}}
\newcommand{\ccn}{\mathrm{ccn}}
\newcommand{\ce}{\mathrel{:=}}
\newcommand{\cir}{{\mathrm{S}^1}}
\newcommand{\clspc}{\mathrm{B}}
\newcommand{\cn}{\mathrm{cn}}
\newcommand{\cofib}{\operatorname*{cofib}}
\newcommand{\coins}{\operatorname{coins}}
\newcommand{\coker}{\operatorname*{coker}}
\newcommand{\colim}{\operatorname*{colim}}
\newcommand{\comm}{\mathrm{C}}
\renewcommand{\cot}{\mathrm{L}}
\newcommand{\cpl}[1]{#1^\wedge}
\newcommand{\cplge}[2]{#1^{\wedge, \ge #2}}
\newcommand{\cplle}[2]{#1^{\wedge, \le #2}}
\newcommand{\cube}{\mathbb{P}}
\newcommand{\dcomm}{\mathrm{D}}
\newcommand{\dR}{\mathrm{dR}}
\renewcommand{\day}{{\otimes_\mathrm{D}}}
\newcommand{\ds}{\mathrm{ds}}
\newcommand{\dual}[1]{#1^\vee}
\newcommand{\epoly}{{\mathrm{e}\text{-}\mathrm{poly}}}
\newcommand{\ev}{\mathrm{ev}}
\newcommand{\fd}{\mathrm{fd}}
\newcommand{\fib}{\operatorname*{fib}}
\newcommand{\fil}{\mathrm{fil}}
\newcommand{\filop}{\operatorname{fil}}
\newcommand{\fin}{\mathrm{fin}}
\newcommand{\grop}{\operatorname{gr}}
\newcommand{\gr}{\mathrm{gr}}

\newcommand{\heart}{\heartsuit}
\newcommand{\ho}{\mathrm{h}}
\newcommand{\hminus}{\text{h}\textsubscript{$-$}}
\newcommand{\hplus}{\text{h}\textsubscript{$+$}}
\newcommand{\hpm}{\text{h}\textsubscript{$\pm$}}
\newcommand{\id}{\mathrm{id}}
\newcommand{\im}{\operatorname{im}}
\newcommand{\ind}{\mathrm{ind}}
\newcommand{\ins}{\operatorname{ins}}
\newcommand{\koz}{{\otimes_\mathrm{K}}}
\renewcommand{\l}{\left}
\newcommand{\lax}{\mathrm{lax}}
\newcommand{\lder}{\mathrm{L}}
\newcommand{\lincir}{\mathbb{T}}
\renewcommand{\o}{\overline}
\newcommand{\op}{\mathrm{op}}
\newcommand{\post}{\mathrm{P}}
\newcommand{\pt}{\mathrm{pt}}
\newcommand{\qf}{\mathrm{qf}}
\renewcommand{\r}{\right}

\newcommand{\spl}{\operatorname{spl}}
\newcommand{\tate}{\mathrm{t}}

\renewcommand{\u}{\underline}
\newcommand{\uMap}{\u{\smash{\Map}}}
\newcommand{\und}{\mathrm{res}}
\newcommand{\unit}{\mathbb{1}}
\newcommand{\zeroone}{{\{0,1\}}}


\begin{document}

\maketitle

\begin{abstract}
  We characterize two objects by universal property: the derived de Rham complex and Hochschild homology together with its Hochschild--Kostant--Rosenberg (HKR) filtration. This involves endowing these objects with extra structure, built on notions of ``homotopy-coherent cochain complex'' and ``filtered circle action'' that we study here. We use these universal properties to give a conceptual proof that the associated graded of the HKR filtration identifies with the derived de Rham complex, as well as to give a new construction of the filtrations on cyclic, negative cyclic, and periodic cyclic homology that relate these invariants to derived de Rham cohomology.
\end{abstract}

{
  \let\clearpage\relax
  \small
  \tableofcontents
}


\section{Introduction}
\label{in}

One of the basic cohomological invariants in algebraic geometry is \emph{algebraic de Rham cohomology}: for $A$ a commutative ring and $B$ a commutative $A$-algebra, this is given by the cohomology of the \emph{algebraic de Rham complex}
\[
  \Omega^\bul_{B/A} = \{\ \Omega^0_{B/A} \to \Omega^1_{B/A} \to \Omega^2_{B/A} \to \cdots\ \}.
\]
This cochain complex has the structure of a strictly commutative differential graded $A$-algebra\footnote{The modifier ``strictly'' refers to the property that $x^2 = 0$ whenever $x \in \Omega^i_{B/A}$ for $i$ odd.}, and is characterized up to isomorphism as such by the following universal property:
\begin{enumerate}[label=($\dagger$),ref=$\dagger$]
\item \label{in--dR-univ}
  For any strictly commutative differential graded $A$-algebra $X^\bul$, a map of commutative $A$-algebras $B \iso \Omega^0_{B/A} \to X^0$ extends uniquely to a map of strictly commutative differential graded $A$-algebras $\Omega^\bul_{B/A} \to X^\bul$.
\end{enumerate}

This paper proves two analogues of \cref{in--dR-univ}: we characterize the \emph{derived de Rham complex} and the \emph{HKR-filtered Hochschild homology} of $B$ over $A$ by similar universal properties. The meaning of this terminology and the features of these universal properties will be explained in the remainder of this introduction: in \cref{in--dR}, we overview our result on the derived de Rham complex (\cref{in--dR--thm}); in \cref{in--HH}, we overview our result on HKR-filtered Hochschild homology (\cref{in--HH--thm}) and explain how these universal properties together supply a conceptual proof of the known relationship between Hochschild (and negative and periodic cyclic) homology and derived de Rham cohomology (\cref{in--HH--fils}), which is what motivated this work.

The universal properties of the derived de Rham complex and HKR-filtered Hochschild homology, while analogous to \cref{in--dR-univ} in a natural manner, involve significantly more intricate types of algebraic structure, which in particular are $\infty$-categorical in nature. Formulating these structures is a primary task of the paper. This necessitates two notices regarding the remainder of the introduction: we will immediately begin using $\infty$-categorical language, and we will content ourselves with giving somewhat informal sketches of the main definitions and results, pointing to the body of the paper for their precise articulation.


\subsection{The derived de Rham complex}
\label{in--dR}

In this paper, we will be interested in \emph{Hodge-completed derived de Rham cohomology}, a variant of algebraic de Rham cohomology that naturally lives in the context of derived algebraic geometry: given a simplicial commutative ring $A$ and a simplicial commutative $A$-algebra $B$, the Hodge-completed derived de Rham cohomology of $B$ over $A$ is an $\Einfty$-$A$-algebra $\cpl\dR_{B/A}$, equipped with a complete $\num{Z}_{\ge 0}$-indexed decreasing filtration $\cplge\dR\star_{B/A}$ called the \emph{Hodge filtration}.

If $A$ is an ordinary commutative ring and $B$ is a \emph{smooth} $A$-algebra, then $\cpl\dR_{B/A}$ is equivalent to the $E_\infty$-$A$-algebra underlying the commutative differential graded $A$-algebra $\Omega^\bul_{B/A}$, and the Hodge filtration $\cplge\dR\star_{B/A}$ is induced by the \emph{brutal filtration} $\Omega^{\bul\ge\star}_{B/A}$, given by
\[
  \Omega^{\bul \ge i}_{B/A} = \{\ \cdots \to 0 \to \Omega^i_{B/A} \to \Omega^{i+1}_{B/A} \to \cdots\ \}
  \qquad\text{for}\ i \in \num{Z}_{\ge 0}.
\]
In particular, in this smooth situation, the associated graded pieces of the Hodge filtration are given by shifts of the modules of differential forms, $\Omega^i_{B/A}[-i]$. In general, the associated graded pieces of $\cplge\dR\star_{B/A}$ are given by shifts of modules of ``derived differential forms'', $\LOmega^i_{B/A}[-i]$, with $\LOmega^1_{B/A}$ being an alternative notation for the cotangent complex $\cot_{B/A}$, and more generally $\LOmega^i_{B/A}$ denoting the derived $i$-th exterior power of $\cot_{B/A}$.

Here we give an alternative perspective on Hodge-completed derived de Rham cohomology and its Hodge filtration: namely, we formulate a derived version of the strictly commutative differential graded $A$-algebra $\Omega^\bul_{B/A}$, with terms $\LOmega^i_{B/A}$ in place of $\Omega^i_{B/A}$, and characterized by a universal property analogous to \cref{in--dR-univ} above. As with $\Omega^\bul_{B/A}$, this object has an ``underlying $\Einfty$-$A$-algebra'' that carries the information of its ``cohomology'' and is equipped with a ``brutal filtration'', and now these recover Hodge-completed derived de Rham cohomology and its Hodge filtration for all maps of simplicial commuative rings $A \to B$.

To realize the vague idea suggested in the previous paragraph, we define the following over any simplicial commutative ring $A$:
\begin{itemize}
\item The notion of an \emph{\hplus-differential graded (\hplus-dg) $A$-module} $X^\bul$ (\cref{dg--pm--dg}). This is a type of ``homotopy-coherent cochain complex'', consisting of an underlying graded $A$-module $X^* = \{X^i\}_{i \in \num{Z}}$, together with differentials $d : X^i[1] \to X^{i+1}$ for $i \in \num{Z}$ that square to zero up to coherent homotopy (the shift in the differentials is the source of the ``\hplus'' in the terminology). These are precisely defined as graded modules over a certain split square-zero graded algebra $\num{D}_+$, whose underlying $A$-module is equivalent to $A \oplus A[1]$. When $A$ is an ordinary commutative ring, any ordinary cochain complex of $A$-modules $M^\bul$ determines an \hplus-dg $A$-module $X^\bul$, with underlying graded object given by $X^i \iso M^i[i]$.
\item The \emph{cohomology type} $|X^\bul|$ of an \hplus-dg $A$-module $X^\bul$ (\cref{dg--co--cohomology-type}). This is an $A$-module equipped with a complete filtration $|X^\bul|^{\ge \star}$ whose associated graded pieces are given by $X^i[-2i]$. In the case that $A$ is an ordinary commutative ring and $X^\bul$ comes from an ordinary cochain complex of $A$-modules $M^\bul$ in the manner described above, the cohomology type $|X^\bul|$ is simply the object in the derived category represented by $M^\bul$, equipped with the brutal filtration (whose associated graded pieces are $M^i[-i]$).
\item The notion of a \emph{simplicial commutative algebra structure} on a \hplus-dg $A$-module (\cref{dg--pm--dga}). We refer to \hplus-dg $A$-modules equipped with this structure as \emph{\hplus-dg simplicial commutative $A$-algebras}. If $X^\bul$ is an \hplus-dg simplicial commutative $A$-algebra, then the zeroth graded piece $X^0$ canonically carries the structure of a simplicial commutative $A$-algebra and the filtered cohomology type $|X^\bul|^{\ge\star}$ canonically carries the structure of a filtered $\Einfty$-$A$-algebra.
\end{itemize}
With these notions in place, we prove the following result:

\begin{theorem}[see \cref{dg--dr}]
  \label{in--dR--thm}
  Let $A$ be a simplicial commutative ring and let $B$ be a simplicial commutative $A$-algebra. Then:
  \begin{enumerate}
  \item \label{in--dR--thm--def}
    There is an initial object $L\Omega_{B/A}^{+\bul}$ in the $\infty$-category of \hplus-dg simplicial commutative $A$-algebras equipped with a map of simplicial commutative $A$-algebras $B \to X^0$.
  \item \label{in--dR--thm--terms}
    For $i \ge 0$, there are equivalences $\LOmega^{+i}_{B/A} \iso \LOmega^i_{B/A}[i]$, and for $i < 0$, we have $\LOmega^{+i}_{B/A} \iso 0$. Under these equivalences, the first differential $\LOmega_{B/A}^{+0} \to \LOmega_{B/A}^{+1}[-1]$ is given by the universal derivation $d : B \to L_{B/A}$.
  \item \label{in--dR--thm--cohomology}
    There is an equivalence of filtered $\Einfty$-$A$-algebras $|\LOmega_{B/A}^{+\bul}|^{\ge\star} \iso \cplge{\dR}{\star}_{B/A}$.
  \end{enumerate}
\end{theorem}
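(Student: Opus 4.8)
The plan is to reduce the theorem to a recognition statement for the initial object, exhibiting a concrete candidate built from the cotangent complex and then verifying its universal property. First I would set up the split square-zero graded algebra $\num{D}_+$ with underlying module $A \oplus A[1]$, so that an \hplus-dg $A$-module is precisely a graded $\num{D}_+$-module; the category of \hplus-dg simplicial commutative $A$-algebras over $B$ (in the sense of part~\ref{in--dR--thm--def}) then admits small colimits, being built from simplicial commutative algebra structures on graded $\num{D}_+$-modules under a slice, so the initial object exists on general grounds (adjoint functor theorem / presentability). This disposes of part~\ref{in--dR--thm--def} with little effort.

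For parts~\ref{in--dR--thm--terms} and~\ref{in--dR--thm--cohomology} the key step is a free–forgetful analysis. I would consider the forgetful functor from \hplus-dg simplicial commutative $A$-algebras to pairs (graded piece in degree~$0$, a simplicial commutative $A$-algebra) and identify its left adjoint by a bar-type / $\LSym$ formula: the free \hplus-dg simplicial commutative $A$-algebra on $B$ in degree~$0$ should be the derived symmetric algebra $\LSym_B$ applied to the free graded $\num{D}_+$-module generated in degrees $0$ and $1$, which after unwinding the $\num{D}_+$-action encodes exactly ``$B$ together with the universal square-zero-up-to-homotopy differential out of it.'' Because $\LSym$ commutes with the relevant colimits and because the $\num{D}_+$-module structure is split square-zero, the terms of this free object compute as $\LSym^i$ of a two-term complex, and the graded pieces collapse — by the décalage/Koszul identity relating $\LSym$ of a shifted module to exterior powers — to $\LOmega^i_{B/A}[i]$ for $i \ge 0$ and to $0$ for $i<0$. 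Tracking the first differential through this identification gives the universal derivation $d : B \to L_{B/A}$, as asserted. I expect this décalage bookkeeping — matching the internal grading coming from $\num{D}_+$ against the simplicial/homological shifts, and checking the signs and the exterior-power identity in the derived setting — to be the main obstacle; it is the place where the ``\hplus'' shift in the differentials really earns its keep, and where one must be careful that ``strictly commutative'' (the $x^2=0$ condition) is automatically enforced by working with simplicial commutative rather than $\Einfty$-algebras.

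Finally, for part~\ref{in--dR--thm--cohomology}, I would apply the cohomology-type functor $|{-}|$ and its filtration $|{-}|^{\ge\star}$ to $\LOmega_{B/A}^{+\bul}$. By the description of $|{-}|$ recalled before the theorem, the associated graded of $|\LOmega_{B/A}^{+\bul}|^{\ge\star}$ in weight~$i$ is $\LOmega^{+i}_{B/A}[-2i] \iso \LOmega^i_{B/A}[-i]$ by part~\ref{in--dR--thm--terms}, which is exactly the associated graded of the Hodge filtration $\cplge{\dR}{\star}_{B/A}$. To upgrade this associated-graded agreement to an equivalence of filtered $\Einfty$-$A$-algebras, I would match the two sides by a universal property: both $|\LOmega_{B/A}^{+\bul}|^{\ge\star}$ and $\cplge{\dR}{\star}_{B/A}$ are functorial in $A \to B$, both are complete, and both agree on polynomial $A$-algebras (where the \hplus-dg object literally comes from the honest de Rham complex with its brutal filtration, by the remark that ordinary cochain complexes of $A$-modules give \hplus-dg $A$-modules). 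Left Kan extension from polynomial algebras, together with completeness, then forces the equivalence in general — using that derived de Rham cohomology is itself defined by exactly this left Kan extension procedure. The one subtlety to watch here is that the comparison must be made as filtered $\Einfty$-algebras, not merely as filtered modules, so the multiplicative structure on $|\LOmega_{B/A}^{+\bul}|^{\ge\star}$ (coming from the last bullet before the theorem) must be shown compatible with the de Rham product on polynomial algebras before Kan-extending.
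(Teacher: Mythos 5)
Your parts~(\ref{in--dR--thm--def}) and (\ref{in--dR--thm--cohomology}) track the paper's argument closely: for (\ref{in--dR--thm--def}) the adjoint functor theorem gives the left adjoint $\LOmega^{+\bul}_{-/A}$ once one checks that $\ev^0 \circ U$ preserves limits and colimits (this is \cref{dg--dr--adj}), and for (\ref{in--dR--thm--cohomology}) the reduction to polynomial $A$-algebras via colimit-preservation and the left-Kan-extension definition of $\cplge\dR{\star}$ is exactly what \cref{dg--dr--classical-compare} does, with the multiplicative compatibility handled there by carrying out the whole comparison at the level of $\CAlg(\DG_\pm(\Mod_A))$. (Your appeal to completeness at the end of (\ref{in--dR--thm--cohomology}) is not needed and not how the paper concludes — colimit-preservation plus agreement on $\Poly_A$ already forces the equivalence.)

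In part~(\ref{in--dR--thm--terms}), however, there is a genuine gap. You assert that the initial object is ``$\LSym_B$ applied to the free graded $\num{D}_+$-module generated in degrees $0$ and $1$,'' and then appeal to ``unwinding the $\num{D}_+$-action'' to see the universal derivation. But the free $\num{D}_+$-module on $B$ in degree $0$ is $B \oplus B[1](1)$, and $\LSym_B$ of a free shifted copy of $B$ has nothing to do with $\cot_{B/A}$ — you would get free exterior powers, not $\LOmega^i_{B/A}$. What is actually needed, and what your proposal never identifies, is the link between the cofree $\dual{\num{D}_+}$-comodule structure and the trivial square-zero extension functor: concretely, the paper's Lemma \cref{dg--dr--und-gr-lem2}\cref{dg--dr--und-gr-lem2--sqzero} shows that $\ev^0 \circ V \circ \ins^\zeroone$ agrees with $G \circ \Omega \circ \alpha'$, where $G$ is trivial square-zero extension and $\Omega$ is $(B,M) \mapsto (B,M[-1])$. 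Passing to left adjoints then converts $G$ into the cotangent complex functor $B \mapsto (B,\cot_{B/A})$, which is precisely how $\cot_{B/A}$ enters. Without this identification the appearance of $\cot_{B/A}$ in the underlying graded object $\LSym_B(\cot_{B/A}[1](1))$ of $\LOmega^{+\bul}_{B/A}$ is unjustified; the décalage identity $\LSym^i_B(M[1])[-i] \iso \bigwedge^i_B M$ that you invoke is fine but only relevant once the correct $M$ has been pinned down.
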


The object $\LOmega^{+\bul}_{B/A}$ of \cref{in--dR--thm} is what we refer to in this paper as the \emph{derived de Rham complex} of $B$ over $A$. Its definition in statement \cref{in--dR--thm--def} of the theorem is the promised universal property analogous to \cref{in--dR-univ}, and statement \cref{in--dR--thm--cohomology} in the theorem explains the way in which this object recovers Hodge-completed derived de Rham cohomology and its Hodge filtration.


\subsection{Hochschild homology}
\label{in--HH}

Another family of cohomological invariants, related to the de Rham invariants discussed above, arises from the theory of Hochschild homology. For $A$ a simplicial commutative ring and $B$ a simplicial commutative $A$-algebra, the \emph{Hochschild homology} of $B$ over $A$ is a simplicial commutative $A$-algebra with $\cir$-action, $\HH(B/A)$. To this object with $\cir$-action, we may apply the homotopy fixed points construction to obtain \emph{negative cyclic homology} $\HN(B/A)$ and the Tate construction to obtain \emph{periodic cyclic homology} $\HP(B/A)$. The relationship between these invariants and the de Rham invariants mentioned above manifests in the form of certain filtrations, as described in the following result.

\begin{theorem}
  \label{in--HH--fils}
  Let $A$ be a simplicial commutative ring and let $B$ be a simplicial commutative $A$-algebra. Then:
  \begin{enumerate}
  \item \label{in--HH--fils--HH}
    There is a complete $\num{Z}_{\ge 0}$-indexed decreasing filtration $\filop^\star \HH(B/A)$ on $\HH(B/A)$ with associated graded pieces given by
    \[
      \grop^i \HH(B/A) \iso \LOmega^i_{B/A}[i] \qquad (i \in \num{Z}_{\ge 0}).
    \]
  \item \label{in--HH--fils--HP}
    There are complete decreasing $\num{Z}$-indexed filtrations $\filop^\star \HN(B/A)$ and $\filop^\star \HP(B/A)$ on $\HN(B/A)$ and $\HP(B/A)$, respectively, with associated graded pieces given by
    \[
      \grop^i \HN(B/A) \iso \cplge\dR{i}_{B/A}[2i]
      \quad\text{and}\quad
      \grop^i \HP(B/A) \iso \cpl\dR_{B/A}[2i]
      \qquad(i \in \num{Z}).
    \]
    These filtrations are exhaustive under the assumptions that $B$ is truncated (i.e. $\pi_k(B) \iso 0$ for $k \gg 0$) and that the $\Tor$-amplitude of $\cot_{B/A}$ over $B$ is contained in $[0,1]$.
  \end{enumerate}
\end{theorem}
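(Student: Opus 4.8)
The plan is to derive \cref{in--HH--fils} from the two universal-property theorems — the characterization of HKR-filtered Hochschild homology (\cref{in--HH--thm}) and of the derived de Rham complex (\cref{in--dR--thm}) — connected through the passage to associated graded, which degenerates the filtered circle to a ``graded circle'' and thereby converts filtered-circle-equivariant graded simplicial commutative algebras into \hplus-dg simplicial commutative algebras.

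First I would address \cref{in--HH--fils--HH}. By \cref{in--HH--thm}, the filtration $\filop^\star\HH(B/A)$ is the underlying filtration of the initial complete filtered simplicial commutative $A$-algebra equipped with a filtered circle action and a map from $B$ placed in weight $0$; in particular it is $\num{Z}_{\ge 0}$-indexed, complete, and has underlying object $\HH(B/A)$. To compute its associated graded, apply the symmetric monoidal associated-graded functor: it carries the filtered circle to the graded circle and the initial filtered-circle-equivariant object under $B$ to the initial graded-circle-equivariant object under $B$; and since graded simplicial commutative $A$-algebras with graded circle action are the same as \hplus-dg simplicial commutative $A$-algebras — the chains on the graded circle being the split square-zero graded algebra $\num{D}_+ \iso A \oplus A[1]$ with the summand $A[1]$ in weight $1$ — we identify $\grop^\star\HH(B/A)$ with the initial \hplus-dg simplicial commutative $A$-algebra under $B$. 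That object is $\LOmega^{+\bul}_{B/A}$ by \cref{in--dR--thm--def}, so \cref{in--dR--thm--terms} gives $\grop^i\HH(B/A) \iso \LOmega^{+i}_{B/A} \iso \LOmega^i_{B/A}[i]$ for $i \ge 0$ (and $0$ for $i < 0$), with first differential the universal derivation $d\colon B \to \cot_{B/A}$. This is \cref{in--HH--fils--HH}.

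For \cref{in--HH--fils--HP} I would define $\filop^\star\HN(B/A)$ and $\filop^\star\HP(B/A)$ by applying the homotopy fixed point and Tate constructions for the filtered circle to $\filop^\star\HH(B/A)$, obtaining $\num{Z}$-indexed complete filtered $\Einfty$-$A$-algebras whose underlying $\Einfty$-$A$-algebras are $\HN(B/A)$ and $\HP(B/A)$ (forgetting the filtration degenerates the filtered circle to the circle). The main point is the associated graded: passing to associated graded intertwines these filtered-circle constructions with the corresponding graded-circle constructions — this uses completeness of the HKR filtration, to ensure the relevant limits commute with $\grop$ — so $\grop^i\HN(B/A)$ and $\grop^i\HP(B/A)$ are the weight-$i$ parts of the graded-circle homotopy fixed points, respectively Tate construction, of $\grop^\star\HH(B/A) \iso \LOmega^{+\bul}_{B/A}$. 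Now the graded-circle homotopy fixed points of an \hplus-dg module is, by construction, its filtered cohomology type, while the Tate construction yields its $2$-periodicization, which deletes the truncation; so, up to the shift dictated by the weights and degrees of the generators of the graded circle, $\grop^i\HN(B/A) \iso |\LOmega^{+\bul}_{B/A}|^{\ge i}[2i]$ and $\grop^i\HP(B/A) \iso |\LOmega^{+\bul}_{B/A}|[2i]$. Applying \cref{in--dR--thm--cohomology} to identify $|\LOmega^{+\bul}_{B/A}|^{\ge\star}$ with the Hodge-filtered derived de Rham complex then yields $\grop^i\HN(B/A) \iso \cplge\dR{i}_{B/A}[2i]$ and $\grop^i\HP(B/A) \iso \cpl\dR_{B/A}[2i]$. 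Completeness of the two filtrations follows from completeness of the HKR filtration and the fact that the fixed point and Tate constructions, computed in complete filtered modules, preserve completeness.

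It remains to prove exhaustiveness of $\filop^\star\HN(B/A)$ and $\filop^\star\HP(B/A)$ under the hypotheses that $B$ is truncated and $\cot_{B/A}$ has $\Tor$-amplitude in $[0,1]$, and I expect this to be the main obstacle. These filtrations are built from limits over the (filtered) circle, and exhaustiveness requires such limits to commute with the colimit $\colim_{i\to-\infty}$ that computes exhaustion, which fails in general. The hypotheses supply the missing convergence: the $\Tor$-amplitude bound forces $\LOmega^i_{B/A} \iso \wedge^i_B\cot_{B/A}$ to have $\Tor$-amplitude in $[0,i]$, and together with truncatedness of $B$ this yields uniform connectivity estimates on the graded pieces $\grop^i\HH(B/A)$, hence on the towers defining $\filop^\star\HN(B/A)$ and $\filop^\star\HP(B/A)$, under which the interchange of limits and colimits becomes valid. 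Carrying out this estimate is the one genuinely technical ingredient; the rest of the argument is formal given \cref{in--HH--thm}, \cref{in--dR--thm}, and the graded-circle/\hplus-dg dictionary.
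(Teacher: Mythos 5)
Your argument is the paper's: part~(a) is exactly how the identification $\gr(\HH_\fil(B/A)) \iso \LOmega^{+\bul}_{B/A}$ is established (via $\gr(\lincir_\fil) \iso \num{D}_+$ and the two universal properties), and part~(b) is precisely the construction by filtered fixed points and Tate followed by the identification of graded pieces with the (brutally filtered) cohomology type of the derived de Rham complex, with exhaustiveness handled by the connectivity/Tor-amplitude estimate. Two small inaccuracies are worth flagging, though neither derails the proof. First, you invoke completeness of the HKR filtration to justify commuting $\gr$ past the fixed-point limit; in fact $\gr : \Fil(\cat{C}) \to \Gr(\cat{C})$ preserves all small limits and colimits unconditionally (each $\gr^n$ is a cofiber of evaluation functors, and cofibers are finite limits in a stable setting), so the commutation is automatic from naturality of the Tate construction under limit- and colimit-preserving symmetric monoidal functors; completeness plays no role here. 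Second, it is not true that the Tate construction preserves completeness of its input in general: to get completeness of $\filop^\star\HP(B/A)$ one needs the filtered orbits to also be complete, which comes from the estimate that $\HH_\fil(B/A)^i$ is $i$-connective for all $i \ge 0$ together with left-separatedness of the t-structure, and then the Tate construction, as a cofiber of two complete filtered objects, is complete.
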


Part \cref{in--HH--fils--HH} of \cref{in--HH--fils} goes back to the classical theorem of Hochschild--Kostant--Rosenberg: the latter is equivalent to the special case of the former where $A$ is an ordinary commutative ring and $B$ is a smooth $A$-algebra; one can then bootstrap to the general statement of \cref{in--HH--fils--HH} using formal/categorical arguments, as explained by Bhatt--Morrow--Scholze in \cite[\S2.2]{bms2}. For this reason, the filtration in \cref{in--HH--fils} is referred to as the \emph{HKR filtration} on Hochschild homology.

Part \cref{in--HH--fils--HP} was proved by Antieau \cite{antieau--periodic}, after being conjectured by Bhatt--Morrow--Scholze, who proved a $p$-adic variant of the result in \cite{bms2}.

In this paper, we shall give a new proof of \cref{in--HH--fils}, i.e. we'll find new constructions of the filtrations appearing in \cref{in--HH--fils}. Firstly, we construct HKR-filtered Hochschild homology, refined by additional $\cir$-equivariant and multiplicative structures, via a universal characterization (which crucially refers to this extra structure). Secondly, we use this additional $\cir$-equivariant structure to construct the filtrations on negative and periodic cyclic homology.

Before elaborating on our universal property for HKR-filtered Hochschild homology, let us recall that Hochschild homology itself is characterized by a universal property. Namely, for $A$ a simplicial commutative ring and $B$ a simplicial commutative $A$-algebra, we have:
\begin{enumerate}[label=($\ddagger$),ref=$\ddagger$]
\item \label{in--HH--univ}
  For any simplicial commutative $A$-algebra with $\cir$-action $X$, a map of simplicial commutative $A$-algebras $B \to X$ extends uniquely to an $\cir$-equivariant map of simplicial commutative $A$-algebras $\HH(B/A) \to X$.\footnote{It is perhaps worth emphasizing here that, when we refer to simplicial commutative rings, we always mean to work with the $\infty$-category of such objects. In particular, the universal property \cref{in--HH--univ} is a properly $\infty$-categorical one, characterizing $\HH(B/A)$ up to equivalence, in contrast with the universal property \cref{in--dR-univ}.}
\end{enumerate}

Our universal property for HKR-filtered Hochschild homology is an interpolation between \cref{in--HH--univ} and the universal property of the derived de Rham complex described in \cref{in--dR}. To this end, we define the following over any base simplicial commutative ring $A$:
\begin{itemize}
\item The notion of a ($\num{Z}$- or $\num{Z}_{\ge0}$-indexed) \emph{filtered $A$-module with filtered $\cir$-action} (\cref{fc--fc--fc-action}). Roughly speaking, a filtered $\cir$-action on a filtered $A$-module is ``an $\cir$-action that increases the filtration degree'' (analogous to the differential of a cochain complex increasing the grading degree). Rigorously, these are defined as filtered modules over a filtered $A$-algebra $\lincir_\fil$, which we refer to as the \emph{$A$-linear filtered circle}. The underlying $A$-algebra of $\lincir_\fil$ is the group algebra $A[S^1]$, and the associated graded object of $\lincir_\fil$ is the graded algebra $\num{D}_+$ appearing in the definition of \hplus-dg objects. It follows that a filtered $\cir$-action on a filtered $A$-module determines an $\cir$-action on its underlying $A$-module and an \hplus-dg structure on its associated graded $A$-module.
\item The notion of a \emph{simplicial commutative algebra structure} on a filtered $A$-module with filtered $\cir$-action (\cref{fc--fc--dalg-fc-action}). We refer to filtered $A$-modules with filtered $\cir$-action equipped with this structure as \emph{filtered simplicial commutative $A$-algebras with filtered $\cir$-action}. If $X$ is a filtered simplicial commutative $A$-algebra with filtered $\cir$-action, then the underlying object of $X$ canonically carries thes structure of a simplicial commutative $A$-algebra with $\cir$-action and the associated graded object $\gr(X)$ canonically carries the structure of a \hplus-dg simplicial commutative $A$-algebra.
\end{itemize}

\begin{remark}[The filtered circle]
  \label{in--HH--filcir}
  Our ability to define the notion of a filtered simplicial commutative $A$-algebra with filtered $\cir$-action---the key notion for the universal property of HKR-filtered Hochschild homology---boils down to exhibiting a sufficient amount of structure on the filtered circle $\lincir_\fil$. We need this object not just as a filtered algebra but as a suitably structured filtered bialgebra. This construction of the filtered circle is a key component in this paper. (The same comments apply to the object $\num{D}_+$ in the graded setting, though that construction is weaker in the sense that it may be recovered from the filtered circle but not vice-versa.)
\end{remark}

With these notions in place, we prove the following result:

\begin{theorem}[see \cref{fc--hh}]
  \label{in--HH--thm}
  Let $A$ be a simplicial commutative ring and let $B$ be a simplicial commutative $A$-algebra. Then:
  \begin{enumerate}
  \item \label{in--HH--thm--def}
    There is an initial object $\HH_\fil(B/A)$ in the $\infty$-category of $\num{Z}_{\ge0}$-indexed filtered simplicial commutative rings with filtered $\cir$-action $X^\star$ equipped with a map of simplicial commutative $A$-algebras $B \to X^0$.
  \item \label{in--HH--thm--und}
    There is an equivalence of simplicial commutative $A$-algebras with $\cir$-action $\HH_\fil(B/A)^0 \iso \HH(B/A)$, i.e. the underlying object of $\HH_\fil(B/A)$ is the Hochschild homology of $B$ over $A$.
  \item \label{in--HH--thm--gr}
    There is an equivalence of \hplus-dg simplicial commutative $A$-algebras $\gr(\HH_\fil(B/A)) \iso \LOmega^{+\bul}_{B/A}$, i.e. the associated graded object of $\HH_\fil(B/A)$ is the derived de Rham complex of $B$ over $A$.
  \item \label{in--HH--thm--cpl}
    As a filtered object, $\HH_\fil(B/A)$ is complete.
  \end{enumerate}
\end{theorem}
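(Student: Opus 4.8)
The plan is to realize $\HH_\fil(B/A)$ as a free object and then obtain (b)--(d) from the structure of the $A$-linear filtered circle $\lincir_\fil$, which at this point is available as a suitably structured filtered bialgebra with $\Fil^0\lincir_\fil\iso A[\cir]$ (its underlying algebra) and $\gr(\lincir_\fil)\iso\num{D}_+$; in particular the $\infty$-category $\cat D$ of $\num{Z}_{\ge0}$-indexed filtered simplicial commutative $A$-algebras with filtered $\cir$-action is presentable. For (a): a map of simplicial commutative $A$-algebras $B\to X^0$ is the same datum as a map of filtered simplicial commutative $A$-algebras $(B,0,0,\dots)\to X^\star$ (the components in filtration degree $\ge1$ are maps out of the zero module, hence unique), so the comma $\infty$-category in the statement is $\cat D_{(B,0,0,\dots)/}$; equivalently, the forgetful functor $\cat D\to\Fil\SCAlg_A$ forgetting the filtered $\cir$-action is accessible and limit-preserving, hence admits a left adjoint $\mathrm{Free}_{\lincir_\fil}$, and $\HH_\fil(B/A)\ce\mathrm{Free}_{\lincir_\fil}(B,0,0,\dots)$ is the required initial object.

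For (b) and (c) I would use that a colimit-preserving symmetric monoidal functor carries the free-module-algebra construction over a bialgebra $H$ to that over the image bialgebra. Apply this to the two functors out of $\cat D$ induced by $\Fil^0\colon\Fil\SCAlg_A\to\SCAlg_A$ and by $\gr\colon\Fil\SCAlg_A\to\Gr\SCAlg_A$. The first is the ``underlying object'' functor to simplicial commutative $A$-algebras with $\cir$-action (since $\Fil^0\lincir_\fil\iso A[\cir]$), and it carries $\HH_\fil(B/A)=\mathrm{Free}_{\lincir_\fil}(B,0,0,\dots)$ to $\mathrm{Free}_{A[\cir]}(\Fil^0(B,0,0,\dots))=\mathrm{Free}_{A[\cir]}(B)$, which by the universal property \cref{in--HH--univ} is $\HH(B/A)$ with its $\cir$-action; this is (b). The second is the ``associated graded'' functor to \hplus-dg simplicial commutative $A$-algebras (since $\gr(\lincir_\fil)\iso\num{D}_+$), and it carries $\HH_\fil(B/A)$ to $\mathrm{Free}_{\num{D}_+}(\gr(B,0,0,\dots))=\mathrm{Free}_{\num{D}_+}(B\ \text{in graded degree}\ 0)$, which by \cref{in--dR--thm}(a) is $\LOmega^{+\bul}_{B/A}$; this is (c). The identifications of the terms and of the first differential then transport from \cref{in--dR--thm}(b).

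For (d) I would prove the sharper statement that $\Fil^n\HH_\fil(B/A)$ is $n$-connective for every $n\ge0$; completeness is then immediate, since for each fixed $k$ the tower $\{\Fil^n\HH_\fil(B/A)\}_n$ has vanishing $\pi_k$ once $n>k$, so $\lim_n\Fil^n\HH_\fil(B/A)\iso0$, which is exactly completeness of the filtration. For the connectivity estimate, observe that the full subcategory of $\num{Z}_{\ge0}$-indexed filtered $A$-modules spanned by those $Y^\star$ with $Y^n$ $n$-connective for all $n$ is closed under colimits, under the Day-convolution tensor product (as $(Y\otimes Z)^n$ is a colimit of tensors $Y^p\otimes_A Z^q$ with $p+q\ge n$, each $(p+q)$-connective), and under $\LSym$ (its symmetric powers being colimits of such tensors); it contains $(B,0,0,\dots)$ because $B$ is connective, and it contains $\lincir_\fil$ because $\gr^i\lincir_\fil$ vanishes for $i\ge2$ (since $\gr(\lincir_\fil)\iso\num{D}_+$ lives in weights $0$ and $1$), so that $\lincir_\fil$ has a two-step filtration with $\Fil^0\lincir_\fil\iso A[\cir]$, $\Fil^1\lincir_\fil\iso A[1]$, and $\Fil^{\ge2}\lincir_\fil\iso0$. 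Since $\HH_\fil(B/A)$ is built from $(B,0,0,\dots)$ and $\lincir_\fil$ by exactly these operations (as a free module-algebra), it lies in this subcategory.

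The main obstacle is less in the manipulations above than in making the functoriality in (b) and (c) rigorous at the level of the full structured $\infty$-categories---i.e.\ knowing that ``underlying object'' and ``associated graded'' are morphisms of ``filtered, resp.\ graded, simplicial commutative algebras with the appropriate ($\cir$-action, resp.\ $\num{D}_+$-module) structure'', not merely of underlying filtered modules, and that, so refined, they preserve the free constructions entering the definitions of $\HH_\fil(B/A)$ and (via \cref{in--dR--thm}) of $\LOmega^{+\bul}_{B/A}$; this is precisely the payoff of the compatibility of the simplicial commutative operad with $\Fil^0$, with $\gr$, and with the bialgebra structures on $\lincir_\fil$ and $\num{D}_+$ established in the preceding sections. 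Statement (d) is the one assertion that is not a formal consequence of the universal property, and it uses the quantitative input that $\lincir_\fil$'s filtration is two-step, which has to be read off from the construction of the filtered circle.
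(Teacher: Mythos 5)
For parts (\emph{a})--(\emph{c}) your proposal matches the paper's argument in substance: the left adjoint to $\ev^0\circ U$ furnishing $\HH_\fil(B/A)$ exists by the adjoint functor theorem (\cref{fc--hh--adj}), and the identifications of the underlying object and associated graded are read off from the commuting diagram of right adjoints built from $\colim\dashv\delta$ and $\gr\dashv\zeta$, which is exactly the content of \cref{fc--hh--conceptual-hkr}; your phrasing via ``symmetric monoidal functor sends a free construction over a bialgebra to the free construction over the image bialgebra'' is the same mechanism, articulated a little differently.

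For part (\emph{d}) your route is genuinely different, and it has a real gap. The paper reduces to the case $B=\LSym_A(M)$ using the bar resolution and the fact that $\HH_\fil(-/A)$ preserves colimits, and in the free case invokes \cref{fc--hh--free-split}: there the HKR filtration is \emph{split}, so that $\HH_\fil(B/A)^n\iso\bigoplus_{i\ge n}\LOmega^{+i}_{B/A}$, and the required $n$-connectivity follows directly from $\LOmega^{+i}_{B/A}\iso(\bigwedge^i_B\cot_{B/A})[i]$ being $i$-connective; this sidesteps any need to track connectivity through $\LSym$ in the filtered setting. You instead propose closure of the Postnikov-connective subcategory of $\Fil^{\ge 0}(\Mod_A)$ under colimits, Day convolution, and $\LSym$. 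The closure under colimits and under Day convolution is fine, but the closure under $\LSym$ is not justified by your parenthetical ``its symmetric powers being colimits of such tensors.'' The filtered $\LSym^j$ is the \emph{left derived} symmetric power, the functor constructed in \cref{dc--df--lsym}, which the paper takes pains to distinguish from the $\Einfty$-free construction $\Sym^j\iso(\,\cdot\,^{\otimes j})_{\Sigma_j}$: the map $\theta^j:\Sym^j\to\LSym^j$ is \emph{not} an equivalence, and $\LSym^j$ cannot be written as a colimit of tensor powers of its argument. What one actually needs, arguing as in the proof of \cref{dc--cn--fil-preserve}, is that $\LSym^j(A[k](k))\iso\LSym^j(A[k])(jk)$ lands in the subcategory, i.e.\ that $\LSym^j_A(A[k])$ is $jk$-connective for all $j,k\ge 0$. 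This is true (it follows by d\'ecalage through the exterior and divided-power functors), but it is a substantive connectivity estimate, the positive-degree counterpart of the estimates in \cref{dc--cn}, and it is neither supplied by your justification nor contained in the paper. Without it, your proof of completeness does not close. Also note that the appeal to ``built by exactly these operations'' silently routes through the bar resolution and the formula $\HH_\fil(\LSym_AM/A)\iso\LSym_A(\lincir_\fil\otimes\ins^0M)$; if you make that explicit, only $k=0,1$ occur (since $\lincir_\fil$ is supported in filtration degrees $0,1$), so your argument could in fact be repaired using just $\LSym^j$ of $0$- and $1$-connective inputs --- but that, in effect, rederives the splitting the paper uses.
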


Statements \cref{in--HH--thm--def,in--HH--thm--und,in--HH--thm--gr} of \cref{in--HH--thm} are completely formal once the definitions have been made, conceptualizing the relationship between Hochschild homology and the derived de Rham complex. Statement \cref{in--HH--thm--cpl} requires the same basic computation going into the proof of the HKR theorem. From all the statements together we certainly in particular recover \cref{in--HH--fils}\cref{in--HH--fils--HH}, and in fact one can immediately deduce that the filtration on $\HH(B/A)$ determined by the object $\HH_\fil(B/A)$ agrees with the HKR filtration. Properly speaking, it is the object $\HH_\fil(B/A)$ that we mean to refer to by the terminology \emph{HKR-filtered Hochschild homology}.

To furthermore recover \cref{in--HH--fils}\cref{in--HH--fils--HP}, we also define, for $A$ a simplicial commutative ring and $X$ a filtered $A$-module with filtered $\cir$-action, the \emph{filtered fixed points} $X^{\lincir_\fil}$ and \emph{filtered Tate construction} $X^{\tate \lincir_\fil}$. These are both filtered $A$-modules, which can be regarded as filtrations on the usual fixed points and Tate construction of the $A$-module with $\cir$-action underlying $X$. We then prove the following result:

\begin{proposition}[see \cref{fc--ta}]
  \label{in--HH--fil-fix}
  Let $A$ be a simplicial commutative ring and let $X$ be a filtered $A$-module with filtered $\cir$-action. Then, for $i \in \num{Z}$, there are equivalences
  \[
    \gr^i(X^{\lincir_\fil}) \iso |\gr(X)|^{\ge i}[2i], \qquad
    \gr^i(X^{\tate \lincir_\fil}) \iso |\gr(X)|[2i]
  \]
  (where we are regarding $\gr(X)$ as an \hplus-dg $A$-module, so that $|\gr(X)|$  denotes the cohomology type of $\gr(X)$ and $|\gr(X)|^{\ge\star}$ the brutal filtration thereon).
\end{proposition}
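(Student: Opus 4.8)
The plan is to push the whole computation through the associated-graded functor $\gr\colon\Fil(\Mod_A)\to\Gr(\Mod_A)$, reducing it to an explicit calculation over the graded algebra $\num{D}_+=\gr(\lincir_\fil)$. Recall that $\gr$ is symmetric monoidal and preserves limits and colimits (a limit or colimit of filtered objects is computed levelwise, and each $\gr^i$ is a shift of a fibre), and that it carries $\lincir_\fil$—together with its augmentation and the rest of its bialgebra structure—to $\num{D}_+$ with the corresponding structure, hence sends filtered $\lincir_\fil$-modules to graded $\num{D}_+$-modules. The first goal is to prove
\[
  \gr\bigl(X^{\lincir_\fil}\bigr)\iso(\gr X)^{\num{D}_+},
  \qquad
  \gr\bigl(X^{\tate\lincir_\fil}\bigr)\iso(\gr X)^{\tate\num{D}_+},
\]
where the right-hand sides denote the analogous fixed-point and Tate constructions over $\num{D}_+$; the second goal is then to identify these with $|\gr(X)|^{\ge\star}[2\star]$ and $|\gr(X)|[2\star]$.

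For the reduction, recall that $X^{\lincir_\fil}\iso\uMap_{\lincir_\fil}(\unit,X)$, where $\unit$ is the filtered unit with $\lincir_\fil$-module structure given by the augmentation, and that $X^{\tate\lincir_\fil}$ is built from $X^{\lincir_\fil}$ by the circle norm cofibre sequence. The filtered circle $\lincir_\fil$ has associated graded concentrated in weights $0$ and $1$ ($\gr^0\iso A$, $\gr^1\iso A[1]$), so it is complete, and its augmentation ideal $\o{\lincir_\fil}=\fib(\lincir_\fil\to\unit)$ then has associated graded concentrated in weight $1$, whence $\o{\lincir_\fil}\iso\unit(1)[1]$. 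Consequently the two-sided bar resolution of the unit over $\lincir_\fil$ is $\unit\iso\colim_{\Delta^{\op}}\bigl([n]\mapsto\lincir_\fil\otimes_A\o{\lincir_\fil}^{\otimes n}\bigr)\iso\colim_{\Delta^{\op}}\bigl([n]\mapsto\lincir_\fil(n)[n]\bigr)$, all of whose terms are shifts of the free rank-one module; applying $\uMap_{\lincir_\fil}(-,X)$ and totalizing gives $X^{\lincir_\fil}\iso\operatorname{Tot}\bigl([n]\mapsto X(-n)[-n]\bigr)$. Since $\gr$ is monoidal and continuous, it carries this picture term-by-term to the analogous one over $\num{D}_+$ (whose bar resolution of the unit is $[n]\mapsto\num{D}_+(n)[n]$), so $\gr(X^{\lincir_\fil})\iso(\gr X)^{\num{D}_+}\iso\operatorname{Tot}\bigl([n]\mapsto(\gr X)(-n)[-n]\bigr)$; applying $\gr$ to the norm cofibre sequence—which it preserves, being exact, monoidal and continuous, with $\gr(X_{\lincir_\fil})\iso(\gr X)_{\num{D}_+}$—gives the Tate statement.

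It remains to compute the two graded objects over $\num{D}_+$. Write $Y^\bul=\gr(X)$, viewed as an \hplus-dg $A$-module with terms $Y^j=\gr^j(X)$ and differentials $d\colon Y^j[1]\to Y^{j+1}$. Extracting the weight-$i$ piece of $\operatorname{Tot}\bigl([n]\mapsto(\gr X)(-n)[-n]\bigr)$, the $n$-th term becomes $Y^{i+n}[-n]=\gr^{i+n}(X)[-n]$ and the cosimplicial coface maps become (after the evident reindexing) the differentials of $Y^\bul$; thus $\gr^i\bigl((\gr X)^{\num{D}_+}\bigr)$ is the totalization of the \hplus-dg complex of $\gr(X)$ brutally truncated to weights $\ge i$, shifted by $2i$—by the construction of the cohomology type and its brutal filtration this is exactly $|\gr(X)|^{\ge i}[2i]$. (As a check: the $n$-th layer of the totalization filtration on either side is $\gr^{i+n}(X)[-2n]$.) For the Tate construction a parallel analysis—inverting the weight-$(-1)$, degree-$(-2)$ periodicity class of $\uMap_{\num{D}_+}(\unit,\unit)$, equivalently running the norm cofibre sequence—shows that on the weight-$i$ piece the brutal truncation $\ge i$ is replaced by $\colim_{k\to\infty}$ of the brutal truncations $\ge i-k$, which by exhaustiveness of the brutal filtration on $|\gr(X)|$ is $|\gr(X)|$; hence $\gr^i\bigl((\gr X)^{\tate\num{D}_+}\bigr)\iso|\gr(X)|[2i]$. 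The one genuinely substantive point is the reduction in the second paragraph—that $\gr$ commutes with the filtered fixed-point and Tate constructions—and this is exactly why it matters that the bar resolution of the unit over $\lincir_\fil$ can be taken with all terms mere shifts of $\lincir_\fil$, which in turn rests on completeness of $\lincir_\fil$ and the one-weight concentration of its augmentation ideal's associated graded.
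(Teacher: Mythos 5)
Your first reduction step—push through the symmetric monoidal, limit- and colimit-preserving functor $\gr$ and use the naturality of orbits/fixed points/Tate for such functors (the paper's \cref{bi--ta--natural}) to land in $\Mod_{\num{D}_+}(\Gr(\Mod_A))$—is exactly what the paper does in the proof of \cref{fc--ta--gr}. Where you diverge is in the second step. The paper invokes \cref{dg--co--orbits-fixed} and \cref{dg--co--tate}, which are proved by a formal adjunction argument: one factors the corestriction $\rho : \Gr(\cat{C}) \to \DG_-(\cat{C})$ through $\Fil(\cat{C})$, passes to right adjoints, and reads off $(-)^{\num{D}_-} \iso \und(|{-}|^{\ge\star})$ from a commutative square, never unwinding any cosimplicial diagram. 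You instead attempt an explicit cobar/totalization computation.

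The central gap in your argument is the assertion that the totalization of $[n]\mapsto Y^{i+n}[-n]$ ``is exactly $|\gr(X)|^{\ge i}[2i]$ by the construction of the cohomology type.'' It is not, at least not by construction. The paper defines $|Y|^{\ge\star}$ as the image of $Y$ under the \emph{inverse} of the Koszul duality equivalence $\o\gr : \cpl\Fil(\cat{C}) \isoto \DG_-(\cat{C})$ from \cref{gf--kd--gf}, which is built out of the bar--cobar/comonadic machinery of \cref{gf--kd--mod}; there is no totalization formula given for $\o\gr^{-1}$. Identifying that abstract preimage with the totalization you write down is precisely the content of \cref{dg--co--orbits-fixed}. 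You have thus assumed, under the guise of ``by construction,'' exactly the lemma that your argument needs to reprove. To close the gap you would either have to reproduce the paper's right-adjoint argument, or separately establish that $\o\gr^{-1}$ is computed by a cosimplicial limit—a genuinely new claim that needs a proof.

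A secondary issue: your resolution with terms $\lincir_\fil\otimes\o{\lincir_\fil}^{\otimes n}\iso\lincir_\fil(n)[n]$ is the \emph{reduced} bar resolution, which is not the canonical one used in \cref{bi--ta--fixed-formula}; that one has terms $(\dual{\lincir_\fil})^{\otimes n}\otimes X\iso(\unit\oplus\unit(-1)[-1])^{\otimes n}\otimes X$, which are strictly larger. Writing the reduced resolution as a simplicial object in the $\infty$-categorical setting, and checking that it still computes $\unit\otimes_{\lincir_\fil}\unit$ and hence the same fixed points, needs justification (the classical argument uses a strict splitting of the augmentation). Likewise the claim that ``the cosimplicial coface maps become the differentials of $Y^\bul$'' is plausible but requires explicitly relating the multiplication on $\num{D}_+$ to the cosimplicial structure, which you do not do. None of these is fatal in spirit, but as written the argument does not establish the proposition without importing the very lemma it is meant to replace.
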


It follows from \cref{in--HH--thm,in--HH--fil-fix} (together with a bit of extra analysis regarding completeness and exhaustiveness) that applying filtered fixed points and filtered Tate construction to HKR-filtered Hochschild homology $\HH_\fil(B/A)$ produces filtrations on negative and periodic cyclic homology as specified by \cref{in--HH--fils}\cref{in--HH--fils--HP}. It is possible to show that these filtrations agree with those constructed in \cite{antieau--periodic}, but we do not spell this out in the paper.


\subsection{Outline}
\label{in--ol}

In general, we refer to the beginning of each section and subsection for description and motivation of its contents. However, this paper being somewhat lengthy and at times technical, let us give a brief guide to the material. The main definitions and results are contained in \cref{dg,fc}; this was summarized in \cref{in--dR,in--HH}, so we say no more about it here. The earlier \cref{bi,gf,dc} are devoted to preliminary categorical and algebraic material that are needed to formulate and work with the structures studied later. Two pieces of this preliminary material may be of particular interest:
\begin{itemize}
\item In \cref{gf--kd}, we discuss the relationship between filtered objects in a stable presentable symmetric monoidal $\infty$-category $\cat{C}$ and \emph{\hminus-differential graded objects} in $\cat{C}$; the latter is another flavor of ``homotopy-coherent cochain complexes'' playing a role in this paper (in addition to the $\hplus$-dg objects discussed in \cref{in--dR}). This relationship underlies our notion of ``cohomology type'' for homotopy-coherent cochain complexes and hence the proof of \cref{in--HH--fil-fix}. Additionally, it supplies a clean construction of the Beilinson t-structure on filtered objects (\cref{gf--t--beilinson}).
\item In \cref{dc}, we discuss the theory of \emph{derived commutative rings}, an extension of the theory of simplicial commutative rings to the nonconnective setting, which we use to formulate the structure of interest on the filtered circle. This theory (though not our choice of terminology) is due to Mathew; it involves ideas of Brantner, which are also used in their joint work \cite{brantner-mathew--deformation}, and is related to earlier work of Illusie \cite[\textsection I.4]{illusie--cotangent-i}. But it has not previously appeared in writing, so we give a complete account of the aspects of the theory needed here.
\end{itemize}


\subsection{Related work}
\label{pr--rw}

The relationship between Hochschild homology and the de Rham complex simplifies substantially in characteristic zero. For example, the filtrations of \cref{in--HH--fils} are canonically split when $A$ is a $\num{Q}$-algebra. In this characteristic zero setting, results essentially equivalent to the ones of this paper appear in earlier work of To\"en--Vezzosi \cite{toen-vezzosi--hkr}, and there are related results due to Ben-Zvi--Nadler \cite{benzvi-nadler--loop-conn}. However, a crucial ingredient in these previous treatments is a certain formality result of the circle over $\num{Q}$ that breaks down in positive and mixed characteristic. In this paper, we accommodate this non-formality and prove integral results. Our perspective should be viewed as building on and refining the viewpoint of To\"en--Vezzosi in particular.

Some of the main ideas in this paper were developed independently and concurrently by Moulinos--Robalo--To\"en. In \cite{moulinos-robalo-toen--hkr}, they construct a ``filtered circle'' essentially equivalent to ours, and use it just as we do here to construct the filtrations on Hochschild homology and negative cyclic homology discussed above. Of course, there are also differences between their paper and this one; let us highlight a few:
\begin{itemize}
\item There is some difference in formalism: \cite{moulinos-robalo-toen--hkr} employs the language of derived algebraic geometry, while here we stick to (derived) algebra. For example, in \cite{moulinos-robalo-toen--hkr}, the derived stacks $\clspc\Gm$ and $\num{A}^1/\Gm$ are used to work with graded and filtered objects, whereas here we do so directly. Accordingly, the filtered circle takes the form of a derived stack in \cite{moulinos-robalo-toen--hkr}; that object should be the spectrum, in an appropriate derived algebro-geometric sense, of the filtered circle studied in this paper (more precisely, of the object $\dual\lincir_\fil$ defined in \cref{fc--fc}).
\item Most importantly, the two papers give rather different constructions of the filtered circle. Here we use the Postnikov filtration construction and demonstrate that this preserves the necessary structure, while \cite{moulinos-robalo-toen--hkr} uses To\"en's description of the affinization of $\cir$ (regarded as a derived stack), which relates it at each prime $p$ to the scheme of $p$-typical Witt vectors. The latter is closer in spirit to the work of Ben-Zvi--Nadler cited above, and offers its own insights into the nature of the filtered circle.

  The approach here has the advantages of involving no computations and working directly over $\num{Z}$ (the results in \cite{moulinos-robalo-toen--hkr} are proved only over $\num{Z}_{(p)}$, though they do indicate how to make the desired construction over $\num{Z}$ from their perspective; see also the related further work of To\"en \cite{toen--schematization-revisited}). Our construction also allows for comparison, as the universal property of the Postnikov filtration construction gives uniqueness results (see \cref{fc--fc--fc-unique}).
\item The results of \cite{moulinos-robalo-toen--hkr} involve an object equivalent to our derived de Rham complex $\LOmega^{+\bul}_{A/R}$, but we discuss the relationship between this object and the usual notion of (Hodge-completed) derived de Rham cohomology in more detail (\cref{dg--dr--classical-compare}). This is afforded by our study in \cref{dg--co} of the ``cohomology'' of homotopy-coherent cochain complexes, or in other words in our results relating filtered objects with \hminus\ cochain complexes and \hminus\ cochain complexes with \hplus\ cochain complexes.
\item There are applications of these ideas discussed in \cite{moulinos-robalo-toen--hkr} that are not mentioned at all here, e.g. to the theory of shifted sympletic structures in positive characteristic.
\end{itemize}

Finally, an alternative formulation of the notion of ``homotopy-coherent cochain complex'' was considered by Joyal \cite[\S35]{joyal--notes}, and is studied in recent work of Walde \cite{walde--dold-kan} and of Ariotta \cite{ariotta--complex}. In particular, Ariotta gives another perspective on the relationship between cochain complexes and filtered objects, discussed here in \cref{gf--kd}.


\subsection{Acknowledgements}
\label{pr--ac}

It is a pleasure to thank Ben Antieau, Tony Feng, S\slasho ren Galatius, Joj Helfer, Lars Hesselholt, Jacob Lurie, Akhil Mathew, Thomas Nikolaus, and Allen Yuan for enlightening and encouraging conversations related to this work, and Ben Antieau, William Balderrama, Lukas Brantner, S\slasho ren Galatius, Akhil Mathew, Bertrand To\"en, and a referee for feedback on earlier versions of the paper. I would also like to acknowledge Tasos Moulinos and Marco Robalo for discussing some of the material presented here and its relation to their joint work with To\"en. Finally, I am grateful for the support provided by the NSF Graduate Research Fellowship and the hospitality of the institutions where this work was carried out: Stanford University, the University of Copenhagen (including support by the Danish National Research Foundation through the Centre for Symmetry and Deformation (DNRF92)), and my parents' home.


\subsection{Notation and terminology}
\label{pr--nt}

Throughout the paper, we use the language of higher category theory and higher algebra developed by Lurie \cite{lurie--topos,lurie--algebra}. All statements and constructions should be understood in a homotopy-invariant sense. We will often cite the \emph{adjoint functor theorem}, by which we mean \cite[Corollary 5.5.2.9]{lurie--topos}, and the \emph{monadicity theorem}, by which we mean \cite[Theorem 4.7.3.5]{lurie--algebra}.

We let $\Spc$ denote the $\infty$-category of spaces and $\Spt$ the $\infty$-category of spectra; we let $\Cat_\infty$ denote the $\infty$-category of small $\infty$-categories; we let $\PrL$ denote the $\infty$-category of presentable $\infty$-categories and colimit-preserving\footnote{When we refer to limits and colimits, we by default mean those with small indexing diagrams.} functors, which we often regard as a symmetric monoidal $\infty$-category by \cite[\S4.8.1]{lurie--algebra}, so that commutative algebra objects of $\PrL$ may be identified with presentable symmetric monoidal $\infty$-categories\footnote{We follow the convention of \cite[Definition 3.4.4.1]{lurie--algebra} that, when applied to a symmetric monoidal $\infty$-category, the adjective ``presentable'' includes the condition that the tensor product commute with colimits in each variable.}. We implicitly regard every ordinary category as an $\infty$-category (via the nerve construction). We also often implicitly regard abelian groups as spectra and commutative rings as simplicial commutative rings or $\Einfty$-rings; in particular, for $A$ a commutative ring, $\Mod_A$ denotes the $\infty$-category of $A$-module spectra (this is equivalent to the derived $\infty$-category $\mathrm{D}(A)$ of complexes over $A$).

For $\cat{C}$ a small $\infty$-category, we let $\PSh(\cat{C})$ denote the $\infty$-category $\Fun(\cat{C}^\op,\Spc)$ of presheaves (of spaces) on $\cat{C}$; the Yoneda embedding $\cat{C} \inj \PSh(\cat{C})$ exhibits $\PSh(\cat{C})$ as the free $\infty$-category admitting colimits generated by $\cat{C}$ \cite[Theorem 5.1.5.6]{lurie--topos}. For $\cat{C}$ a small $\infty$-category admitting finite coproducts, we let $\PSh_\Sigma(\cat{C})$ denote the full subcategory of $\PSh(\cat{C})$ spanned by the finite product--preserving functors $\cat{C}^\op \to \Spc$; the Yoneda embedding $\cat{C} \inj \PSh_\Sigma(\cat{C})$ exhibits $\PSh_\Sigma(\cat{C})$ as the free $\infty$-category admitting sifted colimits generated by $\cat{C}$ \cite[Proposition 5.5.8.15]{lurie--topos}.

For $\cat{C}$ an $\infty$-category, we let $\End(\cat{C})$ denote the $\infty$-category of endofunctors on $\cat{C}$. We often regard $\End(\cat{C})$ as a monoidal $\infty$-category via the \emph{composition monoidal structure}, over which $\cat{C}$ is left-tensored (via evaluation). A \emph{(co)monad on $\cat{C}$} is a (co)algebra object $T$ of $\End(\cat{C})$. Given a (co)monad $T$ on $\cat{C}$, we may consider \emph{$T$-(co)module objects of $\cat{C}$} (note that others often refer to these as $T$-(co)algebras).

At various points, we will invoke the following notion of \emph{endomorphism objects}, introduced in \cite[\S4.7.1]{lurie--algebra}. Let $\cat{E}$ be a monoidal $\infty$-category and let $\cat{C}$ an $\infty$-category left tensored over $\cat{E}$. Given an object $X \in \cat{C}$, an \emph{endomorphism object for $X$ in $\cat{E}$} is an object $\End(X) \in \cat{E}$ equipped with a map $\epsilon : \End(X) \otimes X \to X$ in $\cat{C}$, such that, for every $E \in \cat{E}$, the map
\[
  \Map_{\cat{E}}(E,\End(X)) \to \Map_{\cat{C}}(E \otimes X,X)
\]
induced by $\epsilon$ is an equivalence. It is shown in loc. cit. that, in this situation, $\End(X)$ canonically promotes to an algebra in $\cat{E}$ and the map $\epsilon$ canonically promotes to a left $E$-module structure on $X$, such that, for any $E \in \Alg(\cat{E})$, the induced map
\[
  \Map_{\Alg(\cat{E})}(E,\End(X)) \to \LMod_{E}(\cat{C}) \times_{\cat{C}} \{X\}
\]
is an equivalence. There is an evident dual notion of \emph{coendomorphism objects}, which we will also make use of.

Finally, throughout, we let $\cir$ denote the circle, regarded as a group object in $\Spc$, and we let $\clspc\cir$ denote its classifying space.


\section{Bialgebras}
\label{bi}

Let $\num{Z}[\cir]$ denote the group algebra over $\num{Z}$ on $\cir$. Recall that there is a canonical equivalence of $\infty$-categories
\[
  \Mod_{\num{Z}[\cir]} \iso \Fun(\clspc\cir,\Mod_{\num{Z}}).
\]
(see \cref{bi--tn--rep}). The symmetric monoidal structure on $\Mod_{\num{Z}}$ induces a pointwise symmetric monoidal structure on the right-hand side (encoding the tensor product of $\cir$-representations), which transports under the above equivalence to a symmetric monoidal structure on $\Mod_{\num{Z}[\cir]}$. This is not given by the relative tensor product over $\num{Z}[\cir]$, but there is a way of producing this symmetric monoidal structure on $\Mod_{\num{Z}[\cir]}$ purely in terms of structure on the group algebra. The important point is that $\num{Z}[\cir]$ is not just an algebra, but moreover a \emph{cocommutative bialgebra}: that is, it carries a cocommutative coalgebra structure compatible with its algebra structure. For example, the diagonal map $\cir \to \cir \times \cir$ induces a map of algebras $\Delta : \num{Z}[\cir] \to \num{Z}[\cir \times \cir] \iso \num{Z}[\cir] \otimes_\num{Z} \num{Z}[\cir]$. This leads to the desired tensor product of $\num{Z}[\cir]$-modules as follows: given two $\num{Z}[\cir]$-modules $M$ and $N$, the $\num{Z}$-linear tensor product $M \otimes_\num{Z} N$ canonically has the structure of a $\num{Z}[\cir] \otimes_\num{Z} \num{Z}[\cir]$-module, and we restrict this to a $\num{Z}[\cir]$-module structure using the map $\Delta$.

In \cref{dg,fc}, we will be dealing with structures that are analogous to $\cir$-actions---namely, homotopy-coherent cochain complexes and filtered $\cir$-actions---but that cannot be formulated using functor categories, rather only as module categories in the linear setting. The purpose of this section is to prepare for those situations, by studying bialgebras in the general context of a (stable) presentable symmetric monoidal $\infty$-category. Our aim is to explain how certain features of representation categories like $\Fun(\clspc\cir,\Mod_{\num{Z}})$, e.g. the tensor product discussed above, carry through to categories of modules (or comodules) over bialgebras in this generality.

This section is organized as follows: in \cref{bi--df}, we state the basic definitions of bialgebras in the $\infty$-categorical setting; in \cref{bi--tn}, we construct the symmetric monoidal structure on the category of modules over a cocommutative bialgebra informally sketched above; in \cref{bi--du}, we discuss what happens when one dualizes a bialgebra; and in \cref{bi--ta}, we formulate generalizations to this setting of the orbits, fixed points, and Tate construction for group actions.


\subsection{Definitions}
\label{bi--df}

Fix a symmetric monoidal $\infty$-category $\cat{C}$. Let us begin by recalling the definition of coalgebra and comodule objects of $\cat{C}$. In what follows, recall that the symmetric monoidal structure on $\cat{C}$ determines a symmetric monoidal structure on $\cat{C}^\op$.

\begin{definition}
  \label{bi--df--co}
  A \emph{coalgebra object of $\cat{C}$} is an algebra object of $\cat{C}^\op$. We let $\cAlg(\cat{C})$ denote the $\infty$-category $\Alg(\cat{C}^\op)^\op$, and refer to this as the \emph{$\infty$-category of coalgebra objects of $\cat{C}$}. Given a coalgebra object $A$ of $\cat{C}$, a \emph{left} \emph{$A$-comodule object of $\cat{C}$} is a left $A$-module object of $\cat{C}^\op$. We let $\cLMod_A(\cat{C})$ denote the $\infty$-category $\LMod_A(\cat{C}^\op)^\op$, and refer to this as the \emph{$\infty$-category of left $A$-comodule objects of $\cat{C}$}.

  Similarly, a \emph{cocommutative coalgebra object of $\cat{C}$} is a commutative algebra object of $\cat{C}^\op$. We let $\cCAlg(\cat{C})$ denote the $\infty$-category $\CAlg(\cat{C}^\op)^\op$, and refer to this as the \emph{$\infty$-category of cocommutative coalgebra objects of $\cat{C}$}. Given a cocommutative coalgebra object $A$ of $\cat{C}$, an \emph{$A$-comodule object of $\cat{C}$} is an $A$-module object of $\cat{C}^\op$. We let $\cMod_A(\cat{C})$ denote the $\infty$-category $\Mod_A(\cat{C}^\op)^\op$, and refer to this as the \emph{$\infty$-category of $A$-comodule objects of $\cat{C}$}.
\end{definition}

Now, there are two natural options for encoding the idea of an object of $\cat{C}$ equipped with compatible algebra and coalgebra structures. Noting that the $\infty$-categories $\Alg(\cat{C})$ and $\cAlg(\cat{C})$ inherit symmetric monoidal structures from $\cat{C}$ \cite[Example 3.2.4.4]{lurie--algebra}, we may consider both coalgebra objects of $\Alg(\cat{C})$ and algebra objects of $\cAlg(\cat{C})$. In this paper, we will only deal with the situation where at least one of the algebra and coalgebra structures is commutative. In these cases, we can show that the two options are equivalent:

\begin{proposition}
  \label{bi--df--swap}
  There are canonical equivalences of symmetric monoidal $\infty$-categories
  \[
    \cAlg(\CAlg(\cat{C})) \isoto \CAlg(\cAlg(\cat{C})), \quad
    \Alg(\cCAlg(\cat{C})) \isoto \cCAlg(\Alg(\cat{C})),
  \]
  \[
    \CAlg(\cCAlg(\cat{C})) \isoto \cCAlg(\CAlg(\cat{C}))
  \]
  commuting with the (symmetric monoidal) forgetful functors to $\cat{C}$.
\end{proposition}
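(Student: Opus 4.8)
The plan is to obtain all three equivalences as instances of a single general principle: for a symmetric monoidal $\infty$-category $\cat{D}$ and an $\infty$-operad $\cat{O}$, the $\infty$-category of $\cat{O}$-algebras in $\cat{D}$ is again symmetric monoidal (pointwise), and the formation of such algebra categories commutes past each other up to canonical symmetric monoidal equivalence. Concretely, for operads $\cat{O}$ and $\cat{P}$, there is a canonical equivalence $\Alg_{\cat O}(\Alg_{\cat P}(\cat{D})) \simeq \Alg_{\cat P}(\Alg_{\cat O}(\cat{D}))$ commuting with the forgetful functors to $\cat{D}$, because both sides are identified with $\Alg_{\cat O \otimes \cat P}(\cat{D})$ for the Boardman--Vogt tensor product of operads (Lurie, \cite{lurie--algebra}, \S2.2.5, in particular the bifunctoriality and symmetry of $\otimes$ on $\Op_\infty$, together with $\Alg_{\cat O \otimes \cat P}(\cat D) \simeq \Alg_{\cat O}(\Alg_{\cat P}(\cat D))$). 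First I would apply this with $\cat D = \cat{C}^\op$, which is symmetric monoidal, and the relevant pairs of operads drawn from $\{\mathbb{E}_1, \mathbb{E}_\infty\}$; then I would unwind the various ${}^\op$'s using \cref{bi--df--co} to translate statements about algebras in $\cat{C}^\op$ back into statements about coalgebras in $\cat{C}$.

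In more detail, for the first equivalence: by definition $\cAlg(\cat{C}) = \Alg(\cat{C}^\op)^\op$ and $\CAlg(\cat{C}) = \CAlg(\cat{C}^\op)^\op$, and one must check that the symmetric monoidal structures on these $\infty$-categories (which are again pointwise, hence transported from the symmetric monoidal structures on $\Alg(\cat{C}^\op)$ and $\CAlg(\cat{C}^\op)$ via the ${}^\op$) behave correctly under the identification $(\cat{C}^\op)^{\otimes,\op}$-algebras vs.\ $\cat{C}$-coalgebras. Then $\cAlg(\CAlg(\cat{C})) = \Alg\bigl(\CAlg(\cat{C}^\op)^\op\bigr)^\op = \bigl(\Alg((\CAlg(\cat{C}^\op))^\op)\bigr)^\op$; since $\CAlg(\cat{C}^\op)$ is symmetric monoidal, $\Alg((\CAlg(\cat{C}^\op))^\op) = \cAlg(\CAlg(\cat{C}^\op))$ — wait, that reintroduces a coalgebra; the cleaner route is to pass all the way to $\cat{C}^\op$: one shows $\cAlg(\CAlg(\cat{C}))^\op \simeq \Alg(\CAlg(\cat{C}^\op))$ and $\CAlg(\cAlg(\cat{C}))^\op \simeq \CAlg(\Alg(\cat{C}^\op))$ as symmetric monoidal $\infty$-categories, and then the desired equivalence is the opposite of the instance $\Alg_{\mathbb E_\infty}(\Alg_{\mathbb E_1}(\cat{C}^\op)) \simeq \Alg_{\mathbb E_1}(\Alg_{\mathbb E_\infty}(\cat{C}^\op))$ of the commutation principle above. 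The second equivalence is the same argument with the roles of the two factors and of $\cat{C}$, $\cat{C}^\op$ interchanged, and the third is the instance with both operads equal to $\mathbb{E}_\infty$. Compatibility with the forgetful functors to $\cat{C}$ is built into the commutation equivalence (it commutes with the forgetful functors to $\cat{C}^\op$, which dualize to the forgetful functors to $\cat{C}$).

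For uniqueness: the forgetful functors $\cAlg(\CAlg(\cat{C})) \to \cat{C}$ etc.\ are, after unwinding, induced by the forgetful functor $\Alg_{\mathbb E_\infty \otimes \mathbb E_1}(\cat{C}^\op) \to \cat{C}^\op$, which is conservative and symmetric monoidal; a symmetric monoidal functor over a conservative symmetric monoidal functor is determined up to (contractible) ambiguity by general nonsense about mapping spaces in $\CAlg(\Cat_\infty)$ and the fact that both sides are, as symmetric monoidal $\infty$-categories lying over $\cat{C}$ via a conservative functor, rigid enough — more precisely, one invokes that the space of symmetric monoidal functors commuting with the (conservative, limit-creating) forgetful functors is either empty or contractible, which follows because such a functor must be the identity on underlying objects and morphisms and the operadic structure is then forced. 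The main obstacle I anticipate is not conceptual but bookkeeping: carefully matching the pointwise symmetric monoidal structure on $\cAlg(\cat{C})$ and $\cCAlg(\cat{C})$ (as defined in \cite{lurie--algebra} on $\Alg$ and $\CAlg$, then transported through the ${}^\op$) with the symmetric monoidal structure appearing on $\Alg_{\cat O \otimes \cat P}(\cat C^\op)$, and tracking all the opposites so that "commuting with the forgetful functor to $\cat{C}$" comes out right rather than "commuting with the functor to $\cat{C}^\op$." Once the translation dictionary between the ${}^\op$-based definitions of this section and Lurie's operadic machinery is set up cleanly, each of the three equivalences is a one-line specialization.
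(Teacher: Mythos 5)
Your approach has a fundamental mismatch with the structures involved, and the uniqueness argument is also incorrect.

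The core issue: the two structures you are trying to commute are not both ``operadic algebra structures in the same ambient symmetric monoidal $\infty$-category.'' An object of $\cAlg(\CAlg(\cat{C}))$ carries a commutative \emph{algebra} structure in $\cat{C}$ together with a compatible \emph{co}algebra structure --- and the latter is an $\mathrm{E}_1$-algebra in $\cat{C}^\op$, not in $\cat{C}$. There is no $\infty$-operad $\cat{O}$ with $\Alg_{\cat{O}}(\cat{C}) \simeq \cAlg(\cat{C})$, so the Boardman--Vogt formalism $\Alg_{\cat{O}}(\Alg_{\cat{P}}(\cat{D})) \simeq \Alg_{\cat{O}\otimes\cat{P}}(\cat{D}) \simeq \Alg_{\cat{P}}(\Alg_{\cat{O}}(\cat{D}))$ has no purchase here: it commutes two algebra structures over a single ambient $\cat{D}$, not an algebra structure with a coalgebra structure. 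The attempted reduction to $\cat{C}^\op$ does not repair this. You wrote ``$\CAlg(\cat{C}) = \CAlg(\cat{C}^\op)^\op$'', but that is the definition of $\cCAlg(\cat{C})$ (cocommutative \emph{coalgebras}), not of $\CAlg(\cat{C})$ --- see \cref{bi--df--co}. Consequently the intermediate claim $\cAlg(\CAlg(\cat{C}))^\op \simeq \Alg(\CAlg(\cat{C}^\op))$ is false: expanding definitions, the left side is $\Alg(\CAlg(\cat{C})^\op)$, whose objects carry a commutative algebra structure in $\cat{C}$ plus a compatible coalgebra structure; whereas $\Alg(\CAlg(\cat{C}^\op))$ concerns objects of $\cat{C}$ carrying \emph{two} compatible coalgebra structures (one cocommutative, one coassociative) --- a completely different kind of object. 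Taking opposites reverses the direction of \emph{all} the structure at once; it cannot be applied selectively to one of two interleaved pieces.

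The uniqueness argument is also not valid as stated: conservativity of the forgetful functor to $\cat{C}$ does not imply that the space of symmetric monoidal functors commuting with the forgetful functors is empty or contractible. One can have $UF \simeq UG$ for $U$ conservative without $F \simeq G$, and ``determined on objects, with the rest forced'' is not a legitimate mode of $\infty$-categorical reasoning. The paper's proof pins down the comparison functor $U$ uniquely by a different mechanism: the symmetric monoidal structure on $\CAlg(\cat{C})$, and hence on $\cAlg(\CAlg(\cat{C}))$, is \emph{cocartesian}, and the universal property of cocartesian symmetric monoidal structures then gives uniqueness of the symmetric monoidal functor over $\cAlg(\cat{C})$. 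That $U$ is an equivalence is then verified via monadicity of the two forgetful functors and \cite[Corollary 4.7.3.16]{lurie--algebra}, reducing first to presentable $\cat{C}$ via the symmetric monoidal Yoneda embedding into $\PSh(\cat{C})$ with the Day convolution structure; the second equivalence is dual (cartesianness, comonadicity) and the third is a direct citation. An operadic route of the kind you sketch would at minimum require a framework genuinely accommodating mixed algebra/coalgebra structures, which the Boardman--Vogt product alone does not provide.
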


\begin{remark}
  \label{bi--df--swap-motivation}
  For motivation, we point out that \cref{bi--tn--calg} below invokes one of the identifications of \cref{bi--df--swap}.
\end{remark}

\begin{proof}[Proof of \cref{bi--df--swap}]
  The third equivalence is \cite[Corollary 3.3.4]{lurie--elliptic-i}.

  Let us address the first equivalence. The symmetric monoidal forgetful functor $\CAlg(\cat{C}) \to \cat{C}$ induces a symmetric monoidal functor $G : \cAlg(\CAlg(\cat{C})) \to \cAlg(\cat{C})$. We also have a symmetric monoidal forgetful functor $G' : \CAlg(\cAlg(\cat{C})) \to \cAlg(\cat{C})$. Since the symmetric monoidal structure on $\CAlg(\cat{C})$ is cocartesian \cite[Corollary 3.2.4.7]{lurie--algebra}, and the forgetful functor $\cAlg(\CAlg(\cat{C})) \to \CAlg(\cat{C})$ is symmetric monoidal and creates finite coproducts \cite[Corollary 3.2.2.4]{lurie--algebra}, the symmetric monoidal structure on $\cAlg(\CAlg(\cat{C}))$ is also cocartesian. It then follows from \cite[Theorem 2.4.3.18]{lurie--algebra} that there is a unique symmetric monoidal functor $U: \cAlg(\CAlg(\cat{C})) \to \CAlg(\cAlg(\cat{C}))$ equipped with an equivalence $G' \circ U \iso G$. We wish to show that $U$ is an equivalence.

  Let us first consider the case that $\cat{C}$ is presentable. Then the functors $G$ and $G'$ admit left adjoints, both given by formation of symmetric algebras in $\cat{C}$: for $G$, note that the symmetric algebra functor $\Sym_{\cat{C}} : \cat{C} \to \CAlg(\cat{C})$ obtains an oplax symmetric monoidal structure as the left adjoint to the symmetric monoidal forgetful functor $\CAlg(\cat{C}) \to \cat{C}$, and hence induces a functor $\cAlg(\cat{C}) \to \cAlg(\CAlg(\cat{C}))$ left adjoint to $G$; for $G'$, note that the forgetful functor $\cAlg(\cat{C}) \to \cat{C}$ creates colimits, so that $G'$ admits a left adjoint given by formation of symmetric algebras in $\cAlg(\cat{C})$, which agrees with formation of symmetric algebras in $\cat{C}$ after forgetting coalgebra structures. Moreover, $G$ and $G'$ are both monadic (see \cite[Example 4.7.3.11]{lurie--algebra}, and note once more that the forgetful functor $\cAlg(\cat{C}) \to \cat{C}$ creates colimits), so it follows from \cite[Corollary 4.7.3.16]{lurie--algebra} that $U$ is an equivalence.

  Suppose now that $\cat{C}$ is not presentable. Enlarging the universe if necessary, we may assume that $\cat{C}$ is small. Then we may consider the symmetric monoidal Yoneda embedding $\cat{C} \inj \PSh(\cat{C})$, where $\PSh(\cat{C})$ is equipped with the Day convolution symmetric monoidal structure (see \cite[Corollary 4.8.1.12]{lurie--algebra} or \cite[Proposition 2.14]{glasman--day}). As $\PSh(\cat{C})$ is a presentable symmetric monoidal $\infty$-category, the preceding argument gives us an equivalence $\cAlg(\CAlg(\PSh(\cat{C}))) \isoto \CAlg(\cAlg(\PSh(\cat{C})))$. By the evident naturality of the functor $U$ above, this restricts to the desired equivalence $\cAlg(\CAlg(\cat{C})) \isoto \CAlg(\cAlg(\cat{C}))$ upon pulling back from $\PSh(\cat{C})$ to $\cat{C}$.

  Establishing the second equivalence is similar, except that, for $\cat{C}$ presentable, we invoke cartesianness instead of cocartesianness, the right adjoints to the forgetful functors $\Alg(\cCAlg(\cat{C})) \to \Alg(\cat{C})$ and $\cCAlg(\Alg(\cat{C})) \to \Alg(\cat{C})$ (which exist by \cite[Corollary 3.1.5]{lurie--elliptic-i}) instead of left adjoints, and the dual comonadic form of \cite[Corollary 4.7.3.16]{lurie--algebra}.
\end{proof}

\begin{definition}
  \label{bg--bi--bi}
  We set $\bAlg_\comm(\cat{C}) := \CAlg(\cAlg(\cat{C}))$, and implicitly identify this with the $\infty$-category $\cAlg(\CAlg(\cat{C}))$ whenever convenient using the equivalence of \cref{bi--df--swap}. We refer to objects of $\bAlg_\comm(\cat{C})$ as \emph{commutative bialgebra objects of $\cat{C}$}. Similarly, we set $\bAlg^\comm(\cat{C}) := \cCAlg(\Alg(\cat{C}))$, which we implicitly identify with $\Alg(\cCAlg(\cat{C}))$, and whose objects we refer to as  \emph{cocommutative bialgebra objects of $\cat{C}$}; and we set $\bAlg^\comm_\comm(\cat{C}) := \CAlg(\cCAlg(\cat{C}))$, which we implicitly identify with $\cCAlg(\CAlg(\cat{C}))$, and whose objects we refer to as \emph{bicommutative bialgebra objects of $\cat{C}$} (see also \cite[\textsection 3.3]{lurie--elliptic-i} for this definition).
\end{definition}

\begin{example}
  \label{bi--df--monoid}
  Suppose that the symmetric monoidal structure on $\cat{C}$ is cartesian. Then the forgetful functor $\cCAlg(\cat{C}) \to \cat{C}$ is a (symmetric monoidal) equivalence (this is dual to \cite[Corollary 2.4.3.10]{lurie--algebra}), so the same is true of the forgetful functor $\Alg(\cCAlg(\cat{C})) \to \Alg(\cat{C})$. As $\Alg(\cat{C})$ is equivalent to the $\infty$-category of monoid objects in $\cat{C}$, we deduce that any monoid object of $\cat{C}$ canonically determines a cocommutative bialgebra object of $\cat{C}$.
\end{example}


\subsection{Tensor products}
\label{bi--tn}

In this subsection, we construct the canonical tensor product of modules over a cocommutative bialgebra, generalizing the tensor product of group representations, making precise the informal discussion from the beginning of the section. It will actually be more convenient to discuss the construction in the dual context of comodules over a commutative bialgebra.

Out of convenience, we will make most statements in this subsection in the setting of small $\infty$-categories; however, everything goes through relative to an arbitrary universe, so we will feel free to apply the results of this subsection to large $\infty$-categories. We regard $\Cat_\infty$ as a symmetric monoidal $\infty$-category via the cartesian symmetric monoidal structure. We identify small (symmetric) monoidal $\infty$-categories with (commutative) algebra objects of $\Cat_\infty$; furthermore, for $\cat{C}$ a small monoidal $\infty$-category, we identify small $\infty$-categories right tensored over $\cat{C}$ with right $\cat{C}$-modules in $\Cat_\infty$ \cite[Example 2.4.2.4 and Proposition 2.4.2.5]{lurie--algebra}. If $\cat{C}$ is a small symmetric monoidal $\infty$-category, then $\RMod_{\cat{C}}(\Cat_\infty)$ carries a canonical symmetric monoidal structure given by the relative tensor product construction, so that commutative algebra objects of $\RMod_{\cat{C}}(\Cat_\infty)$ may be identified with small symmetric monoidal $\infty$-categories $\cat{D}$ under $\cat{C}$ (see \cite[\textsection 3.4.1 and \textsection 4.5]{lurie--algebra}, and note that cartesian product in $\Cat_\infty$ commutes with colimits in each variable, as $\Cat_\infty$ is cartesian closed).

The following result encapsulates the main content of this subsection.

\begin{proposition}
  \label{bi--tn--main}
  Let $\cat{C}$ be a small symmetric monoidal $\infty$-category. Then the construction $A \mapsto \cLMod_A(\cat{C})$ canonically extends to a symmetric monoidal functor $\mu_{\cat{C}} : \cAlg(\cat{C}) \to \RMod_{\cat{C}}(\Cat_\infty)$.
\end{proposition}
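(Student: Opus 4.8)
The plan is to build $\mu_{\cat{C}}$ out of Lurie's functoriality of module $\infty$-categories together with a base-change formula for tensor products of module categories, after dualizing so as to trade comodules for modules. Since $\cat{C}$ is symmetric monoidal, $\RMod_{\cat{C}}(\Cat_\infty)$ carries its relative-tensor-product symmetric monoidal structure, with unit object $\cat{C}$ itself; so constructing $\mu_{\cat{C}}$ amounts to (i) exhibiting $\cLMod_A(\cat{C})$ as a right $\cat{C}$-module $\infty$-category, functorially in the coalgebra object $A$, and (ii) promoting the resulting functor to a symmetric monoidal one, i.e.\ supplying coherent equivalences $\cLMod_{\unit}(\cat{C}) \iso \cat{C}$ and $\cLMod_{A \otimes B}(\cat{C}) \iso \cLMod_A(\cat{C}) \otimes_{\cat{C}} \cLMod_B(\cat{C})$ compatible with the symmetry and associativity constraints.

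For (i): the forgetful functor $\mathrm{Mod}(\cat{C}^\op) \to \Alg(\cat{C}^\op)$ is a cartesian fibration whose cartesian edges encode restriction of scalars, and its operadic refinement (\cite[\S4.3, \S4.8.3]{lurie--algebra}) records that each fibre $\LMod_A(\cat{C}^\op)$ is canonically tensored over $\cat{C}^\op$, compatibly with restriction. This yields a functor $\Alg(\cat{C}^\op)^\op \to \RMod_{\cat{C}^\op}(\Cat_\infty)$; composing with the opposite-category involution of $\Cat_\infty$ (which is symmetric monoidal for the cartesian structure and carries $\RMod_{\cat{C}^\op}(\Cat_\infty)$ to $\RMod_{\cat{C}}(\Cat_\infty)$), and identifying $\Alg(\cat{C}^\op)^\op$ with $\cAlg(\cat{C})$ and $\LMod_A(\cat{C}^\op)^\op$ with $\cLMod_A(\cat{C})$, I obtain the underlying functor of $\mu_{\cat{C}}$.

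For (ii): the operadic $\mathrm{Mod}$-construction of \cite[\S4.8]{lurie--algebra}, transported through the opposite-category manipulations above, equips $\mu_{\cat{C}}$ with a lax symmetric monoidal structure whose structure maps $\cLMod_A(\cat{C}) \otimes_{\cat{C}} \cLMod_B(\cat{C}) \to \cLMod_{A \otimes B}(\cat{C})$ are induced by the external tensor product of comodules $(M, N) \mapsto M \otimes N$ (this is the ``tensor product of representations'' of the informal discussion at the start of the section). The substantive claim is that these maps are equivalences. I would verify this using two ingredients: first, a routine unwinding identifying $\cLMod_{A \otimes B}(\cat{C})$ with $\cLMod_B(\cLMod_A(\cat{C}))$, where $\cLMod_B$ of the $\cat{C}$-module $\cLMod_A(\cat{C})$ means comodules over the comonad $B \otimes {-}$ acting through the $\cat{C}$-action (a comodule over the tensor of two coalgebras is a comodule over the first carrying a compatible comodule structure over the second); and second, the base-change formula $\cat{D} \otimes_{\cat{C}} \cLMod_B(\cat{C}) \iso \cLMod_B(\cat{D})$ for an arbitrary right $\cat{C}$-module $\cat{D}$, applied with $\cat{D} = \cLMod_A(\cat{C})$.

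The main obstacle, and where the argument does genuine work, is this base-change equivalence together with the task of making the comparison coherent rather than merely pointwise: one cannot specify a symmetric monoidal functor by writing down finitely many equivalences, so the lax structure must genuinely be extracted from the operadic $\mathrm{Mod}$-machinery, and its strongness then reduces to the base-change formula. I expect the cleanest route to the latter is a two-sided bar-construction computation of the relative tensor product inside $\RMod_{\cat{C}}(\Cat_\infty)$---carried out in the module picture, where $\LMod_B(\cat{D}')$ is resolved by free modules, and which converges because $\Cat_\infty$ is cocomplete, so that no presentability hypothesis on $\cat{C}$ intervenes. It is worth noting that the familiar shortcut of reducing to the presentable case, used in the proof of \cref{bi--df--swap}, is not available here: comodule $\infty$-categories are opposites of module $\infty$-categories over the typically non-presentable category $\cat{C}^\op$, so the clean presentable-case results on $\mathrm{Mod}$ do not apply to them, and one is forced to work with the operadic formalism valid for an arbitrary small symmetric monoidal $\cat{C}$.
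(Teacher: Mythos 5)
Your overall target is right, and you have correctly identified that the symmetric monoidal structure on $\RMod_{\cat{C}}(\Cat_\infty)$ is the relative tensor product one with unit $\cat{C}$. But your route diverges from the paper's in a way that leaves a real gap, and it is worth understanding why the paper's route is the one that closes it.

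The paper never constructs $\mu_{\cat{C}}$ ``by hand'' and then checks a lax structure upgrades to a strong one. Instead it works entirely in the total spaces of the two cocartesian fibrations $\Cat_\infty^{\cAlg} \to \Alg(\Cat_\infty)$ and $\Cat_\infty^{\RMod} \to \Alg(\Cat_\infty)$ (\cref{bi--tn--fib}). Both have finite products (\cref{bi--tn--fib-cartesian}), and the map $\mu$ of \cref{bi--tn--key} visibly preserves them. Viewing $\cat{C}$ as a commutative algebra object of $\Alg(\Cat_\infty)$, one restricts the finite-product-preserving $\mu$ to the fibers over $\cat{C}$; the finite-product structure of the total space induces symmetric monoidal structures on those fibers, which are identified with the usual ones on $\cAlg(\cat{C})$ and on $\RMod_{\cat{C}}(\Cat_\infty)$ (the latter being the relative tensor product structure) by \cite[Remark 4.8.5.19]{lurie--algebra}. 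Thus the entire symmetric monoidal structure on $\mu_{\cat{C}}$, strongness included, is read off from a finite-product-preservation statement upstairs; in particular, the equivalence $\cLMod_A(\cat{C}) \otimes_{\cat{C}} \cLMod_B(\cat{C}) \iso \cLMod_{A\otimes B}(\cat{C})$ emerges as a \emph{consequence} of the proposition rather than as an input.

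Your plan runs in the opposite logical direction: it treats the base-change equivalence $\cat{D} \otimes_{\cat{C}} \cLMod_B(\cat{C}) \iso \cLMod_B(\cat{D})$ (with $\cat{D} = \cLMod_A(\cat{C})$) as a lemma to be proved first, by a bar-construction computation, and derives strongness from it. This is the step I don't believe. Lurie's base-change theorem \cite[Theorem 4.8.4.6]{lurie--algebra} is a statement in $\PrL$, and its proof makes essential use of presentability; it is not a formal bar-construction computation. You acknowledge that the reduction to the presentable case isn't available because $\cat{C}^\op$ is not presentable, but the bar argument you substitute is not available either: the bar construction computes the relative tensor product as a geometric realization (a colimit in $\Cat_\infty$), while $\LMod_{B}(\cat{D}')$ is characterized by a monadicity/Eilenberg--Moore-type (limit-flavored) universal property, and there is no general principle that the realization of $\Bar(\cat{D}',\cat{C}',\LMod_{B}(\cat{C}'))$ lands on $\LMod_B(\cat{D}')$ absent further hypotheses. (Also, ``$\LMod_B(\cat{D}')$ is resolved by free modules'' is a statement about objects of $\LMod_B(\cat{D}')$, not about that $\infty$-category as an object of $\RMod_{\cat{C}'}(\Cat_\infty)$; what you actually need is a simplicial resolution of the right $\cat{C}'$-module $\LMod_{B}(\cat{C}')$ whose realization you can identify, and you don't produce one.) So the ``main obstacle'' you flag at the end really is an unfilled gap, not a routine verification: the cleanest available proof of your base-change formula in the small setting is to first establish the proposition via the fibered argument and then read it off, which is circular for your plan. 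You should switch to the paper's strategy: package the construction $(\cat{C},A) \mapsto (\cat{C},\cLMod_A(\cat{C}))$ as a product-preserving map over $\Alg(\Cat_\infty)$, and get everything at once on the fiber over $\cat{C}$.
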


Before proving this, let us explain how it supplies the symmetric monoidal structures we are looking for.

\begin{construction}
  \label{bi--tn--main-cor}
  Let $\cat{C}$ be a small symmetric monoidal $\infty$-category and let $A$ be a commutative bialgebra object of $\cat{C}$. Using \cref{bi--tn--main}, we obtain a symmetric monoidal structure on the $\infty$-category $\cLMod_A(\cat{C})$, together with one on the forgetful functor $\cLMod_A(\cat{C}) \to \cat{C}$, as follows.

  Note that $\cAlg(\cat{C})$ admits a final object, namely the trivial coalgebra $\unit$, with $\cLMod_\unit(\cat{C}) \iso \cat{C}$. It follows that the symmetric monoidal functor $\mu_{\cat{C}}$ of \cref{bi--tn--main} factors canonically through a symmetric monoidal functor $\mu'_{\cat{C}} : \cAlg(\cat{C}) \to \RMod_{\cat{C}}(\Cat_\infty)_{/\cat{C}}$. This induces a functor on commutative algebra objects
  \[
    \bAlg_\comm(\cat{C}) \iso \CAlg(\cAlg(\cat{C})) \to \CAlg(\RMod_{\cat{C}}(\Cat_\infty)_{/\cat{C}}) \iso \CAlg(\Cat_\infty)_{\cat{C}//\cat{C}}. \qedhere
  \]
  Applying this functor to $A \in \bAlg_\comm(\cat{C})$ gives the desired symmetric monoidal functor $\cLMod_A(\cat{C}) \to \cat{C}$.
\end{construction}

\begin{variant}
  \label{bi--tn--main-cor-op}
  Let $\cat{C}$ be a small symmetric monoidal $\infty$-category and let $A$ be a cocommutative bialgebra object of $\cat{C}$. Applying \cref{bi--tn--main-cor} with $\cat{C}^\op$ in place of $\cat{C}$, we obtain a symmetric monoidal structure on the $\infty$-category $\LMod_A(\cat{C})$, together with one on the forgetful functor $\LMod_A(\cat{C}) \to \cat{C}$.
\end{variant}

We now move on to the proof of \cref{bi--tn--main}. The key constructions needed for this are very similar to those of \cite[\S\S4.8.3--4.8.5]{lurie--algebra}; we will omit details here.

\begin{notation}
  \label{bi--tn--fib}
  The construction $\cat{C} \mapsto \cAlg(\cat{C})$ determines a functor $\cAlg : \Alg(\Cat_\infty) \to \Cat_\infty$ (on maps, this sends a monoidal functor $\cat{C} \to \cat{D}$ to the induced functor $\cAlg(\cat{C}) \to \cAlg(\cat{D})$), classified by a cocartesian fibration that we denote by $\Cat_\infty^{\cAlg} \to \Alg(\Cat_\infty)$. We identify objects of $\Cat_\infty^{\cAlg}$ with pairs $(\cat{C},A)$ where $\cat{C}$ is a small monoidal $\infty$-category and $A$ is a coalgebra object of $\cat{C}$
  
  For the sake of consistent notation, let $\Cat_\infty^\RMod$ denote the $\infty$-category $\RMod(\Cat_\infty)$ of pairs $(\cat{C},\cat{M})$ where $\cat{C}$ is a small monoidal $\infty$-category and $\cat{M}$ is a small $\infty$-category right tensored over $\cat{C}$. We have a cocartesian fibration $\Cat_\infty^\RMod \to \Alg(\Cat_\infty)$ sending $(\cat{C},\cat{M}) \mapsto \cat{C}$. 
\end{notation}

\begin{remark}
  \label{bi--tn--fib-cartesian}
  The $\infty$-categories $\Cat_\infty^{\cAlg}$ and $\Cat_\infty^\RMod$ admit finite products, given by the formulas
  \[
    \textstyle{\prod_{i \in I} (\cat{C}_i,A_i) \iso (\prod_{i \in I} \cat{C}_i, \{A_i\}_{i \in I})}
    \quad\text{and}\quad
    \textstyle{\prod_{i \in I} (\cat{C}_i,\cat{M}_i) \iso (\prod_{i \in I} \cat{C}_i, \prod_{i \in I} \cat{M}_i)}
  \]
  respectively. We regard $\Cat_\infty^{\cAlg}$ and $\Cat_\infty^\RMod$ as symmetric monoidal $\infty$-categories via the cartesian symmetric monoidal structures.
\end{remark}

\begin{construction}
  \label{bi--tn--key}
  Given a monoidal $\infty$-category $\cat{C}$ and a coalgebra object $A$ of $\cat{C}$, the $\infty$-category $\cLMod_A(\cat{C})$ is canonically right tensored over $\cat{C}$ (the dual construction for modules is explained in \cite[\textsection 4.3.2]{lurie--algebra}). Moreover, this construction canonically extends to a functor $\mu : \Cat_\infty^{\cAlg} \to \Cat_\infty^\RMod$ over $\Alg(\Cat_\infty)$ sending $(\cat{C},A) \mapsto (\cat{C},\cLMod_A(\cat{C}))$. The functor $\mu$ evidently preserves finite products, and we thus regard it as a symmetric monoidal functor.
\end{construction}

\begin{proof}[Proof of \cref{bi--tn--main}]
  The small symmetric monoidal $\infty$-category $\cat{C}$ defines a commutative algebra object of $\Cat_\infty$. Consider the span of functors
  \[
    \CAlg(\Cat_\infty) \from \CAlg(\CAlg(\Cat_\infty)) \to \CAlg(\Alg(\Cat_\infty)),
  \]
  induced by the forgetful functors $\CAlg(\Cat_\infty) \to \Alg(\Cat_\infty) \to \Cat_\infty$. By \cite[Example 3.2.4.5]{lurie--algebra}, the left arrow in this span is an equivalence. Hence, $\cat{C}$ may also be regarded as a commutative algebra object of $\Alg(\Cat_\infty)$. This determines symmetric monoidal structures on the fibers
  \[
    \Cat_\infty^{\cAlg} \times_{\Alg(\Cat_\infty)} \{\cat{C}\} \iso \cAlg(\cat{C})
    \quad\text{and}\quad
    \Cat_\infty^\RMod \times_{\Alg(\Cat_\infty)} \{\cat{C}\} \iso \RMod_\cat{C}(\Cat_\infty),
  \]
  which identify canonically with the usual ones (cf. \cite[Remark 4.8.5.19]{lurie--algebra}). Thus, the symmetric monoidal functor $\mu$ of \cref{bi--tn--key} restricts to the desired symmetric monoidal functor $\mu_{\cat{C}} : \cAlg(\cat{C}) \to \RMod_\cat{C}(\Cat_\infty)$, with $\mu_{\cat{C}}(A) \iso \cLMod_A(\cat{C})$.
\end{proof}

\begin{remark}
  \label{bi--tn--functorial}
  Let $\cat{C}$ be a small symmetric monoidal $\infty$-category. We note the following functoriality properties of \cref{bi--tn--main-cor}:
  \begin{enumerate}
  \item \label{bi--tn--functorial--algebra} Let $A \to A'$ be a map of commutative bialgebras in $\cat{C}$. Then there is a canonical symmetric monoidal structure on the corestriction functor $\cLMod_A(\cat{C}) \to \cLMod_{A'}(\cat{C})$, compatible with the symmetric monoidal forgetful functors to $\cat{C}$. This is visible from the construction.
  \item \label{bi--tn--functorial--category} Let $\cat{D}$ be another small symmetric monoidal $\infty$-category and $F : \cat{C} \to \cat{D}$ a symmetric monoidal functor. Let $A$ be a commutative bialgebra in $\cat{C}$. Then there is an induced commutative bialgebra structure on $F(A)$, and the functor $F' : \cLMod_A(\cat{C}) \to \cLMod_{F(A)}(\cat{D})$ determined by $F$ is canonically symmetric monoidal. This follows from the proof of \cref{bi--tn--main}, more precisely the functoriality of $\mu$.
  \end{enumerate}
  Analogous statements hold in the dual situation of \cref{bi--tn--main-cor-op}.
\end{remark}

\begin{remark}
  \label{bi--tn--presentable}
  Let $\cat{C}$ be a presentable symmetric monoidal $\infty$-category and let $A$ be a cocommutative bialgebra object of $\cat{C}$. Applying the above constructions in a larger universe, we obtain a symmetric monoidal structure on $\LMod_A(\cat{C})$. We note that this symmetric monoidal structure is a presentable one: that is, the tensor product commutes with colimit separately in each variable. This follows from the fact that the symmetric monoidal forgetful functor $\LMod_A(\cat{C}) \to \cat{C}$ preserves colimits. (The same can be said if we replace $A$ by a commutative bialgebra object of $\cat{C}$ and $\LMod_A(\cat{C})$ with $\cLMod_A(\cat{C})$.)

  In addition, if $F : \cat{C} \to \cat{D}$ is a morphism in $\CAlg(\PrL)$, then the symmetric monoidal functor $F' : \LMod_A(\cat{C}) \to \LMod_{F(A)}(\cat{D})$ of \cref{bi--tn--functorial}\cref{bi--tn--functorial--category} induces an map $\cat{D} \otimes_\cat{C} \LMod_A(\cat{C}) \to \LMod_{F(A)}(\cat{D})$ in $\CAlg(\PrL)$, which is an equivalence by \cite[Theorem 4.8.4.6]{lurie--algebra}.
\end{remark}

\begin{example}
  \label{bi--tn--rep}
  Let $G$ be a group object of $\Spc$; by \cref{bi--df--monoid}, we may regard $G$ as a cocommutative bialgebra object of $\Spc$. Let $\cat{C}$ be a presentable symmetric monoidal $\infty$-category. Then we have a unique colimit-preserving symmetric monoidal functor $F : \Spc \to \cat{C}$, giving us a cocommutative bialgebra $F(G)$ in $\cat{C}$. From \cref{bi--tn--main-cor-op}, we obtain a symmetric monoidal structure on $\LMod_{F(G)}(\cat{C})$. We claim that there is a canonical symmetric monoidal equivalence between this and $\Fun(\clspc G, \cat{C})$ (with the pointwise tensor product, and where $\clspc G$ denotes the classifying space of $G$).

  To see this, first observe that we have canonical equivalences $\LMod_{F(G)}(\cat{C}) \iso \cat{C} \otimes \LMod_G(\Spc)$ and $\Fun(\clspc G,\cat{C}) \iso \cat{C} \otimes \Fun(\clspc G,\Spc)$ in $\CAlg(\PrL)$ (the former as mentioned in \cref{bi--tn--presentable} and the latter following from \cite[Proposition 4.8.1.17]{lurie--algebra}). It therefore suffices to treat the case $\cat{C} = \Spc$. In this case, the symmetric monoidal structures on $\LMod_G(\Spc)$ and $\Fun(\clspc G,\Spc)$ are cartesian, so the standard equivalence $\LMod_G(\Spc) \iso \Fun(\clspc G,\Spc)$ promotes uniquely to a symmetric monoidal equivalence.

  For completeness, let us review one description of the comparison functor $\LMod_G(\Spc) \to \Fun(\clspc G, \Spc)$. It is given by the composition
  \[
    \LMod_G(\Spc) \to \Spc_{/\clspc G} \to \Fun(\clspc G, \Spc),
  \]
  where the first functor is given by relative product (in the sense of \cite[\textsection 4.4]{lurie--algebra}) with the final object, i.e. $- \times_G *$ (together with the observation that this sends the final object $*$ to the bar construction/classifying space $\clspc G$), and the second functor is the straightening equivalence.
\end{example}

We close this subsection with an alternative characterization of commutative algebra objects for the symmetric monoidal structures constructed above.

\begin{proposition}
  \label{bi--tn--calg}
  In the situation of \cref{bi--tn--main-cor}, there is a canonical equivalence of $\infty$-categories
  \[
    \cLMod_A(\CAlg(\cat{C})) \iso \CAlg(\cLMod_A(\cat{C}))
  \]
  commuting with the forgetful functors to $\cat{C}$. Here, on the left-hand side, we regard the commutative bialgebra $A$ as a coalgebra object of $\CAlg(\cat{C})$, while on the right-hand side, we regard it as a commutative algebra object of $\cAlg(\cat{C})$ (as was done in \cref{bi--tn--main-cor}).
\end{proposition}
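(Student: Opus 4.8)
The plan is to exhibit $\cLMod_A(\cat C)$, carrying the symmetric monoidal structure of \cref{bi--tn--main-cor}, as an $\infty$-category of comodules over a comonad on $\cat C$ built from $A$, and then to apply $\CAlg(-)$ to the resulting comonadic adjunction. Write $U\colon\cLMod_A(\cat C)\to\cat C$ for the symmetric monoidal forgetful functor. Using only the coalgebra structure on $A$ one has its right adjoint, the cofree-comodule functor $R\colon X\mapsto A\otimes X$, and $U$ is comonadic (as always for comodules over a coalgebra, the relevant split totalizations being absolute after applying $U$); hence $\cLMod_A(\cat C)\iso\cMod_\Theta(\cat C)$, where $\Theta\ce UR$ has underlying endofunctor $A\otimes(-)$ and comonad structure induced by the comultiplication and counit of $A$. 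As the right adjoint of a symmetric monoidal functor, $R$ is canonically lax symmetric monoidal; I claim (and will return to this point) that its lax structure maps are the natural maps $(A\otimes X)\otimes(A\otimes Y)\iso(A\otimes A)\otimes(X\otimes Y)\to A\otimes(X\otimes Y)$ induced by the \emph{multiplication} of the commutative bialgebra $A$. Finally, the comultiplication $A\to A\otimes A$ and counit $A\to\unit$ are maps of commutative algebras — this is precisely the compatibility between the algebra and coalgebra structures on $A$, i.e. the content of \cref{bi--df--swap} — so the comonad structure maps of $\Theta$ are symmetric monoidal natural transformations; in other words, $\Theta$ is a comonad in the monoidal $\infty$-category of lax symmetric monoidal endofunctors of $\cat C$.

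Granting the above, the result follows formally. Applying $\CAlg(-)$ to the comonadic adjunction $U\dashv R$ (lax symmetric monoidal functors induce functors on commutative algebras) produces an adjunction $\CAlg(U)\colon\CAlg(\cLMod_A(\cat C))\rightleftarrows\CAlg(\cat C)\colon\CAlg(R)$, with associated comonad $\CAlg(\Theta)$. The underlying endofunctor of $\CAlg(\Theta)$ sends a commutative algebra $R'$ to the tensor product $A\otimes R'$ formed in $\CAlg(\cat C)$, and its comonad structure is that induced by the coalgebra structure of $A$; that is, $\CAlg(\Theta)$ is exactly the comonad $A\otimes(-)$ attached to $A$ regarded, via \cref{bi--df--swap}, as a coalgebra object of $\CAlg(\cat C)$. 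Now $\CAlg(U)$ is conservative (since $U$ and the forgetful functors from categories of commutative algebras are) and comonadic: a $\CAlg(U)$-split cosimplicial object becomes split after passing to underlying objects in $\cat C$, so by comonadicity of $U$ its totalization exists in $\cLMod_A(\cat C)$ and is preserved by $U$, whence — the forgetful functors from commutative algebras creating limits — its totalization exists in $\CAlg(\cLMod_A(\cat C))$ and is preserved by $\CAlg(U)$. The dual Barr--Beck--Lurie theorem therefore gives
\[
  \CAlg(\cLMod_A(\cat C))\iso\cMod_{\CAlg(\Theta)}(\CAlg(\cat C))\iso\cLMod_A(\CAlg(\cat C)),
\]
the last equivalence once more being the identification of comodules over the comonad $A\otimes(-)$ with comodules over the coalgebra $A$, now inside $\CAlg(\cat C)$. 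Chasing the forgetful functors through both uses of Barr--Beck--Lurie shows that this composite commutes with the forgetful functors down to $\cat C$, as required.

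The verification that $\Delta_A$ and $\epsilon_A$ being algebra maps makes the comonad structure maps of $\Theta$ symmetric monoidal is routine. The main obstacle is the deferred claim that the symmetric monoidal structure produced abstractly by \cref{bi--tn--main-cor} is the one for which $R$ is lax symmetric monoidal via the multiplication of $A$ — equivalently, that it agrees with the symmetric monoidal structure one would assemble ``by hand'' from the bialgebra. This is a matter of unwinding the construction of \cref{bi--tn--main}: there the tensor product of comodules $M$ and $N$ is obtained by transporting the external product along the equivalence $\cLMod_A(\cat C)\otimes_{\cat C}\cLMod_A(\cat C)\iso\cLMod_{A\otimes A}(\cat C)$ and then corestricting along the algebra structure map $A\otimes A\to A$ of the bialgebra. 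Tracing this through identifies the coaction on $M\otimes N$ with the composite $M\otimes N\to(A\otimes M)\otimes(A\otimes N)\iso(A\otimes A)\otimes(M\otimes N)\to A\otimes(M\otimes N)$ built from the coactions on $M$ and $N$ and the multiplication of $A$; this is exactly the tensor product in $\cMod_\Theta(\cat C)$, and it exhibits $R$ as lax symmetric monoidal in the asserted way.
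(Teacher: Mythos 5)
Your proof is correct in spirit and arrives at the right conclusion, but you take a genuinely different route from the paper, and the difference is worth noting. The paper's proof works on the ``monad side'': having the comparison functor $\alpha : \cLMod_A(\CAlg(\cat{C})) \to \cLMod_A(\cat{C})$ in hand (from functoriality in the base category, using that $A$ lifts to a commutative bialgebra object of $\CAlg(\cat{C})$ via the equivalence $\CAlg(\CAlg(\cat{C})) \iso \CAlg(\cat{C})$), it factors $\alpha$ through $\CAlg(\cLMod_A(\cat{C}))$ using cocartesianness, and then verifies the resulting horizontal functor is an equivalence by observing that both diagonal forgetful functors to $\cLMod_A(\cat{C})$ are \emph{monadic} with the same monad (the free symmetric algebra), invoking \cite[Corollary 4.7.3.16]{lurie--algebra}. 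You instead work on the ``comonad side'': you exhibit $\cLMod_A(\cat{C})$ as comodules over the lax symmetric monoidal comonad $A \otimes (-)$ on $\cat{C}$, and deduce the statement by applying $\CAlg(-)$ to the comonadic adjunction and comparing with the comonad $A \otimes (-)$ on $\CAlg(\cat{C})$, via the dual Barr--Beck--Lurie theorem. Both arguments rest on the same underlying input (that the comultiplication and counit of a commutative bialgebra are algebra maps, i.e. \cref{bi--df--swap}) and the same abstract machinery, but the paper's version has a practical advantage: it never needs to identify the symmetric monoidal structure on $\cLMod_A(\cat{C})$ produced by \cref{bi--tn--main-cor} with the ``hands-on'' one coming from the bialgebra multiplication. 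That identification is exactly the step you defer to your last paragraph, and while your sketch there is plausible, it is the hardest part to make rigorous $\infty$-categorically: one really must trace the cocartesian-fibration construction of \cref{bi--tn--key} and confirm the coaction on $M \otimes N$ is the claimed composite, which is itself about as much work as the paper's direct verification that the two relevant monads coincide. The paper's proof sidesteps this because the two symmetric algebra monads it compares are both ``free'' left adjoints, and agreement of free left adjoints along a forgetful functor is easier to check than agreement of a lax symmetric monoidal comonad structure. So: a valid alternative, trading one verification (monads agree) for another (comonad lifts laxly and agrees with the abstract structure), with the paper's version being the less delicate of the two.
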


\begin{proof}
  Using the fact that the forgetful functor $\CAlg(\CAlg(\cat{C})) \to \CAlg(\cat{C})$ is an equivalence, we may regard $A$ as a commutative bialgebra object of $\CAlg(\cat{C})$ (which is carried to the original commutative bialgebra object $A$ of $\cat{C}$ under the forgetful functor $\CAlg(\cat{C}) \to \cat{C}$). By \cref{bi--tn--functorial}\cref{bi--tn--functorial--category}, we have a canonical symmetric monoidal functor $\alpha : \cLMod_A(\CAlg(\cat{C})) \to \cLMod_A(\cat{C})$. The symmetric monoidal structure on $\CAlg(\cat{C})$ is cocartesian, from which we deduce that the same is true of $\cLMod_A(\CAlg(\cat{C}))$  (as the forgetful functor $\cLMod_A(\CAlg(\cat{C})) \to \CAlg(\cat{C})$ is symmetric monoidal and preserves coproducts). It follows that the functor $\alpha$ factors uniquely through the forgetful functor $\CAlg(\cLMod_A(\cat{C}))$, giving us a commutative diagram of $\infty$-categories
  \[
    \begin{tikzcd}
      \cLMod_A(\CAlg(\cat{C})) \ar[rr] \ar[rd] &
      &
      \CAlg(\cLMod_A(\cat{C})) \ar[ld] \\
      &
      \cLMod_A(\cat{C}).
    \end{tikzcd}
  \]

  We wish to show that the horizontal arrow in the above diagram is an equivalence. For this we can argue as in the proof of \cref{bi--df--swap}. Firstly, we may pass to the case that $\cat{C}$ is a presentable symmetric monoidal $\infty$-category. Then, the two diagonal arrows in the diagram admit left adjoints, both given by the formation of symmetric algebras in $\cat{C}$, and moreover the diagonal arrows are monadic. Finally, it follows from \cite[Corollary 4.7.3.16]{lurie--algebra} that the horizontal arrow is an equivalence.
\end{proof}


\subsection{Duality}
\label{bi--du}

Let $\cat{C}$ be a symmetric monoidal $\infty$-category and let $\unit \in \cat{C}$ denote the unit object. Recall that an object $X \in \cat{C}$ is called \emph{dualizable} if there exists an object $\dual{X} \in \cat{C}$ and maps $\epsilon : X \otimes \dual{X} \to \unit$ and $\eta : \dual{X} \otimes X \to \unit$ such that the compositions
\[
  X \iso \unit \otimes X \lblto{\eta \otimes \id} X \otimes \dual{X} \otimes X \lblto{\id \otimes \epsilon} X \otimes \unit \iso X,
\]
\[
  \dual{X} \iso \dual{X} \otimes \unit \lblto{\id \otimes \eta} \dual{X} \otimes X \otimes \dual{X} \lblto{\epsilon \otimes \id} \unit \otimes \dual{X} \iso \dual{X}
\]
are homotopic to the identity. If $X$ is dualizable, the object $\dual{X}$ is determined up to equivalence and referred to as the \emph{dual} of $X$.

Our goal in this subsection is to understand the behavior of bialgebra structures under dualization. Similar to the previous subsection, we shall work under the assumption that $\cat{C}$ is small, but this restriction is only apparent.

\begin{notation}
  \label{bi--du--fd}
  We let $\cat{C}_\fd$ denote the full subcategory of $\cat{C}$ spanned by the dualizable objects; it contains $\unit$ and is closed under tensor products, hence inherits a symmetric monoidal structure from $\cat{C}$. Dualization then determines a functor $\dual{(-)} : \cat{C}_\fd \to \cat{C}_\fd^\op$.
\end{notation}

\begin{proposition}
  \label{bi--du--main}
  There exists a symmetric monoidal functor
  \[
    \beta : \cAlg(\cat{C}_\fd) \to \Alg(\cat{C}_\fd)^\op \iso \cAlg(\cat{C}_\fd^\op)
  \]
  uniquely determined by the commuting of the following diagram of symmetric monoidal $\infty$-categories:
  \[
    \begin{tikzcd}
      \cAlg(\cat{C}_\fd) \ar[r, hook] \ar[d, "\beta"] &
      \cAlg(\cat{C}) \ar[r, "\mu'_{\cat{C}}"] &
      \RMod_\cat{C}(\Cat_\infty)_{/\cat{C}} \ar[d, "(-)^\op"] \\
      \cAlg(\cat{C}_\fd^\op) \ar[r, hook] &
      \cAlg(\cat{C}_\fd^\op) \ar[r, "\mu'_{\cat{C}^\op}"] &
      \RMod_{\cat{C}^\op}(\Cat_\infty)_{/\cat{C}^\op}
    \end{tikzcd}
  \]
  (where the symmetric monoidal functors $\mu'_{(-)}$ are as in \cref{bi--tn--main-cor}). Moreover, the diagram of $\infty$-categories
  \[
    \begin{tikzcd}
      \cAlg(\cat{C}_\fd) \ar[r, "\beta"] \ar[d] &
      \cAlg(\cat{C}_\fd^\op) \ar[d] \\
      \cat{C}_\fd \ar[r, "\dual{(-)}"] &
      \cat{C}_\fd^\op
    \end{tikzcd}
  \]
  canonically commutes (where the vertical arrows are the forgetful functors).
\end{proposition}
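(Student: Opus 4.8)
The plan is to produce $\beta$ by factoring a symmetric monoidal composite through $\mu'_{\cat{C}^\op}$. Write $\Phi$ for the symmetric monoidal functor given by the top-then-right composite in the first diagram, i.e.\ $\Phi = (-)^\op \circ \mu'_{\cat{C}} \circ \iota$ with $\iota : \cAlg(\cat{C}_\fd) \inj \cAlg(\cat{C})$; here we use that $(-)^\op$ is a symmetric monoidal equivalence $\RMod_{\cat{C}}(\Cat_\infty)_{/\cat{C}} \iso \RMod_{\cat{C}^\op}(\Cat_\infty)_{/\cat{C}^\op}$, opposition commuting with the relative tensor product of $\infty$-categories. Write $\Psi = \mu'_{\cat{C}^\op} \circ \iota'$ with $\iota' : \cAlg(\cat{C}_\fd^\op) \inj \cAlg(\cat{C}^\op)$; producing $\beta$ amounts to factoring $\Phi$ through $\Psi$ as symmetric monoidal functors. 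The essential input is a \emph{duality equivalence}: for any symmetric monoidal $\infty$-category $\cat{D}$ and dualizable coalgebra object $A$ of $\cat{D}$, there is a canonical equivalence of right-$\cat{D}$-module $\infty$-categories $\cLMod_A(\cat{D}) \iso \LMod_{\dual{A}}(\cat{D})$ over $\cat{D}$. Indeed, $\cLMod_A(\cat{D})$ identifies with comodules over the comonad $A \otimes (-)$ on $\cat{D}$; since $A$ is dualizable (with $\dual{\dual{A}} \iso A$), the endofunctor $\dual{A} \otimes (-)$ is left adjoint to $A \otimes (-)$, so it carries a monad structure (the mate of the comonad structure), which is precisely that of the dual algebra $\dual{A}$ because the comultiplication and counit of $A$ dualize to the multiplication and unit of $\dual{A}$; and the standard identification of modules over a monad with comodules over the comonad right adjoint to it yields $\LMod_{\dual{A}}(\cat{D}) \iso \cLMod_A(\cat{D})$, visibly over $\cat{D}$ and $\cat{D}$-linearly since the comonad, monad, and adjunction are all right-$\cat{D}$-linear.

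Granting this, first note that the essential image of $\Phi$ lies in that of $\Psi$: for $A \in \cAlg(\cat{C}_\fd)$ we have $\Phi(A) \iso \cLMod_A(\cat{C})^\op = \LMod_A(\cat{C}^\op)$ directly from the definition of $\cLMod$, while the duality equivalence in $\cat{C}$ gives $\cLMod_A(\cat{C}) \iso \LMod_{\dual{A}}(\cat{C})$ over $\cat{C}$; combining, $\Phi(A) \iso \LMod_{\dual{A}}(\cat{C})^\op = \cLMod_{\dual{A}}(\cat{C}^\op) = \Psi(\dual{A})$, with $\dual{A}$---a dualizable algebra of $\cat{C}$, equivalently a dualizable coalgebra of $\cat{C}^\op$---lying in $\cAlg(\cat{C}_\fd^\op)$. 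Second, $\Psi$ is fully faithful. From the definition of $\cLMod$, $\Psi$ is $(-)^\op$ applied to the functor $C \mapsto (\LMod_C(\cat{C}) \to \cat{C})$ on dualizable algebras $C$ of $\cat{C}$, so it suffices that $C \mapsto (\LMod_C(\cat{C}) \to \cat{C})$ is a fully faithful functor $\Alg(\cat{C}_\fd) \to \RMod_{\cat{C}}(\Cat_\infty)_{/\cat{C}}$. By the Barr--Beck--Lurie theorem the forgetful functor $\LMod_C(\cat{C}) \to \cat{C}$ is monadic with monad $C \otimes (-)$, and a right-$\cat{C}$-linear functor $\LMod_C(\cat{C}) \to \LMod_{C'}(\cat{C})$ over $\cat{C}$ corresponds to a right-$\cat{C}$-linear map of monads $C \otimes (-) \to C' \otimes (-)$; such a natural transformation is determined by its value $C \to C'$ at $\unit$, and compatibility with the monad structures is exactly the requirement that $C \to C'$ be a map of algebras---so the functor is fully faithful.

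Now $\Psi$ being fully faithful and the essential image of $\Phi$ lying inside that of $\Psi$, the functor $\Phi$ factors uniquely (as a functor) through $\Psi$, say $\Phi \iso \Psi \circ \beta$; since $\Psi$ is a fully faithful symmetric monoidal functor, $\beta$ inherits a symmetric monoidal structure, uniquely. This is the asserted $\beta$, characterized by the commuting of the first diagram. For the second diagram: the identification $\Phi(A) \iso \Psi(\dual{A})$ exhibits the underlying object of $\beta(A)$ as $\dual{A}$, and the forgetful functors $\cAlg(\cat{C}_\fd) \to \cat{C}_\fd$ and $\cAlg(\cat{C}_\fd^\op) \to \cat{C}_\fd^\op$ are recovered from $\mu'_{\cat{C}}$ and $\mu'_{\cat{C}^\op}$ (as the underlying object of the associated (co)monad evaluated at $\unit$, compatibly with \cref{bi--tn--main-cor}); chasing this through the factorization yields the commutativity of the second diagram, which on objects is the evident equivalence $\dual{(-)} \circ (\text{forget}) \iso (\text{forget}) \circ \beta$.

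The step I expect to be the main obstacle is the duality equivalence---specifically, verifying that transposing coactions to actions gives an equivalence of right-$\cat{D}$-module $\infty$-categories over $\cat{D}$ (not merely of underlying $\infty$-categories), with the monad structure on $\dual{A} \otimes (-)$ correctly matching the dual algebra. I would organize this by passing to the monoidal $\infty$-category of right-$\cat{D}$-linear endofunctors of $\cat{D}$, in which the comonad $A \otimes (-)$, the monad $\dual{A} \otimes (-)$, and the adjunction between them all live, and invoking the general (co)monadic comparison there; note that only the pointwise equivalence (for each fixed $A$) is actually needed, since the coherent naturality in $A$ and the symmetric monoidal structure are then supplied automatically by the factorization through the fully faithful $\Psi$.
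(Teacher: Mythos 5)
Your overall strategy is the same as the paper's: factor the composite $\Phi = (-)^\op \circ \mu'_{\cat C} \circ \iota$ through $\Psi = \mu'_{\cat C^\op} \circ \iota'$, using full faithfulness of $\Psi$ for uniqueness and an image computation for existence. But the two places where you pay the bill are handled rather differently, and one of them is a genuine gap as written.

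First, your argument that $\Psi$ is fully faithful asserts that ``a right-$\cat{C}$-linear functor $\LMod_C(\cat{C}) \to \LMod_{C'}(\cat{C})$ over $\cat{C}$ corresponds to a right-$\cat{C}$-linear map of monads $C\otimes(-) \to C'\otimes(-)$, determined by its value at $\unit$.'' This is too quick: a $\cat{C}$-linear functor over $\cat{C}$ between monadic $\infty$-categories does not automatically induce or arise from a map of the associated monads; one needs to package this via the (co)endomorphism-object universal property (\cref{bi--du--linear-comonad} in the paper), which is exactly what produces the bijection between coalgebra maps and morphisms in $\RMod_{\cat{C}}(\Cat_\infty)_{/\cat{C}}$. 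The paper proves full faithfulness of $\mu'_{\cat C}$ on \emph{all} of $\cAlg(\cat C)$ in \cref{bi--du--embedding} by exhibiting a left adjoint built from this universal property and checking the unit/counit; your version needs the same input but does not supply it.

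Second, you aim to prove the ``duality equivalence'' $\cLMod_A(\cat{D}) \iso \LMod_{\dual A}(\cat{D})$ directly, by transposing the comonad $A\otimes(-)$ along the adjunction with $\dual A\otimes(-)$ and identifying the resulting monad with the ``dual algebra'' $\dual A$. You correctly flag the coherence issue, but I want to emphasize how the paper avoids it entirely: \cref{bi--du--dual-adjoint} shows only that $U : \cLMod_A(\cat C) \to \cat C$ is monadic and that $UF \iso \dual A \otimes (-)$ \emph{as functors}; it never attempts to match a monad structure on $\dual A \otimes (-)$ with a pre-specified algebra structure on $\dual A$. Instead, the algebra structure on $\dual A$ is \emph{defined} to be whatever $\beta$ produces, and the second commuting square in the statement then reads off that the underlying object is $\dual A$. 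Your plan of passing to $\End_{\cat D}(\cat D) \iso \cat D$ to compare the monad with the dual algebra is the right idea in principle (it is essentially the coendomorphism-object argument again), but as written the identification ``precisely that of the dual algebra'' is asserted, not proved, and in the $\infty$-categorical setting this is the whole content. Filling both gaps requires precisely the material in \cref{bi--du--linear-comonad}, \cref{bi--du--embedding}, and \cref{bi--du--dual-adjoint}, after which you recover the paper's proof.
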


\begin{corollary}
  \label{bi--du--main-cor}
  Let $A$ be a dualizable coalgebra (resp. commutative bialgebra) object of $\cat{C}$. Then there exists a unique algebra (resp. cocommutative bialgebra) structure on $\dual{A}$ equipped with an equivalence of $\infty$-categories (resp. symmetric monoidal $\infty$-categories) $\cLMod_A(\cat{C}) \iso \LMod_{\dual{A}}(\cat{C})$ commuting with the forgetful functors to $\cat{C}$.
\end{corollary}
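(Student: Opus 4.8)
The plan is to read both assertions off \cref{bi--du--main} by unwinding definitions; no further input should be required. Consider first the coalgebra case. A coalgebra object $A$ of $\cat{C}$ whose underlying object is dualizable is the same datum as a coalgebra object of $\cat{C}_\fd$, because $\cat{C}_\fd \inj \cat{C}$ is a symmetric monoidal subcategory and $\cAlg(\cat{C}) \to \cat{C}$ is conservative; so we may form $\beta(A) \in \cAlg(\cat{C}_\fd^\op) \iso \Alg(\cat{C}_\fd)^\op$, and the lower commuting square in \cref{bi--du--main} identifies its underlying object with $\dual{A}$. Transporting along $\cat{C}_\fd \inj \cat{C}$, this is the sought algebra structure on $\dual{A}$. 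For the equivalence of $\infty$-categories I would evaluate the upper square of \cref{bi--du--main} at $A$: the top-then-right composite sends $A$ to $\mu'_{\cat{C}}(A)^\op$, i.e.\ to $\cLMod_A(\cat{C})^\op$ with its right $\cat{C}^\op$-tensoring and forgetful functor, while the left-then-bottom composite sends $A$ to $\mu'_{\cat{C}^\op}(\beta(A)) = \cLMod_{\beta(A)}(\cat{C}^\op)$, which under the identifications $\cAlg(\cat{C}^\op) \iso \Alg(\cat{C})^\op$ and $\cLMod_{(-)}(\cat{C}^\op) \iso \LMod_{(-)}(\cat{C})^\op$ is precisely $\LMod_{\dual{A}}(\cat{C})^\op$ with its forgetful functor to $\cat{C}^\op$. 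Passing back to opposites yields $\cLMod_A(\cat{C}) \iso \LMod_{\dual{A}}(\cat{C})$ over $\cat{C}$ (in fact $\cat{C}$-linearly, which is more than is claimed).

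For the commutative bialgebra case I would run the same argument one categorical level higher. A commutative bialgebra object $A$ of $\cat{C}$ with dualizable underlying object defines an object of $\CAlg(\cAlg(\cat{C}_\fd)) = \bAlg_\comm(\cat{C}_\fd)$, and since $\beta$ is symmetric monoidal we may apply $\CAlg(\beta)$ to obtain an object of $\CAlg(\cAlg(\cat{C}_\fd^\op))$; unwinding (using $\cAlg(\cat{C}^\op) \iso \Alg(\cat{C})^\op$, that $\CAlg$ of an opposite is the opposite of $\cCAlg$, and \cref{bi--df--swap}) this is exactly a cocommutative bialgebra structure on $\dual{A}$ in $\cat{C}_\fd$, hence in $\cat{C}$. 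To promote the module equivalence above to a symmetric monoidal one, recall from \cref{bi--tn--main-cor} that the symmetric monoidal structure on the module $\infty$-category of a commutative bialgebra is produced by applying $\CAlg$ to the symmetric monoidal functor $\mu'$; so applying $\CAlg$ to the upper square of \cref{bi--du--main}, evaluating at $A$, and passing to opposites as before gives the symmetric monoidal equivalence $\cLMod_A(\cat{C}) \iso \LMod_{\dual{A}}(\cat{C})$, with compatibility with the forgetful functors to $\cat{C}$ visible in the same diagram (and matching the structures of \cref{bi--tn--main-cor,bi--tn--main-cor-op} by construction).

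For the uniqueness clause, I would argue as follows: given any algebra structure on $\dual{A}$ equipped with an equivalence $\cLMod_A(\cat{C}) \iso \LMod_{\dual{A}}(\cat{C})$ over $\cat{C}$, the free-module left adjoint of $\LMod_{\dual{A}}(\cat{C}) \to \cat{C}$ transports along the equivalence to a left adjoint of the forgetful functor out of $\cLMod_A(\cat{C})$, and its value on $\unit$ has underlying object $\dual{A}$; by the theory of endomorphism objects recalled in \cref{pr--nt} (in the bialgebra case using additionally \cref{bi--tn--calg} to pin down the bialgebra structure), the algebra, resp.\ cocommutative bialgebra, structure on $\dual{A}$ is then recovered canonically as the endomorphism object of this module, so the pair in question is unique up to contractible choice. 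I expect the only real friction in all of this to be the opposite-category bookkeeping — keeping straight $\cLMod_A(\cat{C}) = \LMod_A(\cat{C}^\op)^\op$, $\cAlg(\cat{C}^\op) = \Alg(\cat{C})^\op$, and $\CAlg(\cat{D}^\op) = \cCAlg(\cat{D})^\op$, and verifying that the symmetric monoidal structures these transport are literally those of \cref{bi--tn--main-cor,bi--tn--main-cor-op} rather than twisted variants — which is why I would set the whole argument up to factor through the already-verified naturality of $\mu'$ and $\beta$ established in \cref{bi--tn--functorial} and \cref{bi--du--main}.
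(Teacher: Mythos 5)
Your proof is correct and takes essentially the same route as the paper's (implicit) argument: the corollary is meant to be read off directly from the two squares of \cref{bi--du--main}, and you do exactly that, including the correct use of $\CAlg(\cat{D}^\op) \iso \cCAlg(\cat{D})^\op$ to land in $\bAlg^\comm(\cat{C}_\fd)^\op$ in the bialgebra case and the observation that the symmetric monoidal structures produced are precisely those of \cref{bi--tn--main-cor,bi--tn--main-cor-op} because they arise by applying $\CAlg$ to $\mu'$.

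One remark on the uniqueness clause: your argument via transported left adjoints and endomorphism objects is sound, but it re-derives content that \cref{bi--du--embedding} already packages. Since (as you observe) the equivalence you produce is $\cat{C}$-linear, it lies in the image of the fully faithful functor $\mu'_{\cat{C}^\op}$, and uniqueness of the pair follows at once from that full faithfulness; this is the shortest path and the one that keeps the bookkeeping invisible. If you do want to argue directly through endomorphism objects, the phrasing ``the algebra structure on $\dual{A}$ is recovered as the endomorphism object of the free module'' should be treated with some care (the endomorphism algebra of the free module $B \in \LMod_B(\cat{C})$ carries the opposite multiplication in the usual conventions); the more canonical statement is that the monad $U_A F_A \iso \dual{A}\otimes -$ is an endomorphism object of the forgetful functor $U_A$, exactly as in the proof of \cref{bi--du--linear-comonad}, and that this determines the algebra by $\End_{\cat C}(\cat C)\iso\cat C$.
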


\begin{remark}
  \label{bi--du--main-equiv}
  It is immediate from \cref{bi--du--main} that the composite
  \[
    \cAlg(\cat{C}_\fd) \lblto{\beta} \Alg(\cat{C}_\fd)^\op \lblto{\beta} \cAlg(\cat{C}_\fd)
  \]
  is canonically equivalent to the identity, and hence that $\beta$ is in fact a symmetric monoidal equivalence.
\end{remark}

The proof of \cref{bi--du--main} will require some preliminaries. In the following two results, we will denote objects of the $\infty$-category $\RMod_{\cat{C}}(\Cat_\infty)_{/\cat{C}}$ by pairs $(\cat{M},U)$, where $\cat{M}$ is an $\infty$-category right tensored over $\cat{C}$ and $U$ is a $\cat{C}$-linear functor $U : \cat{M} \to \cat{C}$, and sometimes will omit $U$ when it may be understood from context. Also, note that if $U$ admits a right adjoint $G : \cat{C} \to \cat{M}$, then $G$ is canonically lax $\cat{C}$-linear, and we may consider the property that $G$ is in fact strictly $\cat{C}$-linear (\cite[Example 7.3.2.8 and Remark 7.3.2.9]{lurie--algebra}).

\begin{lemma}
  \label{bi--du--linear-comonad}
  Let $(\cat{M},U)$ be an object of $\RMod_{\cat{C}}(\Cat_\infty)_{/\cat{C}}$ such that $U$ admits a right adjoint $G : \cat{C} \to \cat{M}$ that is $\cat{C}$-linear. Then there is a coalgebra $A \in \cAlg(\cat{C})$ equipped with a map $\alpha : \cat{M} \to \cLMod_A(\cat{C})$ in $\RMod_{\cat{C}}(\Cat_\infty)_{/\cat{C}}$, such that, for any $B \in \cAlg(\cat{C})$, the map
  \[
    \Map_{\cAlg(\cat{C})}(A,B) \to \Map_{\RMod_{\cat{C}}(\Cat_\infty)_{/\cat{C}}}(\cat{M},\cLMod_B(\cat{C}))
  \]
  induced by $\alpha$ is a homotopy equivalence.
\end{lemma}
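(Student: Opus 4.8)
The plan is to exhibit $A$ as the coalgebra object of $\cat{C}$ corresponding to the comonad $U \circ G$ on $\cat{C}$, and to obtain its universal property from the (dual of the) coendomorphism-object formalism recalled in \cref{pr--nt}.

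First I would record some purely formal identifications. Since $\cat{C}$ is right tensored over itself, evaluation at the unit $\unit$ gives a monoidal equivalence between $\cat{C}$, with its tensor product, and the $\infty$-category $\Fun_{\cat{C}}(\cat{C},\cat{C})$ of $\cat{C}$-linear endofunctors of $\cat{C}$ with the composition monoidal structure: a $\cat{C}$-linear $F$ satisfies $F(c) \iso F(\unit) \otimes c$, so it is recovered from $F(\unit)$, and $(F \circ F')(\unit) \iso F(\unit) \otimes F'(\unit)$. Under this equivalence, coalgebra objects of $\Fun_{\cat{C}}(\cat{C},\cat{C})$ --- i.e.\ $\cat{C}$-linear comonads on $\cat{C}$ --- correspond to coalgebra objects of $\cat{C}$, and a comonad $T$ corresponding to a coalgebra $A$ has its $\infty$-category of comodule objects canonically equivalent to $\cLMod_A(\cat{C})$, compatibly with the forgetful functors to $\cat{C}$ and the residual right $\cat{C}$-actions. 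Moreover $\Fun_{\cat{C}}(\cat{M},\cat{C})$ is left tensored over $\Fun_{\cat{C}}(\cat{C},\cat{C}) \iso \cat{C}$ by postcomposition, and I would check that, for a coalgebra $A$ of $\cat{C}$, the space of $A$-comodule structures on the object $U$ of $\Fun_{\cat{C}}(\cat{M},\cat{C})$ is canonically identified with $\Map_{\RMod_{\cat{C}}(\Cat_\infty)_{/\cat{C}}}(\cat{M}, \cLMod_A(\cat{C}))$ --- both amount to $\cat{C}$-linear lifts of $U$ along $\cLMod_A(\cat{C}) \to \cat{C}$ (here one uses that $\Fun_{\cat{C}}(\cat{M},-)$ preserves the limits that compute comodule categories).

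Next I would produce the coendomorphism object of $U$ in $\Fun_{\cat{C}}(\cat{C},\cat{C})$. The action of $\Fun_{\cat{C}}(\cat{C},\cat{C})$ on the object $U \in \Fun_{\cat{C}}(\cat{M},\cat{C})$ is the functor $K \mapsto K \circ U$; because $U \dashv G$ and both $U$ and $G$ are $\cat{C}$-linear (here the \emph{strictness} of the $\cat{C}$-linearity of $G$ is essential), this functor admits a left adjoint, namely $W \mapsto W \circ G$. It follows that $U$ admits a coendomorphism object, given by $U \circ G$, with coaction the unit map $U \xrightarrow{U\eta} UGU$ (where $\eta : \id_{\cat{M}} \to GU$ is the unit of $U \dashv G$). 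Hence, by the dual of the statement recalled in \cref{pr--nt}, $UG$ promotes canonically to a coalgebra object of $\Fun_{\cat{C}}(\cat{C},\cat{C})$, $U$ promotes to a $UG$-comodule object of $\Fun_{\cat{C}}(\cat{M},\cat{C})$, and for every coalgebra $S$ of $\Fun_{\cat{C}}(\cat{C},\cat{C})$ the coaction induces an equivalence between the space of maps $UG \to S$ of coalgebras and the space of $S$-comodule structures on $U$.

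Finally I would translate through the first paragraph. Letting $A \in \cAlg(\cat{C})$ be the coalgebra corresponding to the comonad $UG$, so that $A \iso UG(\unit) = U(G(\unit))$, the $UG$-comodule structure on $U$ yields --- after passing to $\cLMod_A(\cat{C})$ --- the asserted morphism $\alpha : \cat{M} \to \cLMod_A(\cat{C})$ in $\RMod_{\cat{C}}(\Cat_\infty)_{/\cat{C}}$; concretely $\alpha$ is the comparison functor $m \mapsto U(m)$ for the comonad $UG$. Taking $S$ to be the comonad attached to a coalgebra $B \in \cAlg(\cat{C})$ turns the last equivalence of the previous paragraph into the claimed equivalence $\Map_{\cAlg(\cat{C})}(A, B) \iso \Map_{\RMod_{\cat{C}}(\Cat_\infty)_{/\cat{C}}}(\cat{M}, \cLMod_B(\cat{C}))$, induced by $\alpha$. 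The step demanding the most care is the coherent bookkeeping of the second paragraph --- the monoidal equivalence $\Fun_{\cat{C}}(\cat{C},\cat{C}) \iso \cat{C}$ and the identification of $A$-comodule structures on $U$ with morphisms $\cat{M} \to \cLMod_A(\cat{C})$ over $\cat{C}$, at the level of $\infty$-categories rather than homotopy categories --- since this is what allows the abstract coendomorphism-object formalism to be invoked; once it is in place, the recognition of $UG$ and the final translation are formal.
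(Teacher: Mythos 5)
Your proof is correct and follows essentially the same strategy as the paper: you identify $UG$ as the coendomorphism object of $U$ in the $\cat{C}$-linear endofunctor category, use the adjunction $U \dashv G$ (with the strict $\cat{C}$-linearity of $G$) to obtain a left adjoint to the action functor and hence the coendomorphism object with its universal property, and then transport everything through the monoidal equivalence $\End_{\cat{C}}(\cat{C}) \iso \cat{C}$. The paper's proof does the same thing, citing \cite[\S4.7.3]{lurie--algebra} and stating that the argument there can be carried out with $\End(\cat{C})$ replaced by $\End_{\cat{C}}(\cat{C})$; you have simply unpacked that citation a bit more explicitly, in particular spelling out how the left adjoint $W \mapsto W \circ G$ witnesses the coendomorphism property and how $A$-comodule structures on $U$ match up with maps $\cat{M} \to \cLMod_A(\cat{C})$ over $\cat{C}$.
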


\begin{proof}
  Let $T := UG \in \End(\cat{C})$. Recall from \cite[\S4.7.3]{lurie--algebra} that:
  \begin{itemize}
  \item the natural transformation $\eta' : U \to TU = UGU$ induced by the unit transformation $\eta : \id_\cat{M} \to GU$ exhibits $T$ as a \emph{coendomorphism object} for $U \in \Fun(\cat{M},\cat{C})$, where we regard $\Fun(\cat{M},\cat{C})$ as left tensored over the monoidal $\infty$-category $\End(\cat{C})$ via postcomposition;
  \item this determines a comonad structure on $T$ and a factorization of $U$ through the forgetful functor $\cLMod_T(\cat{C}) \to \cat{C}$, such that, for any other comonad $T' \in \cAlg(\End(\cat{C}))$, the induced map
    \[
      \Map_{\cAlg(\End(\cat{C}))}(T,T') \to \Map_{(\Cat_\infty)_{/\cat{C}}}(\cat{M},\cLMod_{T'}(\cat{C}))
    \]
    is a homotopy equivalence.
  \end{itemize}
  In our situation, the functors $U,G,T$ and natural transformations $\eta,\eta'$ are all canonically $\cat{C}$-linear, and the same argument may be carried out with $\End(\cat{C})$ replaced by the monoidal $\infty$-category $\End_\cat{C}(\cat{C})$ of $\cat{C}$-linear endofunctors and $(\Cat_\infty)_{/\cat{C}}$ replaced by $\RMod_{\cat{C}}(\Cat_\infty)_{/\cat{C}}$. Since we have a canonical equivalence of monoidal $\infty$-categories $\End_\cat{C}(\cat{C}) \iso \cat{C}$, this gives the claim.
\end{proof}

\begin{proposition}
  \label{bi--du--embedding}
  The functor $\mu'_{\cat{C}} : \cAlg(\cat{C}) \to \RMod_{\cat{C}}(\Cat_\infty)_{/\cat{C}}$ is fully faithful. Its essential image consists of those objects $(\cat{M},U) \in \RMod_{\cat{C}}(\Cat_\infty)_{/\cat{C}}$ satisfying the following properties:
  \begin{enumerate}
  \item $U$ is comonadic, in particular admits a right adjoint $G : \cat{C} \to \cat{M}$;
  \item $G$ is $\cat{C}$-linear.
  \end{enumerate}
\end{proposition}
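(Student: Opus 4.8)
The plan is to deduce both assertions from \cref{bi--du--linear-comonad}.

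First I would check that $\mu'_{\cat{C}}$ takes values in the described full subcategory. For a coalgebra $A \in \cAlg(\cat{C})$, the object $\mu'_{\cat{C}}(A)$ is $(\cLMod_A(\cat{C}), U_A)$ with $U_A$ the forgetful functor. By definition $\cLMod_A(\cat{C}) = \LMod_A(\cat{C}^{\op})^{\op}$, and the forgetful functor $\LMod_A(\cat{C}^{\op}) \to \cat{C}^{\op}$ is monadic (identify $A$-modules with modules over the monad $A \otimes (-)$ and apply the Barr--Beck--Lurie theorem); passing to opposites, $U_A$ is comonadic. Its right adjoint is the cofree comodule functor $G_A : X \mapsto A \otimes X$, with coaction $\Delta_A \otimes \id_X$, and I would check directly that $G_A$ is $\cat{C}$-linear: the right $\cat{C}$-actions on $\cat{C}$ and on $\cLMod_A(\cat{C})$ both act by tensoring underlying objects, and $(A \otimes X) \otimes Y$ with coaction $\Delta_A \otimes \id_X \otimes \id_Y$ is precisely the cofree comodule on $X \otimes Y$.

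For the reverse inclusion, let $(\cat{M},U)$ satisfy (1) and (2), with $\cat{C}$-linear right adjoint $G$. Applying \cref{bi--du--linear-comonad} produces a coalgebra $A \in \cAlg(\cat{C})$ and a map $\alpha : \cat{M} \to \cLMod_A(\cat{C})$ in $\RMod_{\cat{C}}(\Cat_\infty)_{/\cat{C}}$ with the stated universal property. Unwinding that lemma's construction, $A$ is the $\cat{C}$-linear comonad $T = UG$ regarded as a coalgebra of $\cat{C}$ under $\End_{\cat{C}}(\cat{C}) \iso \cat{C}$, the category $\cLMod_A(\cat{C})$ is identified with comodules over the comonad $T$, and $\alpha$ is the canonical comparison functor from $U$ to its associated comonad; since $U$ is comonadic, $\alpha$ is an equivalence, so $(\cat{M},U) \iso \mu'_{\cat{C}}(A)$ lies in the essential image. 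Finally, for full faithfulness, fix coalgebras $A_0, B$. By the first step $\mu'_{\cat{C}}(A_0)$ satisfies (1)--(2), so \cref{bi--du--linear-comonad} applies to it; here the cofree--forgetful comonad $U_{A_0}G_{A_0}$ is $A_0 \otimes (-)$ with its canonical comonad structure, corresponding under $\End_{\cat{C}}(\cat{C}) \iso \cat{C}$ to the coalgebra $A_0$, and $\alpha$ is the identity of $\cLMod_{A_0}(\cat{C})$. The universal property of the lemma then reads exactly that $\mu'_{\cat{C}}$ induces an equivalence $\Map_{\cAlg(\cat{C})}(A_0,B) \iso \Map_{\RMod_{\cat{C}}(\Cat_\infty)_{/\cat{C}}}(\cLMod_{A_0}(\cat{C}),\cLMod_B(\cat{C}))$.

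The step I expect to be the main obstacle is the one I have above treated as routine: matching the abstract comonad $T = UG$ produced by the coendomorphism-object machinery of \cref{bi--du--linear-comonad} with an honest coalgebra object of $\cat{C}$, compatibly with the forgetful functors --- in particular verifying that when $\cat{M} = \cLMod_{A_0}(\cat{C})$ this recovers $A_0$ on the nose and that the induced map on mapping spaces is the one coming from $\mu'_{\cat{C}}$. Everything else is assembly of the cited facts (the Barr--Beck--Lurie theorem, $\cat{C}$-linearity of cofree comodules, and the equivalence $\End_{\cat{C}}(\cat{C}) \iso \cat{C}$).
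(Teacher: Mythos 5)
Your proposal is correct and follows essentially the same route as the paper: the paper phrases the argument by observing that \cref{bi--du--linear-comonad} exhibits a left adjoint $\nu'_{\cat{C}}$ to $\mu'_{\cat{C}}$ (restricted to the subcategory of objects with $\cat{C}$-linear right adjoint), then checks the counit is an equivalence for full faithfulness and that the unit is an equivalence precisely on comonadic $U$; your direct unwinding of the lemma's universal property carries out the same two checks without the adjunction packaging. The verification you flag as the likely obstacle---identifying the comonad $U_{A_0}G_{A_0}$ with $A_0 \otimes (-)$ and matching the induced map on mapping spaces with the one from $\mu'_{\cat{C}}$---is exactly what the paper glosses as ``it is easy to see that the counit transformation is an equivalence,'' so you have located the same implicit step.
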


\begin{proof}
  Let $\RMod_{\cat{C}}(\Cat_\infty)_{/\cat{C}}^0$ denote the full subcategory of $\RMod_{\cat{C}}(\Cat_\infty)_{/\cat{C}}$ spanned by those objects $(\cat{M},U)$ such that $U$ admits a $\cat{C}$-linear right adjoint $G$. The functor $\mu'_{\cat{C}}$ factors through this subcategory, so let us abuse notation and now regard $\mu'_{\cat{C}}$ as a functor $\cAlg(\cat{C}) \to \RMod_{\cat{C}}(\Cat_\infty)_{/\cat{C}}^0$. It follows from \cref{bi--du--linear-comonad} that $\mu'_{\cat{C}}$ admits a left adjoint $\nu'_{\cat{C}} : \RMod_{\cat{C}}(\Cat_\infty)_{/\cat{C}}^0 \to \cAlg(\cat{C})$. It is easy to see that the counit transformation $\nu'_{\cat{C}}\mu'_{\cat{C}} \to \id_{\cAlg(\cat{C})}$ is an equivalence, implying that $\mu'_{\cat{C}}$ is fully faithful, and that the unit transformation $\id_{\RMod_{\cat{C}}(\Cat_\infty)_{/\cat{C}}} \to \mu'_{\cat{C}}\nu'_{\cat{C}}$ is an equivalence on an object $(\cat{M},U)$ if and only if $U$ is comonadic, implying that the essential image of $\Psi_{\cat{C}}'$ is as claimed.
\end{proof}

We need one more lemma to prove \cref{bi--du--main}.

\begin{lemma}
  \label{bi--du--dual-adjoint}
  Let $A \in \cAlg(\cat{C}_\fd)$ be a dualizable coalgebra object of $\cat{C}$. Then the forgetful functor $U : \cLMod_A(\cat{C}) \to \cat{C}$ is monadic, in particular admits a left adjoint $F : \cat{C} \to \cLMod_A(\cat{C})$. Moreover, there is a canonical equivalence between the composition $UF$ and the functor $\dual{A} \otimes - : \cat{C} \to \cat{C}$.
\end{lemma}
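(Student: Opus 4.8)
The plan is to dualize everything and reduce the assertion to a comonadicity statement over $\cat{C}^\op$. By \cref{bi--df--co} we have $\cLMod_A(\cat{C}) = \LMod_A(\cat{C}^\op)^\op$, and under this identification $U$ is the opposite of the forgetful functor $G : \LMod_A(\cat{C}^\op) \to \cat{C}^\op$. It therefore suffices to show that $G$ is comonadic with associated comonad the endofunctor $\dual{A} \otimes (-)$ of $\cat{C}^\op$: passing to opposite categories then gives at once that $U$ is monadic, that $U$ admits a left adjoint $F := R^\op$ (where $R$ is the right adjoint of $G$), and that $UF$ is the opposite of $\dual{A} \otimes (-)$ on $\cat{C}^\op$ — which is exactly $\dual{A} \otimes (-)$ on $\cat{C}$, since the tensor product of $\cat{C}^\op$ is the opposite of that of $\cat{C}$.

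First I would recall that $G$ is monadic by the Barr--Beck--Lurie theorem: it is conservative, and it preserves geometric realizations of $G$-split simplicial objects since those are absolute colimits. Its associated monad is the free-module monad $T := A \otimes (-)$, and its left adjoint is the free-module functor. Dualizability of $A$ enters now: since $A$ is a dualizable object of $\cat{C}$, hence of $\cat{C}^\op$ with the same dual $\dual{A}$, the endofunctor $T = A \otimes (-)$ admits a right adjoint $T^{\mathrm{R}} := \dual{A} \otimes (-)$, because $\Map(A \otimes M, N) \iso \Map(M, \dual{A} \otimes N)$ purely by dualizability, with no closedness hypothesis on $\cat{C}^\op$. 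Being the right adjoint of a monad, $T^{\mathrm{R}}$ acquires a canonical comonad structure by passing to mates.

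The main step is to upgrade this to the statement that $G$ itself admits a right adjoint $R$, with $GR \iso T^{\mathrm{R}}$. I would construct coinduction by hand: set $R(X) := \dual{A} \otimes X$, where $\dual{A}$ is given the left $A$-module structure dual to the right $A$-module structure on $A$, and verify — using the evaluation and coevaluation maps of the dualizable object $A$ to write down the unit and counit — that $R$ is right adjoint to $G$; then $GR \iso \dual{A} \otimes (-)$ by construction. (Alternatively, one checks directly, from the adjunction $T \dashv T^{\mathrm{R}}$ by comparing mates, that there is an equivalence $\LMod_A(\cat{C}^\op) \iso \cLMod_{T^{\mathrm{R}}}(\cat{C}^\op)$ over $\cat{C}^\op$ between modules over the monad $T$ and comodules over the comonad $T^{\mathrm{R}}$; this likewise produces $R$ and identifies the comonad.) With $R$ in hand, $G$ is conservative and, being a right adjoint to the free-module functor, preserves all limits, in particular totalizations of $G$-split cosimplicial objects; so by the dual form of the Barr--Beck--Lurie theorem $G$ is comonadic, with comonad $T^{\mathrm{R}} = \dual{A} \otimes (-)$. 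Passing to opposite categories as in the first paragraph completes the proof.

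I expect the main obstacle to be the construction of the right adjoint $R$ to $G$. Since $\cat{C}$ — equivalently $\cat{C}^\op$ — is only assumed to be a small symmetric monoidal $\infty$-category, in particular neither presentable nor closed, one cannot appeal to the adjoint functor theorem or realize coinduction as an internal hom; dualizability of $A$ is precisely the feature that makes the coinduction functor available. If one prefers to avoid the explicit construction, this smallness can be sidestepped by embedding $\cat{C}$ into $\PSh(\cat{C})$ equipped with the Day convolution symmetric monoidal structure, exactly as in the proof of \cref{bi--df--swap}, running the argument in that presentable setting, and restricting back along the Yoneda embedding.
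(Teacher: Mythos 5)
Your route is genuinely different from the paper's, and it is the more "hands-on" one: you want to exhibit the coinduction functor $R(X) = \dual{A} \otimes X$ explicitly (with the left $A$-module structure on $\dual{A}$ dual to the right $A$-module structure on $A$), verify the adjunction $G \dashv R$ from evaluation/coevaluation, and then run the dual Barr--Beck--Lurie theorem. This would work and is informative, but as you anticipate, writing down the adjunction with full $\infty$-categorical coherence is the real content, and the paper avoids it. The paper instead reduces to the presentable case and then argues entirely on the side of $U : \cLMod_A(\cat{C}) \to \cat{C}$, without ever passing to $\cat{C}^\op$: $U$ preserves colimits because it is a forgetful functor from a comodule category; it preserves limits because the cofree comonad $UG \iso A \otimes -$ preserves limits (by dualizability of $A$); the adjoint functor theorem then supplies $F$ and Barr--Beck--Lurie gives monadicity; and finally $UF$ is identified by noting that $UF \dashv UG$ (from $F \dashv U \dashv G$), so $UF$ must be the left adjoint $\dual{A} \otimes -$ of $A \otimes -$. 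No explicit module structure on $\dual{A}$ ever needs to be produced.

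One caution about your proposed shortcut of embedding $\cat{C} \hookrightarrow \PSh(\cat{C})$ and "running the argument in that presentable setting": your argument lives over $\cat{C}^\op$, and presentability of $\PSh(\cat{C})$ does not give presentability of $\PSh(\cat{C})^\op$, so the adjoint functor theorem will not directly hand you a right adjoint to $G : \LMod_A(\PSh(\cat{C})^\op) \to \PSh(\cat{C})^\op$. To make the reduction work cleanly you should undo the opposite-category reframing and apply the adjoint functor theorem to $U$ itself, as the paper does — at which point the two arguments coincide. If instead you stay with the explicit construction of $R$, the embedding is unnecessary, but you do then owe the reader the coherent left $A$-module structure on $\dual{A}$ and the unit/counit triangle identities.
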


\begin{proof}
  By a reduction similar to the one made in \cref{bi--df--swap}, we may instead work in the situation that $\cat{C}$ is a presentable symmetric monoidal $\infty$-category. Since $U$ is a forgetful functor from a comodule category, it preserves colimits. As $A$ is dualizable, the functor $A \otimes - : \cat{C} \to \cat{C}$ preserves limits, implying that $U$ also preserves limits. It follows from the adjoint functor theorem that $U$ admit a left adjoint $F$ and from the monadicity theorem that $U$ is monadic.

  Let $G : \cat{C} \to \cLMod_A(\cat{C})$ denote the right adjoint of $U$, which is given by tensoring with $A$, so that $UG$ is naturally equivalent to the functor $A \otimes - : \cat{C} \to \cat{C}$. It is then immediate from $UF$ being left adjoint to $UG$ that $UF \iso \dual{A} \otimes -$.
\end{proof}

\begin{proof}[Proof of \cref{bi--du--main}]
  By \cref{bi--du--embedding}, $\mu'_{\cat{C}}$ and $\mu'_{\cat{C}^\op}$ are fully faithful. Thus, uniqueness of $\beta$ is automatic, and to prove existence, it suffices to show for $A \in \cAlg(\cat{C}_\fd)$ that $\cLMod_A(\cat{C})^\op$ lies in the essential image of $\mu'_{\cat{C}^\op}|_{\cAlg(\cat{C}_\fd^\op)}$. Using the description of the essential image in \cref{bi--du--embedding} (and unravelling opposites), we need to show that the forgetful functor $U : \cLMod_A(\cat{C}) \to \cat{C}$ admits a $\cat{C}$-linear left adjoint $F : \cat{C} \to \cLMod_A(\cat{C})$, that $U$ is monadic, and that the $\cat{C}$-linear composite $UF$ is given by tensoring with a dualizable object of $\cat{C}$. But this follows from \cref{bi--du--dual-adjoint}. Since \cref{bi--du--dual-adjoint} in fact gives a natural identification $UF \iso \dual{A} \otimes -$, we deduce a natural identification between the underlying object in $\cat{C}_\fd^\op$ of $\beta(A)$ with $\dual{A}$.
\end{proof}

We note one more elaboration on the results above that will be relevant in the next subsection.

\begin{remark}
  \label{bi--du--dual-bimodule}
  Let $A \in \Alg(\cat{C}_\fd)$ be a dualizable algebra object of $\cat{C}$. By \cref{bi--du--main}, we have a canonical coalgebra structure on $\dual{A}$, together with a $\cat{C}$-linear equivalence $\LMod_A(\cat{C}) \iso \cLMod_{\dual{A}}(\cat{C})$ over $\cat{C}$. We claim that, more generally, for any $\infty$-category $\cat{D}$ left tensored over $\cat{C}$, we have a canonical $\cat{C}$-linear equivalence $\LMod_A(\cat{D}) \iso \cLMod_{\dual{A}}(\cat{D})$ over $\cat{D}$.

  As above, it suffices to prove this in the case that $\cat{C}$ is a presentable symmetric monoidal $\infty$-category and $\cat{D}$ is a $\cat{C}$-module in $\PrL$. Now, (the categorical dual of) \cref{bi--du--dual-adjoint} tells us that the forgetful functor $U : \LMod_A(\cat{C}) \to \cat{C}$ admits a $\cat{C}$-linear right adjoint $G : \cat{C} \to \LMod_A(\cat{C})$ and is comonadic, with associated comonad given by tensoring with $\dual{A}$. Applying $- \otimes_{\cat{C}} \cat{D}$ in $\PrL$, we deduce the same for the forgetful functor $\LMod_A(\cat{D}) \to \cat{D}$, giving the claimed equivalence.
  
  As one example, we find that we have an equivalence
  \begin{align*}
    \BMod{A}{A}(\cat{C})
    &\iso \RMod_A(\LMod_A(\cat{C})) \\
    &\iso \RMod_A(\cLMod_A(\cat{C})) \\
    &\iso \cRMod_{\dual{A}}(\cLMod_A(\cat{C})) \\
    &\iso \cBMod{\dual{A}}{\dual{A}}(\cat{C}).
  \end{align*}
  Here $\BMod{A}{A}(\cat{C})$ denotes the $\infty$-category of $A$-$A$-bimodules in $\cat{C}$ and $\cBMod{\dual{A}}{\dual{A}}(\cat{C})$ is the $\infty$-category of $\dual{A}$-$\dual{A}$-bicomodules in $\cat{C}$ (again defined dually, i.e. as $\BMod{\dual{A}}{\dual{A}}(\cat{C}^\op)^\op$). The first and last identifications are by \cite[Theorem 4.3.2.7]{lurie--algebra}. The middle two identifications come from the preceding discussion (invoking also the obvious variant for right modules and comodules).

  In the context of this example, note that $\dual{A}$ is canonically a bicomodule over itself, and so the preceding equivalence exhibits a canonical $A$-$A$-bimodule structure on $\dual{A}$.
\end{remark}


\subsection{The Tate construction}
\label{bi--ta}

Let $X$ be a spectrum with $\cir$-action, i.e. a diagram $\clspc\cir \to \Spt$. The \emph{homotopy orbits} $X_{\ho \cir}$ and \emph{homotopy fixed points} $\smash{X^{\ho \cir}}$ are defined as the colimit and limit, respectively, of the diagram. There is a canonical norm map $\Nm_X : X_{\ho \cir}[1] \to X^{\ho \cir}$ relating these two constructions, and the \emph{Tate construction} $X^{\tate \cir}$ is defined as the cofiber of $\Nm_X$. In the case that $X$ arises from the Hochschild homology construction, $X_{\ho \cir}$ is cyclic homology, $X^{\ho \cir}$ is negative cyclic homology, and $X^{\tate \cir}$ is periodic cyclic homology.

The goal of this subsection is to formulate these notions in a more general context that will apply not just to objects with $\cir$-action but to filtered objects with filtered $\cir$-action (and in fact to homotopy-coherent cochain complexes as well). Throughout this subsection, we work in the following general context:
\begin{itemize}
\item We let $\cat{C}$ be a stable presentable symmetric monoidal $\infty$-category. We denote the unit object by $\unit$. Presentability ensures that the monoidal structure is closed; we denote internal mapping objects in $\cat{C}$ by $\uMap(-,-)$.
\item We let $A$ be a cocommutative bialgebra object of $\cat{C}$. We regard $\LMod_A(\cat{C})$ as a presentable symmetric monoidal $\infty$-category via \cref{bi--tn--main-cor-op} (and \cref{bi--tn--presentable}), and denote the tensor product simply by $\otimes$. Beware that we will also be using the relative tensor product over $A$, which we denote by $\otimes_A$. In $\LMod_A(\cat{C})$ too we have $\cat{C}$-valued mapping objects, which we denote by $\uMap_A(-,-)$: these are determined by natural equivalences
  \[
    \Map_{\cat{C}}(T, \uMap_A(X,Y)) \iso \Map_{\LMod_A(\cat{C})}(X \otimes T, Y).
  \]
\item We assume that $A$ is dualizable in $\cat{C}$. We regard $\dual{A}$ as an $A$-$A$-bimodule as in \cref{bi--du--dual-bimodule}.
\item We assume given an invertible object $\omega_A \in \cat{C}$ and an equivalence of $A$-$A$-bimodules
  \[
    \alpha : \dual{A} \iso \omega_A^{-1} \otimes A
  \]
  (the $A$-$A$-bimodule structure on the right side being induced by that of $A$).
\end{itemize}
The last stipulation on $\dual{A}$ may be thought of as a form of Poincar\'e/Atiyah duality. For example, in the case $\cat{C} = \Spt$ and $A = \num{S}[\cir]$, Atiyah duality for $\cir$ implies that this stipulation is satisfied with $\omega_A \iso \num{S}[1]$, and this is one way to understand the source of the shift in the norm map for spectra with $\cir$-action described above.

Let us now begin by defining generalizations of the orbits, fixed points, norm map, and Tate construction for $A$-modules in $\cat{C}$.

\begin{notation}[Orbits and fixed points]
  \label{bi--ta--orbits-fixed}
  Let $\epsilon : A \to \unit$ denote the counit of the bialgebra structure on $A$. Restriction in $\epsilon$ determines a functor $\rho : \cat{C} \to \LMod_A(\cat{C})$ that preserves limits and colimits (as its composite with the forgetful functor $\LMod_A(\cat{C}) \to \cat{C}$ is the identity, and the forgetful functor creates limits and colimits), hence admits left and right adjoints by the adjoint functor theorem. We will denote these adjoints by $(-)_A : \LMod_A(\cat{C}) \to \cat{C}$ and $(-)^A : \LMod_A(\cat{C}) \to \cat{C}$, respectively; they are given by the formulas
  \[
    X_A \iso \unit \otimes_A X
    \quad\text{and}\quad
    X^A \iso \uMap_A(\unit,X).
  \]
\end{notation}

\begin{remark}
  \label{bi--ta--fixed-formula}
  Let $B_\bullet : \Delta^\op \to \LMod_A(\cat{C})$ denote the bar resolution computing $\unit \otimes_A A \iso \unit$, so that $B_n \iso A^{\otimes n+1}$ for $n \ge 0$ \cite[\textsection 4.4.2]{lurie--algebra}. Then, for any $X \in \LMod_A(\cat{C})$, we obtain a cosimplicial diagram $\uMap_A(B_\bullet,X) : \Delta \to \LMod_A(\cat{C})$, with
  \[
    \uMap_A(B_n,X) \iso \uMap_A(A^{\otimes n+1},X) \iso \uMap(A^{\otimes n},X) \iso (\dual{A})^{\otimes n} \otimes X,
  \]
  and whose limit is $\uMap_A(\unit,X) \iso X^A$.
\end{remark}

\begin{remark}
  \label{bi--ta--fixed-monoidal}
  It follows from \cref{bi--tn--functorial} that the restriction functor $\rho : \cat{C} \to \LMod_A(\cat{C})$ is canonically symmetric monoidal, implying that the right adjoint $(-)^A : \LMod_A(\cat{C}) \to \cat{C}$ is canonically lax symmetric monoidal.
\end{remark}

\begin{construction}[Norm map and Tate construction]
  \label{bi--ta--norm}
  Let $\eta : \unit \to \dual{A}$ denote the unit map of the dual bialgebra structure on $\dual{A}$ (\cref{bi--du--main-cor}); this is in particular a map of $\dual{A}$-$\dual{A}$-bicomodules, equivalently a map of $A$-$A$-bimodules (\cref{bi--du--dual-bimodule}). For any $X \in \LMod_A(\cat{C})$, this induces a map of left $A$-modules
  \begin{align*}
    \omega_A \otimes X_A
    &\iso \omega_A \otimes (\unit \otimes_A X) \\
    &\lblto{\eta} \omega_A \otimes (\dual{A} \otimes_A X) \\
    &\lbliso{\alpha} \omega_A \otimes ((\omega_A^{-1} \otimes A) \otimes_A X) \\
    &\iso \omega_A \otimes \omega_A^{-1} \otimes X  \\
    &\iso X.
  \end{align*}
  Since the left-hand side is a module over $\unit$ in $\LMod_A(\cat{C})$, this factors uniquely through a map
  \[
    \Nm_X : \omega_A \otimes X_A \to X^A
  \]
  in $\cat{C}$, which we refer to as the \emph{norm map}. We then define the \emph{Tate construction} of $X$ as
  \[
    X^{\tate A} := \cofib(\Nm_X).
  \]
  Note that these constructions are evidently functorial in $X$, in the sense that they determine functors $\Nm : \Delta^1 \times \LMod_A(\cat{C}) \to \cat{C}$ and $(-)^{\tate A} : \LMod_A(\cat{C}) \to \cat{C}$.
\end{construction}

\begin{remark}
  \label{bi--ta--access}
  As $(-)^A : \LMod_A(\cat{C}) \to \cat{C}$ is a right adjoint functor between presentable $\infty$-categories, it is $\kappa$-accessible for some regular cardinal $\kappa$. Since $\omega_A \otimes (-)_A : \LMod_A(\cat{C}) \to \cat{C}$ preserves all colimits, it follows that the Tate construction $(-)^{\tate A} : \LMod_A(\cat{C}) \to \cat{C}$ is also $\kappa$-accessible. This will be relevant in \cref{bi--ta--univ} below.
\end{remark}

Next, imitating \cite[\S I.3]{nikolaus-scholze--tc}, we characterize the above Tate construction, or more precisely the natural transformation $(-)^A \to (-)^{\tate A}$, by a universal property involving the behavior of these functors on \emph{induced} $A$-modules (see \cref{bi--ta--induced} below). As in op. cit., this will show that the functor $(-)^{\tate A}$ inherits a lax symmetric monoidal structure from the one on the functor $(-)^A$ (\cref{bi--ta--fixed-monoidal}). It will also show that the definitions here agree with the usual ones in the setting of group actions (in addition to \cite{nikolaus-scholze--tc}, see e.g. \cite{greenlees-may--tate}, \cite[\textsection 5.2]{rognes--groups}, and \cite[\textsection 6.1.6]{lurie--algebra} for earlier accounts of the Tate construction).

\begin{definition}
  \label{bi--ta--induced}
  We let $\LMod_A^\ind(\cat{C})$ denote the smallest stable full subcategory of $\LMod_A(\cat{C})$ containing the objects $A \otimes X_0$ for $X_0 \in \cat{C}$. We refer to the objects of $\LMod_A^\ind(\cat{C})$ as \emph{induced} $A$-modules.
\end{definition}

\begin{remark}
  \label{bi--ta--induced-ideal}
  The subcategory $\LMod_A^\ind(\cat{C}) \subseteq \LMod_A(\cat{C})$ is a \emph{$\otimes$-ideal}: that is, given an induced $A$-module $X$ and an arbitrary $A$-module $Y$, the tensor product $X \otimes Y$ is also induced. It is enough to show this in the case that $X \iso A \otimes X_0$ for $X_0 \in \cat{C}$ (this follows from the definition of $\LMod_A^\ind(\cat{C})$, since, fixing $Y$, the full subcategory of $\LMod_A(\cat{C})$ spanned by those objects $X$ such that $X \otimes Y$ is induced is a stable one). In this case, the canonical map $X_0 \to X$ induces a map $X_0 \otimes Y \to X \otimes Y$ in $\cat{C}$, which extends uniquely to a map $u_Y : A \otimes X_0 \otimes Y \to X \otimes Y$ in $\LMod_A(\cat{C})$. To prove the claim, it suffices to show that $u_Y$ is an equivalence. This is true by hypothesis when $Y = \unit$, and follows in general since $u_Y \iso u_\unit \otimes \id_Y$.
\end{remark}

\begin{lemma}
  \label{bi--ta--induced-tate-vanishing}
  For $X \in \LMod_A^\ind(\cat{C})$, we have $X^{\tate A} \iso 0$.
\end{lemma}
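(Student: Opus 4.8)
The plan is to reduce, via exactness, to the case of free modules, and then to compute the norm map directly.

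First, observe that $(-)^{\tate A} = \cofib\bigl(\Nm \colon \omega_A \otimes (-)_A \to (-)^A\bigr)$ is an \emph{exact} functor $\LMod_A(\cat{C}) \to \cat{C}$: the functor $(-)_A$ is a left adjoint and $(-)^A$ is a right adjoint (\cref{bi--ta--orbits-fixed}), so both preserve finite (co)limits between the stable $\infty$-categories $\LMod_A(\cat{C})$ and $\cat{C}$ and are therefore exact; tensoring with the invertible object $\omega_A$ is an equivalence; and the cofiber of a natural transformation of exact functors is exact. Consequently the full subcategory of $\LMod_A(\cat{C})$ spanned by those $Y$ with $Y^{\tate A} \iso 0$ is stable. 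Since $\LMod_A^\ind(\cat{C})$ is the smallest stable full subcategory containing the objects $A \otimes X$ for $X \in \cat{C}$, it suffices to prove that the norm map $\Nm_{A \otimes X}$ is an equivalence for every $X \in \cat{C}$.

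So fix $X \in \cat{C}$ and set $Y \ce A \otimes X$, the free $A$-module on $X$. Its orbits are computed by $Y_A \iso Y \otimes_A \unit \iso X$. For the fixed points, the key point is that the hypothesis $\alpha \colon \dual{A} \iso A \otimes \omega_A^{-1}$ of $A$-modules says precisely that free and cofree $A$-modules agree up to the twist $\omega_A$: the right adjoint $G \colon \cat{C} \to \LMod_A(\cat{C})$ of the forgetful functor $U$ satisfies $G(Z) \iso \dual{A} \otimes Z$ (the categorical dual of \cref{bi--du--dual-adjoint}, using that $A$ is dualizable), hence $G(\omega_A \otimes X) \iso \dual{A} \otimes \omega_A \otimes X \iso A \otimes X = Y$ as $A$-modules. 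Writing $\rho$ for the restriction functor along the counit $\epsilon$, so that $U\rho \iso \id_{\cat{C}}$ (both are restrictions, composed along $\unit \to A \xrightarrow{\epsilon} \unit$, which is $\id_\unit$ by the bialgebra unit/counit axiom), we then get for $T \in \cat{C}$ that
\[
  \Map_{\cat{C}}\bigl(T, Y^A\bigr)
  \iso \Map_{\LMod_A(\cat{C})}\bigl(\rho(T), G(\omega_A \otimes X)\bigr)
  \iso \Map_{\cat{C}}\bigl(U\rho(T), \omega_A \otimes X\bigr)
  \iso \Map_{\cat{C}}(T, \omega_A \otimes X),
\]
so $Y^A \iso \omega_A \otimes X \iso \omega_A \otimes Y_A$; thus the source and target of $\Nm_Y$ are abstractly equivalent.

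It remains to check that $\Nm_Y$ \emph{is} this equivalence, which is the step requiring care. Since the free-module functor $\cat{C} \to \LMod_A(\cat{C})$, $X \mapsto A \otimes X$, is colimit-preserving and compatible with the $\cat{C}$-tensoring, and $\Nm$ is natural, the composite natural transformation $\omega_A \otimes ((A\otimes-)_A) \to (A\otimes-)^A$ of functors $\cat{C} \to \cat{C}$ is determined by its value on $A = A \otimes \unit$; so it suffices to verify that $\Nm_A \colon \omega_A \otimes A_A \to A^A$ is an equivalence, where $A_A \iso \unit$ and $A^A \iso \omega_A$ by the above. This is a direct unwinding of \cref{bi--ta--norm}: for $Y = A$ the defining zig-zag collapses, via the map $\eta \colon \unit \to \dual{A}$ adjoint to $\id_\unit$ and the equivalence $\alpha$, to exactly the identification of $A^A$ with $\omega_A$ just produced. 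I expect matching these two descriptions — tracking the $\omega_A$-twist through the zig-zag of \cref{bi--ta--norm} — to be the only nonformal point; once it is checked, $\Nm_A$, hence $\Nm_{A \otimes X}$ for all $X$, is an equivalence, so $(A \otimes X)^{\tate A} \iso 0$ and the lemma follows.
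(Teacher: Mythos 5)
Your strategy is the paper's: reduce via exactness of $(-)^{\tate A}$ to the case of free modules $A \otimes X$, and then check that $\Nm_{A \otimes X}$ is an equivalence by using $\alpha$ to identify the cofree $A$-module on $\omega_A \otimes X$ with the free module $A \otimes X$. Your mapping-space computation of $Y^A$ is the same content as the paper's internal-hom chain, and both proofs ultimately leave the identification of the resulting equivalence with the norm map as a routine unwinding (``straightforward to check'' in the paper, ``once it is checked'' for you). The one step you add that the paper does not---reducing further to $X = \unit$---is not justified as stated: a natural transformation between colimit-preserving endofunctors of $\cat{C}$ is determined by its value at $\unit$ only when it is a $\cat{C}$-\emph{linear} natural transformation, which you haven't established, and which cannot be deduced from mere naturality since $\cat{C}$ is not assumed to be generated under colimits by its unit (e.g.\ $\cat{C} = \Fil(\Mod_{\num{Z}})$, the case of interest in \cref{fc}). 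Fortunately this step is also unnecessary: your chain of equivalences already computes $Y^A$ for arbitrary $X$, so you may as well verify the agreement with $\Nm_{A \otimes X}$ directly for all $X$, exactly as the paper does.
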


\begin{proof}
  As $(-)^{\tate A}$ is an exact functor (being the cofiber of a map between exact functors), the full subcategory of $\LMod_A(\cat{C})$ spanned by those objects $X$ such that $X^{\tate A} \iso 0$ is a stable one. Thus, by definition of $\LMod_A^\ind(\cat{C})$, it suffices to prove that $X^{\tate A} \iso 0$ in the case that $X \iso A \otimes X_0$ for $X_0 \in \cat{C}$; let us now assume that this is the case.

  Reviewing the definition of $X^{\tate A}$ from \cref{bi--ta--norm}, what we need to show is that the map
  \[
    \omega_A \otimes (\unit \otimes_A X) \to \uMap_A(\unit, \omega_A \otimes (\dual{A} \otimes_A X)),
  \]
  induced by the unit map $\eta : \unit \to \dual{A}$, is an equivalence. Under our assumed equivalence $X \iso A \otimes X_0$, the map in question identifies with the map
  \[
    \omega_A \otimes X_0 \iso \uMap(\unit, \omega_A \otimes X_0) \to \uMap_A(\unit, \omega_A  \otimes \dual{A} \otimes X_0)
  \]
  induced by $\eta$. This is indeed an equivalence, as $\eta$ defines the unit of an adjunction $\Map_{\cat{C}}(Y,Z) \iso \Map_{\LMod_A(\cat{C})}(Y,\dual{A} \otimes Z)$.
\end{proof}

\begin{lemma}
  \label{bi--ta--induced-generate}
  For all $X \in \LMod_A(\cat{C})$, the canonical maps
  \[
    \l[\colim_{Y \in \LMod_A^\ind(\cat{C})_{/X}} Y\r] \to X,
  \]
  \[
    \l[\colim_{Y \in \LMod_A^\ind(\cat{C})_{/X}} \cofib(Y \to X)^A\r] \to
    \l[\colim_{Y \in \LMod_A^\ind(\cat{C})_{/X}} \cofib(Y \to X)^{\tate A}\r]
  \]
  are equivalences.
\end{lemma}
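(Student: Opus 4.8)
The plan is to establish the first equivalence and then deduce the second from it using the norm cofiber sequence. Write $\cat{P}\ce\LMod_A^\ind(\cat{C})$.

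For the first equivalence, the starting point is that $\cat{P}$, being a stable subcategory, is closed under finite colimits, which makes the comma category $\cat{P}_{/X}$ \emph{filtered}: any two objects $Y_0\to X$, $Y_1\to X$ admit the cocone $Y_0\oplus Y_1\to X$, and a pair of parallel maps $f,g\colon(Y_0\to X)\to(Y_1\to X)$ is coequalized by the evident map to $(\cofib(f-g\colon Y_0\to Y_1)\to X)$, using that $f$ and $g$ agree over $X$. In particular $\cat{P}_{/X}$ is weakly contractible, and---after harmlessly enlarging the universe so that this large filtered colimit exists---we obtain $X'\ce\colim_{Y\in\cat{P}_{/X}}Y$ together with the canonical map $c\colon X'\to X$. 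I would then invoke compact generation: since $\cat{C}$ is compactly generated and the forgetful functor $U\colon\LMod_A(\cat{C})\to\cat{C}$ preserves colimits, is conservative, and has left adjoint $A\otimes(-)$, the objects $A\otimes Z$ with $Z\in\cat{C}$ compact form a set of compact generators of $\LMod_A(\cat{C})$, and all of these lie in $\cat{P}$. So it is enough to check that $[A\otimes Z,c]$ is bijective for each compact $Z$ (the shifts $Z[n]$ being compact too, this upgrades to an equivalence on each mapping space $\Map(A\otimes Z,-)$). Surjectivity is formal: a map $g\colon A\otimes Z\to X$ is itself an object of $\cat{P}_{/X}$, whose colimit coprojection $A\otimes Z\to X'$ composes with $c$ to $g$. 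For injectivity I would argue as follows: given $g_0,g_1\colon A\otimes Z\to X'$ with $cg_0\simeq cg_1$, compactness of $A\otimes Z$ and filteredness of $\cat{P}_{/X}$ let me factor both through a common stage $(Y\to X)$, say $g_i\simeq\lambda_Y\gamma_i$ with $\lambda_Y$ the coprojection; then $\gamma_0-\gamma_1$ composed with $Y\to X$ is null, so $Y\to X$ extends over $Y'\ce\cofib(\gamma_0-\gamma_1\colon A\otimes Z\to Y)\in\cat{P}$, the quotient $q\colon Y\to Y'$ is a morphism in $\cat{P}_{/X}$, and since $\lambda_{Y'}q\simeq\lambda_Y$ while $q(\gamma_0-\gamma_1)\simeq 0$, we conclude $g_0\simeq\lambda_{Y'}q\gamma_0\simeq\lambda_{Y'}q\gamma_1\simeq g_1$.

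For the second equivalence, I would apply the norm cofiber sequence of \cref{bi--ta--norm} to $\cofib(Y\to X)$ naturally in $Y\in\cat{P}_{/X}$ and pass to the colimit; this displays the map in the statement as part of a cofiber sequence whose fiber term is $\colim_{Y\in\cat{P}_{/X}}\l(\omega_A\otimes\cofib(Y\to X)_A\r)$. Because $\omega_A$ is invertible, $(-)_A$ is exact and colimit-preserving, and $\cat{P}_{/X}$ is weakly contractible, this fiber is $\omega_A\otimes\cofib\bigl((\colim_Y Y)_A\to X_A\bigr)$, which vanishes by the first equivalence; hence the map is an equivalence.

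I expect the only genuinely non-formal step to be the injectivity argument above---in particular the passage to a common stage and the use of $Y'=\cofib(\gamma_0-\gamma_1)$ to absorb the difference of the two maps---this being exactly where closure of $\cat{P}$ under finite colimits earns its keep. The remaining subtlety is merely set-theoretic (making sense of the colimit indexed by the possibly-large category $\cat{P}_{/X}$), and is dispatched by the usual universe enlargement.
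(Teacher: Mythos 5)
Your proposal is correct and follows essentially the same strategy as the paper's proof: test the first map against compact objects (the paper uses generators $W_s$ of $\cat{C}$ rather than $A\otimes Z$ in $\LMod_A(\cat{C})$, but these give the same mapping spaces by adjunction), establish surjectivity by factoring through $A\otimes W_s$ and injectivity by coning off the offending map into a cofiber that stays in $\LMod_A^\ind(\cat{C})$, and deduce the second equivalence from the first via the norm cofiber sequence and exactness/cocontinuity of $(-)_A$. Your handling of the $\omega_A$-twist and the set-theoretic size issue is a bit more explicit than the paper's, but these are cosmetic; the one small tightening worth noting is that filteredness of $\LMod_A^\ind(\cat{C})_{/X}$ is most cleanly justified $\infty$-categorically by simply observing it admits finite colimits, rather than via the $1$-categorical coproduct/coequalizer criterion.
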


\begin{proof}
  The first map being an equivalence is equivalent to the statement that the identity functor $\id : \LMod_A(\cat{C}) \to \LMod_A(\cat{C})$ is left Kan extended from the full subcategory $\LMod_A^\ind(\cat{C})$. Let $G : \LMod_A(\cat{C}) \to \cat{C}$ be the forgetful functor and let $F : \cat{C} \to \LMod_A(\cat{C})$ be its left adjoint, given by the formula $F(X_0) \iso A \otimes X_0$. Set $T := FG : \LMod_A(\cat{C}) \to \LMod_A(\cat{C})$. Then the bar resolution expresses the identity functor on $\LMod_A(\cat{C})$ as a geometric realization of the iterates $T^n$ for $n \ge 1$ (see \cite[Proof of Lemma 4.7.3.13]{lurie--algebra}). Since the full subcategory of $\End(\LMod_A(\cat{C}))$ spanned by those functors that are left Kan extended from $\LMod_A^\ind(\cat{C})$ is closed under colimits, it now suffices to show that this property is satisfied by $T^n = (FG)^n$ for any $n \ge 1$. In fact, more is true: $T^n$ may be obtained by left Kan extension along the functor $F : \cat{C} \to \LMod_A(\cat{C})$ (this is stronger because $F$ factors through $\LMod_A^\ind(\cat{C})$). Indeed, this is visibly true once one recalls that left Kan extension along the left adjoint $F$ is given by precomposition with the right adjoint $G$.

  We now address the second map in the statement. Its fiber is given by
  \[
    \colim_{Y \in \LMod_A^\ind(\cat{C})_{/X}} \left(\omega_A \otimes \cofib(Y \to X)_A\right) \iso \omega_A \otimes \l(\colim_{Y \in \LMod_A^\ind(\cat{C})_{/X}} \cofib(Y \to X)\r)_A,
  \]
  the equivalence resulting from the fact that the functor $\omega_A \otimes (-)_A$ preserves colimits. But this vanishes, since
  \[
    \colim_{Y \in \LMod_A^\ind(\cat{C})_{/X}} \cofib(Y \to X) \iso
    \cofib\l(\colim_{Y \in \LMod_A^\ind(\cat{C})_{/X}} Y \to X\r) \iso 0,
  \]
  the first equivalence using that colimits commute with cofibers and that
  \[
    X \iso \colim_{Y \in \LMod_A^\ind(\cat{C})_{/X}} X
  \]
  since $\LMod_A^\ind(\cat{C})_{/X}$ is filtered, and the second equivalence following from the first part of the proposition.
\end{proof}

\begin{proposition}
  \label{bi--ta--univ}
  Let $\kappa$ be a regular cardinal such that $\cat{C}$ and $(-)^A : \LMod_A(\cat{C}) \to \cat{C}$ are $\kappa$-accessible. Then the following statements hold:
  \begin{enumerate}
  \item \label{bi--ta--univ--initial}
    The canonical transformation $(-)^A \to (-)^{\tate A}$ exhibits the Tate construction $(-)^{\tate A}$ as the initial $\kappa$-accessible, exact functor $F : \LMod_A(\cat{C}) \to \cat{C}$ under $(-)^A$ satisfying $F(X) \iso 0$ for all $X \in \LMod_A^\ind(\cat{C})$.
  \item \label{bi--ta--univ--colim}
    Suppose given a natural transformation $c : (-)^A \to F$ of functors $\LMod_A(\cat{C}) \to \cat{C}$ such that $F$ is $\kappa$-accessible and $F(X) \iso 0$ for all $X \in \LMod_A^\ind(\cat{C})$. Then the induced transformation $(-)^{\tate A} \to F$ (as per \cref{bi--ta--univ--initial}) is an equivalence if and only if $\fib(c)$ preserves colimits.
  \item \label{bi--ta--univ--monoidal-initial}
    There is a unique pair of lax symmetric monoidal structure on the Tate construction $(-)^{\tate A} : \LMod_A(\cat{C}) \to \cat{C}$ and lax symmetric monoidal structure on the natural transformation $(-)^A \to (-)^{\tate A}$. Moreover, this exhibits $(-)^{\tate A}$ as the initial $\kappa$-accessible, exact lax symmetric monoidal functor $F : \LMod_A(\cat{C}) \to \cat{C}$ under $(-)^A$ satisfying $F(X) \iso 0$ for all $X \in \LMod_A^\ind(\cat{C})$. 
  \end{enumerate}
\end{proposition}

\begin{proof}
  This follows from \cref{bi--ta--induced-ideal,bi--ta--induced-tate-vanishing,bi--ta--induced-generate} using the the results of \cite[\S I.3]{nikolaus-scholze--tc}. To be more precise, see Theorems I.3.1 and I.4.1 and their proofs in \cite{nikolaus-scholze--tc}, and note that the preceding results here just named fill in the role of Lemma I.3.8 there.
\end{proof}

\begin{remark}
  \label{bi--ta--agree}
  It follows from \cref{bi--ta--univ} that, in the case $\cat{C} = \Spt$ and $A = \num{S}[G]$ for $G$ any finite group or abelian compact Lie group, the norm map and Tate construction defined in this section agree with the usual ones under the symmetric monoidal equivalence $\LMod_A(\cat{C}) \iso \Fun(\clspc G,\Spt)$ (\cref{bi--tn--rep}), as it is shown in \cite[Theorems I.3.1 and I.4.1]{nikolaus-scholze--tc} that the latter enjoy the same universal property.
\end{remark}

\begin{remark}[Naturality]
  \label{bi--ta--natural}
  Let $\cat{D}$ be another stable presentable symmetric monoidal $\infty$-category and let $F : \cat{C} \to \cat{D}$ be a symmetric monoidal functor. Then $F(A)$ is a cocommutative bialgebra in $\cat{D}$ satisfying the same hypotheses as $A$, in particular with $\omega_{F(A)} \iso F(\omega_A)$. By \cref{bi--tn--functorial}, $F$ determines a symmetric monoidal functor $\LMod_A(\cat{C}) \to \LMod_{F(A)}(\cat{D})$, which here we will just denote by $F$. This begets a natural transformation $\phi_A : F(X)_{F(A)} \to F(X_A)$ and a lax symmetric monoidal natural transformation $\phi^A : F(X^A) \to F(X)^{F(A)}$. It is straightforward to check that the composite
  \[
    F(X)_{F(A)} \lblto{\phi_A} F(X_A) \lblto{F(\Nm_X)} F(X^A) \lblto{\phi^A} F(X)^{F(A)}
  \]
  is canonically homotopic to $\Nm_{F(X)}$.

  Suppose now that $F$ preserves colimits. Then $\phi_A$ is an equivalence, and the previous paragraph gives us a commutative diagram
  \[
    \begin{tikzcd}
      F(X_A) \ar[r, "F(\Nm_X)"] \ar[d, "\phi_A^{-1}", swap] &
      F(X^A) \ar[d, "\phi^A"] \\
      F(X)_{F(A)} \ar[r, "\Nm_{F(X)}"] &
      F(X)^{F(A)}
    \end{tikzcd}
  \]
  Taking horizontal cofibers, we obtain a natural transformation $\phi^{\tate A} : F(X^{\tate A}) \to F(X)^{\tate F(A)}$.

  We next consider the lax symmetric monoidal structures on the functors $(-)^{\tate A}$ and $(-)^{\tate F(A)}$ supplied by \cref{bi--ta--univ}. We claim that $\phi^{\tate A}$ is canonically a transformation of lax symmetric monoidal functors. To see this, let $G : \cat{D} \to \cat{C}$ denote the right adjoint to $F$, which exists by the adjoint functor theorem. The symmetric monoidal structure on $F$ determines a lax symmetric monoidal structure on $G$, and it suffices to promote the adjoint transformation $X^{\tate A} \to G(F(X)^{\tate F(A)})$ to one of lax symmetric monoidal functors. Since $F$ sends induced $A$-modules to induced $F(A)$-modules, this follows from \cref{bi--ta--univ} (where we choose our regular cardinal $\kappa$ large enough such that $G$ is $\kappa$-accessible).

  Finally, if we assume moreover that $F$ preserves limits, then we deduce from \cref{bi--ta--fixed-formula} that $\phi^A$ is an equivalence, and hence $\phi^{\tate A}$ is as well.
\end{remark}


\section{Gradings and filtrations}
\label{gf}

Graded and filtered algebraic structures are central in this paper. This section is devoted to generalities concerning graded and filtered objects in a stable presentable symmetric monoidal $\infty$-category, following \cite[\S 3]{lurie--rot} to a great extent. See \cite{moulinos--filtrations} for another account.

In \cref{gf--df}, we set out our basic framework for working with graded and filtered objects; in \cref{gf--kd}, we discuss the relationship between graded and filtered objects as mediated by the ``associated graded'' construction (an instance of Koszul duality, and the first appearance of ``homotopy-coherent cochain complexes'' in the paper); in \cref{gf--t}, we discuss various t-structures in the graded and filtered settings that will be useful later on, including a new and more general perspective on the ``Beilinson t-structure'' furnished by the results of \cref{gf--kd} (see \cite{ariotta--complex} for more on this).


\subsection{Definitions}
\label{gf--df}

\begin{notation}
  \label{gf--df--gf}
  We regard the set of integers $\num{Z}$ as a category via the standard partial order $\le$, and we let $\num{Z}^\ds$ denote the set of integers regarded as a discrete category (i.e. with only identity morphisms). We use the following notation for any presentable symmetric monoidal $\infty$-category $\cat{C}$:
  \begin{enumerate}[leftmargin=*]
  \item Let $\Gr(\cat{C})$ denote the $\infty$-category $\Fun(\num{Z}^\ds,\cat{C})$, which we regard as a (stable presentable) symmetric monoidal $\infty$-category via the Day convolution symmetric monoidal structure coming from addition on $\num{Z}$. We refer to objects of $\Gr(\cat{C})$ as \emph{graded objects of $\cat{C}$}, and sometimes denote them by $X^*$ or $\{X^n\}_{n \in \num{Z}}$. We denote the Day convolution tensor product of graded objects by $\oast$; it is given by the formula
    \[
      (X^* \oast Y^*)^n \iso \coprod_{i+j=n} X^i \otimes Y^j.
    \]
    For $n \in \num{Z}$, restriction along the inclusion $\{n\} \inj \num{Z}^\ds$ defines an \emph{evaluation functor} $\ev^n : \Gr(\cat{C}) \to \cat{C}$, sending $X^* \mapsto X^n$. Each of these admits a fully faithful left adjoint \emph{insertion functor} $\ins^n : \cat{C} \to \Gr(\cat{C})$, given by left Kan extension along the same inclusion, or concretely by the formula
    \[
      \ins^n(X)^m \iso
      \begin{cases}
        X & m=n \\
        0 & \text{otherwise}.
      \end{cases}
    \]
    Note that right Kan extension would produce the same result, i.e. $\ins^n$ is also right adjoint to $\ev^n$. We note that $\ins^0$ has a canonical symmetric monoidal structure, since $\{0\} \inj \num{Z}$ is a map of commutative monoids, and hence $\ev^0$ has a canonical lax symmetric monoidal structure.

  \item We let $\Fil(\cat{C})$ denote the $\infty$-category $\Fun(\num{Z}^\op,\cat{C})$, which we again regard as a (stable presentable) symmetric monoidal $\infty$-category via the Day convolution structure coming from addition on $\num{Z}$. We refer to objects of $\Fil(\cat{C})$ as \emph{filtered objects of $\cat{C}$}, and sometimes denote them by $X^\star$ or depict them by diagrams
    \[
      \cdots \to X^2 \to X^1 \to X^0 \to X^{-1} \to X^{-2} \to \cdots.
    \]
    We denote the Day convolution tensor product of filtered objects by $\ostar$; it is given by the formula
    \[
      (X^\star \ostar Y^\star)^n \iso \colim_{i+j \ge n} X^i \otimes Y^j.
    \]
    Just as for graded objects, for $n \in \num{Z}$, restriction along the inclusion $\{n\} \inj \num{Z}$ defines an \emph{evaluation functor} $\ev^n : \Fil(\cat{C}) \to \cat{C}$, sending $X^\star \mapsto X^n$. And again each of these admits a fully faithful left adjoint \emph{insertion functor} $\ins^n : \cat{C} \to \Fil(\cat{C})$ given by left Kan extension, here given by the formula
    \[
      \ins^n(X)^m \iso
      \begin{cases}
        X & m \le n \\
        0 & \text{otherwise}
      \end{cases}
    \]
    (when we need to refer to both the filtered and graded insertion functors in proximity, we will add extra decorations to clarify), with $\ins^0$ again having a canonical symmetric monoidal structure and $\ev^0$ a canonical lax symmetric monoidal structure.
    
  \item \label{gf--df--gf--und}
    Restriction along the evident functor $\num{Z}^\ds \to \num{Z}^\op$ defines a functor $\und : \Fil(\cat{C}) \to \Gr(\cat{C})$, extracting the \emph{underlying graded object} of a filtered object. Given a filtered object $X^\star$, we may denote its underlying graded object by $X^*$.
    
  \item We have a functor $\gr : \Fil(\cat{C}) \to \Gr(\cat{C})$ extracting the \emph{associated graded object} of filtered object; it is given by the formula $\gr(X)^n \iso \cofib(X^{n+1} \to X^n)$.

  \item \label{gf--df--gf--spl} The functor $\und : \Fil(\cat{C}) \to \Gr(\cat{C})$ admits a left adjoint, given by left Kan extension along the inclusion $\num{Z}^\ds \inj \num{Z}^\op$. We denote this left adjoint by $\spl : \Gr(\cat{C}) \to \Fil(\cat{C})$; it is described concretely by the formula $\spl(X)^i \iso \coprod_{j \ge i} X^j$. We say a filtered object $X \in \Fil(\cat{C})$ is \emph{split} if there exists an equivalence $\spl(\gr(X)) \iso X$ in $\Fil(\cat{C})$.
    
  \item If $X$ is a graded (resp. filtered) object of $\cat{C}$, then, for $n \in \num{Z}$, we let $X(n)$ denote the graded (resp. filtered) object defined by $X(n)^m \iso X^{m-n}$. Note that, in both the graded and filtered settings, for each $n \in \num{Z}$, we have natural equivalences $\ins^n(X) \iso \ins^0(X)(n)$ for $X \in \cat{C}$. We will sometimes leave the functor $\ins^0$ implicit, i.e. by default regard objects of $\cat{C}$ as graded/filtered objects in grading/filtration $0$, and by this token write $X(n)$ to mean $\ins^n(X)$ for $X \in \cat{C}$ and $n \in \num{Z}$.
  \end{enumerate}
\end{notation}

\begin{variant}
  \label{gf--df--gf-nonnegative}
  We will also consider \emph{nonnegative} graded and filtered objects. These are defined by replacing the set of integers $\num{Z}$ in \cref{gf--df--gf} with the set of nonnegative integers $\num{Z}_{\ge 0}$. If $\cat{C}$ is a presentable symmetric monoidal $\infty$-category, we will denote the resulting $\infty$-categories of nonnegatively graded and filtered objects by $\Gr^{\ge 0}(\cat{C})$ and $\Fil^{\ge 0}(\cat{C})$. The rest of the constructions described in \cref{gf--df--gf} (the symmetric monoidal structures, the evaluation and insertion functors, the underlying and associated graded functors) also go through in the nonnegative setting, and we will use the same notation for them.

  There are evident restriction functors $\Gr(\cat{C}) \to \Gr^{\ge 0}(\cat{C})$ and $\Fil(\cat{C}) \to \Fil^{\ge 0}(\cat{C})$, both of which we will denote by $\ev^{\ge 0}$. Similar to the evaluation functors discussed in \cref{gf--df--gf}, these have fully faithful left adjoints, both of which we will denote by $\ins^{\ge 0}$ (and again, in the graded setting this is also right adjoint to $\ev^{\ge 0}$). Similar to $\ins^0$, the functors $\ins^{\ge 0}$ are canonically symmetric monoidal, and hence $\ev^{\ge 0}$ canonically lax symmetric monoidal. We often implicity identify $\Gr^{\ge0}(\cat{C})$ and $\Fil^{\ge0}(\cat{C})$ with full subcategories of $\Gr(\cat{C})$ and $\Fil(\cat{C})$ via the embeddings $\ins^{\ge0}$. 
\end{variant}

\begin{notation}
  \label{gf--df--calg}
  Let $\cat{C}$ be a presentable symmetric monoidal $\infty$-category, and regard $\Gr(\cat{C})$ and $\Fil(\cat{C})$ as such as in \cref{gf--df--gf}. We will refer to commutative algebras in $\Gr(\cat{C})$ as \emph{graded commutative algebras in $\cat{C}$}, and denote $\CAlg(\Gr(\cat{C}))$ by $\Gr\CAlg(\cat{C})$. Similarly, we refer to commutative algebras in $\Fil(\cat{C})$ as \emph{filtered commutative algebras in $\cat{C}$}, and denote $\CAlg(\Fil(\cat{C}))$ by  $\Fil\CAlg(\cat{C})$. We also have \emph{nonnegatively} graded and filtered commutative algebras in $\cat{C}$, the $\infty$-categories of which we denote by $\Gr^{\ge 0}\CAlg(\cat{C})$ and $\Fil^{\ge 0}\CAlg(\cat{C})$ respectively. 
\end{notation}

\begin{remark}
  \label{gf--df--sptalg}
  In \cite{lurie--rot}, the notions in \cref{gf--df--gf} are discussed only in the case $\cat{C}=\Spt$. However, this is the universal case in the stable setting: for any stable presentable symmetric monoidal $\infty$-category $\cat{C}$, there are canonical equivalences $\Gr(\Spt) \otimes \cat{C} \isoto \Gr(\cat{C})$ and $\Fil(\Spt) \otimes \cat{C} \isoto \Fil(\cat{C})$ in $\CAlg(\PrL)$. Moreover, the functors $\und,\gr : \Fil(\cat{C}) \to \Gr(\cat{C})$ are obtained by tensoring the functors $\und,\gr : \Fil(\Spt) \to \Gr(\Spt)$ with $\cat{C}$ in $\PrL$.
\end{remark}


\subsection{Koszul duality}
\label{gf--kd}

Our goal in this subsection is to analyze the relationship between filtered and graded objects in a stable presentable symmetric monoidal $\infty$-category $\cat{C}$. The analysis can be framed by the following question: To what extent, and how, can one recover a filtered object $X \in \Fil(\cat{C})$ from its associated graded object $\gr(X) \in \Gr(\cat{C})$? The answer turns out to fit into the framework of \emph{Koszul} (or \emph{bar-cobar}) \emph{duality}. We'll begin with a discussion of the general framework, and subsequently explain how it applies to graded and filtered objects.

Let us first recall the basic features of Koszul duality between augmented algebras and coalgebras, following \cite[\S5.2.2]{lurie--algebra}.

\begin{recollection}
  \label{gf--kd--bar}
  Let $\cat{C}$ be a monoidal $\infty$-category admitting geometric realizations. Let $A$ be an augmented algebra object of $\cat{C}$. We let $\Bar(A) \in \cat{C}$ denote the \emph{bar construction} on $A$: this is the geometric realization of a canonical simplicial diagram $\Delta^\op \to \cat{C}$ that on objects sends $[n] \to A^{\otimes n}$, and under the assumption that the tensor product $\otimes : \cat{C} \times \cat{C} \to \cat{C}$ preserves geometric realizations, this computes the relative tensor product $\unit \otimes_A \unit$ (where $\unit$ denotes the unit object).

  The bar construction furthermore satisfies the following universal property \cite[Definition 5.2.2.1, Lemma 5.2.2.6, Remark 5.2.2.8]{lurie--algebra}. Let $\BMod{A}{A}(\cat{C})$ denote the $\infty$-category of $A$-$A$-bimodule objects of $\cat{C}$ and let $\rho : \cat{C} \to \BMod{A}{A}(\cat{C})$ denote the restriction functor induced by the augmentation of $A$. Then there is a canonical map $A \to \rho(\Bar(A))$ in $\BMod{A}{A}(\cat{C})$ such that the induced map
  \[
    \Map_\cat{C}(\Bar(A),X) \to \Map_{\BMod{A}{A}(\cat{C})}(A,\rho(X))
  \]
  is a homotopy equivalence for all $X \in \cat{C}$.

  We then have the formally dual notion: assume instead that $\cat{C}$ admits totalizations and let $B$ be an augmented coalgebra object of $\cat{C}$. Then we let $\Cobar(B) \in \cat{C}$ denote the \emph{cobar construction} on $B$, which is the totalization of a canonical cosimplicial diagram $\Delta^\op \to \cat{C}$ that on objects sends $[n] \to B^{\otimes n}$, and satisfies a universal property dual to the one stated for the bar construction.

  Finally, we recall that, assuming $\cat{C}$ admits both geometric realizations and totalizations, the constructions $A \mapsto \Bar(A)$ and $B \mapsto \Cobar(B)$ canonically promote to a pair of adjoint functors
  \[
    \Bar : \Alg^\aug(\cat{C}) \fromto \cAlg^\aug(\cat{C}) : \Cobar,
  \]
  where $\Alg^\aug(\cat{C})$ and $\cAlg^\aug(\cat{C})$ denote the $\infty$-categories of augmented algebra objects and augmented coalgebra objects of $\cat{C}$ respectively \cite[Remark 5.2.2.19]{lurie--algebra}. In particular, the bar (resp. cobar) construction on an augmented algebra (resp. coalgebra) carries a canonical coalgebra (resp. algebra) structure.
\end{recollection}

\begin{remark}
  \label{gf--kd--bar-bi}
  Let $\cat{C}$ be a symmetric monoidal $\infty$-category admitting geometric realizations, and assume that the tensor product $\otimes : \cat{C} \times \cat{C} \to \cat{C}$ preserves geometric realizations. Let $A$ be an augmented \emph{commutative} algebra object of $\cat{C}$. Since the forgetful functor $\Alg(\CAlg(\cat{C})) \to \CAlg(\cat{C})$ is an equivalence, we may regard $A$ as an augmented algebra object in $\CAlg(\cat{C})$, and hence perform the bar construction in $\CAlg(\cat{C})$ to obtain $\Bar(A)$ as an object in $\cAlg(\CAlg(\cat{C}))$, i.e. a commutative bialgebra object of $\cat{C}$. Since the forgetful functor $\CAlg(\cat{C}) \to \cat{C}$ preserves geometric realizations, the underlying coalgebra of $\Bar(A)$ agrees with the bar construction on underlying augmented algebra of $A$.
\end{remark}

The following proposition supplies an alternative construction of a coalgebra structure on  bar construction of an augmented algebra.\footnote{I learned this statement from lecture notes of Lurie. That this alternative coalgebra structure agrees with the one constructed in \cite[\S5.2.2]{lurie--algebra} is established in \cite[\textsection 3.4]{brantner-campos-nuiten--pd-operads}. However, this agreement will not actually be necessary for the particular situation in which we will invoke both perspectives (the proof of \cref{gf--kd--mod}).}

\begin{proposition}
  \label{gf--kd--bar-coend}
  Let $\cat{C}$ be a monoidal $\infty$-category admitting geometric realizations. Let $\unit$ denote the unit object and let $A$ be an augmented algebra object of $\cat{C}$. Let $\cat{M} := \LMod_A(\cat{C})$, which we regard as right tensored over $\cat{C}$. Then $\Bar(A) \in \cat{C}$ is a coendomorphism object for $\unit \in \cat{M}$.
\end{proposition}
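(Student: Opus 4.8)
The plan is to verify the universal property directly: to exhibit a map $c : \unit \to \unit \otimes \Bar(A)$ in $\cat{M}$ for which the induced natural transformation $\Map_{\cat{C}}(\Bar(A), X) \to \Map_{\cat{M}}(\unit, \unit \otimes X)$ is an equivalence for all $X \in \cat{C}$. First I would unwind the target functor: the right $\cat{C}$-module structure on $\cat{M} = \LMod_A(\cat{C})$ sends $(N,X)$ to $N \otimes X$ with $A$ acting through $N$, and ``$\unit \in \cat{M}$'' denotes the trivial module $\epsilon^*(\unit)$ --- the unit of $\cat{C}$ with $A$ acting via the augmentation $\epsilon : A \to \unit$ --- so, as $\epsilon^*$ is right $\cat{C}$-linear, $\unit \otimes X \iso \epsilon^*(X)$. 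Thus the claim is that $\Bar(A)$ corepresents $X \mapsto \Map_{\LMod_A(\cat{C})}(\unit, \epsilon^*(X))$. (Conceptually, $\Bar(A)$ should be the value at $\unit$ of the left adjoint $\unit \otimes_A (-)$ of $\epsilon^*$, which is what furnishes the coalgebra structure; but since $\LMod_A(\cat{C})$ need not admit all geometric realizations under the present hypotheses, it is cleanest to establish corepresentability by hand.)

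The key device is the standard bar resolution of the trivial module by free modules. Let $P_\bullet : \Delta^\op \to \LMod_A(\cat{C})$ be this resolution, so that $P_n$ is the free left $A$-module on $A^{\otimes n} \in \cat{C}$ and the face and degeneracy maps are assembled from the multiplication, unit, and augmentation of $A$. Applying the forgetful functor $U : \LMod_A(\cat{C}) \to \cat{C}$, the augmented simplicial object $U(P_\bullet) \to \unit$ acquires an extra degeneracy and is therefore split; since $\cat{C}$ admits geometric realizations and $U$ is monadic, $U$ creates the geometric realization of the $U$-split simplicial object $P_\bullet$, so that $|P_\bullet| \iso \unit$ in $\LMod_A(\cat{C})$.

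Now the computation: for any $X \in \cat{C}$, the resolution together with the free/forgetful adjunction gives natural equivalences
\begin{align*}
  \Map_{\cat{M}}(\unit, \unit \otimes X)
  &\iso \Map_{\LMod_A(\cat{C})}(|P_\bullet|, \epsilon^* X)
   \iso \lim_{[n] \in \Delta} \Map_{\LMod_A(\cat{C})}(P_n, \epsilon^* X) \\
  &\iso \lim_{[n] \in \Delta} \Map_{\cat{C}}(A^{\otimes n}, X).
\end{align*}
A short check identifies the cosimplicial object on the right with $[n] \mapsto \Map_{\cat{C}}(A^{\otimes n}, X)$ equipped with the cosimplicial structure dual to the bar simplicial object $[n] \mapsto A^{\otimes n}$ of \cref{gf--kd--bar}; hence its limit is $\Map_{\cat{C}}\bigl(\colim_{[n] \in \Delta^\op} A^{\otimes n}, X\bigr) = \Map_{\cat{C}}(\Bar(A), X)$. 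By the Yoneda lemma this natural equivalence is represented by the desired map $c : \unit \to \unit \otimes \Bar(A)$ (the image of $\id_{\Bar(A)}$), which exhibits $\Bar(A)$ as a coendomorphism object for $\unit \in \cat{M}$.

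The step I expect to be the main obstacle is proving $|P_\bullet| \iso \unit$ inside $\LMod_A(\cat{C})$ --- that the bar resolution of the trivial module converges there --- because this category need not have general geometric realizations; this is exactly what the monadicity/extra-degeneracy argument is for. The rest is routine bookkeeping: matching the cosimplicial structure coming from $P_\bullet$ with that of the bar construction, and observing (via Yoneda) that the abstract equivalence is corepresented by an honest map $c$ in $\cat{M}$.
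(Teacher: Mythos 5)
Your proof is correct, but it takes a genuinely different route from the paper's. The paper invokes the universal property of $\Bar(A)$ as an $A$-$A$-bimodule recalled in \cref{gf--kd--bar} (namely $\Map_{\cat{C}}(\Bar(A),X) \iso \Map_{\BMod{A}{A}(\cat{C})}(A,\rho(X))$, from \cite[\S5.2.2]{lurie--algebra}), and then uses a single adjunction step --- writing $A \iso \unit \otimes A$ as the free right $A$-module on $\unit \in \LMod_A(\cat{C})$ --- to pass from bimodule maps to left-module maps, giving $\Map_{\LMod_A(\cat{C})}(\unit,\rho'(X))$. You instead bypass the bimodule universal property entirely and argue from the colimit description $\Bar(A) \iso \colim_{\Delta^\op} A^{\otimes\bullet}$: you resolve the trivial module $\unit$ by the bar resolution $P_\bullet$ of free left $A$-modules, use the extra-degeneracy argument and the fact that the monadic forgetful functor $U$ creates $U$-split geometric realizations to deduce $|P_\bullet| \iso \unit$ in $\LMod_A(\cat{C})$, and then compute $\Map_{\cat{M}}(\unit, \unit \otimes X)$ termwise via the free-forget adjunction, matching the resulting cosimplicial object with $\Map_{\cat{C}}(A^{\otimes\bullet},X)$. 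The paper's argument is shorter because it leans on a citable universal property; yours is more self-contained and makes the role of the bar resolution (and hence the relation between $\Bar(A)$ and the monadic resolution of the trivial module) explicit, at the cost of some indexing bookkeeping in the cosimplicial identification. Both reach the same conclusion, and your handling of the potential pitfall --- that $\LMod_A(\cat{C})$ is not assumed to have all geometric realizations, so one must use monadicity to realize $P_\bullet$ --- is exactly right.
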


\begin{proof}
  Consider the canonical map $A \to \rho(\Bar(A))$ discussed in \cref{gf--kd--bar} and the unit map $\unit \to A$. These induce homotopy equivalences
  \[
    \Map_\cat{C}(\Bar(A),X) \isoto \Map_{\BMod{A}{A}(\cat{C})}(A,\rho(X)) \isoto \Map_{\LMod_A(\cat{C})}(\unit, \rho'(X)),
  \]
  where $\rho : \cat{C} \to \BMod{A}{A}(\cat{C})$ and $\rho' : \cat{C} \to \LMod_A(\cat{C})$ denote the forgetful functors induced by the augmentation of $A$. Noting that $\rho'(X)$ is naturally equivalent to the tensoring of $\unit \in \cat{M}$ with $X \in \cat{C}$, this exhibits $\Bar(A)$ as a coendomorphism object for $\unit \in \cat{M}$.
\end{proof}

What we will actually be relevant for us is the dual statement for the cobar construction:

\begin{variantspecial}
  \label{gf--kd--cobar-end}
  Let $\cat{C}$ be a monoidal $\infty$-category admitting totalizations. Let $B$ be an augmented coalgebra in $\cat{C}$. Let $\cat{M} := \cLMod_B(\cat{C})$, which we regard as right tensored over $\cat{C}$. Then $\Cobar(B) \in \cat{C}$ is an endomorphism object for $\unit \in \cat{M}$.
\end{variantspecial}

We now come to the Koszul duality result that we are ultimately interested in here, taking place at the level of categories of modules and comodules:

\begin{proposition}
  \label{gf--kd--mod}
  Let $\cat{C}$ be a presentable symmetric monoidal $\infty$-category. Let $\unit \in \cat{C}$ denote the unit object. Let $A$ be an augmented commutative algebra object of $\cat{C}$. Assume that the augmentation exhibits $\unit$ as a dualizable $A$-module, so that $\Bar(A) \iso \unit \otimes_A \unit$ is dualizable in $\cat{C}$. We regard $\Bar(A)$ as a commutative bialgebra object of $\cat{C}$ by \cref{gf--kd--bar-bi}, and thereby regard $\cLMod_{\Bar(A)}(\cat{C})$ as a presentable symmetric monoidal $\infty$-category by \cref{bi--tn--main-cor,bi--tn--presentable}. The following statements then hold:
  \begin{enumerate}
  \item \label{gf--kd--mod--functor} The symmetric monoidal functor $\unit \otimes_A - : \Mod_A(\cat{C}) \to \cat{C}$ canonically lifts along the (symmetric monoidal) forgetful functor $U : \cLMod_{\Bar(A)}(\cat{C}) \to \cat{C}$ to a symmetric monoidal functor $F : \Mod_A(\cat{C}) \to \cLMod_{\Bar(A)}(\cat{C})$.
  \item \label{gf--kd--mod--adjoint} The functor $F$ in \cref{gf--kd--mod--functor} admits a fully faithful right adjoint $G : \cLMod_{\Bar(A)}(\cat{C}) \to \Mod_A(\cat{C})$.
  \item \label{gf--kd--mod--local} If $\cat{C}$ is stable, then the essential image of the functor $G$ in \cref{gf--kd--mod--adjoint} is the full subcategory $\Mod_A(\cat{C})_\unit \subseteq \Mod_A(\cat{C})$ spanned by the \emph{$\unit$-local} objects, in the sense of Bousfield localization.
  \end{enumerate}
\end{proposition}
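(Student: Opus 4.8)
For part \cref{gf--kd--mod--functor}, the plan is to obtain $F$ from the universal property of $\Mod_A(\cat C)$ as the free object of $\CAlg(\PrL)_{\cat C/}$ equipped with an augmentation of $A$: for any $\cat D \in \CAlg(\PrL)_{\cat C/}$ with structure functor $g$, colimit-preserving symmetric monoidal functors $\Mod_A(\cat C) \to \cat D$ over $\cat C$ correspond to augmentations $g(A) \to \unit_{\cat D}$ in $\CAlg(\cat D)$. I apply this with $\cat D = \cLMod_{\Bar(A)}(\cat C)$, which lies in $\CAlg(\PrL)_{\cat C/}$ by \cref{bi--tn--main-cor} and \cref{bi--tn--presentable}. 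The augmentation $\epsilon \colon A \to \unit$ in $\CAlg(\cat C)$ maps under $g$ to a morphism $g(A) \to g(\unit) = \unit_{\cat D}$ of commutative algebras in $\cLMod_{\Bar(A)}(\cat C)$, which is the required augmentation and hence produces $F$. Since $U \circ g = \id_{\cat C}$, the composite $U \circ F$ is over $\cat C$ and is classified by $U(g(\epsilon)) = \epsilon$, which also classifies $\unit \otimes_A -$; thus $U \circ F \iso \unit \otimes_A -$. (Alternatively one can build $F$ via \cref{bi--du--linear-comonad} applied to $(\Mod_A(\cat C),\unit\otimes_A -)$, whose right adjoint---restriction along $\epsilon$---is $\cat C$-linear because $\unit$ is $A$-dualizable; that route then needs the extra step of identifying the coalgebra structure it yields on $\Bar(A)$ with the bialgebra structure of \cref{gf--kd--bar-bi}.)

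For part \cref{gf--kd--mod--adjoint}: $F$ preserves colimits, so the adjoint functor theorem supplies a right adjoint $G$, and I must show the counit $FG \to \id$ is an equivalence. The forgetful functor $U \colon \cLMod_{\Bar(A)}(\cat C) \to \cat C$ is conservative and comonadic, with right adjoint the cofree functor $\Bar(A) \otimes -$. First, for a cofree comodule $\Bar(A) \otimes X$, combining the adjunctions $U \dashv (\Bar(A) \otimes -)$ and $(\unit \otimes_A -) \dashv \rho$ (with $\rho$ restriction along $\epsilon$) identifies $G(\Bar(A)\otimes X) \iso \rho(X)$ and then $FG(\Bar(A)\otimes X) \iso \unit \otimes_A \rho(X) \iso \Bar(A)\otimes X$; the comodule counit axiom forces the underlying map of the counit to be the identity, so it is an equivalence. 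For general $N$, comonadicity of $U$ lets one write $N$ as the totalization of its cobar resolution by cofree comodules; each term is $\iso F(\rho(Y))$ for some $Y \in \cat C$ by the previous point, so the resolution has the form $F(Q^\bullet)$ for a cosimplicial object $Q^\bullet$ of $\Mod_A(\cat C)$, whence $G(N) \iso \operatorname{Tot}(Q^\bullet)$ (as $G$ preserves limits) and $FG(N) \iso F(\operatorname{Tot} Q^\bullet)$. It then suffices to see $F$ preserves this totalization, i.e.\ $F(\operatorname{Tot} Q^\bullet) \iso \operatorname{Tot}(FQ^\bullet) = N$: apply $U$; on the right $U$ preserves the cobar totalization by comonadicity, while $UF = \unit \otimes_A -$ preserves all totalizations since $\unit$ is $A$-dualizable, so the two sides agree after $U$, and $U$ is conservative.

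For part \cref{gf--kd--mod--local}: with $\cat C$ stable, \cref{gf--kd--mod--adjoint} exhibits $\cLMod_{\Bar(A)}(\cat C)$ as a reflective localization of $\Mod_A(\cat C)$ with reflector $F$, so the essential image of $G$ is the subcategory of $F$-local objects---those $M$ for which $\Map(-,M)$ inverts every map inverted by $F$. Since $U$ is conservative and $UF = \unit \otimes_A -$, a map is inverted by $F$ exactly when it is inverted by $\unit \otimes_A -$, and in the stable setting the cofibers of such maps are precisely the $\unit$-acyclic objects (those $N$ with $\unit \otimes_A N \iso 0$), which form the localizing subcategory one localizes against. Hence the $F$-local objects are exactly those right-orthogonal to the $\unit$-acyclics, i.e.\ the $\unit$-local objects in the sense of Bousfield localization.

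The main obstacle is the reduction in part \cref{gf--kd--mod--adjoint}: $F$ is only known a priori to preserve colimits, yet $N$ must be reconstructed from cofree comodules by a \emph{totalization}, and the argument goes through only because two preservation facts are threaded through the conservativity of $U$---that comonadic $U$ preserves the canonical cobar totalization, and that $\unit \otimes_A -$ preserves all limits thanks to the hypothesis that $\unit$ is a dualizable $A$-module. Getting these compatibilities lined up (and, for part \cref{gf--kd--mod--functor}, making sure one is working with the bialgebra structure of \cref{gf--kd--bar-bi}) is where the real work lies; the rest is formal.
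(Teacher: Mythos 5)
Your proof of part (\emph{a}) has a genuine gap that invalidates the later parts. The universal property you invoke is fine, and the structure map $g \colon \cat{C} \to \cLMod_{\Bar(A)}(\cat{C})$ is the corestriction $\rho$ along the unit $\unit \to \Bar(A)$ (the canonical section of $U$). But the augmentation you feed into the universal property, namely $\alpha = g(\epsilon) = \rho(\epsilon)$, is the \emph{wrong} one. Since $\rho$ is symmetric monoidal and colimit-preserving, the resulting functor is
\[
  F(M) \;\iso\; \unit_{\cat{D}} \otimes_{\rho(A)} \rho(M) \;\iso\; \rho\bigl(\unit \otimes_A M\bigr),
\]
i.e.\ the \emph{trivial} lift: $\unit \otimes_A M$ equipped with the trivial $\Bar(A)$-coaction. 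This does lift $\unit \otimes_A -$ along $U$, but it discards exactly the Koszul/descent data that the coaction is supposed to encode. In particular, part (\emph{b}) fails for this $F$: on a cofree comodule, $FG(\Bar(A) \otimes X) = F(\epsilon^*X) = \rho(\Bar(A) \otimes X)$ carries the \emph{trivial} coaction rather than the cofree one, and if you unwind the adjunctions the underlying map of the counit $FG(\Bar(A)\otimes X) \to \Bar(A)\otimes X$ comes out to be $(\eta_{\Bar(A)} \circ \epsilon_{\Bar(A)}) \otimes \id_X$ (unit after counit on the $\Bar(A)$ factor), not the identity. The ``comodule counit axiom'' step you invoke would only close up if the coaction on $F(\epsilon^*X)$ were the cofree comultiplication, which is precisely what fails here. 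The mapping space $\Map_{\CAlg(\cLMod_{\Bar(A)}(\cat{C}))}(\rho(A), \unit_{\cat{D}})$ is strictly larger than the image of $\Map_{\CAlg(\cat{C})}(A,\unit)$ under $\rho$ --- one may identify it with $\Map_{\CAlg(\cat{C})}(A, \Cobar(\Bar(A)))$ using the right adjoint of $\rho$ --- and the correct $F$ corresponds to the canonical unit map $A \to \Cobar(\Bar(A))$ of the bar--cobar adjunction, whereas your choice corresponds to the composite $A \to \unit \to \Cobar(\Bar(A))$. Constructing and identifying that nontrivial map is exactly what the paper's cobar/endomorphism-object argument is for, so it cannot be sidestepped.

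Your parenthetical alternative (via the $\cat{C}$-linear comonad construction of \cref{bi--du--linear-comonad}) is the right idea and is essentially the paper's route, but as you yourself note, it requires identifying the resulting coalgebra structure on $\Bar(A)$ with the bialgebra structure of \cref{gf--kd--bar-bi}, and that identification is not supplied. This is precisely the subtlety the paper handles via \cref{gf--kd--bar-coend} together with the comparison argument between $E$ and $\Cobar(\Bar(A))$ inside $\cLMod_{\Bar(A)}(\CAlg(\cat{C}))$; without it the construction of $F$ is incomplete. Given a correct $F$, your argument for parts (\emph{b}) and (\emph{c}) (cobar resolution, $U$-split totalizations preserved by comonadicity, $\unit \otimes_A -$ preserving totalizations by dualizability) is a reasonable substitute for the paper's appeal to the dual of \cite[Corollary 4.7.3.16]{lurie--algebra}, so the only genuine gap is in (\emph{a}).
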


\begin{proof}
  We begin with \cref{gf--kd--mod--functor}. By the results of \cite[\S4.8.5]{lurie--algebra}, the symmetric monoidal structure on $\cLMod_{\Bar(A)}(\cat{C})$ determines a commutative algebra structure on the endomorphism object $E := \End_{\cLMod_{\Bar(A)}(\cat{C})}(\unit) \in \cat{C}$ and the symmetric monoidal forgetful functor $U$ induces a map of commutative algebras $E \to \unit$. Moreover, producing the desired symmetric monoidal functor $F$ factoring $\unit \otimes_A - : \Mod_A(\cat{C}) \to \cat{C}$ is equivalent to producing a map of augmented commutative algebras $A \to E$. To do so, it suffices to exhibit an equivalence of augmented commutative algebras $E \iso \Cobar(\Bar(A))$, since there is a canonical map of augmented commutative algebras $A \to \Cobar(\Bar(A))$ determined by the bar-cobar adjunction described in \cref{gf--kd--bar} (applied in $\CAlg(\cat{C})$). 

  Let $E' := \Cobar(\Bar(A))$. Let
  \[
    \rho : \cat{C} \to \cLMod_{\Bar(A)}(\cat{C}), \quad
    \rho' : \CAlg(\cat{C}) \to \cLMod_{\Bar(A)}(\CAlg(\cat{C}))
  \]
  denote the corestriction functors induced by the augmentation $\unit \to \Bar(A)$. By \cref{gf--kd--cobar-end}, $E'$ is an endomorphism object of $\unit \in \cLMod_{\Bar(A)}(\CAlg(\cat{C}))$. Thus, the image under $\rho'$ of the augmentation $E \to \unit$ in $\CAlg(\cat{C})$ determines a map of augmented commutative algebras $\alpha : E \to E'$. Conversely, using the universal property of $E$ as an endomorphism object, we obtain a canonical map $\beta : E' \to E$ in $\cat{C}$. One can check that the map $\rho(E) \iso \unit \otimes E \to \unit$ in $\cLMod_{\Bar(A)}(\cat{C})$ exhibiting $E$ as the endomorphism object of $\unit$ is obtained by applying $\rho$ to the augmentation $E \to \unit$ in $\cat{C}$, from which we deduce that the composition $\beta \circ \alpha$ is homotopic to the identity. It now suffices to show that $\beta$ is an equivalence, as then $\alpha$ is too. This follows from the chain of equivalences, natural in $X \in \cat{C}$,
  \begin{align*}
    \Map_\cat{C}(X,E')
    &\iso \Map_{\CAlg(\cat{C})}(\Sym(X),E') \\
    &\iso \Map_{\cLMod_{\Bar(A)}(\CAlg(\cat{C}))}(\rho'(\Sym(X)),\unit) \\
    &\iso \Map_{\cLMod_{\Bar(A)}(\cat{C})}(\rho(X),\unit) \\
    &\iso \Map_\cat{C}(X,E),
  \end{align*}
  where $\Sym : \cat{C} \to \CAlg(\cat{C})$ denotes the left adjoint to the forgetful functor (the composite equivalence is indeed induced by $\beta$).

  Statement \cref{gf--kd--mod--adjoint} follows from applying (the dual form of) \cite[Corollary 4.7.3.16]{lurie--algebra} to the following commutative diagram of $\infty$-categories:
  \[
    \begin{tikzcd}
      \Mod_A(\cat{C}) \ar[rr, "F"] \ar[dr, "\unit \otimes_A -", swap] &
      &
      \cLMod_{\Bar(A)}(\cat{C}) \ar[dl, "U"] \\
      &
      \cat{C}
    \end{tikzcd}
  \]
  For statement \cref{gf--kd--mod--local}, observe that $F$ vanishes on $\unit$-acyclic objects, as $U$ is conservative, and hence factors through the Bousfield localization $\Mod_A(\cat{C})_\unit$ of $\Mod_A(\cat{C})$ at $\unit$. It follows formally that the essential image of the right adjoint $G$ is contained in $\Mod_A(\cat{C})_\unit$, and that there is an induced adjunction $\Mod_A(\cat{C})_\unit \fromto \cLMod_{\Bar(A)}(\cat{C})$. This adjunction is an equivalence, since the right adjoint remains fully faithful and now by construction the left adjoint is conservative.
\end{proof}

\begin{remark}[Naturality]
  \label{gf--kd--mod-natural}
  In the situation of \cref{gf--kd--mod}, suppose given another augmented commutative algebra $B$, whose augmentation exhibits $\unit$ as a dualizable $B$-module, so that we also obtain a symmetric monoidal functor $F' : \Mod_B(\cat{C}) \to \cLMod_{\Bar(B)}(\cat{C})$. It follows from the proof of \cref{gf--kd--mod} that, for any map of augmented commutative algebras $\phi : A \to B$, the diagram of symmetric monoidal $\infty$-categories
  \[
    \begin{tikzcd}
      \Mod_A(\cat{C}) \ar[r, "F"] \ar[d, "B \otimes_A -"] &
      \cLMod_{\Bar(A)}(\cat{C}) \ar[d] \\
      \Mod_B(\cat{C}) \ar[r, "F'"] &
      \cLMod_{\Bar(B)}(\cat{C})
    \end{tikzcd}
  \]
  commutes, where the right-hand vertical arrow is the corestriction functor of \cref{bi--tn--functorial}\cref{bi--tn--functorial--algebra} determined by the map of commutative bialgebras $\Bar(\phi) : \Bar(A) \to \Bar(B)$.
\end{remark}

We now explain how \cref{gf--kd--mod} applies to the question of recovering a filtered object from its associated graded object. For the remainder of the subsection, we let $\cat{C}$ be a stable presentable symmetric monoidal $\infty$-category and $\unit$ the unit object of $\cat{C}$.

\begin{notation}
  \label{gf--kd--gf-aug}
  Let $\num{S}^\gr \in \Gr(\Spt)$ and $\num{S}^\fil \in \Fil(\Spt)$ denote the unit objects in graded and filtered spectra, so $\num{S}^\gr \iso \ins^0_\gr(\num{S})$ and $\num{S}^\fil \iso \ins^0_\fil(\num{S})$. Set $\num{S}^\gr[t] \ce \und(\num{S}^\fil) \in \CAlg(\Gr(\Spt))$, the commutative algebra structure coming from the canonical lax symmetric monoidal structure on the functor $\und : \Fil(\cat{C}) \to \Gr(\Spc)$ (see \cite[Remark 3.1.3, Notation 3.1.4, Remark 3.1.5]{lurie--rot}). There is a canonical augmentation $\epsilon : \num{S}^\gr[t] \to \num{S}^\gr$ in $\CAlg(\Gr(\Spt))$ (\cite[Remark 3.2.6]{lurie--rot}).
  
  Now let $\unit^\gr \in \Gr(\cat{C})$ and $\unit^\fil \in \Fil(\cat{C})$ denote the analogous graded and filtered unit objects of $\cat{C}$, and let $\unit^\gr[t] \ce \und(\unit^\fil) \in \CAlg(\Gr(\cat{C}))$. These are the images of $\num{S}^\gr,\num{S}^\fil,\num{S}^\gr[t]$ under the canonical symmetric monoidal functors $\Gr(\Spt) \to \Gr(\cat{C})$ and $\Fil(\Spt) \to \Fil(\cat{C})$. We let $\epsilon : \unit^\gr[t] \to \unit^\gr$ denote the image of the map $\epsilon : \num{S}^\gr[t] \to \num{S}^\gr$ of the previous paragraph under the canonical functor $\Gr(\Spt) \to \Gr(\cat{C})$.
\end{notation}

\begin{remark}
  \label{gf--kd--gfdual}
  Observe that the fiber of the augmentation $\epsilon : \unit^\gr[t] \to \unit$ is equivalent to $\unit^\gr[t](-1)$. This implies that $\unit$ is dualizable as a $\unit^\gr[t]$-module.
\end{remark}

\begin{proposition}
  \label{gf--kd--und}
  The functor $\und : \Fil(\cat{C}) \to \Gr(\cat{C})$ induces an equivalence of symmetric monoidal $\infty$-categories
  \[
    \Fil(\cat{C}) \isoto \Mod_{\unit^\gr[t]}(\Gr(\cat{C})).
  \]
  Moreover, the compositition
  \[
    \Fil(\cat{C}) \isoto \Mod_{\unit^\gr[t]}(\Gr(\cat{C})) \lblto{\unit^\gr \oast_{\unit^\gr[t]}} \Mod_{\unit^\gr}(\Gr(\cat{C})) \iso \Gr(\cat{C})
  \]
  is canonically equivalent to the associated graded functor $\gr : \Fil(\cat{C}) \to \Gr(\cat{C})$.
\end{proposition}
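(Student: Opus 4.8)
The plan is to realize $\und$ as a monadic functor whose monad is the free-module monad on the algebra $\unit^\gr[t]$.

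Since $\spl$ is left Kan extension along the (strictly) symmetric monoidal functor $\num{Z}^\ds \to \num{Z}^\op$, it is strong symmetric monoidal for the Day convolution tensor products, so its right adjoint $\und$ is canonically lax symmetric monoidal. In particular $\und$ sends the unit commutative algebra $\unit^\fil$ to the commutative algebra $\und(\unit^\fil) = \unit^\gr[t]$, and, every object of a symmetric monoidal $\infty$-category being canonically a module over the unit, $\und$ refines to a functor $\widetilde\und : \Fil(\cat{C}) \to \Mod_{\unit^\gr[t]}(\Gr(\cat{C}))$ lying over $\Gr(\cat{C})$ along the forgetful functor $U$.

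Next I would observe that $\und$ is conservative (a map of filtered objects is an equivalence iff it is in every degree) and preserves all limits and colimits (being restriction along $\num{Z}^\ds \to \num{Z}^\op$, it admits both the left adjoint $\spl$ and a right adjoint given by right Kan extension). By the Barr--Beck--Lurie theorem it is therefore monadic, identifying $\Fil(\cat{C}) \iso \Mod_{\und\spl}(\Gr(\cat{C}))$ for the monad $\und\spl$; likewise $U$ is monadic with monad $\unit^\gr[t] \oast (-)$. The lax symmetric monoidal structure on $\und$ produces a canonical map of monads $\unit^\gr[t] \oast (-) \to \und\spl$, namely $\unit^\gr[t] \oast Z \iso \und(\unit^\fil) \oast Z \to \und(\unit^\fil) \oast \und\spl(Z) \to \und(\unit^\fil \ostar \spl(Z)) \iso \und\spl(Z)$, and I claim this is an equivalence --- equivalently, that the projection formula
\[
  X^* \oast \und(Y^\star) \iso \und\bigl(\spl(X^*) \ostar Y^\star\bigr)
\]
holds. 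As both sides preserve colimits in $X^*$, and every graded object is a coproduct of objects $\ins^n(V)$, this reduces to the case $X^* = \ins^0(V)$; there $\spl$ carries $\ins^0(V)$ to the filtered insertion of $V$, and a direct computation from the formula for $\ostar$ shows both sides have $n$-th term $V \otimes Y^n$. With the monads identified, $\widetilde\und$ becomes an equivalence of $\infty$-categories. To upgrade it to a symmetric monoidal equivalence --- it already carries a lax symmetric monoidal structure refining that of $\und$ --- it remains to check this lax structure is strong; using the base-change description of \cref{gf--df--sptalg} this reduces to the universal case $\cat{C} = \Spt$, where it may be verified directly (or deduced from the fact that $\spl$ factors as a strong symmetric monoidal functor through the free-module functor $\Gr(\cat{C}) \to \Mod_{\unit^\gr[t]}(\Gr(\cat{C}))$).

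For the ``moreover'' clause: by \cref{gf--kd--gfdual}, the augmentation $\epsilon : \unit^\gr[t] \to \unit^\gr$ sits in a cofiber sequence $\unit^\gr[t](-1) \to \unit^\gr[t] \lblto{\epsilon} \unit^\gr$ whose first map is multiplication by $t$. Hence base change along $\epsilon$ sends a $\unit^\gr[t]$-module $M$ to $\unit^\gr \otimes_{\unit^\gr[t]} M \iso \cofib\bigl(M(-1) \to M\bigr)$, the map being the $t$-action; applied to $M = \und(X^\star)$, this $t$-action in degree $n$ is the filtration map $X^{n+1} \to X^n$, so the cofiber in degree $n$ is $\cofib(X^{n+1} \to X^n) = \gr(X^\star)^n$. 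Thus the displayed composite is canonically $\gr$.

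The step I expect to be the main obstacle is making the identification of $\und\spl$ with $\unit^\gr[t] \oast (-)$ genuinely one \emph{of monads} (rather than merely of underlying endofunctors) and, relatedly, promoting the resulting categorical equivalence to a symmetric monoidal one --- i.e., the coherence bookkeeping underlying the projection-formula argument.
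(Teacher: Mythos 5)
The proposal is correct in its main thrust and takes a genuinely different route from the paper, which dispatches the proposition in two sentences by reducing to $\cat{C} = \Spt$ via \cref{gf--df--sptalg} and then citing the result there from \cite[\S 3.2]{lurie--rot}. You instead prove it directly via monadicity, which is presumably close to what the cited reference does, and which has the advantage of being self-contained.

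The skeleton works: $\spl$ is left Kan extension along the symmetric monoidal functor $\num{Z}^\ds \to \num{Z}^\op$, hence strong symmetric monoidal for Day convolution; $\und$ is therefore lax and factors through a functor $\widetilde\und$ to $\Mod_{\unit^\gr[t]}(\Gr(\cat{C}))$; $\und$ is conservative and preserves all small limits and colimits (both adjoints exist pointwise), hence is monadic by Barr--Beck--Lurie; and \cite[Corollary 4.7.3.16]{lurie--algebra} reduces the assertion that $\widetilde\und$ is an equivalence of $\infty$-categories to the assertion that the canonical natural transformation $\unit^\gr[t] \oast (-) \to \und\spl$ of underlying endofunctors is an equivalence, which your computation on $\ins^n(V)$ (shifting to $\ins^0(V)$) verifies. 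In fact, that corollary lets you sidestep the worry you raise about promoting the comparison to an honest map of \emph{monads}: once the triangle over $\Gr(\cat{C})$ commutes, the corollary only asks for the comparison of free functors as underlying functors. The ``moreover'' clause is argued correctly from \cref{gf--kd--gfdual}: the $t$-action $M(-1) \to M$ on $M = \und(X^\star)$ is, in degree $n$, the structure map $X^{n+1} \to X^n$, so its cofiber is $\gr(X^\star)$.

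The caveat that genuinely remains is the one you flag, namely promoting the lax symmetric monoidal structure on $\widetilde\und$ to a strong one; a lax monoidal structure on an equivalence is not automatically strong. Your ``$\spl$ factors through free'' suggestion is circular as stated: the implied second arrow $\Mod_{\unit^\gr[t]}(\Gr(\cat{C})) \to \Fil(\cat{C})$ is $\widetilde\und^{-1}$, whose strong monoidality is exactly what is at issue. The workable route is the same colimit-reduction you used for the monad comparison: the lax structure maps $\widetilde\und(X) \otimes_{\unit^\gr[t]} \widetilde\und(Y) \to \widetilde\und(X \ostar Y)$ preserve colimits in each variable, $\Fil(\cat{C})$ is generated under colimits by the objects $\spl(\ins^n(V))$, and on free inputs both sides identify with $\unit^\gr[t] \oast A \oast B$. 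Verifying that the lax structure map realizes this identification is a coherence point about the factorization of a lax right adjoint through modules over the image of the unit; it is true but not free, and is precisely the bookkeeping the paper avoids by delegating to $\Spt$. (Your other fallback, reducing to $\cat{C} = \Spt$ via \cref{gf--df--sptalg}, works, but at that point you have essentially reconstructed the paper's argument rather than provided an alternative.)
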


\begin{proof}
  For $\cat{C}=\Spt$, this is proven in \cite[Proposition 3.1.6 and Remarks 3.2.6 and 3.2.8]{lurie--rot}. The general statement follows from this case by tensoring with $\cat{C}$ in $\PrL$ (see \cref{gf--df--sptalg}).
\end{proof}

\begin{remark}
  \label{gf--kd--rees}
  \cref{gf--kd--und} is an incarnation of the classical ``Rees algebra'' construction.
\end{remark}

\begin{remark}
  \label{gf--kd--zeta}
  The associated graded functor $\gr : \Fil(\cat{C}) \to \Gr(\cat{C})$ preserves colimits, and hence admits a right adjoint $\zeta : \Gr(\cat{C}) \to \Fil(\cat{C})$. \cref{gf--kd--und} shows that that the right adjoint $\zeta$ sends a graded object $X^*$ to the filtered object
  \[
    \cdots \to X^2 \lblto{0} X^1 \lblto{0} X^0 \lblto{0} X^{-1} \lblto{0} X^{-2} \to \cdots.
  \]
\end{remark}

It follows from \cref{gf--kd--gfdual,gf--kd--und} that $\Fil(\cat{C})$ and $\Gr(\cat{C})$ fit into an example of the situation of \cref{gf--kd--mod}. Let us unravel what \cref{gf--kd--mod} says in this example. We first analyze the relevant bar construction.

\begin{notation}
  \label{gf--kd--dminus}
  We let $\dual{\num{D}_-}$ denote the bar construction $\Bar(\unit^\gr[t])$, which we regard as a commutative bialgebra in $\Gr(\cat{C})$ (\cref{gf--kd--bar-bi}). We let $\num{D}_-$ denote the dual $\dual{\Bar(\unit^\gr[t])}$, which we regard as a cocommutative bialgebra in $\Gr(\cat{C})$ (\cref{bi--du--main-cor}).

  We can compute what these objects look like. Under the equivalence $\Fil(\cat{C}) \iso \Mod_{\unit^\gr[t]}(\Gr(\cat{C}))$ of \cref{gf--kd--und}, the $\unit^\gr[t]$-module $\unit$ corresponds to the unique filtered object $\num{A} \in \Fil(\cat{C})$ with $\num{A}^0 \iso \unit$ and $\num{A}^n \iso 0$ for $n \ne 0$ (see \cite[Notation 3.2.4]{lurie--rot}). It follows from \cref{gf--kd--und} that in $\Gr(\cat{C})$ we have
  \[
    \dual{\num{D}_-} \iso \unit^\gr \oast_{\unit^\gr[t]} \unit^\gr \iso \gr(\num{A}),
  \]
  from which we calculate
  \[
    (\dual{\num{D}_-})^n \iso
    \begin{cases}
      \unit & \text{if}\ n = 0 \\
      \unit[1] & \text{if}\ n = -1 \\
      0 & \text{otherwise},
    \end{cases}
    \qquad\text{and hence}\qquad
    \num{D}_-^n \iso
    \begin{cases}
      \unit & \text{if}\ n = 0 \\
      \unit[-1] & \text{if}\ n = 1 \\
      0 & \text{otherwise}.
    \end{cases}
  \]
  Regarding the bialgebra structures on these, the unit and counit maps on each are given by the displayed equivalences in degree $0$, and the multiplication and comultiplication maps are then uniquely determined in the homotopy category $\ho \Gr(\cat{C})$; for example, the multiplications are the standard square-zero ones. However, the further homotopy-coherent structure in $\Gr(\cat{C})$ itself is less transparent in general.
\end{notation}

We next characterize the relevant Bousfield localization.

\begin{definition}
  \label{gf--kd--cpl}
  We say a filtered object $X \in \Fil(\cat{C})$ is \emph{complete} if $\lim_{n \to \infty} X^n \iso 0$. We denote by $\cpl{\Fil}(\cat{C})$ the full subcategory of $\Fil(\cat{C})$ spanned by the complete objects.
\end{definition}

\begin{lemma}[{\cite[Lemma 2.15]{gwilliam-pavlov--filtered}}]
  \label{gf--kd--cpl-loc}
  The inclusion $\cpl{\Fil}(\cat{C}) \inj \Fil(\cat{C})$ admits a left adjoint $\cpl{(-)} : \Fil(\cat{C}) \to \cpl{\Fil}(\cat{C})$. Given a map $X \to Y$ in $\Fil(\cat{C})$, the induced map $\cpl{X} \to \cpl{Y}$ is an equivalence if and only if the induced map $\gr(X) \to \gr(Y)$ is an equivalence in $\Gr(\cat{C})$.
\end{lemma}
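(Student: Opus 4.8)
The plan is to exhibit $\cpl{\Fil}(\cat{C})$ as an accessible Bousfield localization of $\Fil(\cat{C})$ and then to identify the class of localization-equivalences with the $\gr$-equivalences; both assertions of the lemma follow formally once this is done.

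First I would recast completeness as an orthogonality condition. For $V \in \cat{C}$ write $\triv(V) \in \Fil(\cat{C})$ for the constant filtered object with value $V$, i.e. the image of $V$ under the constant-diagram functor $\cat{C} \to \Fun(\num{Z}^\op,\cat{C}) = \Fil(\cat{C})$; this functor is left adjoint to the limit functor $X^\star \mapsto \lim_{n\to\infty} X^n$, so $\Map_{\Fil(\cat{C})}(\triv(V),X) \iso \Map_\cat{C}(V,\lim_n X^n)$ for all $X \in \Fil(\cat{C})$. Now fix a small set $\{V_s\}_s$ of generators of $\cat{C}$ (for instance the $\kappa$-compact objects for $\kappa$ chosen so that $\cat{C}$ is $\kappa$-compactly generated): these jointly detect equivalences, and the smallest full subcategory of $\cat{C}$ closed under colimits and containing them is all of $\cat{C}$. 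It follows that $X \in \Fil(\cat{C})$ is complete if and only if it is local with respect to the set $S \ce \{\, 0 \to \triv(V_s) \,\}_s$ of morphisms of $\Fil(\cat{C})$ --- if $X$ is complete then each space $\Map_{\Fil(\cat{C})}(\triv(V_s),X) \iso \Map_\cat{C}(V_s,\lim_n X^n)$ vanishes, and conversely if $X$ is $S$-local then $\Map_\cat{C}(V,\lim_n X^n) \iso 0$ for every $V$ (the $V_s$ detecting equivalences), whence $\lim_n X^n \iso 0$ on taking $V = \lim_n X^n$. The existence of the left adjoint $\cpl{(-)}$ is therefore an instance of the theory of localizations of presentable $\infty$-categories at a set of morphisms \cite[Proposition 5.5.4.15]{lurie--topos}.

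For the second assertion I would identify the $S$-acyclic objects. Since $\cat{C}$, hence $\Fil(\cat{C})$, is stable, the $S$-acyclics form the localizing subcategory generated by the cofibres of the maps in $S$, namely the objects $\triv(V_s)$. The functor $\gr$ is exact and colimit-preserving and annihilates each $\triv(V_s)$, so every $S$-acyclic object is $\gr$-trivial (killed by $\gr$). Conversely, a $\gr$-trivial filtered object has all transition maps equivalences, so is equivalent to $\triv(V)$ for $V$ any of its values; since $\triv(-)$ is colimit-preserving and the colimit-closure of $\{V_s\}$ is all of $\cat{C}$, every such $\triv(V)$ lies in the localizing subcategory generated by the $\triv(V_s)$, i.e. is $S$-acyclic. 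Hence the $S$-acyclic objects are exactly the $\gr$-trivial ones. Given $f : X \to Y$ in $\Fil(\cat{C})$, the map $\cpl{f}$ is then an equivalence iff $\fib(f)$ is $S$-acyclic iff $\fib(f)$ is $\gr$-trivial iff $\gr(f)$ is an equivalence, the last step using exactness of $\gr$ to identify $\gr(\fib f) \iso \fib(\gr f)$. (In particular the unit $X \to \cpl{X}$ is always a $\gr$-equivalence.)

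The main obstacle is precisely this identification of the $S$-acyclic objects with the $\gr$-trivial ones --- equivalently, the fact that completion leaves the associated graded unchanged; everything else is a formal application of the presentable-localization machinery. A more hands-on alternative dispenses with the orthogonality reformulation: one writes down the completion explicitly by $\cpl{X}^n \ce \lim_{m \ge n} \cofib(X^m \to X^n)$, checks directly that $\cpl{X}$ is complete and that $X \to \cpl{X}$ is the unit of a reflection, and verifies that this map is a $\gr$-equivalence via an octahedral-axiom computation identifying $\cofib(\cpl{X}^{n+1} \to \cpl{X}^n)$ with $\cofib(X^{n+1} \to X^n)$; the cancellation of the index-$m$ terms in that computation is however the same phenomenon, so the essential content is unchanged.
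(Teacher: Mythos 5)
The paper gives no proof here---the lemma is simply quoted from Gwilliam--Pavlov---so there is no internal argument to compare against. Your proof via Bousfield localization is a valid route to the result, and the structure is sound: recast completeness as $S$-locality for $S=\{0\to\triv(V_s)\}$, deduce existence of the left adjoint from presentability, then reduce the iff to showing that the $\cpl{(-)}$-acyclics coincide with the $\gr$-acyclics. Two points deserve more care than you give them.

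First, the pivot of your second paragraph---``the $S$-acyclics form the localizing subcategory generated by the cofibres of the maps in $S$''---is genuinely nontrivial. What is automatic is that the localizing subcategory $\cat{A}_0 = \langle\triv(V_s)\rangle$ is contained in the acyclics; the reverse inclusion $\cat{A}' \subseteq \cat{A}_0$ amounts to showing ${}^\perp(\cat{A}_0^\perp) = \cat{A}_0$, which needs the inclusion $\cat{A}_0 \hookrightarrow \Fil(\cat{C})$ to be coreflective. This does hold, because the localizing subcategory generated by a set in a presentable stable $\infty$-category is itself presentable and the inclusion preserves colimits, so the adjoint functor theorem supplies the right adjoint; but this is exactly the kind of step that, stated as a bare fact, reads like an assertion rather than an argument. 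Either cite it or give the two-line reduction. Relatedly, for the identification ``complete $\Leftrightarrow$ $S$-local'' you want the generating set $\{V_s\}$ to be closed under shifts (or, equivalently, to speak of mapping spectra rather than mapping spaces), so that $\Map(V_s,\lim_n X^n)\simeq *$ for all $s$ really does force $\lim_n X^n \simeq 0$; say so. And the step ``a $\gr$-trivial filtered object is equivalent to $\triv(V)$'' silently uses that the nerve of $\num{Z}^\op$ is contractible (so that a $\num{Z}^\op$-diagram inverting all arrows is essentially constant); it is worth flagging this, since the analogous claim fails for index categories that are not weakly contractible.

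Second, your ``hands-on alternative'' at the end is in fact the shorter and cleaner route, and I would promote it to the main argument. Defining $\cpl{X} := \cofib\bigl(\triv(\lim_m X^m) \to X\bigr)$ gives the left adjoint directly: it is complete because finite colimits commute with limits in a stable $\infty$-category, and it is a reflection because $\Map(\triv(V),Z)\simeq\Map(V,\lim_n Z^n)\simeq 0$ for $Z$ complete. Since $\gr$ kills every $\triv(V)$ and is exact, the fiber sequence shows $\gr(X)\isoto\gr(\cpl{X})$. This gives the ``only if'' direction at once; for the ``if'' direction one only needs that $\gr$ is conservative on complete objects, which is exactly the observation that a complete, $\gr$-trivial object is a constant diagram $\triv(V)$ with $V = \lim_n V \iso 0$. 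This bypasses the identification of acyclics with a localizing subcategory entirely, which is why I would lead with it.
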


We finally arrive at the main result of this subsection.

\begin{theorem}
  \label{gf--kd--gf}
  There is a canonical equivalence of symmetric monoidal $\infty$-categories $\o{\gr} : \cpl{\Fil}(\cat{C}) \isoto \LMod_{\num{D}_-}(\Gr(\cat{C}))$ making the diagram
  \[
    \begin{tikzcd}
      \Fil(\cat{C}) \ar[r, "\gr"] \ar[d, "\cpl{(-)}", swap] &
      \Gr(\cat{C}) \\
      \cpl\Fil(\cat{C}) \ar[r, "\o{\gr}"] &
      \LMod_{\num{D}_-}(\Gr(\cat{C})) \ar[u, "U", swap]
    \end{tikzcd}
  \]
  commute (here $U$ is the forgetful functor).
\end{theorem}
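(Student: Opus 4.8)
The plan is to obtain the theorem by applying the Koszul-duality result \cref{gf--kd--mod} with the ambient category there taken to be $\Gr(\cat{C})$ and the augmented commutative algebra taken to be $\unit^\gr[t]$. The hypotheses are met: $\Gr(\cat{C})$ is stable, presentable and symmetric monoidal; the augmentation $\epsilon : \unit^\gr[t] \to \unit^\gr$ exhibits $\unit^\gr$ as a dualizable $\unit^\gr[t]$-module by \cref{gf--kd--gfdual}; and $\Bar(\unit^\gr[t]) \iso \dual{\num{D}_-}$ by \cref{gf--kd--dminus}. Thus \cref{gf--kd--mod} provides a symmetric monoidal functor $F : \Mod_{\unit^\gr[t]}(\Gr(\cat{C})) \to \cLMod_{\dual{\num{D}_-}}(\Gr(\cat{C}))$ lifting $\unit^\gr \otimes_{\unit^\gr[t]} -$ along the forgetful functor $U$ (so $U \circ F \iso \unit^\gr \otimes_{\unit^\gr[t]} -$), with a fully faithful right adjoint $G$ whose essential image is the full subcategory of $\unit^\gr$-local objects; as in the proof of \cref{gf--kd--mod}\cref{gf--kd--mod--local}, $F$ corestricts to a symmetric monoidal equivalence $\Mod_{\unit^\gr[t]}(\Gr(\cat{C}))_{\unit^\gr} \isoto \cLMod_{\dual{\num{D}_-}}(\Gr(\cat{C}))$. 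Composing with the duality equivalence $\cLMod_{\dual{\num{D}_-}}(\Gr(\cat{C})) \iso \LMod_{\num{D}_-}(\Gr(\cat{C}))$ of \cref{bi--du--main-cor}---through which $\num{D}_-$ was defined in \cref{gf--kd--dminus}, and which commutes with the forgetful functors to $\Gr(\cat{C})$---yields a symmetric monoidal equivalence $\Mod_{\unit^\gr[t]}(\Gr(\cat{C}))_{\unit^\gr} \isoto \LMod_{\num{D}_-}(\Gr(\cat{C}))$.

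It remains to identify the source of this equivalence with $\cpl{\Fil}(\cat{C})$. By \cref{gf--kd--und}, $\und$ is a symmetric monoidal equivalence $\Fil(\cat{C}) \isoto \Mod_{\unit^\gr[t]}(\Gr(\cat{C}))$ carrying $\gr$ to $\unit^\gr \otimes_{\unit^\gr[t]} -$. Hence under this equivalence the $\unit^\gr$-equivalences (the maps $f$ with $\unit^\gr \otimes_{\unit^\gr[t]} f$ invertible) correspond to the maps $f$ in $\Fil(\cat{C})$ with $\gr(f)$ invertible, which by \cref{gf--kd--cpl-loc} are precisely the maps inverted by the completion functor $\cpl{(-)}$. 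Two reflective localizations of a presentable $\infty$-category that invert the same maps coincide, so $\und$ carries the localization $\cpl{(-)} : \Fil(\cat{C}) \to \cpl{\Fil}(\cat{C})$ to the Bousfield localization at $\unit^\gr$; since $\gr$ is symmetric monoidal the completion localization is monoidal, so $\und$ in fact restricts to a symmetric monoidal equivalence $\cpl{\Fil}(\cat{C}) \isoto \Mod_{\unit^\gr[t]}(\Gr(\cat{C}))_{\unit^\gr}$. We define $\o{\gr}$ to be the composite
\[
  \cpl{\Fil}(\cat{C}) \isoto \Mod_{\unit^\gr[t]}(\Gr(\cat{C}))_{\unit^\gr} \isoto \cLMod_{\dual{\num{D}_-}}(\Gr(\cat{C})) \isoto \LMod_{\num{D}_-}(\Gr(\cat{C})),
\]
a symmetric monoidal equivalence.

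Finally, to see the square commutes, note that by construction $\o{\gr} \circ \cpl{(-)}$ is the composite of $\und : \Fil(\cat{C}) \to \Mod_{\unit^\gr[t]}(\Gr(\cat{C}))$, the functor $F$, and the duality equivalence. The duality equivalence commutes with the forgetful functors to $\Gr(\cat{C})$ (\cref{bi--du--main-cor}) and $U \circ F \iso \unit^\gr \otimes_{\unit^\gr[t]} -$ by construction of $F$, so combining with the identification $\gr \iso (\unit^\gr \otimes_{\unit^\gr[t]} -) \circ \und$ of \cref{gf--kd--und} we obtain $U \circ \o{\gr} \circ \cpl{(-)} \iso \gr$, which is the asserted commutativity. (Equivalently $U \circ \o{\gr} \iso \gr|_{\cpl{\Fil}(\cat{C})}$, using that the completion unit $X \to \cpl{X}$ is a $\gr$-equivalence by \cref{gf--kd--cpl-loc}, whence $\gr \circ \cpl{(-)} \iso \gr$.) The only step genuinely requiring care is the coincidence of the two localizations in the second paragraph; everything else is bookkeeping with the cited results.
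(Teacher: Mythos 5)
Your proof is correct and takes essentially the same route as the paper's, which is a one-line citation of \cref{gf--kd--mod}, \cref{gf--kd--und}, \cref{gf--kd--gfdual}, \cref{gf--kd--cpl-loc}, and \cref{bi--du--main-cor}; your version simply spells out how those ingredients combine. The one minor stylistic detour is the appeal to monoidality of the completion localization via $\gr$ being symmetric monoidal — the symmetric monoidal structure on $\cpl\Fil(\cat{C})$ really falls out of the fact that the functor $F$ from \cref{gf--kd--mod} is symmetric monoidal and factors through the localization, but the conclusion is the same.
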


\begin{proof}
  Noting that \cref{bi--du--main-cor} gives an equivalence $\LMod_{\num{D}_-}(\Gr(\cat{C})) \iso \cLMod_{\dual{\num{D}}_-}(\Gr(\cat{C}))$ commuting with the forgetful functors to $\Gr(\cat{C})$, this follows from combining \cref{gf--kd--mod,gf--kd--und}, \cref{gf--kd--gfdual}, and \cref{gf--kd--cpl-loc}.
\end{proof}

\begin{remark}
  \label{gf--kd--unravel}
  More informally, what \cref{gf--kd--gf} says is that, given a filtered object $X \in \Fil(\cat{C})$, what can be recovered from the associated graded object $\gr(X)$ is the completion $\cpl{X}$, and the extra data necessary to perform this recovery is a $\num{D}_-$-module structure on $\gr(X)$.

  In view of the description of $\num{D}_-$ in \cref{gf--kd--dminus}, in the homotopy category $\ho\Gr(\cat{C})$, the $\num{D}_-$-module structure on $\gr(X)$ determines a type of cochain complex object, with differentials of the form $\gr^i(X)[-1] \to \gr^{i+1}(X)$. Unravelling the constructions made above, one can check that the differentials are induced by the boundary maps in the cofiber sequence $X^{i+1} \to X^i \to \gr^i(X)$, recovering the cochain complex recorded explicitly in \cite[Remark 1.2.2.3]{lurie--algebra}.

  Accordingly, we may think of a $\num{D}_-$-module structure in $\Gr(\cat{C})$ as a kind of ``homotopy-coherent cochain complex'' structure. We will return to this idea in \cref{dg}.
\end{remark}


\subsection{t-structures}
\label{gf--t}

In this subsection, we define various t-structures for graded and filtered objects that will come in handy later in the paper.\footnote{The material in this subsection grew in part out of a conversation with Ben Antieau.}

\begin{definition}
  \label{gf--t--compatible}
  Let $\cat{C}$ be a stable presentable symmetric monoidal $\infty$-category. By a \emph{compatible t-structure} on $\cat{C}$, we will mean a t-structure $(\cat{C}_{\ge 0}, \cat{C}_{\le 0})$ on the underlying stable $\infty$-category of $\cat{C}$ satisfying the following properties:
  \begin{enumerate}
  \item $\cat{C}_{\le 0}$ is closed under filtered colimits in $\cat{C}$;
  \item the unit object $\unit$ of $\cat{C}$ lies in $\cat{C}_{\ge 0}$;
  \item if $X,Y \in \cat{C}_{\ge 0}$, then $X \otimes Y \in \cat{C}_{\ge 0}$.
  \end{enumerate}
\end{definition}

Below, we will state that various constructions give compatible t-structures, but omit verifying the t-structure and compatibility axioms, as they will always be straightforward to check.

For the remainder of this subsection, we fix a stable presentable symmetric monoidal $\infty$-category $\cat{C}$, equipped with a compatible t-structure $(\cat{C}_{\ge 0},\cat{C}_{\le 0})$. We begin by discussing ways in which $\Gr(\cat{C})$ inherits a t-structure from $\cat{C}$.

\begin{construction}
  \label{gf--t--gr-t}
  We define the following three t-structures on $\Gr(\cat{C})$:
  \begin{enumerate}
  \item Let $\Gr(\cat{C})_{\ge 0}$ be the full subcategory of $\Gr(\cat{C})$ consisting of those graded objects $X^*$ for which $X^n \in \cat{C}_{\ge 0}$ for all $n \in \num{Z}$. Let $\Gr(\cat{C})_{\le 0}$ be the full subcategory of $\Gr(\cat{C})$ consisting of those graded objects $X^*$ for which $X^n \in \cat{C}_{\le 0}$ for all $n \in \num{Z}$. Then $(\Gr(\cat{C})_{\ge 0}, \Gr(\cat{C})_{\le 0})$ is a compatible t-structure on $\Gr(\cat{C})$, which we will refer to as the \emph{neutral t-structure}.
  
  \item Let $\Gr(\cat{C})_{\ge 0^+}$ be the full subcategory of $\Gr(\cat{C})$ consisting of those graded objects $X^*$ for which $X^n \in \cat{C}_{\ge n}$ for all $n \in \num{Z}$. Let $\Gr(\cat{C})_{\le 0^+}$ be the full subcategory of $\Gr(\cat{C})$ consisting of those graded objects $X^*$ for which $X^n \in \cat{C}_{\le n}$ for all $n \in \num{Z}$. Then $(\Gr(\cat{C})_{\ge 0^+}, \Gr(\cat{C})_{\le 0^+})$ is a compatible t-structure on $\Gr(\cat{C})$. We will refer to this as the \emph{positive t-structure} on $\Gr(\cat{C})$, and may write $\Gr(\cat{C})_+$ to denote $\Gr(\cat{C})$ equipped with this t-structure.

  \item Let $\Gr(\cat{C})_{\ge 0^-}$ be the full subcategory of $\Gr(\cat{C})$ consisting of those graded objects $X^*$ for which $X^n \in \cat{C}_{\ge -n}$ for all $n \in \num{Z}$. Let $\Gr(\cat{C})_{\le 0^-}$ be the full subcategory of $\Gr(\cat{C})$ consisting of those graded objects $X^*$ for which $X^n \in \cat{C}_{\le -n}$ for all $n \in \num{Z}$. Then $(\Gr(\cat{C})_{\ge 0^-}, \Gr(\cat{C})_{\le 0^-})$ is a compatible t-structure on $\Gr(\cat{C})$. We will refer to this as the \emph{negative t-structure} on $\Gr(\cat{C})$, and may write $\Gr(\cat{C})_-$ to denote $\Gr(\cat{C})$ equipped with this t-structure.
  \end{enumerate}
\end{construction}

\begin{remark}
  \label{gf--t--gr-heart}
  The hearts $\Gr(\cat{C})^\heart$, $\Gr(\cat{C})_+^\heart$ and $\Gr(\cat{C})_-^\heart$ of the three t-structures defined in \cref{gf--t--gr-t} can each be identified with the category $\Gr(\cat{C}^\heart) = \Fun(\num{Z}^\ds,\cat{C}^\heart)$, via the functors described the following formulas:
  \begin{align*}
    \{X^n\}_{n \in \num{Z}} \in \Gr(\cat{C}^\heart)
    \quad \mapsto \quad
    \begin{cases}
      \{X^n\}_{n \in \num{Z}} \in \Gr(\cat{C})^\heart \\
      \{X^n[n]\}_{n \in \num{Z}} \in \Gr(\cat{C})_+^\heart \\
      \{X^n[-n]\}_{n \in \num{Z}} \in \Gr(\cat{C})_-^\heart.
    \end{cases}
  \end{align*}

  As the three t-structures are all compatible with the symmetric monoidal structure on $\Gr(\cat{C})$, there are induced symmetric monoidal structures on each of their hearts, which under the above identifications determine symmetric monoidal structures on $\Gr(\cat{C}^\heart)$. For the positive and negative t-structures, this recovers the usual symmetric monoidal structure considered on graded objects in an abelian category, with tensor product again given by Day convolution but with the symmetry isomorphism incorporating the Koszul sign convention (for example, a commutative algebra object for this symmetric monoidal structure is a graded commutative algebra in $\cat{C}^\heart$ as usually defined). However, the appearance of this sign convention depended on our choices of shifts in the t-structures $\Gr(\cat{C})_\pm$: for the neutral t-structure, we obtain the Day convolution symmetric monoidal structure on $\Gr(\cat{C}^\heart)$ with no Koszul sign convention involved. Since these two symmetric monoidal structures on $\Gr(\cat{C}^\heart)$ will both arise later on, we will distinguish between them by denoting the former (with the Koszul sign rule) by $\Gr(\cat{C}^\heart)^\koz$, and the latter (without the Koszul sign rule) by $\Gr(\cat{C}^\heart)^\day$.
\end{remark}

Mediating between the positive and negative t-structures on graded objects will turn out to be an important aspect of the story we will tell later on. The necessary translation is encapsulated in the following result.

\begin{proposition}
  \label{gf--t--shift}
  Let $\cat{C}$ be a $\num{Z}$-linear stable presentable symmetric monoidal $\infty$-category (in the sense of \cite[Definition D.1.2.1]{lurie--sag}). Then the two constructions
  \[
    \{X^n\}_{n \in \num{Z}} \in \Gr(\cat{C})
    \quad \mapsto \quad
    \begin{cases}
      \{X^n[2n]\}_{n \in \num{Z}} \in \Gr(\cat{C}) \\
      \{X^n[-2n]\}_{n \in \num{Z}} \in \Gr(\cat{C})
    \end{cases}
  \]
  canonically extend to a pair of mutually inverse symmetric monoidal $t$-exact equivalences
  \[
    [2*] : \Gr(\cat{C})_- \fromto \Gr(\cat{C})_+ : [-2*],
  \]
  such that the induced equivalences
  \[
    \Gr(\cat{C}^\heart) \iso \Gr(\cat{C})_-^\heart \fromto \Gr(\cat{C})_+^\heart \iso \Gr(\cat{C}^\heart)
  \]
  are homotopic to the identity.
\end{proposition}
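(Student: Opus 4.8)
The plan is to build $[2*]$ as a colimit-preserving symmetric monoidal endofunctor of $\Gr(\cat{C})$ by invoking the universal property of graded objects (cf.\ \cite[\S 3.1]{lurie--rot}): via $\Gr(\cat{C}) \iso \cat{C} \otimes \Gr(\Spc)$ and the fact that $\Gr(\Spc)$ is freely generated under colimits, as a presentable symmetric monoidal $\infty$-category, by an invertible object with trivial self-braiding, a colimit-preserving symmetric monoidal functor out of $\Gr(\cat{C})$ is the same as a colimit-preserving symmetric monoidal functor out of $\cat{C}$ together with an invertible object of the target carrying a coherent trivialization of its self-braiding (the tautological such object being $\unit(1) \in \Gr(\cat{C})$). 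Now, since $\cat{C}$ is $\num{Z}$-linear, $\unit[2] \in \cat{C}$ is invertible, and---crucially because the shift is by the \emph{even} amount $2$, so that the relevant sign $(-1)^{2 \cdot 2}$ is trivial---its self-braiding admits a canonical coherent trivialization; hence so does $\unit(1)[2] \in \Gr(\cat{C})$. Feeding the universal property the pair $(\ins^0 : \cat{C} \to \Gr(\cat{C}),\ \unit(1)[2])$ then produces a colimit-preserving symmetric monoidal functor $[2*]$ with $[2*] \circ \ins^0 \iso \ins^0$ and $[2*](\unit(1)) \iso \unit(1)[2]$, hence, since every graded object is a colimit of objects $\ins^0(c)(n) \iso \ins^n(c)$, with $[2*](X)^n \iso X^n[2n]$. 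This first step is the main obstacle: its content is that $[2*]$ exists as a genuinely symmetric monoidal functor, not merely one on homotopy categories, i.e.\ that $\unit(1)[2]$ carries the requisite coherences. The clean way to see this is to reduce (using $\num{Z}$-linearity, via \cref{gf--df--sptalg}) to $\cat{C} = \Mod_{\num{Z}}$, where one can either use a strict model of $\Gr(\Mod_{\num{Z}})$ by graded chain complexes---on which $\{X^n\} \mapsto \{X^n[2n]\}$ is a visibly symmetric monoidal functor, the even shift causing no clash with the Koszul sign rule---or observe that the relevant Picard spectrum is $2$-truncated and that the sole obstruction $\eta \cdot [\unit(1)[2]]$ vanishes for parity reasons.

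Granting the construction, the remaining assertions are formal. Applying the same construction with $\unit(1)[-2]$ in place of $\unit(1)[2]$ yields $[-2*]$; both composites $[2*] \circ [-2*]$ and $[-2*] \circ [2*]$ restrict to $\ins^0$ on $\cat{C}$ and fix $\unit(1)$, so by the uniqueness half of the universal property each is canonically equivalent to the identity, whence $[2*]$ and $[-2*]$ are mutually inverse symmetric monoidal equivalences. For $t$-exactness it suffices that $[2*]$ acts degreewise by the shift $[2n]$ in degree $n$: if $X^n \in \cat{C}_{\ge -n}$ for all $n$ then $X^n[2n] \in \cat{C}_{\ge n}$ for all $n$, so $[2*]$ carries $\Gr(\cat{C})_{\ge 0^-}$ into $\Gr(\cat{C})_{\ge 0^+}$ and, dually, $\Gr(\cat{C})_{\le 0^-}$ into $\Gr(\cat{C})_{\le 0^+}$; symmetrically $[-2*]$ carries $\Gr(\cat{C})_+$ into $\Gr(\cat{C})_-$. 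Thus $[2*] : \Gr(\cat{C})_- \to \Gr(\cat{C})_+$ and its inverse are $t$-exact.

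Finally, for the statement about hearts I would trace through the identifications of \cref{gf--t--gr-heart}. An object of $\Gr(\cat{C})_-^\heart$ corresponding to $\{X^n\} \in \Gr(\cat{C}^\heart)$ is $\{X^n[-n]\}$, and $[2*]$ sends it to $\{X^n[-n][2n]\} = \{X^n[n]\}$, which is precisely the object of $\Gr(\cat{C})_+^\heart$ corresponding to $\{X^n\}$; the same computation on morphisms shows the induced equivalence $\Gr(\cat{C}^\heart) \iso \Gr(\cat{C})_-^\heart \to \Gr(\cat{C})_+^\heart \iso \Gr(\cat{C}^\heart)$ is the identity on underlying objects and morphisms. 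To upgrade this to ``homotopic to the identity'' as symmetric monoidal functors, note that both hearts carry the Koszul-signed structure $\Gr(\cat{C}^\heart)^\koz$ (\cref{gf--t--gr-heart}), and that $[2*]$ sends the degree-$1$ generator $\unit(1)[-1]$ of $\Gr(\cat{C})_-^\heart$ to the degree-$1$ generator $\unit(1)[1]$ of $\Gr(\cat{C})_+^\heart$ compatibly with their (identical) self-braidings---again the point being that the intervening shifts are even, so no extra sign $(-1)^{(2p)(2q)}$ is introduced---whence the induced symmetric monoidal endofunctor of $\Gr(\cat{C}^\heart)^\koz$, being the identity on the generating invertible object over $\cat{C}^\heart$, is the identity; the analogous check applies to $[-2*]$.
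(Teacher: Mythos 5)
Your proof is correct and, at the crucial step, rests on exactly the same computation as the paper's: that the assignment $n \mapsto [2n]$ admits an $\Einfty$-structure over $\num{Z}$ because (i) $\Picspt(\Mod_{\num{Z}})$ is truncated above degree one, so there are no obstructions beyond the first, and (ii) the first obstruction --- the $\eta$-class measuring the self-braiding of the invertible object, which is the $k$-invariant $\kappa : \num{Z} \to \num{Z}/2[2]$ in the paper's notation --- vanishes because the shift is by the even amount $2$. The packaging differs slightly --- you invoke the free-object universal property of $\Gr(\cat{C})$ and locate the $\Einfty$-data on $\unit(1)[2]$, whereas the paper constructs the $\Einfty$-map $\phi : \num{Z}^\ds \to \Pic(\num{Z})$ from the Postnikov sequence of $\Picspt(\num{Z})$ and then feeds it to the Day-convolution universal property --- but these are two formulations of the same universal property and amount to the same argument. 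One small caveat worth noting: your opening assertion that $\Gr(\Spc)$ is ``freely generated \ldots by an invertible object with trivial self-braiding'' is slightly too strong as a general statement (the correct universal property is in terms of $\Einfty$-maps $\num{Z} \to \Pic(\cat{D})$, for which the trivialization of the self-braiding is only the first layer of the data); the paper's footnote on the failure over $\num{S}$ --- where the $\eta$-obstruction still vanishes but higher obstructions do not --- illustrates exactly this gap. You do effectively repair this by appealing to truncation of the relevant Picard spectrum, which is why the argument goes through, but the statement of the universal property should be phrased to make clear that it is the truncation, not merely the vanishing of the self-braiding, that yields the coherent $\Einfty$-structure.
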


\begin{proof}
  We first extend the assignments $n \mapsto \num{Z}[2n]$ and $n \mapsto \num{Z}[-2n]$ to maps of $\Einfty$-spaces $\phi,-\phi : \num{Z}^\ds \to \Pic(\num{Z})$, where $\Pic(\num{Z})$ denote the Picard space of invertible $\num{Z}$-module spectra (see e.g. \cite[\textsection 2.2]{mathew-stojanoska--picard-tmf} for generalities on Picard spaces).\footnote{This cannot be done over the sphere spectrum $\num{S}$: the map $\num{Z}^\ds \to \Pic(\num{S})$ sending $n \mapsto \num{S}[2n]$ can be given an $\mathrm{E}_2$-structure but not an $\mathrm{E}_3$-structure. However, it is not necessary to pass all the way to $\num{Z}$ to succeed: for example, it is possible to do so (and thus the proposition goes through) already over $\mathrm{MU}$.} There is a fiber sequence of $\Einfty$-spaces $\Pic(\num{Z}) \to \num{Z} \lblto{\kappa} \clspc^2(\num{Z}/2)$, coming from the Postnikov sequence at the level of spectra,
  \[
    \begin{tikzcd}
      \pi_1(\Picspt(\num{Z}))[1] \ar[r] &
      \tau_{\le 1}\Picspt(\num{Z}) \ar[r] &
      \tau_{\le 0} \Picspt(\num{Z}) \\
      \num{Z}/2[1] \ar[u, "\simeq"] \ar[r] &
      \Picspt(\num{Z}) \ar[u, "\simeq"] \ar[r] &
      \num{Z} \ar[u, "\simeq"],
    \end{tikzcd}
  \]
  where $\Picspt(\num{Z})$ denotes the connective delooping of $\Pic(\num{Z})$. The composite maps
  \[
    \num{Z}^\ds \lblto{\pm 2} \num{Z}^\ds \lblto{\kappa} \clspc^2(\num{Z}/2),
    \quad\text{or equivalently}\quad
    \num{Z}^\ds \lblto{\kappa} \clspc^2(\num{Z}/2) \lblto{\pm 2} \clspc^2(\num{Z}/2),
  \]
  are canonically nullhomotopic, inducing the desired maps $\pm\phi$. Note that the notation $-\phi$ is not abusive, as there is a homotopy of $\Einfty$-maps between the sum
  \[
    \num{Z}^\ds \lblto{(\phi,-\phi)} \Pic(\num{Z}) \times \Pic(\num{Z}) \lblto{\otimes} \Pic(\num{Z})
  \]
  and the zero map.

  We now use this construction to define the desired symmetric monoidal functors $[\pm 2*] : \Gr(\cat{C}) \to \Gr(\cat{C})$. To define a \emph{lax} symmetric monoidal functor of this form, it is equivalent by the universal property of the Day convolution structure to define a lax symmetric monoidal functor $\num{Z}^\ds \times \Gr(\cat{C}) \to \cat{C}$. Let us take this to be the composition of lax symmetric monoidal functors
  \[
    \num{Z}^\ds \times \Gr(\cat{C}) \lblto{(\pm\phi \circ \pi_1, \ev)} \Pic(\num{Z}) \times \cat{C} \lblto{\otimes} \cat{C},
  \]
  where the second map comes from the $\num{Z}$-linear structure on $\cat{C}$. Unwinding the definitions shows that the associated lax symmetric monoidal functor $\Gr(\cat{C}) \to \Gr(\cat{C})$ extends the desired construction on objects and is in fact symmetric monoidal.

  Finally, the t-exactness follows from the fact that the shift $[2n]$ carries $\cat{C}_{\le -n}$ into $\cat{C}_{\le n}$ and $\cat{C}_{\ge -n}$ into $\cat{C}_{\ge n}$, and it is clear that these functors are mutually inverse and induce the identity on the heart as claimed.
\end{proof}

We now discuss t-structures on $\Fil(\cat{C})$. The first one is analogous to the neutral t-structure on $\Gr(\cat{C})$.

\begin{construction}
  \label{gf--t--fil-neutral}
    We define a compatible t-structure $(\Fil(\cat{C})_{\ge 0}, \Fil(\cat{C})_{\le 0})$ on $\Fil(\cat{C})$ as follows: let $\Fil(\cat{C})_{\ge 0}$ be the full subcategory of $\Fil(\cat{C})$ consisting of those filtered objects $X^\star$ for which $X^n \in \cat{C}_{\ge 0}$ for all $n \in \num{Z}$, and let $\Fil(\cat{C})_{\le 0}$ be the full subcategory of $\Fil(\cat{C})$ consisting of those filtered objects $X^\star$ for which $X^n \in \cat{C}_{\le 0}$ for all $n \in \num{Z}$. We refer to this as the \emph{neutral t-structure on $\Fil(\cat{C})$}.
\end{construction}

The next t-structure on $\Fil(\cat{C})$ is related to the positive t-structure on $\Gr(\cat{C})$, and is motivated by the Postnikov filtration construction (\cref{gf--t--postnikov-fil}).

\begin{construction}
  \label{gf--t--postnikov-t}
  We define a compatible t-structure $(\Fil(\cat{C})^\post_{\ge 0}, \Fil(\cat{C})^\post_{\le 0})$ on $\Fil(\cat{C})$ as follows: let $\Fil(\cat{C})^\post_{\ge 0}$ be the full subcategory of $\Fil(\cat{C})$ consisting of those filtered objects $X^\star$ for which $X^n \in \cat{C}_{\ge n}$ for all $n \in \num{Z}$, and let $\Fil(\cat{C})^\post_{\le 0}$ be the full subcategory of $\Fil(\cat{C})$ consisting of those filtered objects $X^\star$ for which $X^n \in \cat{C}_{\le n}$ for all $n \in \num{Z}$. We refer to this as the \emph{Postnikov t-structure on $\Fil(\cat{C})$}.
\end{construction}

\begin{construction}
  \label{gf--t--postnikov-fil}
  Let $\delta : \cat{C} \to \Fil(\cat{C})$ denote the diagonal functor and let $\tau_{\ge 0}^\post : \Fil(\cat{C}) \to \Fil(\cat{C})_{\ge 0}^\post$ denote the connective cover functor with respect to the Postnikov t-structure (\cref{gf--t--postnikov-t}). We let $\tau_{\ge\star} : \cat{C} \to \Fil(\cat{C})_{\ge 0}^\post$ denote the composite
  \[
    \cat{C} \lblto{\delta} \Fil(\cat{C}) \lblto{\tau_{\ge 0}^\post} \Fil(\cat{C})_{\ge 0}^\post,
  \]
  and refer to this as the \emph{Postnikov filtration} functor; as the notation suggests, we have canonical identifications $\ev^n \circ \tau_{\ge \star} \iso \tau_{\ge n}$ for $n \in \num{Z}$. We note that there is a canonical lax symmetric monoidal structure on $\tau_{\ge\star}$, determined by the canonical lax symmetric monoidal structures on $\delta$ and $\tau_{\ge 0}^\post$.
\end{construction}

\begin{remark}
  \label{gf--t--postnikov-fully-faithful}
  Assuming that the t-structure on $\cat{C}$ is right complete, the Postnikov filtration functor $\tau_{\ge\star}$ of \cref{gf--t--postnikov-fil} is fully faithful. Indeed, it is right adjoint to the restriction of the colimit functor $\colim : \Fil(\cat{C}) \to \cat{C}$ to $\Fil(\cat{C})_{\ge 0}^\post$, and the counit transformation for this adjunctions is an equivalence under this completeness assumption.
\end{remark}

The last t-structure on $\Fil(\cat{C})$ that will be relevant in this paper is related to the negative t-structure on $\Gr(\cat{C})$: it is the \emph{Beilinson t-structure}, as discussed in \cite[\S5.1]{bms2} and employed in \cite{antieau--periodic}. We give an alternative explanation/construction of this t-structure using the results of \cref{gf--kd}.

\begin{notation}
  \label{gf--t--ch}
  Let $\DG(\cat{C}^\heart)$ denote the ordinary category of cochain complexes in the abelian category $\cat{C}^\heart$. We regard $\DG(\cat{C}^\heart)$ as a symmetric monoidal category in the standard manner (in particular with the Koszul sign rule imposed).
\end{notation}

\begin{construction}
  \label{gf--t--mod-t}
  Recall from \cref{gf--t--gr-t} that we have the negative t-structure on $\Gr(\cat{C})$, whose connective part $\Gr(\cat{C})_{\ge 0^-}$ consists of those graded objects $X^*$ such that $X^n \in \cat{C}_{\ge -n}$ for all $n \in \num{Z}$. Let $\num{D}_-$ be as in \cref{gf--kd--dminus}. Observe that $\num{D}_- \in \Gr(\cat{C})_{\ge 0^-}$, so that we have a full subcategory $\LMod_{\num{D}_-}(\Gr(\cat{C})_{\ge 0^-}) \subseteq \LMod_{\num{D}_-}(\Gr(\cat{C}))$; it follows from \cite[Proposition 1.4.4.11]{lurie--algebra} that this is the connective part of a t-structure on $\LMod_{\num{D}_-}(\Gr(\cat{C}))$.

  Suppose given a object $X \in \LMod_{\num{D}_-}(\Gr(\cat{C}))$ that is coconnective for this t-structure: this holds if and only if $\Map_{\LMod_{\num{D}_-}(\Gr(\cat{C}))}(Y[1],X) \iso 0$ for all connective objects $Y \in \LMod_{\num{D}_-}(\Gr(\cat{C})_{\ge 0^-})$. Applying this in the case that $Y = \num{D}_- \oast Y_0$ for $Y_0 \in \Gr(\cat{C})_{\ge 0^-}$, we deduce by adjunction that $\Map_{\Gr(\cat{C})}(Y_0[1],X) \iso 0$, implying that the underlying object of $X$ in $\Gr(\cat{C})$ is coconnective in the negative t-structure. The converse also holds: this follows from the fact that $\LMod_{\num{D}_-}(\Gr(\cat{C})_{\ge 0^-})$ is generated under colimits by objects of the form $\num{D}_- \oast Y_0$.

  That is, an object $\LMod_{\num{D}_-}(\Gr(\cat{C}))$ is connective or coconnective with respect to the t-structure just defined if and only if its underlying object in $\Gr(\cat{C})$ is connective or coconnective with respect to the negative t-structure. This shows that the above t-structure is a compatible one. In addition, the heart of the t-structure can be described similarly, and it follows that the heart identifies with $\LMod_{\pi_{0^-}(\num{D}_-)}(\Gr(\cat{C})^\heart_-)$, where $\pi_{0^-}$ is the zeroth homotopy group functor with respect to the negative t-structure. It then follows from the analysis of $\num{D}_-$ in \cref{gf--kd--dminus} that the equivalence $\Gr(\cat{C})^\heart_- \iso \Gr(\cat{C}^\heart)$ identifies this heart with the symmetric monoidal category $\DG(\cat{C}^\heart)$ of cochain complexes in $\cat{C}^\heart$ (that is, a $\pi_{0^-}(\num{D}_-)$-module structure corresponds to a differential).
\end{construction}

\begin{lemma}
  \label{gf--t--complete-beilinson}
  Let $\cpl\Fil(\cat{C})^\beil_{\ge 0}$ denote the full subcategory of $\cpl\Fil(\cat{C})$ spanned by those complete filtered objects $X$ satisfying $\gr(X) \in \Gr(\cat{C})_{\ge 0^-}$, i.e. $\gr^n(X) \in \cat{C}_{\ge -n}$ for all $n \in \num{Z}$; and let $\cpl\Fil(\cat{C})^\beil_{\le 0}$ denote the full subcategory of $\cpl\Fil(\cat{C})$ spanned by those complete filtered objects $X$ satisfying $\gr(X) \in \Gr(\cat{C})_{\le 0^-}$, i.e. $\gr^n(X) \in \cat{C}_{\le -n}$ for all $n \in \num{Z}$. Then $(\cpl\Fil(\cat{C})^\beil_{\ge 0}, \cpl\Fil(\cat{C})^\beil_{\le 0}$) defines a compatible t-structure on $\cpl\Fil(\cat{C})$, whose heart is canonically equivalent to the symmetric monoidal category $\DG(\cat{C}^\heart)$ of cochain complexes in $\cat{C}^\heart$.
\end{lemma}

\begin{proof}
  This follows from \cref{gf--t--mod-t}, using the equivalence $\cpl\Fil(\cat{C}) \iso \LMod_{\num{D}_-}(\Gr(\cat{C}))$ of \cref{gf--kd--gf}.
\end{proof}

\begin{lemma}
  \label{gf--t--beilinson-coconnective}
  Suppose that the t-structure on $\cat{C}$ is right separated, i.e. that any object in $\bigcap_{n \in \num{Z}_{\ge0}} \cat{C}_{\le -n}$ is zero. Then the following conditions on a filtered object $X \in \Fil(\cat{C})$ are equivalent :
  \begin{enumerate}
  \item \label{gf--t--beilinson-coconnective--cpl-gr}
    $X$ lies in the full subcategory $\cpl\Fil(\cat{C})^\beil_{\le 0} \subseteq \Fil(\cat{C})$ of \cref{gf--t--complete-beilinson};
  \item \label{gf--t--beilinson-coconnective--und}
    $\und(X) \in \Gr(\cat{C})_{\le 0^-}$, i.e. $X^n \in \cat{C}_{\le -n}$ for all $n \in \num{Z}$.
  \end{enumerate}
\end{lemma}

\begin{proof}
  Assume first that \cref{gf--t--beilinson-coconnective--cpl-gr} holds, and let us show for any $n \in \num{Z}$ that $X^n \in \cat{C}_{\le-n}$. By hypothesis, we have for each $m \ge n$ that $\gr^m(X) \in \cat{C}_{\le-m} \subseteq \cat{C}_{\le-n}$. It follows by induction that $\cofib(X^m \to X^n) \in \cat{C}_{\le -n}$ for each $m \ge n$. The canonical map $X^n \to \lim_{m \to \infty} \cofib(X^m \to X^n)$ is an equivalence: its fiber is $\lim_{m \to \infty} X^m$, which vanishes by our hypothesis that $X$ is complete. Since $\cat{C}_{\le-n}$ is closed under limits in $\cat{C}$, we conclude that $X^n \in \cat{C}_{\le-n}$.

  Assume now that \cref{gf--t--beilinson-coconnective--und} holds. This clearly implies that $\gr(X) \in \Gr(\cat{C})_{\le0^-}$. It remains to show that $X$ is complete, i.e. that $\lim_{m \to \infty} X^m \iso 0$. Since the t-structure is right separated, it suffices to show for all $n \in \num{Z}_{\ge 0}$ that $\lim_{m \to \infty} X^m \in \cat{C}_{\le-n}$. This follows from the hypothesis that $X^m \in \cat{C}_{\le-m} \subseteq \cat{C}_{\le-n}$ for each $m \ge n$ and the fact that $\cat{C}_{\le-n}$ is closed under limits in $\cat{C}$.
\end{proof}

\begin{proposition}
  \label{gf--t--beilinson}
  Suppose that the t-structure on $\cat{C}$ is right separated. Let $\Fil(\cat{C})^\beil_{\ge 0}$ denote the full subcategory of $\Fil(\cat{C})$ spanned by those filtered objects $X$ satisfying $\gr(X) \in \Gr(\cat{C})_{\ge 0^-}$, i.e. $\gr^n(X) \in \cat{C}_{\ge -n}$ for all $n \in \num{Z}$; and let $\Fil(\cat{C})^\beil_{\le 0}$ be the full subcategory of $\Fil(\cat{C})$ spanned by those filtered objects $X$ satisfying $\und(X) \in \Gr(\cat{C})_{\le 0^-}$, i.e. $X^n \in \cat{C}_{\le -n}$ for all $n \in \num{Z}$. Then $(\Fil(\cat{C})^\beil_{\ge 0}, \Fil(\cat{C})^\beil_{\le 0})$ defines a compatible t-structure on $\Fil(\cat{C})$, whose heart is canonically equivalent to the symmetric monoidal category $\DG(\cat{C}^\heart)$ of cochain complexes in $\cat{C}^\heart$.
\end{proposition}

\begin{proof}
  Let us first show that the subcategory $\Fil(\cat{C})^\beil_{\le 0}$ defined in the statement is the coconnective part of a t-structure on $\Fil(\cat{C})$. By \cite[Proposition 1.2.1.16]{lurie--algebra}, this is equivalent to showing that it is a localization of $\Fil(\cat{C})$ and closed under extensions. By \cref{gf--t--beilinson-coconnective}, this subcategory coincides with the subcategory $\cpl\Fil(\cat{C})^\beil_{\le 0}$ of \cref{gf--t--complete-beilinson}, so by that result is the coconnective part of a t-structure on $\cpl\Fil(\cat{C})$. That is, the subcategory is a localization of $\cpl\Fil(\cat{C})$ and is closed under extensions therein. Since $\cpl\Fil(\cat{C})$ itself is a localization of $\Fil(\cat{C})$ (\cref{gf--kd--cpl-loc}) that is closed under extensions, we deduce the desired claim.

  We next show that the connective part of the t-structure defined in the previous paragraph is the subcategory $\Fil(\cat{C})^\beil_{\ge 0}$ defined in the statement; we will invoke \cref{gf--kd--cpl-loc} again here. A filtered object $X \in \Fil(\cat{C})$ is connective for this t-structure if and only if $\Map_{\Fil(\cat{C})}(X,Y[-1]) \iso 0$ for all $Y \in \Fil(\cat{C})^\beil_{\le 0}$. Since $\Fil(\cat{C})^\beil_{\le 0} \subseteq \cpl\Fil(\cat{C})$, this is equivalent to the condition that $\Map_{\cpl\Fil(\cat{C})}(\cpl{X},Y[-1]) \iso 0$. By \cref{gf--t--complete-beilinson}, this holds if and only if $\gr(\cpl{X}) \in \Gr(\cat{C})_{\ge 0^-}$. Since the completion map $X \to \cpl{X}$ induces an equivalence on associated graded objects, this gives the desired description.

  Finally, let us note that it is visible from the definitions in the statement that the t-structure is indeed compatible, and by construction, its heart agrees with that of the t-structure of \cref{gf--t--complete-beilinson}.
\end{proof}

\begin{notation}
  \label{gf--t--beilinson-heart}
  Suppose that the t-structure on $\cat{C}$ is right separated. We refer to the t-structure $(\Fil(\cat{C})^\beil_{\ge 0}, \Fil(\cat{C})^\beil_{\le 0})$ of \cref{gf--t--beilinson} as the \emph{Beilinson t-structure on $\Fil(\cat{C})$}. The identification of the heart of this t-structure supplies a canonical lax symmetric monoidal functor $\DG(\cat{C}^\heart) \to \cpl\Fil(\cat{C})$. We will denote the image of a cochain complex $M \in \DG(\cat{C}^\heart)$ under this functor by $|M|^{\ge\star}$ and we let $|M| := \colim(|M|^{\ge\star}) \in \cat{C}$.
\end{notation}

\begin{remark}
  \label{gf--t--brutal}
  Suppose given $M \in \DG(\cat{C}^\heart)$. Let $M^{\ge\star} \in \Fun(\num{Z}^\op,\DG(\cat{C}^\heart))$ denote the \emph{(decreasing) brutal filtration} of $M$, so that, for $i \in \num{Z}$, the graded terms of $M^{\ge i}$ are given by
  \[
    (M^{\ge i})^j \iso
    \begin{cases}
      M^j & j \ge i \\
      0 & \text{otherwise}.
    \end{cases}
  \]
  Then, in the situation of \cref{gf--t--beilinson-heart}, there is a canonical natural equivalence $|M|^{\ge \star} \iso |M^{\ge\star}|$ of functors $\DG(\cat{C}^\heart) \to \Fil(\cat{C})$.
\end{remark}

\begin{example}
  \label{gf--t--beilinson-eg}
  Suppose that $\cat{C} = \Mod_A$ for $A$ a commutative ring. Then, for $M \in \DG(\Mod_A^\heart)$, the object $|M| \in \Mod_A$ is equivalent to the Eilenberg-MacLane object represented by $M$, and, by \cref{gf--t--brutal}, the filtration $|M|^{\ge\star} \in \cpl\Fil(\Mod_A)$ on $|M|$ is the brutal filtration.
\end{example}


\section{Derived commutative rings}
\label{dc}

The constructions and results of \cref{bi} allow us to reformulate the notion of an $\Einfty$-$\num{Z}$-algebra with $\cir$-action purely in terms of the group algebra $\num{Z}[\cir]$ or its dual $\num{Z}^\cir$. Namely, we have equivalences of $\infty$-categories
\[
  \Fun(\clspc\cir,\CAlg_{\num{Z}}) \iso \CAlg(\Mod_{\num{Z}[\cir]}) \iso \cMod_{\num{Z}^\cir}(\CAlg_{\num{Z}}),
\]
where in the middle term we regard $\Mod_{\num{Z}[\cir]}$ as a symmetric monoidal $\infty$-category using the cocommutative bialgebra structure on $\num{Z}[\cir]$, and in the right term we regard $\num{Z}^\cir$ as a commutative bialgebra over $\num{Z}$ (this will be spelled out more carefully in \cref{fc}). These alternative perspectives will be crucial for us later in the paper, but we will in fact need even more: in addition to $\Einfty$-algebra structures, we will be interested in simplicial commutative algebra structures. Thus, for example, we would like to similarly rewrite the $\infty$-category $\Fun(\clspc\cir,\CAlg^\Delta_{\num{Z}})$ of simplicial commutative rings with $\cir$-action in terms of the group algebra $\num{Z}[\cir]$.

It turns out that this is more naturally formulated with the dual $\num{Z}^\cir$: we would like $\num{Z}^{\cir}$ to be a ``simplicial commutative bialgebra'', i.e. a coalgebra object in $\CAlg^\Delta_{\num{Z}}$, and then for there to be an equivalence of $\infty$-categories $\Fun(\clspc\cir,\CAlg^\Delta_{\num{Z}}) \iso \cLMod_{\num{Z}^{\cir}}(\CAlg^\Delta_{\num{Z}})$.  However, this wish is nonsensical as stated: simplicial commutative rings are by definition connective objects, while $\num{Z}^{\cir}$ is not.

The goal of this section to introduce some theory that allows us to make sense of the above wish. Namely, we define the notion of a \emph{derived commutative ring}. We will make the definition in a certain axiomatic context in \cref{dc--df}, and then in \cref{dc--eg} explain the example contexts of interest, including the basic setting of $\num{Z}$-modules involved above as well as the graded and filtered settings needed in \cref{dg,fc}. The definition is formulated using the language of monads, some preliminary discussion of which will be necessary in \cref{dc--mo}. In \cref{dc--ct}, we give a definition of the cotangent complex in the setting of derived commutative rings, generalizing the cotangent complex from the setting of simplicial commutative rings; our main aim there is really to give a recharacterization of the latter that will be useful for studying the derived de Rham complex in \cref{dg--dr}. Finally, in \cref{dc--cn}, we discuss some key facts about coconnective derived commutative rings that will be needed to construct the graded and filtered analogues of $\num{Z}^{\cir}$ in \cref{dg,fc}.

Let us also repeat what was mentioned in \cref{in--ol}: while we are giving a complete account here of the definition of derived commutative rings, the theory we present is due to Mathew, building on an idea of Brantner (to be explicated in \cref{dc--df}), and related to earlier work of Illusie \cite[\textsection I.4]{illusie--cotangent-i}.


\subsection{Monad miscellany}
\label{dc--mo}

The following example of a monad will be a key player throughout much of this section:

\begin{construction}[Symmetric algebra monad]
  \label{dc--mo--sym}
  Let $\cat{C}$ be a presentable symmetric monoidal $\infty$-category. Then the forgetful functor $G : \CAlg(\cat{C}) \to \cat{C}$ admits a left adjoint $F : \cat{C} \to \CAlg(\cat{C})$. We let $\Sym_{\cat{C}} : \cat{C} \to \cat{C}$ denote the monad associated to this adjunction, and refer to it as the \emph{symmetric algebra monad on $\cat{C}$}. Recall that the functor $\Sym_{\cat{C}}$ preserves sifted colimits, and can be described by the formula $\Sym_{\cat{C}}(X) \iso \coprod_{j \ge 0} (X^{\otimes j})_{\Sigma_j}$ (where $(-)_{\Sigma_j}$ to denote homotopy orbits) \cite[Example 3.1.3.14]{lurie--algebra}. Recall also that $G$ is monadic (by the monadicity theorem, cf. \cite[Example 4.7.3.11]{lurie--algebra}), so that we have a canonical equivalence of $\infty$-categories $\CAlg(\cat{C}) \iso \LMod_{\Sym_{\cat{C}}}(\cat{C})$.
\end{construction}

In the situation of \cref{dc--mo--sym}, observe that the functor $\Sym_{\cat{C}}$ has a canonical filtration by functors $\Sym^{\le i}_{\cat{C}} : \cat{C} \to \cat{C}$ for $i \ge 0$, given by $\Sym^{\le i}_{\cat{C}}(X) \iso \coprod_{0 \le j \le i} (X^{\otimes j})_{\Sigma_j}$. In \cref{dc--df}, it will be useful to be able to control $\Sym_{\cat{C}}$ \emph{as a monad} in terms of this filtration. The bulk of this subsection is concerned with setting up the definitions and constructions needed to do this.  We begin with the following definition.

\begin{definition}
  \label{dc--mo--filmo}
  Let $\cat{C}$ be an $\infty$-category. A \emph{filtered monad on $\cat{C}$} is a lax monoidal functor $\num{Z}_{\ge0}^\times \to \End(\cat{C})$, where $\num{Z}_{\ge0}^\times$ denotes the partially ordered set of nonnegative integers regarded as a monoidal category via multiplication. More generally, if $\cat{E}$ is a monoidal full subcategory of $\End(\cat{C})$, we will refer to algebra objects of $\cat{E}$ as \emph{$\cat{E}$-monads} and to lax monoidal functors $\num{Z}_{\ge0}^\times \to \cat{E}$ as \emph{filtered $\cat{E}$-monads}.
\end{definition}

\begin{notation}
  \label{dc--mo--day-operad}
  Let $\cat{C}$ and $\cat{E} \subseteq \End(\cat{C})$ be as in \cref{dc--mo--filmo}. We let $\cat{E}^\circ \to \Assoc^\otimes$ denote the fibration of $\infty$-operads encoding the monoidal structure on $\cat{E}$, and we let $\Fun(\num{Z}_{\ge0}^\times,\cat{E}^\circ)^\star \to \Assoc^\otimes$ denote the fibration of $\infty$-operads obtained by applying the Day convolution construction (\cite[Construction 2.2.6.7]{lurie--algebra}) to the monoidal $\infty$-categories $\num{Z}_{\ge0}^\times$ and $\cat{E}$. We note that algebra objects of $\Fun(\num{Z}_{\ge0}^\times, \cat{E}^\circ)^\star$ can be identified with lax monoidal functors $\num{Z}_{\ge0}^\times \to \cat{E}$, i.e. filtered $\cat{E}$-monads \cite[Remark 2.2.6.8]{lurie--algebra}.\footnote{Beware however that $\Fun(\num{Z}_{\ge0}^\times, \cat{E}^\circ)^\star$ is not a monoidal $\infty$-category, due to composition $- \circ -$ not preserving colimits in the second variable.}
\end{notation}

\begin{proposition}
  \label{dc--mo--colim-operad}
  Let $\cat{C}$ and $\cat{E} \subseteq \End(\cat{C})$ be as in \cref{dc--mo--filmo}. Assume moreover that:
  \begin{enumerate}
  \item $\cat{C}$ admits all small colimits;
  \item $\cat{E}$ is closed under sequential colimits, so that we have a colimit functor $\colim : \Fun(\num{Z}_{\ge0},\cat{E}) \to \cat{E}$, left adjoint to the diagonal functor $\delta : \cat{E} \to \Fun(\num{Z}_{\ge0},\cat{E})$;
  \item each $F \in \cat{E}$ commutes with sequential colimits.
  \end{enumerate}
  Then the adjunction $\colim : \Fun(\num{Z}_{\ge0},\cat{E}) \fromto \cat{E} : \delta$ canonically lifts to a relative adjunction of $\infty$-operads $\Fun(\num{Z}_{\ge0}^\times,\cat{E}^\circ)^\star \fromto \cat{E}^\circ$ over $\Assoc^\otimes$ (in the sense of \cite[\textsection 7.3.2]{lurie--algebra}). In particular, the colimit of a filtered $\cat{E}$-monad is canonically an $\cat{E}$-monad.
\end{proposition}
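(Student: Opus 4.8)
The plan is to exhibit the asserted relative adjunction as an instance of the operadic relative‑adjoint formalism of \cite[\S7.3.2]{lurie--algebra}, and then to read off from the same formalism the condition under which the relative left adjoint $\colim$ is moreover a map of $\infty$-operads, which is what delivers the final sentence. First I would observe that the monoidal functor $p\colon \num{Z}_{\ge0}^\times \to \ast$ collapsing onto the trivial (one-object, one-morphism) monoidal category induces, by the functoriality of the operadic Day convolution construction in its index category (\cite[\S2.2.6]{lurie--algebra}; no colimit hypotheses are needed for this), a map of $\infty$-operads $\cat{E}^\circ \to \Fun(\num{Z}_{\ge0}^\times,\cat{E}^\circ)^\star$ over $\Assoc^\otimes$ whose underlying functor is the diagonal $\delta\colon\cat{E}\to\Fun(\num{Z}_{\ge0},\cat{E})$. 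Over each $\langle n\rangle\in\Assoc$ the induced functor is $\delta^{\times n}$, which admits the left adjoint $\colim^{\times n}$ because $\cat{E}$ is closed under sequential colimits; hence \cite[\S7.3.2]{lurie--algebra} produces a relative left adjoint $F\colon\Fun(\num{Z}_{\ge0}^\times,\cat{E}^\circ)^\star\to\cat{E}^\circ$ over $\Assoc^\otimes$ with underlying functor $\colim$. This proves the first assertion.

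For the second assertion it is enough to promote $F$ to a map of $\infty$-operads: maps of $\infty$-operads over $\Assoc^\otimes$ preserve $\Assoc$-algebras, so such an $F$ would carry a filtered $\cat{E}$-monad --- an algebra object of $\Fun(\num{Z}_{\ge0}^\times,\cat{E}^\circ)^\star$ in the sense of \cref{dc--mo--day-operad} --- to an $\cat{E}$-monad. By \cite[\S7.3.2]{lurie--algebra}, $F$ is a map of $\infty$-operads precisely when its oplax structure maps are all equivalences, which unwinds to the requirement that for all $M,N\in\Fun(\num{Z}_{\ge0}^\times,\cat{E})$ the canonical comparison
\[
  \colim(M\oast N)\longrightarrow(\colim M)\circ(\colim N)
\]
is an equivalence (the analogous unit statement is immediate, since $\colim$ sends the unit object of $\Fun(\num{Z}_{\ge0}^\times,\cat{E}^\circ)^\star$ to $\unit_{\cat{E}}$).

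To check this comparison I would identify both sides with the colimit of $(i,j)\mapsto M(i)\circ N(j)$ over $\num{Z}_{\ge0}\times\num{Z}_{\ge0}$. Unwinding the Day convolution formula together with the usual description of an iterated colimit via a Grothendieck construction, $\colim(M\oast N)$ is the colimit of $M(i)\circ N(j)$ over the poset $\{(i,j,n):ij\le n\}$; the forgetful functor from this poset to $\num{Z}_{\ge0}\times\num{Z}_{\ge0}$ is cofinal, because the fibre over $(a,b)$ has least element $(a,b,ab)$, whence $\colim(M\oast N)\iso\colim_{(i,j)}M(i)\circ N(j)$. On the other hand, since $\num{Z}_{\ge0}$-indexed colimits of functors in $\cat{E}$ are computed pointwise and remain in $\cat{E}$, and since each $G\in\cat{E}$ commutes with them, composition $\cat{E}\times\cat{E}\to\cat{E}$ preserves $\num{Z}_{\ge0}$-indexed colimits separately in each variable; combining this with the interchange of iterated colimits gives $(\colim M)\circ(\colim N)\iso\colim_i\colim_j M(i)\circ N(j)\iso\colim_{(i,j)}M(i)\circ N(j)$. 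A short diagram chase then matches the two resulting equivalences with the canonical comparison map.

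I expect the main obstacle to be interfacing this elementary colimit computation with the operadic bookkeeping. Because $\Fun(\num{Z}_{\ge0}^\times,\cat{E}^\circ)^\star$ is only an $\infty$-operad and not a monoidal $\infty$-category --- the products $M\oast N$ need not exist as objects --- the comparison map above must be extracted from its multi-morphism structure rather than from honest tensor products, and one must confirm that $\colim(M\oast N)$, a priori only meaningful as the colimit of an explicit diagram, genuinely computes the value of the relative left adjoint $F$ on the relevant active morphism. Once this identification is in hand, the verification reduces to the cofinality statement above, which uses exactly the three hypotheses of the proposition.
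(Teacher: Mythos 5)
Your proposal takes the same route as the paper: construct the right adjoint $G$ at the level of $\infty$-operads via Day convolution, then invoke Lurie's relative-adjunction machinery, with the substance of the argument being the check that $\colim$ commutes with the composition product. The paper does this by citing \cite[Corollary 2.2.6.14]{lurie--algebra} to see that $\Fun(\num{Z}_{\ge0}^\times,\cat{E}^\circ)^\star \to \Assoc^\otimes$ is a locally cocartesian fibration of $\infty$-operads and then applying \cite[Proposition 7.3.2.11]{lurie--algebra}; criterion (2) of that proposition is precisely your comparison map $\colim(M \oast N) \to (\colim M)\circ(\colim N)$, and your cofinality computation is the ``unravelling'' the paper refers to. So the mathematical content matches.

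One organizational point worth fixing: you present ``obtaining the relative left adjoint $F$'' and ``promoting $F$ to a map of $\infty$-operads'' as two separate steps, with the comparison-map check entering only at the second. In fact the fiberwise left adjoints $\colim^{\times n} \dashv \delta^{\times n}$ are not by themselves sufficient to produce a relative left adjoint over $\Assoc^\otimes$; compatibility with the locally cocartesian pushforwards along active morphisms — that is, your comparison check — is already required for the relative left adjoint to exist. Proposition 7.3.2.11 packages both conclusions at once under that single criterion. Relatedly, your closing worry about whether $\colim(M\oast N)$ ``genuinely computes $F$'' is resolved at the outset once you know the fibration is locally cocartesian: the binary products $M \oast N$ do exist as locally cocartesian pushforwards (what fails is associativity, i.e.\ the fibration is not cocartesian), so the comparison map is well-defined and your computation applies directly.
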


\begin{proof}
  We begin with the right adjoint functor, i.e. the diagonal $\delta : \cat{E} \to \Fun(\num{Z}_{\ge0},\cat{E})$. Recall that $\delta$ corresponds under the equivalence $\Fun(\cat{E},\Fun(\num{Z}_{\ge0},\cat{E})) \iso \Fun(\cat{E} \times \num{Z}_{\ge0}, \cat{E})$ to the projection functor. Similarly, the projection map of $\infty$-operads $\cat{E}^\circ \times_{\Assoc^\otimes} \num{Z}_{\ge0}^\times \to \cat{E}^\circ$ corresponds uniquely under the universal property of Day convolution \cite[Definition 2.2.6.1]{lurie--algebra} to a map of $\infty$-operads $G : \cat{E}^\circ \to \Fun(\num{Z}_{\ge0}^\times,\cat{E}^\circ)^\star$ over $\Assoc^\otimes$, which lifts $\delta$.

  Now, to prove the desired claim, it suffices to show that $G$ admits a left adjoint relative to $\Assoc^\otimes$. It follows from \cite[Corollary 2.2.6.14]{lurie--algebra} that $\Fun(\num{Z}_{\ge0}^\times,\cat{E}^\circ)^\star \to \Assoc^\otimes$ is a locally cocartesian fibration of $\infty$-operads, so it will be enough to check that criterion (2) of \cite[Proposition 7.3.2.11]{lurie--algebra} is satisfied (note that the first criterion amounts in our situation to the existence of the colimit functor $\Fun(\num{Z}_{\ge0},\cat{E}) \to \cat{E}$ that we are studying). Unravelling the definitions, this follows from our hypothesis that each $F \in \cat{E}$ commutes with sequential colimits.
\end{proof}

We can now state the main goal of this subsection a bit more precisely: in the situation of \cref{dc--mo--sym}, we would like to construct a certain filtered monad $\Sym^{\le\star}_{\cat{C}}$ on $\cat{C}$ whose colimit recovers the symmetric algebra monad $\Sym_{\cat{C}}$. Doing so will require some preliminaries, involving the theory of symmetric sequences and the concomitant theory of operads. Let us begin by reviewing the basic notions (cf. \cite{trimble--lie,haugseng--symseq,brantner--thesis}).

\begin{recollection}
  \label{dc--mo--sseq}
  Let $\num{F}$ denote the groupoid whose objects are finite sets and whose morphisms are bijections, regarded as a symmetric monoidal category via disjoint union. For any presentable symmetric monoidal $\infty$-category $\cat{C}$, we let $\SSeq(\cat{C})$ denote the $\infty$-category $\Fun(\num{F},\cat{C})$ of \emph{symmetric sequences in $\cat{C}$}, regarded as a (presentable) symmetric monoidal $\infty$-category via Day convolution. For $A \in \SSeq(\cat{C})$ and $n \in \num{Z}_{\ge0}$, we let $A_n \in \cat{C}$ denote the value of $A$ on the finite set $\{1,\ldots,n\}$.

  Left Kan extension along the inclusion $\{\emptyset\} \inj \num{F}$ determines a fully faithful, colimit-preserving symmetric monoidal embedding $\iota : \cat{C} \to \SSeq(\cat{C})$. Composing the Yoneda embedding
  \[
    \num{F} \iso \num{F}^\op \to \Fun(\num{F},\Spc) = \SSeq(\Spc)
  \]
  with the unique map $\Spc \to \cat{C}$ in $\CAlg(\PrL)$, we obtain a functor $y : \num{F} \to \SSeq(\cat{C})$.

  There is an additional (non-symmetric) monoidal structure on $\SSeq(\cat{C})$, referred to as the \emph{composition monoidal structure}. This arises as follows. For any $\cat{D} \in \CAlg(\PrL)_{\cat{C}/}$, evaluation at $y(\{1\})$ determines an equivalence of $\infty$-categories
  \[
    \Fun_{\CAlg(\PrL)_{\cat{C}/}}(\SSeq(\cat{C}),\cat{D}) \isoto \cat{D},
  \]
  where the left-hand side denote the $\infty$-category of colimit-preserving symmetric monoidal functors under $\cat{C}$. Taking $\cat{D} = \SSeq(\cat{C})$, we obtain an equivalence of $\infty$-categories $\End_{\CAlg(\PrL)_{\cat{C}/}}(\SSeq(\cat{C})) \iso \SSeq(\cat{C})$. Transporting the composition monoidal structure on the left hand side across this equivalence defines a monoidal structure on $\SSeq(\cat{C})$, and we then take the reverse monoidal structure to obtain the composition monoidal structure on $\SSeq(\cat{C})$. The tensor product of the composition monoidal structure is referred to as the \emph{composition product}, denoted by $(A,B) \mapsto A \circ B$, and given concretely by the formula
  \[
    A \circ B \iso \coprod_{n \ge 0} (A_n \otimes B^{\oast n})_{\Sigma_n},
  \]
  where $\otimes$ denotes the tensoring of $\SSeq(\cat{C})$ over $\cat{C}$ (determined by the symmetric monoidal embedding $\iota$) and $\oast$ denotes the Day convolution product. We refer to algebra objects of $\SSeq(\cat{C})$ with respect to the composition monoidal structure as \emph{operads in $\cat{C}$}, and let $\Op(\cat{C})$ denote the $\infty$-category $\Alg(\SSeq(\cat{C})^\circ)$ of operads in $\cat{C}$.

  Finally, we let $\theta : \SSeq(\cat{C}) \to \End(\SSeq(\cat{C}))$ denote the monoidal functor determined by the composition product, sending $A \mapsto A \circ -$. We also denote the induced functor on algebra objects $\Op(\cat{C}) \to \Alg(\End(\SSeq(\cat{C}))$ by $\theta$. For any $A \in \SSeq(\cat{C})$, the endofunctor $\theta(A)$ preserve the essential image of the embedding $\iota : \cat{C} \to \SSeq(\cat{C})$, and hence restricts to an endofunctor on $\cat{C}$. 
\end{recollection}

\begin{construction}
  \label{dc--mo--sym-operad}
  \emph{Let $\cat{C}$ be a presentable symmetric monoidal $\infty$-category. Let $A \in \SSeq(\cat{C})$ denote the constant symmetric sequence with value the unit object $\unit \in \cat{C}$. We will construct an operad structure on $A$, together with an equivalence of monads $\theta(A) \iso \Sym_{\SSeq(\cat{C})}$ (where $\Sym_{\SSeq(\cat{C})}$ is defined using the Day convolution symmetric monoidal structure on $\SSeq(\cat{C})$).}

  We use ideas similar to those used in the proof of \cref{bi--du--linear-comonad}. As there, let us begin by recalling how the monad structure on $\Sym_{\SSeq(\cat{C})}$ is defined. Let $G : \CAlg(\SSeq(\cat{C})) \to \SSeq(\cat{C})$ denote the forgetful functor, and let $F$ denote its left adjoint, so that $\Sym_{\SSeq(\cat{C})}$ is given as a functor by the compositon $G \circ F$. Regarding the functor category $\Fun(\CAlg(\SSeq(\cat{C})),\SSeq(\cat{C}))$ as left tensored over the monoidal $\infty$-category $\End(\SSeq(\cat{C}))$ via postcomposition, one checks that $\Sym_{\SSeq(\cat{C})} = G \circ F$ is an endomorphism object for $G$, hence carries a canonical algebra (i.e. monad) structure.

  We now refine this construction. Let $\cat{E} := \End_{\CAlg(\PrL)_{\cat{C}/}}(\SSeq(\cat{C}))$. We regard $\cat{E}$ as a monoidal $\infty$-category, and we regard both $\SSeq(\cat{C})$ and $\CAlg(\SSeq(\cat{C}))$ as left tensored over $\cat{E}$ via evaluation, so that the forgetful functor $G$ is canonically $\cat{E}$-linear. It follows that the left adjoint $F$ is canonically oplax $\cat{E}$-linear, and it is easy to check that it is in fact $\cat{E}$-linear. Carrying out the same construction above in the $\cat{E}$-linear setting, we deduce that $\Sym_{\SSeq(\cat{C})}$ is canonically an \emph{$\cat{E}$-linear monad}, i.e. an algebra object in the $\infty$-category of $\cat{E}$-linear endofunctors of $\SSeq(\cat{C})$.

  Now recall from \cref{dc--mo--sseq} that $\cat{E} \iso \SSeq(\cat{C})$. Under this equivalence the left action of $\cat{E}$ on $\SSeq(\cat{C})$ identifies with the right action of $\SSeq(\cat{C})$ on itself via the composition monoidal structure. It follows that any $\cat{E}$-linear endofunctor (resp. monad) on $\SSeq(\cat{C})$ is given by $B \mapsto A' \circ B$ for some $A' \in \SSeq(\cat{C})$ (resp. $A' \in \Op(\cat{C})$). By inspection, the symmetric sequence underlying the operad giving $\Sym_{\SSeq(\cat{C})}$ is constant with value $\unit \in \cat{C}$.
\end{construction}

\begin{construction}[{cf. \cite[\textsection 4.1]{heuts--approx}}]
  \label{dc--mo--sseq-fil}
  Let $\cat{C}$ and $\SSeq(\cat{C})$ be as in \cref{dc--mo--sseq}. For $i \in \num{Z}_{\ge0}$, let $\SSeq^{\le i}(\cat{C}) \subseteq \SSeq(\cat{C})$ denote the full subcategory spanned by those symmetric sequences $A$ such that $A_j$ is an initial object of $\cat{C}$ for all $j > i$.

  Now consider the $\infty$-category $\Fun(\num{Z}_{\ge0},\SSeq(\cat{C}))$, and let $\SSeq^{\le\star}(\cat{C})$ denote the full subcategory of $\Fun(\num{Z}_{\ge0},\SSeq(\cat{C}))$ spanned by those functors $F$ such that $F(i) \in \SSeq^{\le i}(\cat{C})$ for all $i \in \num{Z}_{\ge0}$. The inclusion $\phi : \SSeq^{\le\star}(\cat{C}) \inj \Fun(\num{Z}_{\ge0},\SSeq(\cat{C}))$ admits a right adjoint $\psi : \Fun(\num{Z}_{\ge0},\SSeq(\cat{C})) \to \SSeq^{\le\star}(\cat{C})$, given by the formula
  \[
    \psi(F)(i)_j \iso
    \begin{cases}
      F(i) & j \le i \\
      \emptyset_{\cat{C}} & \text{otherwise},
    \end{cases}
  \]
  for $i,j \in \num{Z}_{\ge0}$, where $\emptyset_{\cat{C}}$ denotes the initial object of $\cat{C}$.

  Let $p : \Fun(\num{Z}_{\ge0}^\times,\SSeq(\cat{C})^\circ)^\otimes \to \Assoc^\otimes$ denote the fibration of $\infty$-operads obtained by applying the Day convolution construction for the multiplication monoidal structure on $\num{Z}_{\ge0}$ and the composition monoidal structure on $\SSeq(\cat{C})$. The composition product on $\SSeq(\cat{C})$ preserves sequential colimits in each variable, so that, by \cite[Corollary 2.2.6.14]{lurie--algebra}, $p$ is a locally cocartesian fibration. Concretely, this means that, for every finite linearly ordered set $S \iso \{1,\ldots,n\}$ (with $n = 0$ for $S = \emptyset$), we have a well-defined Day convolution product
  \[
    \bigotimes_{k=1}^n : \prod_{k=1}^n \Fun(\num{Z}_{\ge0},\SSeq(\cat{C})) \to \Fun(\num{Z}_{\ge0},\SSeq(\cat{C})),
  \]
  given by the formula
  \[
    (\bigotimes_{k=1}^n F_k)(i) \iso \colim_{i = \prod_{k=1}^n i_k } F_1(i_1) \circ \cdots \circ F_k(i_k)
  \]
  (see \cite[Remark 2.2.6.15]{lurie--algebra}). Observe that, for $n \ge 0$ and $F_1,\ldots,F_n \in \SSeq^{\le\star}(\cat{C})$, we have $\bigotimes_{k=1}^n F_k \in \SSeq^{\le\star}(\cat{C})$ as well.

  Let $\SSeq^{\le\star}(\cat{C})^\otimes \subseteq \Fun(\num{Z}_{\ge0}^\times,\SSeq(\cat{C})^\circ)^\otimes$ denote the full subcategory determined by $\SSeq(\cat{C}) \subseteq \Fun(\num{Z}_{\ge0},\SSeq(\cat{C}))$ (as defined in \cite[\S2.2.1]{lurie--algebra}). Noting that the statement and proof of \cite[Proposition 2.2.1.1]{lurie--algebra} go through with ``cocartesian fibration'' replaced by ``locally cocartesian fibration'', we deduce from the observation above that the functor $\psi$ canonically promotes to a map of $\infty$-operads $\psi' : \Fun(\num{Z}_{\ge0}^\times,\SSeq(\cat{C})^\circ)^\otimes \to \SSeq^{\le\star}(\cat{C})^\otimes$.

  Finally, we will denote the composition
  \[
    \SSeq(\cat{C})^\circ \lblto{\delta} \Fun(\num{Z}_{\ge0}^\times,\SSeq(\cat{C})^\circ)^\otimes \lblto{\psi'} \SSeq^{\le\star}(\cat{C})^\otimes \subseteq \Fun(\num{Z}_{\ge0}^\times,\SSeq(\cat{C})^\circ)^\otimes.
  \]
  by $(-)^{\le\star}$; here $\delta$ is the diagonal map, constructed as in the proof of \cref{dc--mo--colim-operad}. We will also denote by $(-)^{\le\star}$ the induced functor on algebra objects
  \[
    \Op(\cat{C}) = \Alg(\SSeq(\cat{C})^\circ) \to \Alg(\Fun(\num{Z}_{\ge0}^\times,\SSeq(\cat{C})^\circ)^\otimes) \iso \Fun^\lax((\num{Z}_{\ge0}^\times,\SSeq(\cat{C})^\circ)^\otimes).
  \]
\end{construction}

We finally come to the main construction of the subsection.

\begin{construction}[Symmetric powers filtered monad]
  \label{dc--mo--sym-fil}
  \emph{Let $\cat{C}$ be a presentable symmetric monoidal $\infty$-category. Let $\cat{E} \subseteq \End(\cat{C})$ denote the full subcategory spanned by those endofunctors of $\cat{C}$ that preserve sifted colimits. We will construct a filtered $\cat{E}$-monad $\Sym^{\le\star}_{\cat{C}}$, whose filtered pieces $\Sym^{\le i}_{\cat{C}}$ are given by the formula $\Sym^{\le i}_{\cat{C}}(X) \iso \coprod_{0 \le j \le i} (X^{\otimes j})_{\Sigma_j}$, together with an equivalence of $\cat{E}$-monads $\colim(\Sym^{\le *}_{\cat{C}}) \iso \Sym_{\cat{C}}$. We will refer to $\Sym^{\le\star}_{\cat{C}}$ as the \emph{symmetric powers filtered monad on $\cat{C}$}.}
  
  Let $\SSeq(\cat{C})$ be as in \cref{dc--mo--sseq} and let $\cat{E}'$ denote the (monoidal) full subcategory of $\End(\SSeq(\cat{C}))$ spanned by those endofunctors of $\SSeq(\cat{C})$ that preserve sifted colimits and that preserve the image of the embedding $\iota : \cat{C} \to \SSeq(\cat{C})$. We have a monoidal restriction functor $\rho : \cat{E}' \to \cat{E}$ that carries the $\cat{E}'$-monad $\Sym_{\SSeq(\cat{C})}$ to the $\cat{E}$-monad $\Sym_{\cat{C}}$.

  The monoidal functor $\theta : \SSeq(\cat{C}) \to \End(\SSeq(\cat{C}))$ factors through $\cat{E}'$. By \cref{dc--mo--sym-operad}, we have an operad $A \in \Op(\cat{C})$ with an equivalence of monads $\theta(A) \iso \Sym_{\SSeq(\cat{C})}$. Applying \cref{dc--mo--sseq-fil}, we obtain a lax monoidal functor $A^{\le\star} : \num{Z}_{\ge 0}^\times \to \SSeq(\cat{C})^\circ$. We let $\Sym^{\le\star}_{\cat{C}}$ denote the composite lax monoidal functor
  \[
    \num{Z}_{\ge 0}^\times \lblto{A^{\le\star}} \SSeq(\cat{C})^\circ \lblto{\theta} \cat{E}' \lblto{\rho} \cat{E}.
  \]
  This filtered $\cat{E}$-monad $\Sym^{\le\star}_{\cat{C}}$ fulfills the requirements delineated above.
\end{construction}

We close this subsection by recording two unrelated results about monads that will be needed later in the section.

\begin{proposition}
  \label{dc--mo--localization}
  Let $\tau : \cat{C} \fromto \cat{C}_0 : \iota$ be a localization of $\infty$-categories, i.e. an adjunction where the right adjoint $\iota$ is fully faithful. Let $T$ be a monad on $\cat{C}$ such that the unit transformation $\id_{\cat{C}} \to \iota \circ \tau$ induces an equivalence $\tau \circ T \isoto \tau \circ T \circ \iota \circ \tau$. Then there is an induced monad structure on the composite $T_0 := \tau \circ T \circ \iota \in \End(\cat{C}_0)$ and an induced localization $\tau : \LMod_T(\cat{C}) \fromto \LMod_{T_0}(\cat{C}_0) : \iota$, where the embedding $\iota : \LMod_{T_0}(\cat{C}_0) \to \LMod_T(\cat{C})$ identifies $\LMod_{T_0}(\cat{C}_0)$ with the fiber product $\LMod_T(\cat{C}) \times_{\cat{C}} \cat{C}_0$.
\end{proposition}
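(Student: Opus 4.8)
The plan is to recognize the asserted localization of module categories as the one coming from a reflective subcategory. Write $L := \iota\circ\tau$ for the idempotent monad attached to the localization, with unit $\eta\colon\id_\cat{C}\to L$, so that $\cat{C}_0\subseteq\cat{C}$ is identified with the full subcategory of $L$-local objects. The first observation is that the hypothesis has a cleaner reformulation: a morphism $f$ of $\cat{C}$ is an $L$-equivalence (becomes an equivalence under $\tau$) if and only if $\tau(f)$ is an equivalence, and $T$ preserves $L$-equivalences if and only if $T(\eta_X)\colon TX\to TLX$ is an $L$-equivalence for every $X$ (as $\eta_X$ is the universal such map) — which is exactly the assertion that $\tau\circ T\to\tau\circ T\circ\iota\circ\tau$ be an equivalence. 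Applying the fully faithful $\iota$, this is the same as saying the canonical map $LT\to LTL$ is an equivalence. Granting this, the three things to produce are: (i) the monad $T_0=\tau T\iota$ on $\cat{C}_0$, together with a canonical equivalence $\tau\circ T\simeq T_0\circ\tau$; (ii) an equivalence $\LMod_{T_0}(\cat{C}_0)\simeq\cat{M}_0$, where $\cat{M}_0:=\LMod_T(\cat{C})\times_\cat{C}\cat{C}_0$ is the full subcategory of $\LMod_T(\cat{C})$ on those $T$-modules with $L$-local underlying object; and (iii) a proof that $\cat{M}_0$ is reflective in $\LMod_T(\cat{C})$, which then furnishes the adjunction $\tau\colon\LMod_T(\cat{C})\fromto\LMod_{T_0}(\cat{C}_0)\colon\iota$.

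For (i) the efficient approach is to work inside the monoidal $\infty$-category $\End(\cat{C})$ (composition product), where $L$ and $T$ are algebra objects and $L$ is idempotent. The functor $P_0\mapsto\iota\circ P_0\circ\tau$ is a fully faithful monoidal embedding $\End(\cat{C}_0)\hookrightarrow\End(\cat{C})$ (using $\tau\iota\simeq\id$), whose essential image consists of those $P$ for which the canonical map $P\to LPL$ is an equivalence. Using the idempotency of $L$ together with the hypothesis $LT\xrightarrow{\sim}LTL$, one checks that $LT$ lies in this essential image, with preimage the functor $T_0=\tau T\iota$, and that $LT$ carries an algebra structure in $\End(\cat{C})$ whose multiplication is $LTLT\simeq LTT\to LT$ (using $LTL\simeq LT$ and the multiplication of $T$) and whose unit is $\id\to T\to LT$; transporting this algebra structure back along the embedding equips $T_0$ with a monad structure, and the equivalence $\iota T_0\tau\simeq LT$ of algebras in $\End(\cat{C})$ restricts along $\iota$ to $\tau T\simeq T_0\tau$. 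I expect this last bundle of coherence verifications — promoting the visibly correct homotopy-category formulas for the multiplication, unit, and the comparisons above into genuine $\infty$-categorical algebra data — to be the principal obstacle; carrying it out via algebra objects of $\End(\cat{C})$, rather than by a direct cocycle manipulation, is what keeps it manageable.

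For (ii) and (iii): given $M\in\LMod_T(\cat{C})$ with action $a\colon TM\to M$, one promotes $LM$ to a $T$-module via the zigzag $TLM\xrightarrow{\eta}LTLM\xleftarrow{\sim}LTM\xrightarrow{La}LM$ (the indicated equivalence being the hypothesis), checks that $\eta_M\colon M\to LM$ is $T$-linear and is initial among maps from $M$ into objects of $\cat{M}_0$, and concludes that $\cat{M}_0\hookrightarrow\LMod_T(\cat{C})$ is reflective with reflection $\mathrm{loc}$ satisfying $U_T\circ\mathrm{loc}\simeq L\circ U_T$, where $U_T\colon\LMod_T(\cat{C})\to\cat{C}$ is the forgetful functor. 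Dually, a $T_0$-module $(X,b)$ on a local object $X$ determines a $T$-module structure $T\iota X\xrightarrow{\eta}LT\iota X\simeq\iota T_0X\xrightarrow{\iota b}\iota X$ on $\iota X$, hence an object of $\cat{M}_0$; iterating the hypothesis gives equivalences $\tau T^{\circ n}M\simeq T_0^{\circ n}\tau M$ matching the bar resolutions, and from this one checks that these two assignments are mutually inverse equivalences $\LMod_{T_0}(\cat{C}_0)\simeq\cat{M}_0$ over $\cat{C}_0$ (when $\cat{C}$ admits the relevant colimits this follows more quickly from the Barr--Beck--Lurie theorem applied to the conservative functor $U_0\colon\cat{M}_0\to\cat{C}_0$ and its left adjoint $\mathrm{loc}\circ\mathrm{Free}_T\circ\iota$, whose monad is $\tau\circ L\circ T\circ\iota\simeq\tau\circ T\circ\iota=T_0$). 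Transporting the reflective localization $\mathrm{loc}\colon\LMod_T(\cat{C})\fromto\cat{M}_0$ across this equivalence then yields the adjunction $\tau\colon\LMod_T(\cat{C})\fromto\LMod_{T_0}(\cat{C}_0)\colon\iota$ of the statement, with right adjoint $\iota$ the fully faithful inclusion, everything lying over the forgetful functors and over $\tau\colon\cat{C}\fromto\cat{C}_0\colon\iota$, and $\LMod_{T_0}(\cat{C}_0)$ thereby identified with $\cat{M}_0=\LMod_T(\cat{C})\times_\cat{C}\cat{C}_0$ as claimed.
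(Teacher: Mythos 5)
The paper's proof is far terser than yours and hinges on a single observation that your outline never isolates: the hypothesis on $T$ says precisely that $T$ lies in the monoidal full subcategory $\cat{E} \subseteq \End(\cat{C})$ of endofunctors preserving $L$-equivalences (with $L = \iota\tau$), and the definition of $\cat{E}$ is exactly what makes the localization $\tau : \cat{C} \fromto \cat{C}_0 : \iota$ \emph{compatible} with the left-tensoring of $\cat{C}$ over $\cat{E}$ in the sense of \cite[Definition 2.2.1.6]{lurie--algebra} applied to the operad $\LM^\otimes$. Once this observation is made, Lurie's compatible-localization machinery delivers, in one stroke, the induced $\cat{E}$-tensoring on $\cat{C}_0$, the $\cat{E}$-linearity of the adjunction, and --- taking modules over the algebra object $T \in \cat{E}$ --- the monad $T_0 = \tau T \iota$ on $\cat{C}_0$ together with the localization $\LMod_T(\cat{C}) \fromto \LMod_{T_0}(\cat{C}_0)$ and its identification with $\LMod_T(\cat{C}) \times_{\cat{C}} \cat{C}_0$. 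All the coherence you describe --- the algebra structure on $T_0$ (or $LT$), the compatibility $\tau T \simeq T_0 \tau$, the zigzag module structure on $LM$, the $T$-linearity of $\eta_M$ --- is thereby packaged into a single citation.

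Your plan is correct in spirit but has a real gap at exactly the place you call ``the principal obstacle'': you gesture at resolving the coherence ``via algebra objects of $\End(\cat{C})$'' without providing the construction. Writing down the multiplication $LTLT \simeq LTT \to LT$ and unit $\id \to T \to LT$ is only a homotopy-level description; a genuine $\infty$-categorical argument is required to make $LT$ an algebra object. The cleanest version of what you want is to note that postcomposition $F \mapsto LF$ is an oplax monoidal endofunctor of $\End(\cat{C})$ (via $\eta : \id \to L$) which becomes strong monoidal when restricted to $\cat{E}$, hence sends the algebra $T$ to an algebra $LT$; but at that point one has reconstructed the paper's compatible-localization argument by hand, and citing Lurie is better. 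A secondary remark: the Barr--Beck--Lurie shortcut you offer for steps (ii)/(iii) requires colimit hypotheses on $\cat{C}$ that the proposition does not impose (you flag this yourself); the paper's route needs no such hypotheses, and you would need the bar-resolution argument in general, which again entrains coherence work that the compatible-localization framework dispatches for free.
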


\begin{proof}
  Let $\cat{E}$ denote the full subcategory of $\End(\cat{C})$ spanned by those endofunctors satisfying the property that the natural transformation $\tau \circ T \to \tau \circ T \circ \iota \circ \tau$ is an equivalence. Then $\cat{E}$ is closed under composition in $\End(\cat{C})$ and contains $\id_{\cat{C}}$, so we may regard $\cat{E}$ as a monoidal $\infty$-category over which $\cat{C}$ is left tensored. It follows from the definition of $\cat{E}$ that the localization $\tau : \cat{C} \fromto \cat{C}_0 : \iota$ is compatible with this tensoring over $\cat{E}$, in the sense of \cite[Definition 2.2.1.6]{lurie--algebra} (for the case that the $\infty$-operad $\cat{O}^\otimes$ is the left module $\infty$-operad $\LM^\otimes$ of \S4.2.1 in op. cit.), determining a canonical left tensoring of $\cat{C}_0$ over $\cat{E}$. After unravelling definitons, this gives the claim.
\end{proof}

\begin{proposition}
  \label{dc--mo--alg-presentable}
  Let $\kappa$ be a regular cardinal and let $\cat{C}$ be a $\kappa$-presentable $\infty$-category. Let $T$ be a monad on $\cat{C}$ that commutes with sifted colimits. Then $\LMod_T(\cat{C})$ is a $\kappa$-presentable $\infty$-category.
\end{proposition}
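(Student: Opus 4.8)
The plan is to present $\LMod_T(\cat{C})$ as a cocomplete, locally small $\infty$-category generated under small colimits by a small family of $\kappa$-compact objects — namely the free $T$-algebras on $\kappa$-compact objects of $\cat{C}$ — and then to apply the standard recognition criterion for $\kappa$-presentability.

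First I would record the structural facts. The forgetful functor $U \colon \LMod_T(\cat{C}) \to \cat{C}$ has a left adjoint $F$ (formation of free $T$-algebras), and by \cite[Corollary 4.2.3.5]{lurie--algebra} $U$ creates sifted colimits, using that $T$ preserves them. Consequently $\LMod_T(\cat{C})$ admits all small colimits: it has all geometric realizations (created by $U$), it has all coproducts — coproducts of free $T$-algebras exist since $F$ preserves colimits, finite coproducts of arbitrary $T$-algebras are computed as geometric realizations of the objectwise coproducts of their monadic bar resolutions (using that $\Delta^\op \to \Delta^\op \times \Delta^\op$ is cofinal), and infinite coproducts are filtered colimits of finite ones — and an $\infty$-category with all coproducts and geometric realizations is cocomplete. (Alternatively, one may simply invoke that $\LMod_T(\cat{C})$ is presentable, since $T$, preserving sifted colimits, is accessible.) Local smallness is clear.

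Next comes the substantive point: writing $\cat{C}^\kappa$ for the (essentially small) full subcategory of $\kappa$-compact objects, the family $\{F(X) : X \in \cat{C}^\kappa\}$ consists of $\kappa$-compact objects and generates $\LMod_T(\cat{C})$ under small colimits. For $\kappa$-compactness: $\kappa$-filtered colimits are sifted, so $U$ preserves them, and hence for $X \in \cat{C}^\kappa$ the functor $\Map_{\LMod_T(\cat{C})}(F(X),-) \iso \Map_\cat{C}(X,U(-))$ commutes with $\kappa$-filtered colimits. For generation: let $\cat{D} \subseteq \LMod_T(\cat{C})$ be the smallest full subcategory closed under small colimits and containing this family; since $F$ preserves colimits and every object of $\cat{C}$ is a $\kappa$-filtered colimit of objects of $\cat{C}^\kappa$, $\cat{D}$ contains $F(Y)$ for every $Y \in \cat{C}$; and since every $T$-algebra $M$ is the geometric realization of its monadic bar resolution $[n] \mapsto (FU)^{n+1}(M)$, whose terms are free $T$-algebras, and $\Delta^\op$ is sifted, we get $M \in \cat{D}$. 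Hence $\cat{D} = \LMod_T(\cat{C})$.

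Combining these, $\LMod_T(\cat{C})$ is a cocomplete, locally small $\infty$-category generated under small colimits by a small set of $\kappa$-compact objects, and is therefore $\kappa$-presentable (\cite[\S5.5]{lurie--topos}). The only step that requires genuine work rather than formal manipulation is the cocompleteness assertion — more precisely the existence of arbitrary coproducts, extracted via the bar resolution — and if one is willing to cite the presentability of module $\infty$-categories over accessible monads, even that becomes routine, the whole content then residing in the generation statement of the previous paragraph.
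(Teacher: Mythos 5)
Your argument is correct and reaches the same conclusion, but takes a more abstract route than the paper's. The paper constructs the $\kappa$-accessibility data explicitly: it builds an essentially small subcategory $\cat{A}_0$ of $\kappa$-compact objects (colimits of bar-type diagrams of free $T$-algebras on objects of $\cat{C}^\kappa$, with the index category truncated to $\Delta^\op_{\le m}$ when $\kappa = \omega$, since $\Delta^\op$ itself is not $\omega$-small), and a colimit interchange with the bar resolution then exhibits any $T$-algebra as a $\kappa$-filtered colimit of objects of $\cat{A}_0$. You instead take the smaller generating set $\{F(X) : X \in \cat{C}^\kappa\}$ (which is not closed under $\kappa$-small colimits), observe that it consists of $\kappa$-compact objects and generates under arbitrary small colimits, and hand off to the general recognition criterion for $\kappa$-presentability. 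That criterion absorbs the bookkeeping the paper does explicitly --- in particular the decomposition of an arbitrary small colimit as a $\kappa$-filtered colimit of $\kappa$-small sub-colimits, which is precisely what handles the $\kappa = \omega$ subtlety. Your version is tidier; the paper's is more self-contained in that it verifies the definition of $\kappa$-accessibility directly. One note on the citation: \cite[Theorem 5.5.1.1]{lurie--topos} by itself yields presentability for \emph{some} regular cardinal, and pinning down $\kappa$ specifically also uses the $\kappa$-compactness and colimit-decomposition material from \S\S5.3--5.4, so you may want to cite that material explicitly alongside \S5.5.
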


\begin{proof}
  We first argue that $\LMod_T(\cat{C})$ admits small colimits. The hypothesis that $T$ commutes with sifted colimits ensures that $\LMod_T(\cat{C})$ admits sifted colimits (which are preserved by the forgetful functor $\LMod_T(\cat{C}) \to \cat{C}$), by \cite[Corollary 4.2.3.5]{lurie--algebra}. It must admit an initial object as the functor $T : \cat{C} \to \LMod_T(\cat{C})$ is a left adjoint and hence preserves initial objects. It's enough then to see that any pair of objects $A,B \in \LMod_T(\cat{C})$ admit a coproduct in $\LMod_T(\cat{C})$. Using the bar resolutions $A \iso \colim_{[n] \in \Delta^\op} T^{(n+1)}(A)$ and $B \iso \colim_{[n] \in \Delta^\op} T^{(n+1)}(B)$, where $T^{(k)}$ denotes the $k$-fold iterated functor, we may reduce to the case that $A = T(X)$ and $B = T(Y)$ for $X,Y \in \cat{C}$. But then we may again use the fact that $T : \cat{C} \to \LMod_T(\cat{C})$ is a left adjoint to conclude that $A$ and $B$ admit a coproduct.

  We now want to show that $\LMod_T(\cat{C})$ admits a small set of $\kappa$-compact generators. Let $\cat{C}_0$ denote the full subcategory of $\kappa$-compact objects in $\cat{C}$, and let $\cat{A}_0$ be the essential image of $\cat{C}_0$ under $T : \cat{C} \to \LMod_T(\cat{C})$. We make the following observations:
  \begin{itemize}
  \item Since $\cat{C}$ is $\kappa$-presentable, $\cat{C}_0$ is essentially small, which implies that $\cat{A}_0$ is essentially small.
  \item Since $T : \cat{C} \to \LMod_T(\cat{C})$ is left adjoint to the forgetful functor, and the forgetful functor preserves $\kappa$-filtered colimits, the objects of $\cat{A}_0$ are $\kappa$-compact in $\LMod_T(\cat{C})$.
  \item For any $A \in \LMod_T(\cat{C})$, the bar resolution gives us an equivalence
    $A \iso \colim_{[n] \in \Delta^\op} T^{(n+1)}(A)$ in $\LMod_T(\cat{C})$.
  \item Let $X \in \cat{C}$. Since $\cat{C}$ is $\kappa$-presentable, we may choose a $\kappa$-filtered diagram $\{X_\alpha\}$ in $\cat{C}_0$ with an equivalence $X \iso \colim_\alpha X_\alpha$ in $\cat{C}$. This induces an equivalence $T(X) \iso \colim_\alpha T(X_\alpha)$ in $\LMod_T(\cat{C})$, where each $T(X_\alpha)$ lies in $\cat{A}_0$.
  \end{itemize}
  We conclude that $\cat{A}_0$ is an essentially small subcategory of $\kappa$-compact objects generating $\LMod_T(\cat{C})$, as desired.
\end{proof}


\subsection{The general definition}
\label{dc--df}

Our goal in this subsection is to define \emph{derived commutative rings}. Before we begin, let us briefly outline the idea. As mentioned earlier, the goal here is to enlarge the $\infty$-category of simplicial commutative rings $\CAlg^\Delta_{\num{Z}}$ to allow for nonconnective objects. To do so, we look at $\CAlg^\Delta_{\num{Z}}$ from the vantage of the fact that its forgetful functor the $\infty$-category $\Mod_{\num{Z}}^\cn$ of connective $\num{Z}$-modules is \emph{monadic}: that is, it admits a left adjoint, and $\CAlg^\Delta_{\num{Z}}$ can be identified with $\infty$-category of modules over the monad associated to this adjunction. We denote this monad by $\LSym_{\num{Z}}$, and refer to it as the \emph{derived symmetric algebra monad} on $\Mod_{\num{Z}}^\cn$. It is uniquely characterized by the facts that it preserves sifted colimits and that on finite free $\num{Z}$-modules it is given by the formation of \emph{ordinary} symmetric/polynomial algebras (not of free $\Einfty$-algebras). Our strategy will be to construct an extension of the derived symmetric algebra monad from $\Mod_{\num{Z}}^\cn$ to all of $\Mod_{\num{Z}}$, and then define derived commutative rings as modules for this extended monad.

In this subsection, we will work not just in the setting of $\num{Z}$-modules, but in the general context of an $\infty$-category $\cat{C}$ subject to certain axioms. For our purposes in this paper, the point of making the definition in this generality is so that it may also be applied in the graded and filtered settings, i.e. not just for $\cat{C} = \Mod_{\num{Z}}$ but for $\cat{C} = \Gr(\Mod_{\num{Z}})$ and $\cat{C} = \Fil(\Mod_{\num{Z}})$ as well. We will discuss in \cref{dc--eg} how these settings fit into the framework of this subsection.

Let us now begin by formulating the axiomatic context in which we will work.

\begin{definition}
  \label{dc--df--context}
  A \emph{derived algebraic context} consists of a stable presentable symmetric monoidal $\infty$-category $\cat{C}$, a compatible t-structure $(\cat{C}_{\ge 0},\cat{C}_{\le 0})$ (\cref{gf--t--compatible}), and a small full subcategory $\cat{C}^0 \subseteq \cat{C}^\heart$, satisfying the following properties:
  \begin{enumerate}
  \item The t-structure is right complete. Note also that the compatibility of the t-structure implies that there is an induced presentable symmetric monoidal structure on $\cat{C}^\heart$, with tensor product given by $X \otimes_{\cat{C}^\heart} Y \iso \pi_0(X \otimes_{\cat{C}} Y)$.
  \item The subcategory $\cat{C}^0$ is a symmetric monoidal subcategory of $\cat{C}$ and is closed under the formation of $\cat{C}^\heart$--symmetric powers: that is, for $X \in \cat{C}^0$ and $n \ge 0$, we have that $\Sym^n_{\cat{C}^\heart}(X) \in \cat{C}^0$ as well.
  \item The subcategory $\cat{C}^0$ is closed under the formation of finite coproducts in $\cat{C}$ and its objects form a set of compact projective generators for $\cat{C}_{\ge 0}$.
  \end{enumerate}
  We will usually abusively refer to just $\cat{C}$ as a derived algebraic context, leaving the rest of the structure implicit.

  There is a natural notion of \emph{morphism between derived algebraic contexts $\cat{C}$ and $\cat{D}$}, namely a colimit-preserving, right t-exact symmetric monoidal functor $F : \cat{C} \to \cat{D}$ such that $F(\cat{C}^0) \subseteq \cat{D}^0$.
\end{definition}

\begin{remark}
  \label{dc--df--context-determined}
  The entire structure of a derived algebraic context $\cat{C}$ is uniquely determined by the constituent piece $\cat{C}^0$. Firstly, by the projective generation assumption, we have a canonical symmetric monoidal equivalence $\cat{C}_{\ge0} \iso \PSh_\Sigma(\cat{C}^0)$, where $\PSh_\Sigma(\cat{C}^0)$ inherits a symmetric monoidal structure from $\cat{C}^0$ by \cite[Proposition 4.8.1.10]{lurie--algebra}. And secondly, by the right completeness assumption, we have a canonical equivalence $\cat{C} \iso \Spt(\cat{C}_{\ge0}) \iso \Spt \otimes \cat{C}_{\ge0}$ \cite[Remark  C.3.1.5]{lurie--sag}, with the symmetric monoidal structure again inherited. These observations imply that $\cat{C}^\heart$ may be canonically identified with the category $\PSh_\Sigma(\cat{C}^0;\Set)$ of product-preserving functors $(\cat{C}^0)^\op \to \Set$ \cite[Remark  C.1.5.9]{lurie--sag}.
\end{remark}

For the remainder of this subsection, we fix a derived algebraic context $\cat{C}$.

\begin{notation}
  \label{dc--df--sigma}
  If $\cat{A}$ and $\cat{B}$ are $\infty$-categories admitting sifted colimits, we let $\Fun^\Sigma(\cat{A},\cat{B})$ denote the full subcategory of $\Fun(\cat{A},\cat{B})$ spanned by those functors that preserve sifted colimits.

  Recall from \cite[\S5.5.8]{lurie--topos} that $\cat{C}^0$ projectively generating $\cat{C}_{\ge 0}$ implies that, for any $\infty$-category $\cat{D}$ admitting sifted colimits, the restriction functor $\Fun^\Sigma(\cat{C},\cat{D}) \to \Fun(\cat{C}^0,\cat{D})$ is an equivalence of $\infty$-categories. We denote the image of a functor $F : \cat{C}^0 \to \cat{D}$ under the inverse to this equivalence by $\lder F : \cat{C}_{\ge 0} \to \cat{D}$, and refer to this as the \emph{left derived functor} of $F$.
\end{notation}

\begin{example}
  \label{dc--df--lsym-connective}
  Let $F : \cat{C}^0 \to \cat{C}_{\ge 0}$ denote the functor sending $X \mapsto \Sym_{\cat{C}^\heart}(X)$. We denote the left derived functor $\lder F$ by $\LSym_{\cat{C}_{\ge 0}} : \cat{C}_{\ge 0} \to \cat{C}_{\ge 0}$, and refer to this as the \emph{derived symmetric algebra functor on $\cat{C}_{\ge 0}$}. Replacing $\Sym_{\cat{C}^\heart}$ with $\Sym^i_{\cat{C}^\heart}$ for $i \ge 0$, we similarly obtain \emph{derived symmetric powers functors $\LSym^i_{\cat{C}_{\ge0}}$ on $\cat{C}_{\ge 0}$}.
\end{example}

As we will see below, the derived symmetric algebra functor $\LSym_{\cat{C}_{\ge 0}}$ has a canonical monad structure. Our goal is to construct an extension of this monad to $\cat{C}$. We will do so using the theory of functor calculus---this approach to extending functors is due to Brantner, and its application in this context is due to Mathew\footnote{There is also an earlier approach to extending the derived symmetric powers functors due to Illusie \cite[\textsection I.4]{illusie--cotangent-i}.}---so let us begin by reviewing the necessary aspects of this theory. It will be apparent further below, but let us note now that we are largely following \cite[\S3]{brantner-mathew--deformation} here.

\begin{definition}
  \label{dc--df--cube}
  For $n \ge 0$, let $\cube_n$ denote the power set of $\{0,\ldots,n\}$, and for $0 \le m \le n+1$, let $\cube_n^{\le m}$ (resp. $\cube_n^{\ge m}$) denote the subset of $\cube_n$ consisting of those subsets of $\{0,\ldots,n\}$ of cardinality at most (resp. at least) $m$. We regard $\cube_n$, $\cube_n^{\le m}$, $\cube_n^{\ge m}$ as categories via their partial ordering by inclusion.

  Given an $\infty$-category $\cat{A}$, an \emph{$n$-cube in $\cat{A}$} is a diagram $\chi : \cube_n \to \cat{A}$. We say an $n$-cube $\chi$ is:
  \begin{itemize}
  \item \emph{cocartesian} if it is a colimit diagram, i.e. exhibits $\chi(\{0,\ldots,n\})$ as a colimit of the diagram $\chi|_{\cube_n^{\le n}}$;
  \item \emph{cartesian} if it is a limit diagram, i.e. exhibits $\chi(\emptyset)$ a a limit of the diagram $\chi|_{\cube_n^{\ge 1}}$;
  \item \emph{strongly cocartesian} if it is left Kan extended from its restriction to $\cube_n^{\le 1}$.
  \end{itemize}
\end{definition}

\begin{remark}
  \label{dc--df--cube-stable}
  If $\cat{A}$ is a stable $\infty$-category, then an $n$-cube in $\cat{A}$ is cartesian if and only if it is cocartesian (\cite[Proposition 1.2.4.13]{lurie--algebra}).
\end{remark}

\begin{definition}
  \label{dc--df--exc-poly}
  Let $\cat{A}$ be an $\infty$-category admitting finite colimits and let $\cat{B}$ be a stable $\infty$-category.
  \begin{itemize}
  \item For $n \ge 0$, a functor $F : \cat{A} \to \cat{B}$ is called \emph{$n$-excisive} if it carries strongly cocartesian $n$-cubes in $\cat{A}$ to cocartesian (equivalently cartesian by \cref{dc--df--cube-stable}) $n$-cubes in $\cat{B}$.  We let $\Exc_n(\cat{A},\cat{B})$ denote the full subcategory of $\Fun(\cat{A},\cat{B})$ spanned by the functors that are $n$-excisive.
  \item A functor $F : \cat{A} \to \cat{B}$ is called \emph{excisively polynomial} if it is $n$-excisive for some $n \ge 0$. We let $\Fun_\epoly(\cat{A},\cat{B})$ denote the full subcategory of $\Fun(\cat{A},\cat{B})$ spanned by the functors that are excisively polynomial.
  \end{itemize}
\end{definition}

\begin{example}
  \label{dc--df--exc-poly-sym}
  Let $\cat{B}$ be a stable, presentable symmetric monoidal $\infty$-category. Then the functor $\Sym^{\le i}_{\cat{B}} : \cat{B} \to \cat{B}$ is $i$-excisive. This follows from the fact that each tensor power functor $(-)^{\otimes j} : \cat{B} \to \cat{B}$ is $j$-excisive, which is a consequence of the tensor product being exact in each variable (see e.g. \cite[Corollary 6.1.3.5]{lurie--algebra}).
\end{example}

\begin{remark}
  \label{dc--df--exc-poly-compose}
  Let $\cat{B},\cat{B}',\cat{B}''$ be stable $\infty$-categories. If $F : \cat{B} \to \cat{B}'$ is an $n$-excisive functor and $G : \cat{B}' \to \cat{B}''$ is an $m$-excisive functor, then the composition $GF$ is $mn$-excisive (see for example \cite[Lemma 7.5]{mccarthy--excisive}). It follows that the collection of excisively polynomial endofunctors on a stable $\infty$-category $\cat{B}$ is closed under composition.
\end{remark}

\begin{definition}
  \label{dc--df--add-poly}
  Let $\cat{A}$ and $\cat{B}$ be additive $\infty$-categories and assume $\cat{B}$ is idempotent complete. We inductively say that a functor $F : \cat{A} \to \cat{B}$ is:
  \begin{itemize}
  \item \emph{of degree $0$} if $F$ is constant;
  \item \emph{of degree $n$} for $n \ge 1$ if, for each $X \in \cat{A}$, the difference functor $D_XF : \cat{A} \to \cat{B}$ defined by $D_XF(Y) := \fib(F(X \oplus Y) \to F(Y))$ is of degree $n-1$ (note that this fiber is guaranteed to exist by the idemponent completeness assumption).
  \end{itemize}
  We say that a functor $F : \cat{A} \to \cat{B}$ is \emph{additively polynomial} if it is of degree $n$ for some $n \ge 0$, and let $\Fun_\apoly(\cat{A},\cat{B})$ denote the full subcategory of $\Fun(\cat{A},\cat{B})$ spanned by the additively polynomial functors.
\end{definition}

\begin{example}
  \label{dc--df--add-poly-sym}
  Let $\cat{B}$ be an additive, presentable symmetric monoidal $\infty$-category. Then the functor $\Sym^{\le i}_{\cat{B}} : \cat{B} \to \cat{B}$ is of degree $i$. This follows the fact that each tensor power functor $(-)^{\otimes j} : \cat{B} \to \cat{B}$ is of degree $j$ (see e.g. \cite[Example 2.6.3]{johnson-mccarthy--taylor}).
\end{example}

\begin{remark}
  \label{dc--df--add-poly-compose}
  Let $\cat{B},\cat{B}',\cat{B}''$ be additive $\infty$-categories. If $F : \cat{B} \to \cat{B}'$ is a functor of degree $n$ and $G : \cat{B}' \to \cat{B}''$ is a functor of degree $m$, then the composition $GF$ is of degree $mn$. It follows that the collection of additively polynomial endofunctors on an additive $\infty$-category $\cat{B}$ is closed under composition.
\end{remark}

\begin{notation}
  \label{dc--df--fun-ntn-addendum}
  The functor category decorations introduced in \cref{dc--df--sigma,dc--df--exc-poly,dc--df--add-poly} will also be applied to endomorphism categories, with the same meaning. They may also be applied in conjunction, with the obvious meaning. For example, $\End_\epoly^\Sigma(\cat{C})$ denotes the $\infty$-category of functors $F : \cat{C} \to \cat{C}$ that are excisively polynomial and preserve sifted colimits.
\end{notation}

\begin{proposition}
  \label{dc--df--degree-n-excisive}
  Let $\cat{D}$ be a stable $\infty$-category admitting small colimits and let $F : \cat{C}^0 \to \cat{D}$ be a functor of degree $n$ for some $n \ge 0$. Then the left derived functor $\lder F : \cat{C}_{\ge 0} \to \cat{D}$ is $n$-excisive.
\end{proposition}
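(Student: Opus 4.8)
The plan is to argue by induction on the degree $n$, following \cite[\S3]{brantner-mathew--deformation} and using the theory of excisive functors from \cite[\S6.1]{lurie--algebra}. Throughout I use that, since $\cat{C}$ is a derived algebraic context, $\cat{C}^0$ is closed under finite coproducts and generates $\cat{C}_{\ge0}$ under sifted colimits, so that $\lder F$ is the essentially unique sifted-colimit-preserving extension of $F$ along $\cat{C}^0 \inj \cat{C}_{\ge0}$ (\cref{dc--df--sigma}, \cite[\S5.5.8]{lurie--topos}); note also that $\cat{C}_{\ge0}$ is pointed and cocomplete. For $n = 0$ the functor $F$ is constant, say with value $c \in \cat{D}$; the constant functor $\cat{C}_{\ge0} \to \cat{D}$ with value $c$ preserves sifted colimits (sifted $\infty$-categories being weakly contractible), so it coincides with $\lder F$, whence $\lder F$ carries every morphism to an equivalence and is in particular $0$-excisive.

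For the inductive step, suppose the claim holds in degree $n-1$ and let $F$ have degree $n \ge 1$. Fix $X \in \cat{C}^0$. The difference functor $D_X F : \cat{C}^0 \to \cat{D}$, $Y \mapsto \fib\bigl(F(X \oplus Y) \to F(Y)\bigr)$, is defined since $\cat{C}^0$ is closed under coproducts, and it has degree $n-1$ by \cref{dc--df--add-poly}; hence $\lder(D_X F)$ is $(n-1)$-excisive by the inductive hypothesis. On the other hand, consider $D_X(\lder F) : \cat{C}_{\ge0} \to \cat{D}$, $Y \mapsto \fib\bigl(\lder F(X \oplus Y) \to \lder F(Y)\bigr)$. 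The coproduct functor $\cat{C}_{\ge0} \times \cat{C}_{\ge0} \to \cat{C}_{\ge0}$ is a left adjoint, so $X \oplus -$ preserves all colimits; and in the stable $\infty$-category $\cat{D}$ the fiber of a map is a shift of its cofiber, so $\fib$ preserves all colimits. Thus $D_X(\lder F)$ preserves sifted colimits, and since it restricts to $D_X F$ on $\cat{C}^0$, uniqueness of sifted-colimit-preserving extensions gives $D_X(\lder F) \iso \lder(D_X F)$, which is therefore $(n-1)$-excisive.

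The same two observations show that, for any sifted colimit $X \iso \colim_i X_i$ in $\cat{C}_{\ge0}$, one has $D_X(\lder F) \iso \colim_i D_{X_i}(\lder F)$ in $\Fun(\cat{C}_{\ge0},\cat{D})$. Since the $(n-1)$-excisive functors into the stable $\infty$-category $\cat{D}$ form a full subcategory of $\Fun(\cat{C}_{\ge0},\cat{D})$ closed under colimits (for a fixed strongly cocartesian $n$-cube $\chi$, the condition that $G \circ \chi$ be cartesian is the vanishing of a total fiber, equivalently, by \cref{dc--df--cube-stable}, of a total cofiber, hence is preserved under both limits and colimits in $G$), and since $\cat{C}^0$ generates $\cat{C}_{\ge0}$ under sifted colimits, it follows that $D_X(\lder F)$ is $(n-1)$-excisive for \emph{every} $X \in \cat{C}_{\ge0}$. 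Invoking the difference-functor characterization of excisiveness over a stable base — a functor $G$ into a stable $\infty$-category is $n$-excisive if and only if all of its difference functors $D_X G$ are $(n-1)$-excisive — we conclude that $\lder F$ is $n$-excisive.

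The step I expect to be the main obstacle is this last invocation: one needs, in the present generality ($\cat{C}_{\ge0}$ merely pointed and cocomplete, $\cat{D}$ stable), the equivalence between $n$-excisiveness and the vanishing of all $(n+1)$-fold difference functors (equivalently, of the $(n+1)$-st cross effect). This is part of the standard machinery of Goodwillie calculus over a stable base (\cite[\S6.1]{lurie--algebra}, \cite[\S3]{brantner-mathew--deformation}); granting it, everything else above is a formal manipulation with sifted colimits.
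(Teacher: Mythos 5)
Your reduction through the inductive framework and the manipulations with sifted colimits are all fine, and you have correctly pinpointed the last invocation as the crux. But that invocation, as you have stated it, is not a theorem: the implication ``all $D_X G$ are $(n-1)$-excisive $\Rightarrow G$ is $n$-excisive'' fails for a general functor $G$ with stable target, even with stable source. Take $G = \tau_{\ge 0} \colon \Spt \to \Spt$. Truncation commutes with finite direct sums, so $D_X G(Y) \iso \fib\bigl(\tau_{\ge 0}X \oplus \tau_{\ge 0}Y \to \tau_{\ge 0}Y\bigr) \iso \tau_{\ge 0}X$ is constant in $Y$, i.e.\ every $D_X G$ is $0$-excisive; but $G$ is not $1$-excisive, since applying $G$ to the pushout square with both maps $\num{S}[-1] \to 0$ (pushout $\num{S}$) yields a square whose pullback is $\num{S}[-1]$, which differs from $G(\num{S}[-1]) = \tau_{\ge 0}(\num{S}[-1])$. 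This $G$ does not preserve sifted colimits, so it is not a $\lder F$ --- and that is exactly the point: the correct implication must use the sifted-colimit preservation of $\lder F$ \emph{in the passage from difference functors to $n$-excisiveness}, whereas your argument spends that hypothesis only on establishing that the $D_X(\lder F)$ are $(n-1)$-excisive, which, as the example shows, is not by itself enough.

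Bridging that remaining gap is precisely the substance of the proposition. The paper gives no argument of its own, citing \cite[Proposition 3.34]{brantner-mathew--deformation} (tracing back to \cite[Proposition 5.10]{johnson-mccarthy--taylor}); the work there is to resolve an arbitrary strongly cocartesian cube in $\cat{C}_{\ge 0} \iso \PSh_\Sigma(\cat{C}^0)$ by cubes built from coproduct decompositions --- where the degree/cross-effect condition actually sees something --- and then pass back through the sifted colimits. That is genuine content, not a formal corollary of the difference-functor condition, and it is what your black-box invocation hides. Your self-assessment is right: this step is not a routine citation, it is the theorem.
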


\begin{proof}
  See (the proof of) \cite[Proposition 3.34]{brantner-mathew--deformation}, which goes back to Johnson--McCarthy \cite[Proposition 5.10]{johnson-mccarthy--taylor}.
\end{proof}

The following result is the key for extending functors defined on $\cat{C}_{\ge 0}$ to all of $\cat{C}$.

\begin{proposition}
  \label{dc--df--poly-cn}
  Let $\cat{D}$ be a stable $\infty$-category admitting small colimits. Then the restriction functor
  \[
    \Fun_\epoly^\Sigma(\cat{C}, \cat{D}) \to \Fun_\epoly^\Sigma(\cat{C}_{\ge 0}, \cat{D})
  \]
  is an equivalence of $\infty$-categories.
\end{proposition}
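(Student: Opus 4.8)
\emph{Overview and reduction to a fixed degree.}
The plan is to split an excisively polynomial functor into its Goodwillie layers and, by induction on excisive degree, reduce to the case of symmetric multilinear functors, which one handles via the identification $\cat{C}\iso\Spt(\cat{C}_{\ge0})$ of \cref{dc--df--context-determined}. Since the inclusion $\cat{C}_{\ge0}\hookrightarrow\cat{C}$ preserves finite colimits and sifted colimits, it carries strongly cocartesian cubes to strongly cocartesian cubes, so restriction along it carries $\Exc_n^\Sigma(\cat{C},\cat{D})$ into $\Exc_n^\Sigma(\cat{C}_{\ge0},\cat{D})$ for each $n$. As $\Fun_\epoly^\Sigma(-,\cat{D})=\bigcup_{n\ge0}\Exc_n^\Sigma(-,\cat{D})$ is a filtered union of full subcategories (\cref{dc--df--exc-poly}), it suffices to prove that $\Exc_n^\Sigma(\cat{C},\cat{D})\to\Exc_n^\Sigma(\cat{C}_{\ge0},\cat{D})$ is an equivalence for each $n$. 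Finally, for $F$ in either category one has a natural splitting $F\iso\underline{F(0)}\oplus\bar F$ with $\bar F:=\fib(F\to\underline{F(0)})$ reduced, still $n$-excisive, and still sifted-colimit-preserving (constant functors preserve sifted colimits); this splitting is compatible with restriction, so it remains to treat reduced functors.

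\emph{The multilinear case.}
Here the right completeness hypothesis enters: by \cref{dc--df--context-determined} it identifies $\cat{C}$ with the stabilization $\Spt(\cat{C}_{\ge0})$. Hence a reduced exact functor $\cat{C}_{\ge0}\to\cat{D}$ preserving sifted colimits---equivalently, preserving all colimits---extends uniquely along $\cat{C}_{\ge0}\hookrightarrow\cat{C}$ to an exact colimit-preserving functor $\cat{C}\to\cat{D}$, by the universal property of stabilization; that is, the desired statement holds for reduced $1$-excisive functors. Applying this one variable at a time (and passing to $\Sigma_i$-equivariant objects, since the construction is symmetric in the variables) gives the same conclusion for reduced symmetric $i$-linear functors of $i$ variables that preserve sifted colimits in each variable: the relevant category of such functors is unchanged by restriction.

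\emph{Reassembly by induction on $n$.}
For a reduced $n$-excisive $F$ on $\cat{C}$ or $\cat{C}_{\ge0}$ one has the finite Goodwillie tower $0=P_0F\to\cdots\to P_nF=F$, whose $i$-th layer $D_iF$ is $i$-homogeneous and classified by a reduced symmetric $i$-linear functor $\partial_iF$ preserving sifted colimits in each variable (with $D_iF(X)\iso\partial_iF(X,\dots,X)_{h\Sigma_i}$). Because restriction along $\cat{C}_{\ge0}\hookrightarrow\cat{C}$ preserves the finite colimits used to form cross-effects and the joins appearing in the Taylor-tower construction, and preserves $\Sigma_i$-orbits (these are colimits computed in $\cat{D}$), it commutes with each $P_i$, with each $D_i$, and with the homogeneous/multilinear classification. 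One then argues by induction on $n$ that restriction is fully faithful and essentially surjective simultaneously: the $n$-th layer is matched by the multilinear case, $P_{n-1}F$ is matched by the inductive hypothesis, and the attaching datum expressing $F$ in terms of $P_{n-1}F$ and $D_nF$ lives in a mapping space of $\Exc_n^\Sigma$ which, via the layer fiber sequence, is controlled by mapping spaces already matched at lower degree; in fact, in the present fully stable situation these attaching maps are canonically trivial (a reduced functor between stable $\infty$-categories has split Goodwillie tower), so $\Exc_n^{\mathrm{red},\Sigma}(-,\cat{D})$ decomposes compatibly with restriction as the product of its homogeneous layers, finishing the induction.

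\emph{The main obstacle.}
The substantive work is the bookkeeping of the last step: checking carefully that the full Goodwillie-calculus package (the Taylor tower, its layers, and the homogeneous/multilinear classification, together with the splitting in the stable case) is strictly compatible with the restriction functor, and that the instances of the universal property of stabilization and of the homogeneous classification that one needs---in the form used in \cite[\S3]{brantner-mathew--deformation}, ultimately \cite[\S6.1]{lurie--algebra}---apply in this generality. Here it is relevant that $\cat{D}$ admits all small colimits (so the $\Sigma_i$-orbits exist) and that for each fixed $n$ only finitely many tower stages occur, so no completeness of $\cat{D}$ under limits is required.
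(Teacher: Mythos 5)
Your reduction to a fixed excisive degree, the observation that $\cat{C}_{\ge 0}\hookrightarrow\cat{C}$ preserves the finite and sifted colimits used in the Goodwillie constructions (so that restriction commutes with cross-effects and the Taylor tower), and the use of $\cat{C}\iso\Spt(\cat{C}_{\ge0})$ in the linear case are all reasonable starting points. The gap is in the reassembly step. The assertion that ``a reduced functor between stable $\infty$-categories has split Goodwillie tower'' is false, and is misapplied in any case: for essential surjectivity the functor you need to split is defined on $\cat{C}_{\ge0}$, which is not stable. Indeed the central object of this paper is already a counterexample. Take $\cat{C}=\Mod_{\num{Z}}$ and $F=\LSym^2\colon\Mod_{\num{Z}}^{\cn}\to\Mod_{\num{Z}}$, which is reduced, $2$-excisive (\cref{dc--df--add-poly-sym,dc--df--degree-n-excisive}), and preserves sifted colimits. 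Its second cross-effect is $(X,Y)\mapsto X\otimes Y$, so $D_2F(X)\iso(X\otimes X)_{\Sigma_2}$; taking $X=\num{Z}[1]$, the symmetry on $\num{Z}[1]\otimes\num{Z}[1]\iso\num{Z}[2]$ acts by the sign, whence $\pi_2\bigl(D_2F(\num{Z}[1])\bigr)\iso\num{Z}/2\neq 0$. On the other hand, by decalage (\cref{dg--dr--ext}, \cite[Proposition 25.2.4.2]{lurie--sag}) one has $F(\num{Z}[1])\iso\LSym^2(\num{Z}[1])\iso(\textstyle{\bigwedge^2_{\num{Z}}\num{Z}})[2]\iso 0$. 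A split tower would exhibit $D_2F(\num{Z}[1])$ as a direct summand of $F(\num{Z}[1])=0$, which is absurd; so the tower does not split and the induction does not close. Your parenthetical fallback---that the attaching data live in mapping spaces already matched at lower degree---is not developed, and on its face the $k$-invariant $P_{n-1}F\to D_nF[1]$ has target the top-degree homogeneous layer, so it is not a matter handled purely at lower degree.

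The paper's proof circumvents the layer bookkeeping entirely, following \cite[Theorem 3.35]{brantner-mathew--deformation}: it produces an explicit left adjoint $\phi$ to the restriction $\psi$, namely $\phi(F)=P_n(F\circ\tau_{\ge0})$; it checks that $\phi(F)$ is again sifted-colimit-preserving (Goodwillie's explicit formula for $P_n$ preserves filtered-colimit-preservation, and \cite[Proposition 3.36]{brantner-mathew--deformation} upgrades filtered to sifted); the unit $\id\to\psi\phi$ is an equivalence because restriction commutes with $P_n$, so $\phi$ is fully faithful; and finally $\psi$ is conservative by a two-step argument using right completeness of the t-structure together with the last paragraph of the proof of loc.\ cit. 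If you wish to proceed by Goodwillie layers instead, you would need to carry the genuinely nontrivial attaching data through a recollement-type induction, which is considerably more delicate than the adjoint-plus-conservativity route.
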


\begin{proof}
  It will suffice to show for each fixed $n \ge 0$ that the restriction functor
  \[
    \psi : \Exc_n^\Sigma(\cat{C},\cat{D}) \to \Exc_n^\Sigma(\cat{C}_{\ge 0},\cat{D})
  \]
  is an equivalence. We closely follow the proof of \cite[Theorem 3.35]{brantner-mathew--deformation}.

  We first claim that $\psi$ has a left adjoint $\phi$, which sends $F \in \Exc_n^\Sigma(\cat{C}_{\ge 0},\cat{D})$ to $P_n(F \circ \tau_{\ge 0})$, where $P_n : \Fun(\cat{C},\cat{D}) \to \Exc_n(\cat{C},\cat{D})$ denotes the $n$-excisive approximation functor. This is straightforward to see once we know that $P_n(F \circ \tau_{\ge 0})$ preserves sifted colimits, assuming $F$ does. It preserves filtered colimits because $\tau_{\ge 0}$ does and $P_n$ preserves this property (as a result of Goodwillie's explicit description of $P_n$). By \cite[Proposition 3.36]{brantner-mathew--deformation}, this implies it preserves sifted colimits.

  Now, the unit map $\id \to \psi \circ \phi$ is an equivalence, implying that $\phi$ is fully faithful. Thus, to prove that $\psi$ is an equivalence, it suffices to prove that $\psi$ is conservative. We can see this in two steps: firstly, restriction to $\cat{C}_{\ge -\infty} := \bigcup_{k \ge 0} \cat{C}_{\ge -k}$ is conservative, because our functors in particular preserve sequential colimits and, for any $X \in \cat{C}$, we have $X \iso \colim_{m \to \infty} \tau_{\ge -m}(X)$ by our assumption that the t-structure on $\cat{C}$ is right complete; secondly, restriction from $\cat{C}_{\ge -\infty}$ to $\cat{C}_{\ge 0}$ is conservative by the argument in the last paragraph of the proof of \cite[Theorem 3.35]{brantner-mathew--deformation}.
\end{proof}

\begin{remark}
  \label{dc--df--excisive-coconnective}
  Let $\cat{D}$ be a stable $\infty$-category admitting small colimits and let $F : \cat{C} \to \cat{D}$ be a functor that preserves filtered colimits and is excisively polynomial. By \cite[Proposition 3.36]{brantner-mathew--deformation}, $F$ preserves $n$-skeletal totalizations for any $n \ge 0$. This provides a way of understanding the behavior of $F$ on certain objects in $\cat{C}_{\le 0}$ in terms of its behavior on $\cat{C}^0$: namely, if $X \in \cat{C}_{\le 0}$ can be written as the totalization (in $\cat{C}$) of an $n$-skeletal cosimplicial diagram $X^\bul : \Delta \to \cat{C}^0$, then we have $F(X) \iso \lim_\Delta F(X^\bul)$. For example, in the case that $\cat{C} = \Mod_{\num{Z}}$, this may be applied when $X$ is any perfect $\num{Z}$-module of Tor-amplitude $\le 0$.
\end{remark}

We need just a bit more of preparation to construct the derived symmetric algebra monad on $\cat{C}$.

\begin{lemma}
  \label{dc--df--derived-pizero}
  Let $F : \cat{C}_{\ge 0} \to \cat{C}_{\ge 0}$ be a functor preserving sifted colimits. Then the truncation map $X \to \pi_0(X)$ induces an equivalence $\pi_0(F(X)) \isoto \pi_0(F(\pi_0(X)))$ for all $X \in \cat{C}_{\ge 0}$.
\end{lemma}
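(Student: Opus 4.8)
The plan is to identify $\pi_0 \circ F : \cat{C}_{\ge 0} \to \cat{C}^\heart$ as a left derived functor and then show that this left derived functor factors through $\pi_0 : \cat{C}_{\ge 0} \to \cat{C}^\heart$; the statement will then follow formally. Recall that $\pi_0 = \tau_{\le 0}|_{\cat{C}_{\ge 0}}$ is left adjoint to the inclusion $\cat{C}^\heart \inj \cat{C}_{\ge 0}$, hence preserves all colimits. Therefore $\pi_0 F$ preserves sifted colimits (it is the sifted-colimit-preserving functor $F$ followed by the cocontinuous functor $\pi_0$), so by \cref{dc--df--sigma} we have $\pi_0 F \iso \lder\bigl((\pi_0 F)|_{\cat{C}^0}\bigr)$. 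The point that must be handled carefully — and really the only subtlety — is that one should treat $\pi_0$ as a functor with target $\cat{C}^\heart$, not as an endofunctor of $\cat{C}_{\ge 0}$: the composite $X \mapsto \pi_0(X)$ viewed in $\cat{C}_{\ge 0}$ does not preserve geometric realizations, so one cannot directly argue that $X \mapsto \pi_0 F(\pi_0 X)$ preserves sifted colimits.

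Next I would observe that, by \cref{dc--df--context-determined}, the heart is canonically identified with $\PSh_\Sigma(\cat{C}^0;\Set)$ (the identification restricting to the Yoneda embedding on $\cat{C}^0$), so that the objects of $\cat{C}^0$ also form a set of compact projective generators of $\cat{C}^\heart$. Thus \cref{dc--df--sigma} applies equally with $\cat{C}^\heart$ in place of $\cat{C}_{\ge 0}$, and the functor $(\pi_0 F)|_{\cat{C}^0} : \cat{C}^0 \to \cat{C}^\heart$ admits a (unique) sifted-colimit-preserving extension $G : \cat{C}^\heart \to \cat{C}^\heart$. Since the restriction of $\pi_0 : \cat{C}_{\ge 0} \to \cat{C}^\heart$ to $\cat{C}^0$ is simply the inclusion $\cat{C}^0 \inj \cat{C}^\heart$ (truncation does nothing to objects already in the heart), the composite $G \circ \pi_0 : \cat{C}_{\ge 0} \to \cat{C}^\heart$ preserves sifted colimits and restricts on $\cat{C}^0$ to $(\pi_0 F)|_{\cat{C}^0}$. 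By the equivalence $\Fun^\Sigma(\cat{C}_{\ge 0},\cat{C}^\heart) \iso \Fun(\cat{C}^0,\cat{C}^\heart)$ of \cref{dc--df--sigma}, it follows that there is a natural equivalence $\pi_0 F \iso G \circ \pi_0$.

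Finally I would evaluate this natural equivalence at an object $X$ and at $\pi_0(X)$: using that $\pi_0(X) \to \pi_0(\pi_0(X))$ is an equivalence (a triangle identity for the adjunction between $\pi_0$ and the inclusion $\cat{C}^\heart \inj \cat{C}_{\ge 0}$), one obtains $\pi_0 F(X) \iso G(\pi_0 X) \iso \pi_0 F(\pi_0 X)$, and chasing the naturality square for the truncation map $X \to \pi_0(X)$ shows that this equivalence is precisely the one induced by that map. No genuine computation is involved; as flagged above, the one delicate step is to work with $\pi_0$ as a cocontinuous functor into $\cat{C}^\heart$ and to invoke that $\cat{C}^0$ generates $\cat{C}^\heart$ under sifted colimits, not merely $\cat{C}_{\ge 0}$.
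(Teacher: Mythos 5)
Your proof is correct and takes essentially the same route as the paper's. Both arguments identify $\pi_0 F : \cat{C}_{\ge 0} \to \cat{C}^\heart$ as the left derived functor of its restriction to $\cat{C}^0$, use the identification $\cat{C}^\heart \iso \PSh_\Sigma(\cat{C}^0;\Set)$ from \cref{dc--df--context-determined} to produce the unique sifted-colimit-preserving extension $G' : \cat{C}^\heart \to \cat{C}^\heart$ of that restriction, observe that $G' \circ \pi_0$ is also a left derived functor of the same restriction and hence agrees with $\pi_0 F$, and deduce the claim; you simply spell out more explicitly the target-category bookkeeping and the final triangle-identity step.
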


\begin{proof}
  Let $G := \pi_0F : \cat{C}_{\ge 0} \to \cat{C}^\heart$. Then $G$ preserves sifted colimits, hence is the left derived functor of its restriction $G_0 := G|_{\cat{C}^0}$. On the other hand, as we have $\cat{C}^\heart \iso \PSh_\Sigma(\cat{C}^0;\Set)$ (\cref{dc--df--context-determined}), there is a unique sifted colimit--preserving functor $G' : \cat{C}^\heart \to \cat{C}^\heart$ with restriction $G'|_{\cat{C}^0} \iso G_0$, and from this we obtain that $G' \circ \pi_0 : \cat{C}_{\ge 0} \to \cat{C}^\heart$ is also the left derived functor of $G_0$. We thus have $G \iso G'$, and the claim follows.
\end{proof}

\begin{remark}
  \label{dc--df--pizero-adjunction}
  Consider the following two full subcategories of $\End^\Sigma(\cat{C}_{\ge 0})$:
  \begin{itemize}
  \item $\End_0^\Sigma(\cat{C}_{\ge 0})$, spanned by those endofunctors that, in addition to preserving sifted colimits, preserve the subcategory $\cat{C}^0 \subseteq \cat{C}_{\ge 0}$;
  \item $\End_1^\Sigma(\cat{C}_{\ge 0})$, spanned by those endofunctors $F$ that, in addition to preserving sifted colimits, satisfy $\pi_0(F(X)) \in \cat{C}^0$ for all $X \in \cat{C}^0$.
  \end{itemize}
  The former is evidently a monoidal subcategory, and it follows from \cref{dc--df--derived-pizero} that the latter is a monoidal subcategory. We also have an inclusion $\End_0^\Sigma(\cat{C}_{\ge 0}) \subseteq \End_1^\Sigma(\cat{C}_{\ge 0})$.

  This inclusion of $\infty$-categories admits a left adjoint $\tau : \End^\Sigma_1(\cat{C}_{\ge 0}) \to \End_0^\Sigma(\cat{C}_{\ge 0})$: identifying sifted colimit--preserving functors $\cat{C}_{\ge 0} \to \cat{C}_{\ge 0}$ with functors $\cat{C}^0 \to \cat{C}_{\ge 0}$, the left adjoint $\tau$ is given by composition with $\pi_0$. The monoidal structure on the inclusion endows the left adjoint $\tau$ with an oplax monoidal structure, and it follows from \cref{dc--df--derived-pizero} that it is in fact strictly monoidal.
\end{remark}

We now come to the crucial constructions of this subsection. Note that we are forced to work with the filtrations $\Sym^{\le\star}$ and $\LSym^{\le\star}$, and the notion of filtered monads defined in \cref{dc--mo}, because the filtered pieces are excisively polynomial (hence fall under the purview of \cref{dc--df--poly-cn}) but their colimits $\Sym$ and $\LSym$ are not.

\begin{construction}[Derived symmetric powers filtered monad]
  \label{dc--df--lsym-fil}
  \emph{Let $\cat{E} \subseteq \End(\cat{C})$ denote the full subcategory spanned by those endofunctors that are excisively polynomial, preserve sifted colimits, and preserve the full subcategory $\cat{C}_{\ge0} \subseteq \cat{C}$. We will construct a filtered $\cat{E}$-monad $\LSym^{\le\star}_{\cat{C}}$, together with natural equivalences $\LSym^{\le i}_{\cat{C}}|_{\cat{C}^0} \iso \Sym^{\le i}_{\cat{C}^\heart}$ for all $i \ge 0$, as well as a map of filtered $\cat{E}$-monads $\theta^{\le\star} : \Sym^{\le\star}_{\cat{C}} \to \LSym^{\le\star}_{\cat{C}}$, where $\Sym^{\le\star}_{\cat{C}}$ is the symmetric powers filtered monad on $\cat{C}$ (\cref{dc--mo--sym-fil}). We will refer to the filtered monad $\LSym^{\le\star}_{\cat{C}}$ as the \emph{derived symmetric powers filtered monad on $\cat{C}$}.}
  
  Let $\cat{E}' \subseteq \End(\cat{C}_{\ge 0})$ denote the full subcategory spanned by those endofunctors that preserve sifted colimits and whose composition with the inclusion $\cat{C}_{\ge0} \subseteq \cat{C}$ is excisively polynomial; $\cat{E}'$ is a monoidal subcategory, i.e. contains the identity functor and is closed under composition. It follows from \cref{dc--df--poly-cn} that the (monoidal) restriction functor $\cat{E} \to \cat{E}'$ is an equivalence. Noting that this restriction carries $\Sym^{\le\star}_{\cat{C}}$ to $\Sym^{\le\star}_{\cat{C}_{\ge0}}$ (invoking \cref{dc--df--exc-poly-sym}), it will be enough to make the construction with $\cat{C}_{\ge 0}$ in place of $\cat{C}$.
 
  Now, using the notation of \cref{dc--df--pizero-adjunction}, we have $\Sym^{\le i}_{\cat{C}_{\ge 0}} \in \End_1^\Sigma(\cat{C}_{\ge 0})$ for all $i \ge 0$, and we define $\LSym^{\le\star}_{\cat{C}_{\ge 0}} := \tau(\Sym^{\le\star}_{\cat{C}_{\ge 0}})$. The monoidality of $\tau$ implies that $\LSym^{\le\star}_{\cat{C}_{\ge 0}}$ inherits a filtered monad structure from $\Sym^{\le\star}_{\cat{C}_{\ge0}}$, and the description of $\tau$ in \cref{dc--df--pizero-adjunction} gives natural equivalences
  \[
    \LSym^{\le i}_{\cat{C}_{\ge0}}(X) \iso \pi_0(\Sym^{\le i}_{\cat{C}_{\ge0}}(X)) \iso \Sym^{\le i}_{\cat{C}^\heart}(X)
    \quad\text{for}\ X \in \cat{C}^0\ \text{and}\ i \ge 0,
  \]
  as desired. Since the functors $\LSym^{\le i}_{\cat{C}_{\ge0}}$ preserve sifted colimits, they must be the left derived functors of the functors $\Sym^{\le i}_{\cat{C}^\heart}$; by \cref{dc--df--degree-n-excisive} and \cref{dc--df--add-poly-sym}, this implies that their compositions with the inclusion $\cat{C}_{\ge0} \subseteq \cat{C}$ are excisively polynomial. Thus, $\LSym^{\le\star}_{\cat{C}_{\ge0}}$ is indeed a filtered $\cat{E}'$-monad.

  Finally, the unit map of the adjunction defining $\tau$ gives us the desired map of filtered monads $\theta^{\le\star} : \Sym^{\le\star}_{\cat{C}_{\ge0}} \to \LSym^{\le\star}_{\cat{C}_{\ge0}}$, finishing the construction.
\end{construction}

\begin{construction}[Derived symmetric algebra monad]
  \label{dc--df--lsym}
  We let $\LSym_{\cat{C}}$ denote the colimit of the derived symmetric powers filtered monad $\LSym^{\le\star}_{\cat{C}}$ of \cref{dc--df--lsym-fil}. We regard $\LSym_{\cat{C}}$ as a (sifted colimit--preserving)  monad on $\cat{C}$ by \cref{dc--mo--colim-operad}, and refer to it as the \emph{derived symmetric algebra monad on $\cat{C}$}.
\end{construction} 

\begin{remark}
  \label{dc--df--lsym-sum}
  By \cref{dc--df--degree-n-excisive,dc--df--poly-cn}, for each $i \ge 0$, there is a unique sifted colimit--preserving, excisively polynomial functor $\LSym^i_{\cat{C}} : \cat{C} \to \cat{C}$ that restricts to the derived symmetric power functor $\LSym^i_{\cat{C}_{\ge 0}}$ (\cref{dc--df--lsym-connective}) on $\cat{C}_{\ge 0}$. Examining \cref{dc--df--lsym-fil,dc--df--lsym}, we see that the the equivalences $\Sym_{\cat{C}^\heart}^{\le i} \iso \bigoplus_{0 \le j \le i} \Sym_{\cat{C}^\heart}^i$ determine equivalences $\LSym_{\cat{C}}^{\le i} \iso \bigoplus_{0 \le j \le i} \LSym_{\cat{C}}^j$, and hence an equivalence $\LSym_{\cat{C}} \iso \bigoplus_{j \ge 0} \LSym_{\cat{C}}^j$.
\end{remark}

We are now able to make the main definition of this section.

\begin{definition}
  \label{dc--df--dalg}
  A \emph{derived commutative algebra object of $\cat{C}$} is a module over the derived symmetric algebra monad $\LSym_{\cat{C}}$ on $\cat{C}$ (\cref{dc--df--lsym}). We let $\DAlg(\cat{C})$ denote the $\infty$-category $\LMod_{\LSym_{\cat{C}}}(\cat{C})$ of derived commutative algebra objects of $\cat{C}$.
\end{definition}

\begin{remark}
  \label{dc--df--dalg-limits}
  By \cref{dc--mo--alg-presentable}, the $\infty$-category $\DAlg(\cat{C})$ is presentable, in particular admits small limits and colimits.
\end{remark}

\begin{remark}
  \label{dc--df--connective-generators}
  Let $\DAlg(\cat{C})^\cn$ denote the fiber product $\DAlg(\cat{C}) \times_{\cat{C}} \cat{C}_{\ge0}$, and let $\DAlg(\cat{C})^\heart$ denote the fiber product $\DAlg(\cat{C}) \times_{\cat{C}} \cat{C}^\heart$. These $\infty$-categories admit the following alternative descriptions:
  \begin{itemize}
  \item Let $\cat{D}^0$ denote the full subcategory of $\DAlg(\cat{C})^\cn$ spanned by the objects $\LSym_{\cat{C}}(X)$ for $X \in \cat{C}^0$. Then $\DAlg(\cat{C})^\cn$ is projectively generated by the objects of $\cat{D}^0$, so that there is a canonical equivalence $\DAlg(\cat{C})^\cn \iso \PSh_\Sigma(\cat{D}^0)$. This follows from \cite[Corollary 4.7.3.18]{lurie--algebra}.
  \item There is a canonical equivalence $\DAlg(\cat{C})^\heart \iso \CAlg(\cat{C}^\heart)$. To see this, consider the localization $\pi_0 : \cat{C}_{\ge 0} \fromto \cat{C}^\heart : \iota$. Using reasoning similar to that in the proof of \cref{dc--df--lsym-fil}, one can see that the monad $\LSym_{\cat{C}_{\ge 0}}$ on $\cat{C}_{\ge 0}$ satisfies the hypothesis of \cref{dc--mo--localization} for this localization, and that the induced monad on $\cat{C}^\heart$ canonically identifies with the symmetric algebra monad $\Sym_{\cat{C}^\heart}$ on $\cat{C}^\heart$. \cref{dc--mo--localization} then implies the desired equivalence
    \[
      \DAlg(\cat{C})^\heart \iso \LMod_{\Sym_{\cat{C}^\heart}}(\cat{C}^\heart) \iso \CAlg(\cat{C}^\heart).
    \]
  \end{itemize}
\end{remark}

\begin{remark}
  \label{dc--df--dalg-natural}
  The above definition of derived commutative algebra objects is natural in the following sense: a morphism of derived algebraic contexts $F : \cat{C} \to \cat{D}$ determines a colimit-preserving functor $F' : \DAlg(\cat{C}) \to \DAlg(\cat{D})$ such that the diagrams
  \[
    \begin{tikzcd}
      \DAlg(\cat{C}) \ar[r, "F'"] \ar[d, "U_{\cat{C}}", swap] &
      \DAlg(\cat{D}) \ar[d, "U_{\cat{D}}"] \\
      \cat{C} \ar[r, "F"] &
      \cat{D}
    \end{tikzcd}
    \quad\quad\text{and}\quad\quad
    \begin{tikzcd}
      \DAlg(\cat{C}) \ar[r, "F'"] &
      \DAlg(\cat{D})  \\
      \cat{C} \ar[r, "F"] \ar[u, "\LSym_{\cat{C}}"] &
      \cat{D} \ar[u, "\LSym_{\cat{D}}", swap]
    \end{tikzcd}
  \]
  canonically commute (where the vertical arrows in the first diagram are the forgetful functors, and in the second diagram are their left adjoints). Note that the commuting of the second diagram is equivalent to the commuting of the diagram of right adjoints
  \[
    \begin{tikzcd}
      \DAlg(\cat{C}) \ar[d, "U_{\cat{C}}", swap] &
      \DAlg(\cat{D}) \ar[l, "G'", swap] \ar[d, "U_{\cat{D}}"] \\
      \cat{C} &
      \cat{D} \ar[l, "G", swap]
    \end{tikzcd}
  \]
  ($F$ and $F'$ admit right adjoints $G$ and $G'$ by the adjoint functor theorem).

  More generally, similar statements hold when the morphism $F : \cat{C} \to \cat{D}$ is replaced by any diagram of derived algebraic contexts. This general form of naturality can be proved by running through the constructions and results of this section with a bit more care, replacing the single derived algebraic context $\cat{C}$ with any diagram of such.
\end{remark}

We now compare derived commutative algebra objects of $\cat{C}$ with $\Einfty$-algebra objects of $\cat{C}$.

\begin{notation}
  \label{dc--df--dalg-calg}
  Let $\theta : \Sym_{\cat{C}} \to \LSym_{\cat{C}}$ denote the map of monads obtained by taking the colimit of the map of filtered monads $\theta^{\le\star}$ of \cref{dc--df--lsym-fil}, using the equivalence $\Sym_{\cat{C}} \iso \colim(\Sym^{\le\star}_{\cat{C}})$ of \cref{dc--mo--sym-fil}. Then let $\Theta : \DAlg(\cat{C}) \to \CAlg(\cat{C})$ denote the restriction functor induced by $\theta$ on $\infty$-categories of modules, using the canonical equivalence $\CAlg(\cat{C}) \iso \LMod_{\Sym_{\cat{C}}}(\cat{C})$. Following \cite[\S25]{lurie--sag}, for $A \in \DAlg(\cat{C})$, we let $A^\circ$ denote $\Theta(A) \in \CAlg(\cat{C})$ and refer to this as the \emph{underlying $\Einfty$-algebra of $A$}. When less precision is acceptable, we will just write $A$ instead of $A^\circ$.
\end{notation}

\begin{proposition}
  \label{dc--df--dalg-colim}
  The functor $\Theta : \DAlg(\cat{C}) \to \CAlg(\cat{C})$ of \cref{dc--df--dalg-calg} preserves small limits and colimits.
\end{proposition}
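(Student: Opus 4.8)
The proof will treat limits and colimits separately, in each case using that $\Theta$ is compatible with the forgetful functors to $\cat{C}$. Since $\Theta$ is by definition restriction along the map of monads $\theta\colon\Sym_{\cat{C}}\to\LSym_{\cat{C}}$ (\cref{dc--df--dalg-calg}), we have a canonical identification $U_{\CAlg}\circ\Theta\iso U_{\DAlg}$, where $U_{\DAlg}\colon\DAlg(\cat{C})\to\cat{C}$ and $U_{\CAlg}\colon\CAlg(\cat{C})\to\cat{C}$ are the forgetful functors. Recall that $U_{\DAlg}$ and $U_{\CAlg}$, being forgetful functors for modules over a monad, create small limits, and also create sifted colimits since $\LSym_{\cat{C}}$ (\cref{dc--df--lsym}) and $\Sym_{\cat{C}}$ preserve sifted colimits. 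It then follows formally that $\Theta$ preserves small limits and sifted colimits: if $\bar p$ is a small limit diagram (resp.\ a sifted colimit diagram) in $\DAlg(\cat{C})$, then $U_{\CAlg}\circ\Theta\circ\bar p\iso U_{\DAlg}\circ\bar p$ is a limit diagram (resp.\ colimit diagram) in $\cat{C}$, and since $U_{\CAlg}$ creates limits (resp.\ sifted colimits) this forces $\Theta\circ\bar p$ to be a limit diagram (resp.\ colimit diagram) in $\CAlg(\cat{C})$.

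It remains to prove that $\Theta$ preserves finite coproducts, which together with preservation of sifted colimits yields preservation of all small colimits. For the initial object: the initial object of $\DAlg(\cat{C})$ is the free object $\LSym_{\cat{C}}(0)$, and by \cref{dc--df--lsym-sum} together with the identifications $\LSym^j_{\cat{C}}(0)\iso\Sym^j_{\cat{C}^\heart}(0)\iso 0$ for $j\ge 1$ (valid since $0\in\cat{C}^0$) its underlying object is $\unit$, with unit map an equivalence; hence $\Theta(\LSym_{\cat{C}}(0))$ is the initial object $\unit$ of $\CAlg(\cat{C})$. For binary coproducts, the key input is the \emph{exponential law}
\[
  \LSym_{\cat{C}}(X\oplus Y)\iso\LSym_{\cat{C}}(X)\otimes_{\cat{C}}\LSym_{\cat{C}}(Y)\qquad(X,Y\in\cat{C}).
\]
To prove it, one reduces (via \cref{dc--df--lsym-sum}) to the graded-piece identities $\LSym^n_{\cat{C}}(X\oplus Y)\iso\bigoplus_{i+j=n}\LSym^i_{\cat{C}}(X)\otimes_{\cat{C}}\LSym^j_{\cat{C}}(Y)$; both sides are excisively polynomial and preserve sifted colimits in each variable, hence by \cref{dc--df--poly-cn} (and the projective generation of $\cat{C}_{\ge 0}$) are determined by their restrictions to $\cat{C}^0\times\cat{C}^0$, where --- using that $\cat{C}^0$ is a symmetric monoidal subcategory of $\cat{C}$ closed under $\cat{C}^\heart$-symmetric powers and under $\oplus$, so that the relevant tensor products remain in $\cat{C}^0\subseteq\cat{C}^\heart$ --- they follow from the classical exponential law for $\Sym^n$ in the abelian category $\cat{C}^\heart$.

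Granting the exponential law, I would conclude as follows. Since $\LSym_{\cat{C}}\colon\cat{C}\to\DAlg(\cat{C})$ is a left adjoint, $\LSym_{\cat{C}}(X)\amalg\LSym_{\cat{C}}(Y)\iso\LSym_{\cat{C}}(X\oplus Y)$ in $\DAlg(\cat{C})$; combining this with the exponential law and with the fact that binary coproducts in $\CAlg(\cat{C})$ are computed by $\otimes_{\cat{C}}$ on underlying objects (\cite[\S3.2.4]{lurie--algebra}), one sees that the canonical comparison map $\Theta(A)\amalg\Theta(B)\to\Theta(A\amalg B)$ becomes an equivalence after applying the conservative functor $U_{\CAlg}$, hence is an equivalence, whenever $A$ and $B$ are free. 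For general $A,B\in\DAlg(\cat{C})$, resolve them by geometric realizations of simplicial diagrams of free objects (the monadic bar resolutions) and use that $\Theta$, $U_{\DAlg}$, the tensor product in each variable, and coproducts all commute with geometric realizations. I expect the exponential law to be the main obstacle --- equivalently, the assertion that binary coproducts in $\DAlg(\cat{C})$, like those in $\CAlg(\cat{C})$, are computed by $\otimes_{\cat{C}}$ on underlying objects: the care required is in checking that left-deriving the classical identity genuinely produces the derived tensor product $\otimes_{\cat{C}}$ rather than only $\otimes_{\cat{C}^\heart}$ at the level of hearts.
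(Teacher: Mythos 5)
Your proposal is correct and follows essentially the same strategy as the paper's proof: split into limits, sifted colimits, and finite coproducts; handle the first two by the fact that both forgetful functors create them; handle the initial object using $\unit_\cat{C}\in\cat{C}^0$; and reduce binary coproducts via bar resolution to free objects, where the statement becomes the exponential law for $\LSym_\cat{C}$, which you then reduce via excisive polynomiality and sifted-colimit preservation to the case $X,Y\in\cat{C}^0$. The only cosmetic difference is that you phrase the final reduction via the graded decomposition of \cref{dc--df--lsym-sum}, whereas the paper packages the same reduction using the filtered pieces $\LSym^{\le\star}_{\cat{C}}$ and a Day convolution in $\Fun(\num{Z}_{\ge 0},\cat{C})$; these are equivalent ways to organize the argument. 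Your closing concern about $\otimes_{\cat{C}}$ versus $\otimes_{\cat{C}^\heart}$ is in fact already discharged by the observations you make: since $\cat{C}^0$ is a symmetric monoidal subcategory of $\cat{C}$ and closed under $\cat{C}^\heart$-symmetric powers, the relevant tensor products of objects of $\cat{C}^0$ remain in $\cat{C}^0\subseteq\cat{C}^\heart$, where the two tensor products coincide, so there is no further gap.
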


\begin{proof}
  Limits and sifted colimits are preserved by $\Theta$ because they are computed in both $\DAlg(\cat{C})$ and $\CAlg(\cat{C})$ at the level of underlying objects in $\cat{C}$. The initial object is preserved by $\Theta$ because $\Sym_{\cat{C}}$ and $\LSym_{\cat{C}}$ preserve initial objects (being left adjoints) and we have $\Sym_{\cat{C}}(0) \iso \unit_{\cat{C}} \iso \LSym_{\cat{C}}$, where $\unit_{\cat{C}}$ denotes the unit object of $\cat{C}$ (the latter equivalence uses the hypothesis that $\unit_{\cat{C}} \in \cat{C}^0$). It now only remains to show that binary coproducts are preserved.

  Recall that the coproduct in $\CAlg(\cat{C})$ is given by the tensor product $\otimes_{\cat{C}}$, and let us temporarily denote the coproduct in $\DAlg(\cat{C})$ by $\boxtimes_{\cat{C}}$. We wish to show that, for $A,B \in \DAlg(\cat{C})$, the canonical map $\Theta(A) \otimes_{\cat{C}} \Theta(B) \to \Theta(A \boxtimes_{\cat{C}} B)$ is an equivalence. Remembering that $\DAlg(\cat{C})$ is the $\infty$-category of algebras for the monad $\LSym_{\cat{C}}$, we have the canonical bar resolutions $A \iso \colim_{[n] \in \Delta^\op} \LSym_{\cat{C}}^{(n+1)}(A)$ and $B \iso \colim_{[n] \in \Delta^\op} \LSym_{\cat{C}}^{(n+1)}(B)$, where $\LSym^{(k)}_{\cat{C}}$ denotes the $k$-fold iterated functor. As $\otimes_{\cat{C}}$ and $\boxtimes_{\cat{C}}$ commute with geometric realizations in each variable and $\Theta$ commutes with geometric realizations, this reduces us to the case that $A = \LSym_{\cat{C}}(X)$ and $B = \LSym_{\cat{C}}(Y)$ for $X,Y \in \cat{C}$.

  In this special case, we have a canonical equivalence of derived commutative algebras
  \[
    \LSym_{\cat{C}}(X) \boxtimes_{\cat{C}} \LSym_{\cat{C}}(Y) \isoto \LSym_{\cat{C}}(X \oplus Y),
  \]
  since $\LSym_{\cat{C}} : \cat{C} \to \DAlg(\cat{C})$ is a left adjoint and hence preserves coproducts. We thus wish to show that the canonical map $\LSym_{\cat{C}}(X) \otimes_{\cat{C}} \LSym_{\cat{C}}(Y) \to \LSym_{\cat{C}}(X \oplus Y)$ is an equivalence. One can show that this map may be naturally obtained as the colimit of a filtered map $\LSym_{\cat{C}}^{\le *}(X) \ostar_{\cat{C}} \LSym_{\cat{C}}^{\le *}(Y) \to \LSym_{\cat{C}}^{\le *}(X \oplus Y)$, where we regard $\LSym_{\cat{C}}^{\le *}$ as a functor $\cat{C} \to \Fun(\num{Z}_{\ge 0},\cat{C})$ and $\ostar_{\cat{C}}$ denotes the Day convolution tensor product in $\Fun(\num{Z}_{\ge 0},\cat{C})$. Since each filtered piece of $\LSym_{\cat{C}}^{\le *}$ is excisively polynomial and preserves sifted colimits, we may reduce first by \cref{dc--df--poly-cn} to the case that $X,Y \in \cat{C}_{\ge 0}$ and then to the case that $X,Y \in \cat{C}^0$. In this last case, the claim is immediate from the construction of $\LSym_{\cat{C}}$.
\end{proof}

\begin{notation}
  \label{dc--df--dalg-relative}
  Let $A$ be a derived commutative algebra object of $\cat{C}$.
  \begin{enumerate}[leftmargin=*]
  \item By an \emph{$A$-module} we mean an $A^\circ$-module object of $\cat{C}$, and we let $\Mod_A(\cat{C})$ denote the $\infty$-category $\Mod_{A^\circ}(\cat{C})$.
  \item A \emph{derived commutative $A$-algebra} is a derived commutative algebra object $B$ of $\cat{C}$ equipped with a map $\phi : A \to B$ in $\DAlg(\cat{C})$. We let $\DAlg_A(\cat{C})$ denote the $\infty$-category $\DAlg(\cat{C})_{A/}$ of derived commutative $A$-algebras, which is presentable (by \cite[Proposition 5.5.3.10]{lurie--topos}, since $\DAlg(\cat{C})$ is presentable).
  \item Let $\CAlg_{A^\circ}(\cat{C})$ denote the $\infty$-category $\CAlg(\cat{C})_{A^\circ/}$ of $\Einfty$-$A$-algebras in $\cat{C}$. The functor $\Theta$ induces a functor $\Theta_A : \DAlg_A(\cat{C}) \to \CAlg_{A^\circ}(\cat{C})$, which also preserves small limits and colimits. From this we obtain a forgetful functor $\DAlg_A(\cat{C}) \to \Mod_A(\cat{C})$ which preserves limits and sifted colimits; this forgetful functor admits a left adjoint $\LSym_A : \Mod_A(\cat{C}) \to \DAlg_A(\cat{C})$ by the adjoint functor theorem, and this adjunction is monadic by the monadicity theorem.
  \item We regard $\DAlg_A(\cat{C})$ as a symmetric monoidal $\infty$-category via the cocartesian symmetric monoidal structure. We denote the coproduct by $\otimes_A$, which is reasonable since the functor $\Theta_A : \DAlg_A(\cat{C}) \to \CAlg_A(\cat{C})$ preserves coproducts, so that, at the level of underlying $A$-modules or $\Einfty$-algebras, $\otimes_A$ is given by the relative tensor product $\otimes_{A^\circ}$. In other words, $\Theta_A$ and the forgetful functor $\DAlg_A(\cat{C}) \to \Mod_A(\cat{C})$ are canonically symmetric monoidal.
  \end{enumerate}
  When it is safe and convenient to do so, we will drop $\cat{C}$ from the notation established above; for example, we may denote $\Mod_A(\cat{C})$ simply by $\Mod_A$ and $\DAlg_A(\cat{C})$ by $\DAlg_A$.
\end{notation}

\begin{remark}
  \label{dc--df--sym-base-change}
  Let $A \to B$ be a map in $\DAlg(\cat{C})$. The resulting restriction functor $\DAlg_B(\cat{C}) \to \DAlg_A(\cat{C})$ has a left adjoint $B \otimes_A - : \DAlg_A(\cat{C}) \to \DAlg_B(\cat{C})$ given by coproduct in $\DAlg_A(\cat{C})$ (or equivalently pushout in $\DAlg(\cat{C})$) with $B$. Moreover, there is a canonical natural equivalence of derived commutative $B$-algebras $B \otimes_A \LSym_A(M) \iso \LSym_B(B \otimes_A M)$ for $M \in \Mod_A(\cat{C})$ (this is immediate from considering the right adjoint functors).
\end{remark}

We close this subsection by discussing the following definition, which was motivated at the beginning of the section.

\begin{definition}
  \label{dc--df--dcbi}
  A \emph{derived commutative} (resp. \emph{derived bicommutative}) \emph{bialgebra object of $\cat{C}$} is a coalgebra (resp. cocommutative coalgebra) object of $\DAlg(\cat{C})$. We set $\bAlg_\dcomm(\cat{C}) := \cAlg(\DAlg(\cat{C}))$ and $\bAlg_\dcomm^\comm(\cat{C}) := \cCAlg(\DAlg(\cat{C}))$, and refer to these as the \emph{$\infty$-category of derived commutative bialgebra objects of $\cat{C}$} and \emph{$\infty$-category of derived bicommutative bialgebra objects of $\cat{C}$} respectively.
\end{definition}

\begin{remark}
  \label{dc--df--dcbi-und}
  A derived commutative (resp. derived bicommutative) bialgebra object $A$ of $\cat{C}$ has an underlying commutative (resp. bicommutative) bialgebra object $A^\circ$ (which again we may just denote by $A$).
\end{remark}

Let us now explain why, given a derived commutative bialgebra object $A$ of $\cat{C}$, we may lift the derived symmetric algebra monad $\LSym_{\cat{C}}$ from $\cat{C}$ to the comodule category $\cLMod_A(\cat{C})$, allowing us to make sense of derived commutative algebra objects in this comodule category. This is analogous to what was discussed in \cref{bi--tn}, namely that the underlying commutative bialgebra structure on $A$ determines a symmetric monoidal structure on $\cLMod_A(\cat{C})$, allowing us to make sense of $\Einfty$-algebra objects in this comodule category.

\begin{construction}
  \label{dc--df--dcbi-comod}
  \emph{Let $A \in \bAlg_\dcomm(\cat{C})$, which we note has an underlying coalgebra object in $\cat{C}$. We construct a monad $\LSym_\cat{C}$ on the $\infty$-category $\cLMod_A(\cat{C})$ such that the diagram}
  \[
    \begin{tikzcd}
      \cLMod_A(\cat{C}) \ar[r, "\LSym_{\cat{C}}"] \ar[d] &
      \cLMod_A(\cat{C}) \ar[d] \\
      \cat{C} \ar[r, "\LSym_{\cat{C}}"] &
      \cat{C}
    \end{tikzcd}
  \]
  \emph{canonically commutes (the vertical arrows being the forgetful functors), together with an equivalence of $\infty$-categories $\LMod_{\LSym_{\cat{C}}}(\cLMod_A(\cat{C})) \iso \cLMod_A(\DAlg(\cat{C}))$.}
  
  Consider the free-forget adjunction $F : \cat{C} \fromto \DAlg(\cat{C}) : G$, whose associated monad is $\LSym_{\cat{C}}$ by definition of $\DAlg(\cat{C})$. Since the right adjoint $G$ is canonically symmetric monoidal, the left adjoint $F$ is canonically oplax symmetric monoidal, and there is an induced adjunction $F' : \cLMod_A(\cat{C}) \fromto \cLMod_A(\DAlg(\cat{C})): G'$. We define the monad $\LSym_{\cat{C}}$ on $\cLMod_A(\cat{C})$ to be the monad associated to this adjunction. The commutativity of the diagram in the claim is then clear, and since forgetful functors from comodule categories preserve colimits, the monadicity theorem implies that the adjunction $F' \dashv G'$ is also monadic, giving the desired equivalence
  $\LMod_{\LSym_{\cat{C}}}(\cLMod_A(\cat{C})) \iso \cLMod_A(\DAlg(\cat{C}))$.
\end{construction}

\begin{variant}
  \label{dc--df--dcbi-dual}
  Let $B$ be a dualizable cocommutative bialgebra object of $\cat{C}$. By \cref{bi--du--main-cor}, there is an induced commutative bialgebra structure on $\dual{B}$ for which we have an equivalence of symmetric monoidal $\infty$-categories $\alpha : \LMod_B(\cat{C}) \iso \cLMod_{\dual{B}}(\cat{C})$, where the symmetric monoidal structures are as defined in \cref{bi--tn}.

  Suppose we are given a lift of the commutative bialgebra structure on $\dual{B}$ to a derived commutative bialgebra structure. Then, using \cref{dc--df--dcbi-comod} and the equivalence $\alpha$, we obtain a derived symmetric algebra monad $\LSym_{\cat{C}}$ on $\LMod_B(\cat{C})$, compatible with that on $\cat{C}$. We set
  \[
    \DAlg(\LMod_B(\cat{C})) := \LMod_{\LSym_{\cat{C}}}(\LMod_B(\cat{C})),
  \]
  and refer to objects of this $\infty$-category as \emph{derived commutative algebra objects of $\LMod_B(\cat{C})$}. Note that we still have an underlying $\Einfty$-algebra functor:
  \[
    \DAlg(\LMod_B(\cat{C})) \iso \cLMod_{\dual{B}}(\DAlg(\cat{C})) \lblto{\Theta} \cLMod_{\dual{B}}(\CAlg(\cat{C})) \iso \CAlg(\LMod_B(\cat{C})),
  \]
  where the first equivalence is from \cref{dc--df--dcbi-comod} and the last equivalence is from \cref{bi--tn--calg}.
\end{variant}

The above discussion reformulated the $\infty$-category $\cLMod_A(\DAlg(\cat{C}))$ as the $\infty$-category $\DAlg(\LMod_B(\cat{C}))$ (where $A$ is a dualizable derived commutative bialgebra and $B$ is its dual). This rephrasing in terms of modules is not strictly necessary for us, but it is somewhat comforting psychologically, and makes the end of the following remark a bit clearer. (Conversely, we should emphasize that it is not clear how to define $\DAlg(\LMod_B(\cat{C}))$ without passing to the dual comodule perspective.)

\begin{remark}
  \label{dc--df--dcbi-natural}
  Let $F : \cat{C} \to \cat{D}$ be a morphism of derived algebraic contexts. Let $B$ be a dualizable cocommutative bialgebra object of $\cat{C}$ equipped with a lift of $\dual{B}$ to $\bAlg_\dcomm(\cat{C})$, as in \cref{dc--df--dcbi-dual}. Then $F(B)$ and $\dual{F(B)} \iso F(\dual{B})$ inherit the same structure in $\cat{D}$, and we have an induced functor $F_B : \LMod_B(\cat{C}) \to \LMod_{F(B)}(\cat{D})$. By \cref{dc--df--dalg-natural}, there is also an induced functor
  \[
    \DAlg(\LMod_B(\cat{C})) \iso \cLMod_{\dual{B}}(\DAlg(\cat{C})) \to
    \cLMod_{\dual{F(B)}}(\DAlg(\cat{D})) \iso \DAlg(\LMod_{\dual{F(B)}}(\cat{D})),
  \]
  which here we denote by $F_B'$, making the diagrams
  \[
    \begin{tikzcd}
      \DAlg(\LMod_B(\cat{C})) \ar[r, "F_B'"] \ar[d, "U_{\cat{C}}", swap] &
      \DAlg(\LMod_{F(B)}(\cat{D})) \ar[d, "U_{\cat{D}}"] \\
      \LMod_B(\cat{C}) \ar[r, "F_B"] &
      \LMod_{F(B)}(\cat{D})
    \end{tikzcd}
  \]
  and
  \[
    \begin{tikzcd}
      \DAlg(\LMod_B(\cat{C})) \ar[r, "F_B'"] &
      \DAlg(\LMod_{F(B)}(\cat{D}))  \\
      \LMod_B(\cat{C}) \ar[r, "F_B"] \ar[u, "\LSym_{\cat{C}}"] &
      \LMod_{F(B)}(\cat{D}) \ar[u, "\LSym_{\cat{D}}", swap]
    \end{tikzcd}
  \]
  commute. Note that we may pass to right adjoints in the second diagram, and since the right adjoint of $F_B$ is induced by the right adjoint $G : \cat{D} \to \cat{C}$ of $F$, this shows that the right adjoint of $F_B'$ is also induced by $G$.
\end{remark}


\subsection{Examples of contexts}
\label{dc--eg}

In this subsection, we catalogue the derived algebraic contexts that will be relevant later in the paper, and establish some abbreviated notation in these specific contexts for the notions defined in \cref{dc--df}.

\begin{example}
  \label{dc--eg--modz}
  The fundamental example of a derived algebraic context is obtained by taking $\cat{C} = \Mod_{\num{Z}}$, which we regard as equipped with the usual symmetric monoidal structure and t-structure, and taking $\cat{C}^0$ to be the full subcategory spanned by the finite free $\num{Z}$-modules. We refer to derived commutative algebra objects of $\Mod_{\num{Z}}$ simply as \emph{derived commutative rings}, let $\DAlg_{\num{Z}}$ denote $\DAlg(\Mod_{\num{Z}})$, and abbreviate the derived symmetric algebra monad $\LSym_{\Mod_{\num{Z}}}$ to $\LSym_{\num{Z}}$.
\end{example}

\begin{remark}
  \label{dc--eg--modz-initial}
  In addition to being the fundamental example, $\Mod_{\num{Z}}$ is the initial example of a derived algebraic context $\Mod_{\num{Z}}$: that is, for any derived algebraic context $\cat{C}$, there is a unique morphism of derived algebraic contexts $\Mod_{\num{Z}} \to \cat{C}$. This follows from \cref{dc--df--context-determined} and the fact that $\cat{C}^0$, being an additive, classical symmetric monoidal category, receives a unique additive symmetric monoidal functor from the category of finite free $\num{Z}$-modules.
\end{remark}

\begin{remark}
  \label{dc--eg--scr}
  It follows from \cref{dc--df--connective-generators} that the $\infty$-category
  \[
    \DAlg_{\num{Z}}^\cn := \DAlg_{\num{Z}} \times_{\Mod_{\num{Z}}} \Mod_{\num{Z}}^\cn
  \]
  of \emph{connective} derived commutative rings is canonically equivalent to the $\infty$-category of simplicial commutative rings, as described for example in \cite[\S25]{lurie--sag}.
\end{remark}

We now explain how to obtain derived algebraic contexts in the graded and filtered settings.

\begin{construction}
  \label{dc--eg--gf}
  Let $\cat{C}$ be a derived algebraic context. Then we regard the $\infty$-category $\Gr(\cat{C})$ of graded objects of $\cat{C}$ as a derived algebraic context by equipping it with the Day convolution symmetric monoidal structure structure (\cref{gf--df--gf}) and the neutral t-structure (\cref{gf--t--gr-t}), and taking $\Gr(\cat{C})^0 \subseteq \Gr(\cat{C})$ to be full subcategory spanned by finite coproducts of the objects $X(n) \iso \ins^n(X)$ for $X \in \cat{C}^0$ and $n \in \num{Z}$. We regard the $\infty$-category $\Fil(\cat{C})$ of filtered objects of $\cat{C}$ as a derived algebraic context in the analogous way, and similarly for the nonnegative variants $\Gr^{\ge 0}(\cat{C})$ and $\Fil^{\ge 0}(\cat{C})$ (\cref{gf--df--gf-nonnegative}).

  We will refer to derived commutative algebra objects of $\Gr(\cat{C})$ as \emph{graded derived commutative algebra objects of $\cat{C}$}, and denote $\DAlg(\Gr(\cat{C}))$ by $\Gr\DAlg(\cat{C})$. Similarly, we refer to derived commutative algebra objects of $\Fil(\cat{C})$ as \emph{filtered derived commutative algebra objects of $\cat{C}$}, and denote $\DAlg(\Fil(\cat{C}))$ by  $\Fil\DAlg(\cat{C})$. We similarly have \emph{nonnegatively} graded and filtered derived commutative objects of $\cat{C}$, the $\infty$-categories of which we denote by $\Gr^{\ge0}\DAlg(\cat{C})$ and $\Fil^{\ge0}\DAlg(\cat{C})$ respectively. In all these cases, we adopt the same notation for the relative notions of \cref{dc--df--dalg-relative}; moreover, as mentioned there, we will sometimes drop $\cat{C}$ from the notation in the relative situation.
\end{construction}

\begin{remark}
  \label{dc--eg--gf-lsym}
  In the situation of \cref{dc--eg--gf}, note that, for $X \in \cat{C}^0$ and $m,n \in \num{Z}$, we have $\Sym^m_{\Gr(\cat{C})^\heart}(X(n)) \iso \Sym^m_{\cat{C}^\heart}(X)(mn)$. It follows that, for all $X \in \cat{C}$, we have $\LSym^m_{\Gr(\cat{C})}(X(n)) \iso \LSym^m_{\cat{C}}(X)(mn)$. The same holds with $\Gr(\cat{C})$ replaced by $\Fil(\cat{C})$.
\end{remark}

\begin{remark}
  \label{dc--eg--gf-morphisms}
  The functor $\ins^0 : \cat{C} \to \Gr(\cat{C})$ is a morphism of derived algebraic contexts, with right adjoint $\ev^0 : \cat{C} \to \Gr(\cat{C})$. It follows from \cref{dc--df--dalg-natural} that there is an induced adjunction $\ins^0 : \DAlg(\cat{C}) \fromto \Gr\DAlg(\cat{C}) : \ev^0$. Informally speaking, this says that the zeroth graded piece of a graded derived commutative algebra object of $\cat{C}$ is canonically a derived commutative algebra object of $\cat{C}$, and conversely any derived commutative algebra object of $\cat{C}$ can be regarded as a graded derived commutative algebra in $\cat{C}$ concentrated in grading-degree $0$. We will often do the latter implicitly.

  Similar statements hold with $\Gr(\cat{C})$ replaced by $\Fil(\cat{C})$ or the nonnegative variants of these, as well as for the adjunction $\ins^{\ge 0} : \Gr^{\ge 0}(\cat{C}) \fromto \Gr(\cat{C}) : \ev^{\ge 0}$, and so on. The same can also be said for the colimit functor $\colim : \Fil(\cat{C}) \to \cat{C}$ and the associated graded functor $\gr : \Fil(\cat{C}) \to \Gr(\cat{C})$, which both determine morphisms of derived algebraic contexts.
\end{remark}

Before discussing the final construction of interest, we must spell out when a localization of a derived algebraic context inherits a derived algebraic context structure.

\begin{construction}
  \label{dc--eg--localization}
  Let $\cat{C}$ be a derived algebraic context and let $\cat{D}$ be a full subcategory of $\cat{C}$ satisfying the following conditions:
  \begin{enumerate}
  \item \label{dc--eg--inclusion--pr}
    $\cat{D}$ is stable and presentable, and the inclusion $\iota : \cat{D} \inj \cat{C}$ admits a left adjoint $\lambda : \cat{C} \to \cat{D}$.
  \item \label{dc--eg--inclusion--t}
    For $X \in \cat{C}$, if $X$ is contained in $\cat{D}$, then $\tau_{\ge 0}(X), \tau_{\le 0}(X)$ are also contained in $\cat{D}$, so that the t-structure on $\cat{C}$ restricts to a t-structure on $\cat{D}$. Moreover, $\lambda$ is right t-exact.
  \item \label{dc--eg--inclusion--mon}
    The left adjoint $\lambda$ is compatible with the symmetric monoidal structure on $\cat{C}$ (in the sense of \cite[Definition 2.2.1.6]{lurie--algebra}). It follows that $\cat{D}$ inherits a presentable symmetric monoidal structure for which $\lambda$ is canonically symmetric monoidal \cite[Proposition 2.2.1.9]{lurie--algebra}.
  \item \label{dc--eg--inclusion--gen}
    Set $\cat{D}^0 := \lambda(\cat{C}^0)$. Then the objects of $\cat{D}^0$ form a set of compact projective generators for $\cat{D}_{\ge 0}$.
  \end{enumerate}
  Then the above structure makes $\cat{D}$ a derived algebraic context and $\lambda$ a morphism of derived algebraic contexts, so that there is an induced localizing adjunction $\lambda : \DAlg(\cat{C}) \fromto \DAlg(\cat{D}) : \iota$.
\end{construction}

\begin{example}
  \label{dc--eg--gr-zeroone}
  Let $\cat{C}$ be a derived algebraic context, and regard $\Gr^{\ge 0}(\cat{C})$ as a derived algebraic context as in \cref{dc--eg--gf}. Let $\Gr^\zeroone(\cat{C})$ denote the $\infty$-category $\cat{C} \times \cat{C}$ of pairs $(X^0,X^1)$ of objects in $\cat{C}$. We have a fully faithful embedding $\ins^\zeroone : \Gr^\zeroone(\cat{C}) \inj \Gr^{\ge 0}(\cat{C})$ identifying $\Gr^\zeroone(\cat{C})$ with the full subcategory of $\Gr^{\ge0}(\cat{C})$ spanned by those graded objects $X^*$ for which $X^n \iso 0$ for all $n \notin \zeroone$. This embedding has a left adjoint given by the functor $\ev^\zeroone : \Gr^{\ge0}(\cat{C}) \to \Gr^\zeroone(\cat{C})$ that sends $X^* \mapsto (X^0,X^1)$. This localization satisfies the conditions of \cref{dc--eg--localization}, hence equips $\Gr^\zeroone(\cat{C})$ with the structure of a derived algebraic context. We note in particular that the tensor product on $\Gr^\zeroone(\cat{C})$ is given by the construction
  \[
    ((X^0,X^1),(Y^0,Y^1)) \mapsto (X^0 \otimes Y^0,(X^0 \otimes Y^1) \oplus (X^1 \otimes Y^0)).
  \]
  We let $\Gr^\zeroone\DAlg(\cat{C})$ denote the $\infty$-category $\DAlg(\Gr^\zeroone(\cat{C}))$, and similarly for the relative situation.
\end{example}


\subsection{The cotangent complex}
\label{dc--ct}

In this subsection, we continue to work with a fixed derived algebraic context $\cat{C}$. Our goal is to give a definition of relative cotangent complexes for derived commutative algebra objects of $\cat{C}$, generalizing and reformulating the usual cotangent complex in the setting of simplicial commutative rings in a manner that will be convenient in \cref{dg--dr}. We follow the usual pattern: we first define trivial square-zero extensions, then define derivations as maps into these, and finally define the cotangent complex as a representing object for derivations.

\begin{notation}
  \label{dc--ct--algmod}
  Let $\CAlgMod(\cat{C})$ denote the $\infty$-category of pairs $(A,M)$ where $A$ is an $\Einfty$-algebra in $\cat{C}$ and $M$ is an $A$-module in $\cat{C}$ (see \cite[Definition 3.3.3.8]{lurie--algebra}), and let $\DAlgMod(\cat{C}) \iso \DAlg(\cat{C}) \times_{\CAlg(\cat{C})} \CAlgMod(\cat{C})$ denote the $\infty$-category of pairs $(A,M)$ where $A$ is a derived commutative ring in $\cat{C}$ and $M$ is a $A$-module in $\cat{C}$.
\end{notation}

In the following two results, we regard $\Gr^\zeroone(\cat{C}) = \cat{C} \times \cat{C}$ as a derived algebraic context as in \cref{dc--eg--gr-zeroone}; we let $\Gr^\zeroone\CAlg(\cat{C})$ denote $\CAlg(\Gr^\zeroone(\cat{C}))$.

\begin{lemma}
  \label{dc--ct--zeroone-calg}
   Then there is a canonical equivalence of $\infty$-categories
  \[
    \alpha : \Gr^\zeroone\CAlg(\cat{C}) \isoto \CAlgMod(\cat{C})
  \]
  commuting with the forgetful functors to $\Gr^\zeroone(\cat{C})$.\footnote{This result goes through for any presentable symmetric monoidal $\infty$-category $\cat{C}$.}
\end{lemma}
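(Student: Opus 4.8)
The plan is to prove this by comparing both sides through their forgetful functors to $\Gr^\zeroone(\cat{C}) \iso \cat{C}\times\cat{C}$ (with the tensor product displayed in \cref{dc--eg--gr-zeroone}), which I claim are both monadic. For $\Gr^\zeroone\CAlg(\cat{C})$ this is \cref{dc--mo--sym}: the associated monad is the symmetric algebra monad $\Sym_{\Gr^\zeroone(\cat{C})}$. For $\CAlgMod(\cat{C})$, consider $U\colon \CAlgMod(\cat{C}) \to \cat{C}\times\cat{C}$, $(A,M)\mapsto(A,M)$; it is conservative and preserves limits and sifted colimits, since these are computed on underlying objects (via the corresponding facts for $\CAlg(\cat{C})\to\cat{C}$ and for module $\infty$-categories, including with varying base algebra), and it admits a left adjoint $(X,Y)\mapsto(\Sym_{\cat{C}}(X),\Sym_{\cat{C}}(X)\otimes Y)$ — free algebra, then free module — as one checks directly from universal properties. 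So $U$ is monadic by the Barr--Beck--Lurie theorem, with monad $T(X,Y)\iso(\Sym_{\cat{C}}(X),\Sym_{\cat{C}}(X)\otimes Y)$.

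Next I would compute $\Sym_{\Gr^\zeroone(\cat{C})}$ as an endofunctor and observe it agrees with $T$. Writing $(X,Y)\iso\ins^0(X)\oplus\ins^1(Y)$ and using $\Sym(Z)\iso\coprod_{n\ge 0}(Z^{\oast n})_{\Sigma_n}$ (\cref{dc--mo--sym}) together with the tensor formula for $\Gr^\zeroone(\cat{C})$: the degree-$0$ part of $(Z^{\oast n})_{\Sigma_n}$ is $\Sym^n_{\cat{C}}(X)$ and its degree-$1$ part is $\Sym^{n-1}_{\cat{C}}(X)\otimes Y$, the set of positions of the unique degree-$1$ tensor factor being a transitive $\Sigma_n$-set with stabilizer $\Sigma_{n-1}$. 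Summing over $n$ gives $\Sym_{\Gr^\zeroone(\cat{C})}(X,Y)\iso(\Sym_{\cat{C}}(X),\Sym_{\cat{C}}(X)\otimes Y)$, the same underlying functor as $T$.

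To upgrade this to an equivalence of monads I would construct a functor $\Phi\colon\Gr^\zeroone\CAlg(\cat{C})\to\CAlgMod(\cat{C})$ commuting with the forgetful functors to $\cat{C}\times\cat{C}$. On a graded commutative algebra $X^\bullet$, the degree-$0$ piece $X^0$ is a commutative algebra via the symmetric monoidal functor $\ev^0\colon\Gr^\zeroone(\cat{C})\to\cat{C}$ (symmetric monoidal since $(X\oast Y)^0\iso X^0\otimes Y^0$, cf.\ \cref{gf--df--gf}), and $X^1$ acquires an $X^0$-module structure by restricting the multiplication of $X^\bullet$ along the summand inclusion $\ins^0(X^0)\oast\ins^1(X^1)\to X^\bullet$; packaging this coherently amounts to exhibiting $\ev^1$ as a left-lax $\ev^0$-linear functor, its structure maps $X^0\otimes Y^1\to(X\oast Y)^1$ being the evident Day-convolution summand inclusions, so that $\CAlg$ of this structure carries an algebra $X^\bullet$ to a module over $\ev^0(X^\bullet)$. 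This $\Phi$ induces a transformation of monads $\Sym_{\Gr^\zeroone(\cat{C})}\to T$ over $\cat{C}\times\cat{C}$, which by the previous paragraph (unwinding, it is the obvious comparison map) is an equivalence; then \cite[Corollary 4.7.3.16]{lurie--algebra} shows $\Phi$ itself is an equivalence, commuting with the forgetful functors to $\Gr^\zeroone(\cat{C})$ as required.

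The step that needs genuine care is the construction of $\Phi$ in the third paragraph — coherently promoting ``$X^1$ is a module over $X^0$'' to a functor, i.e.\ the left-lax $\ev^0$-linearity of $\ev^1$; everything else is symmetric-algebra bookkeeping. An alternative would be to build the inverse directly, sending $(A,M)$ to the graded commutative algebra $A\oplus\ins^1(M)$ (automatically square-zero since there is nothing in degree $\ge 2$) and checking this is an equivalence, or to identify $\CAlg(\ev^0)\colon\Gr^\zeroone\CAlg(\cat{C})\to\CAlg(\cat{C})$ as the cocartesian fibration classifying $A\mapsto\Mod_A(\cat{C})$; these cost comparable effort, so I would keep the monadic argument as the main line.
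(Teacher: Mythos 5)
Your proposal is essentially the same argument as the paper's: construct a comparison functor commuting with the forgetful functors to $\Gr^\zeroone(\cat{C})$, identify the free (symmetric-algebra) functors on both sides, and conclude by \cite[Corollary 4.7.3.16]{lurie--algebra}. The step you flag as needing care --- coherently packaging the $X^0$-module structure on $X^1$, via left-lax $\ev^0$-linearity of $\ev^1$ --- is precisely the point the paper handles somewhat differently: it embeds $\Gr^\zeroone\CAlg(\cat{C})$ into $\Gr^{\ge 0}\CAlg(\cat{C})$, identifies the latter with lax symmetric monoidal functors $\num{Z}_{\ge 0}^\ds \to \cat{C}$, and then just observes that $0$ is a commutative algebra and $1$ a $0$-module in $\num{Z}_{\ge 0}^\ds$, so that lax symmetric monoidal functors automatically carry this structure along. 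That route sidesteps having to exhibit the linearity of $\ev^1$ by hand and may be worth adopting if you pursue this; otherwise your line is sound, and your monad computation $\Sym_{\Gr^\zeroone(\cat{C})}(X,Y) \iso (\Sym_{\cat{C}}(X),\Sym_{\cat{C}}(X)\otimes Y)$ matches the paper's.
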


\begin{proof}
  The embedding $\ins^\zeroone : \Gr^\zeroone(\cat{C}) \inj \Gr^{\ge0}(\cat{C})$ induces an embedding $\iota : \Gr^\zeroone\CAlg(\cat{C}) \to \Gr^{\ge 0}\CAlg(\cat{C})$. We will define a functor $\beta : \Gr^{\ge 0}\CAlg(\cat{C}) \to \CAlgMod(\cat{C})$ and then define the desired functor $\alpha$ to be the composition $\beta \circ \iota$.

  To define $\beta$, recall that $\Gr^{\ge 0}\CAlg(\cat{C})$ can be identified with the $\infty$-category of lax symmetric monoidal functors $X : \num{Z}_{\ge 0}^\ds \to \cat{C}$, where $\num{Z}_{\ge 0}^\ds$ is regarded as a symmetric monoidal category via addition (\cite[Example 2.2.6.9]{lurie--algebra}). Observe that $0 \in \num{Z}_{\ge 0}^\ds$ has a (unique) commutative algebra structure and $1 \in \num{Z}_{\ge 0}^\ds$ a (unique) module structure over $0$. It follows that a lax symmetric monoidal structure on a functor $X : \num{Z}_{\ge 0}^\ds \to \cat{C}$ determines a commutative algebra structure on $X^0$ and an $X^0$-module structure on $X^1$. This construction defines our functor $\beta : \Gr^{\ge 0}\CAlg(\cat{C}) \to \CAlgMod(\cat{C})$.

  Now consider the composition
  \[
    \alpha := \beta \circ \iota : \Gr^\zeroone\CAlg(\cat{C}) \to \CAlgMod(\cat{C}).
  \]
  This evidently commutes with the forgetful functors to $\Gr^\zeroone(\cat{C})$. We finish by showing that $\alpha$ is an equivalence of $\infty$-categories. Note that both forgetful functors admit left adjoints,
  \begin{align*}
    F = \Sym_{\Gr^\zeroone(\cat{C})} : \Gr^\zeroone(\cat{C}) \to \Gr^\zeroone\CAlg(\cat{C}), \quad
    F' : \Gr^\zeroone(\cat{C}) \to \CAlgMod(\cat{C}),
  \end{align*}
  where $F'(X^0,X^1) \iso (\Sym_{\cat{C}}(X^0), \Sym_{\cat{C}}(X^0) \otimes_{\cat{C}} X^1)$; moreover, both adjunctions are monadic. Unravelling the definition of the symmetric powers $\Sym^n_{\Gr^\zeroone(\cat{C})}$, one can see that the canonical map $F' \to \alpha \circ F$ is an equivalence.
  It then follows from \cite[Corollary 4.7.3.16]{lurie--algebra} that $\alpha$ is an equivalence.
\end{proof}

\begin{lemma}
  \label{dc--ct--zeroone-dalg}
  There is a canonical equivalence of $\infty$-categories
  \[
    \alpha' : \Gr^\zeroone\DAlg(\cat{C}) \isoto \DAlgMod(\cat{C})
  \]
  commuting with the forgetful functors to $\Gr^\zeroone(\cat{C})$.
\end{lemma}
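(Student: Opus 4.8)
The strategy is to mirror the proof of \cref{dc--ct--zeroone-calg} in the derived setting, bootstrapping off of that lemma. First one records the identification $\DAlgMod(\cat{C}) \simeq \DAlg(\cat{C}) \times_{\CAlg(\cat{C})} \CAlgMod(\cat{C})$, where $\CAlgMod(\cat{C}) \to \CAlg(\cat{C})$ remembers the algebra and $\DAlg(\cat{C}) \to \CAlg(\cat{C})$ is the underlying-$\Einfty$-algebra functor $\Theta$; this is immediate from \cref{dc--df--dalg-relative}, which defines an ``$A$-module'' for $A \in \DAlg(\cat{C})$ to mean an $A^\circ$-module. To define $\alpha'$ it then suffices to give a pair of functors out of $\Gr^\zeroone\DAlg(\cat{C})$ together with a homotopy between the two resulting composites to $\CAlg(\cat{C})$: take $\ev^0 \colon \Gr^\zeroone\DAlg(\cat{C}) \to \DAlg(\cat{C})$, the right adjoint of $\ins^0$ (cf.\ \cref{dc--eg--gf-morphisms}), extracting the degree-$0$ part, and the composite $\alpha \circ \Theta \colon \Gr^\zeroone\DAlg(\cat{C}) \to \Gr^\zeroone\CAlg(\cat{C}) \to \CAlgMod(\cat{C})$ using the $\Einfty$-equivalence $\alpha$ of \cref{dc--ct--zeroone-calg}. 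The needed homotopy $\Theta \circ \ev^0 \simeq \ev^0 \circ \Theta$ (on $\CAlg$) follows from the naturality of $\Theta$ in morphisms of derived algebraic contexts (applied to $\ins^0$, cf.\ \cref{dc--df--dalg-natural}) together with the fact that $\alpha$ commutes with the forgetful functors to $\Gr^\zeroone(\cat{C})$. By construction $\alpha'$ commutes with the forgetful functors to $\Gr^\zeroone(\cat{C}) = \cat{C} \times \cat{C}$, sending $X^\star \mapsto (X^0, X^1)$ on underlying objects.

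To see $\alpha'$ is an equivalence, I would apply \cite[Corollary 4.7.3.16]{lurie--algebra} to the triangle formed by $\alpha'$ and the forgetful functors $G \colon \Gr^\zeroone\DAlg(\cat{C}) \to \cat{C} \times \cat{C}$ and $G' \colon \DAlgMod(\cat{C}) \to \cat{C} \times \cat{C}$. Here $G$ is monadic by definition of $\DAlg$, with left adjoint $F = \LSym_{\Gr^\zeroone(\cat{C})}$; and $G'$ is monadic by the Barr--Beck--Lurie theorem, with left adjoint $F'(X,Y) \simeq (\LSym_\cat{C}(X),\, \LSym_\cat{C}(X) \otimes Y)$ --- that $F'$ is left adjoint is a direct mapping-space computation, and conservativity of $G'$ and preservation of $G'$-split geometric realizations are checked exactly as for $\CAlgMod(\cat{C})$ in the proof of \cref{dc--ct--zeroone-calg}. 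It then remains to check that the canonical map $\alpha' \circ F \to F'$ is an equivalence, i.e.\ that $\alpha'$ sends free objects to free objects. Since $\ev^0$ carries $\LSym_{\Gr^\zeroone(\cat{C})}$ to $\LSym_\cat{C}$ (by naturality, \cref{dc--df--dalg-natural}), the algebra coordinate is immediate, and the module coordinate reduces to computing the degree-$1$ part of $\LSym_{\Gr^\zeroone(\cat{C})}(X(0) \oplus Y(1))$. This I would obtain by pulling back along the morphism of derived algebraic contexts $\ev^\zeroone \colon \Gr^{\ge 0}(\cat{C}) \to \Gr^\zeroone(\cat{C})$ and combining \cref{dc--df--lsym-sum}, \cref{dc--eg--gf-lsym}, and \cref{dc--df--dalg-colim}: these yield $\LSym_{\Gr^{\ge 0}(\cat{C})}(X(0) \oplus Y(1))^\circ \simeq \bigoplus_{i,j \ge 0} \bigl(\LSym^i_\cat{C}(X) \otimes \LSym^j_\cat{C}(Y)\bigr)(j)$, whose degree-$1$ part, after applying $\ev^\zeroone$, is $\bigl(\bigoplus_i \LSym^i_\cat{C}(X)\bigr) \otimes Y \simeq \LSym_\cat{C}(X)^\circ \otimes Y$ --- exactly matching the module coordinate of the free object under $\alpha$. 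This parallels the identification $\alpha \circ F \simeq F'$ in the $\Einfty$-case, now with $\Sym$ replaced by $\LSym$.

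The main obstacle is the bookkeeping in this last step: one must verify that the computation of $\LSym_{\Gr^\zeroone(\cat{C})}$ on a ``two-term'' object is compatible with the module structures and with $\alpha$, not merely with the underlying graded objects. Threading the square-zero symmetric monoidal structure of $\Gr^\zeroone(\cat{C})$ (where degree-$1$ tensor degree-$1$ vanishes) through the $\LSym$-machinery --- which, unlike $\Sym$, is only controlled via its excisively polynomial filtered pieces --- takes care; I would first reduce via \cref{dc--df--poly-cn} to $\cat{C}_{\ge 0}$ and then to $\cat{C}^0$, where $\LSym^n$ is literally $\Sym^n_{\cat{C}^\heart}$ and the binomial decomposition $\Sym^n(X \oplus Y) \simeq \bigoplus_{i+j=n} \Sym^i(X) \otimes \Sym^j(Y)$ is classical, the square-zero structure killing the summands with $j \ge 2$. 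A secondary technical point is the monadicity of $G'$, but exhibiting its left adjoint $F'$ by hand (as above) lets us bypass any appeal to presentability of $\DAlgMod(\cat{C})$.
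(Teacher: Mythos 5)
Your proposal is correct and follows essentially the same route as the paper: the paper's own proof constructs $\alpha'$ from exactly the same pair of functors ($\ev^0$ and $\alpha\circ\Theta$) into the fiber product $\DAlg(\cat{C})\times_{\CAlg(\cat{C})}\CAlgMod(\cat{C})$, and then disposes of the equivalence claim with the single sentence "follows from an argument similar to the one used in \cref{dc--ct--zeroone-calg}." You have correctly unpacked that sentence, applying \cite[Corollary 4.7.3.16]{lurie--algebra} to the forgetful functors to $\Gr^\zeroone(\cat{C})$ and verifying $\alpha'\circ F\simeq F'$ by the graded computation of $\LSym$ on $X(0)\oplus Y(1)$ via \cref{dc--df--lsym-sum}, \cref{dc--eg--gf-lsym}, and \cref{dc--df--dalg-colim} — which is precisely the derived analogue of "unravelling the definition of $\Sym^n_{\Gr^\zeroone(\cat{C})}$" from the $\Einfty$ case.
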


\begin{proof}
  By our definition of modules over derived commutative rings (\cref{dc--df--dalg-relative}), we have an equivalence of $\infty$-categories $\DAlgMod(\cat{C}) \iso \DAlg(\cat{C}) \times_{\CAlg(\cat{C})} \CAlgMod(\cat{C})$. In terms of this equivalence, we define the functor $\alpha'$ via the functor $\Gr^\zeroone\DAlg(\cat{C}) \to \DAlg(\cat{C})$ induced by $\ev^0 : \Gr^\zeroone(\cat{C}) \to \cat{C}$ (see \cref{dc--eg--gf-morphisms}, which applies also with $\Gr(\cat{C})$ replaced by $\Gr^\zeroone(\cat{C})$) and the composition
  \[
    \Gr^\zeroone\DAlg(\cat{C}) \lblto{\Theta} \Gr^\zeroone\CAlg(\cat{C}) \lblto{\alpha} \CAlgMod(\cat{C}),
  \]
  where $\alpha$ is the equivalence of \cref{dc--ct--zeroone-calg}. That $\alpha'$ is an equivalence follows from an argument similar to the one used in \cref{dc--ct--zeroone-calg} to show that $\alpha$ is an equivalence.
\end{proof}

\begin{notation}
  \label{dc--ct--D}
  Let $\dual{\num{D}} \in \Gr\CAlg(\cat{C}^\heart)$ denote the ordinary trivial square-zero commutative algebra $\unit_{\cat{C}} \oplus \unit_{\cat{C}}(-1)$, where $\unit_{\cat{C}}$ denotes the unit object of $\cat{C}$.\footnote{Our choice of notation here is made to be compatible with notation to be introduced in \cref{dg}, namely the objects $\num{D}_{\pm}$ of \cref{dg--pm--d}.} By \cref{dc--df--connective-generators}, we have a canonical embedding $\Gr\CAlg(\cat{C}^\heart) \inj \Gr\DAlg(\cat{C})$, via which we regard $\dual{\num{D}}$ as an object of $\Gr\DAlg(\cat{C})$.
\end{notation}

\begin{construction}[Trivial square-zero extensions]
  \label{dc--ct--square-zero}
  Let $U : \DAlgMod(\cat{C}) \to \DAlg(\cat{C})$ and $U' : \DAlgMod(\cat{C}) \to \cat{C}$ denote the functors given by $U(A,M) := A$ and $U'(A,M) := M$. Define $G : \DAlgMod(\cat{C}) \to \DAlg(\cat{C})$ to be the composite functor
  \[
    \DAlgMod(\cat{C})
    \lblto{(\alpha')^{-1}} \Gr^\zeroone\DAlg(\cat{C})
    \inj \Gr\DAlg(\cat{C})
    \lblto{- \ostar \dual{\num{D}}} \Gr\DAlg(\cat{C})
    \lblto{\ev^0} \DAlg(\cat{C}).
  \]
  Then we have:
  \begin{itemize}
  \item natural transformations $\eta : U \to G$ and $\epsilon : G \to U$, induced by the unit and augmentation maps $\eta_{\dual{\num{D}}} : \unit_{\Gr(\cat{C})} \to \dual{\num{D}}$ and $\epsilon_{\dual{\num{D}}} : \dual{\num{D}} \to \unit_{\Gr(\cat{C})}$ in $\CAlg(\Gr(\cat{C})^\heart) \subseteq \Gr\DAlg(\cat{C})$
  \item a homotopy $\epsilon \circ \eta \iso \id_U$, induced by the equality $\epsilon_{\dual{\num{D}}} \circ \eta_{\dual{\num{D}}} = \id_{\unit_{\Gr(\cat{C})}}$.
  \item a natural equivalence $\fib(\epsilon) \iso U'$, induced by the equivalence $\fib(\epsilon_{\dual{\num{D}}}) \iso \unit_{\cat{C}}(-1)$.
  \end{itemize}
  These data determine a natural equivalence $G(A,M) \iso A \oplus M$ in $\cat{C}$ for $(A,M) \in \DAlgMod(\cat{C})$. We will thus denote $G(A,M) \in \DAlg(\cat{C})$ itself by $A \oplus M$, and refer to it as the \emph{trivial square-zero extension of $A$ by $M$}.
\end{construction}

\begin{notation}
  \label{dc--ct--dalgmod-rel}
  For $A$ a derived commutative algebra object of $\cat{C}$, we let $\DAlgMod_A$ denote the $\infty$-category $\DAlgMod(\cat{C})_{(A,0)/}$ of pairs $(B,M)$ where $B$ is a derived commutative $A$-algebra and $M$ is a $B$-module. Since the functors $U$ and $G$ in \cref{dc--ct--square-zero} both send $(A,0)$ to $A$, they induce functors $\DAlgMod_A \to \DAlg_A$, which we abusively also denote by $U$ and $G$.
\end{notation}

\begin{definition}
  \label{dc--ct--derivation}
  Let $A$ be a derived commutative algebra object $\cat{C}$. For $B$ a derived commutative $A$-algebra and $M$ a $B$-module, we define an \emph{$A$-linear derivation of $B$ into $M$} to be a map $\delta : B \to B \oplus M$ in $(\DAlg_A)_{/B}$. We often abusively identify a derivation $\delta$ with the map of $A$-modules $d : B \to M$ obtained by composing with the projection $B \oplus M \to M$.
\end{definition}

\begin{proposition}
  \label{dc--ct--square-zero-adjoint}
  Let $A$ be a derived commutative algebra object of $\cat{C}$. Then the trivial square-zero extension functor $G : \DAlgMod_A \to \DAlg_A$ preserves limits and sifted colimits, hence admits a left adjoint $L$.
\end{proposition}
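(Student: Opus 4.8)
The plan is to verify the hypotheses of the adjoint functor theorem, i.e. that $G : \DAlgMod_A \to \DAlg_A$ preserves small limits and sifted colimits. Both source and target are presentable (by \cref{dc--df--dalg-limits} and \cref{dc--df--dalg-relative}, together with the fact that $\DAlgMod(\cat{C}) \iso \DAlg(\cat{C}) \times_{\CAlg(\cat{C})} \CAlgMod(\cat{C})$ is presentable as a fiber product of presentable $\infty$-categories along accessible limit-preserving functors), so once limit- and sifted-colimit-preservation is established the existence of the left adjoint $L$ is immediate. By \cref{dc--ct--square-zero}, $G$ is the composite
\[
  \DAlgMod_A \lblto{(\alpha')^{-1}} \Gr^\zeroone\DAlg(\cat{C})_{\dots} \inj \Gr\DAlg(\cat{C})_{\dots} \lblto{- \ostar \dual{\num{D}}} \Gr\DAlg(\cat{C})_{\dots} \lblto{\ev^0} \DAlg(\cat{C})_{\dots}
\]
(all taken in the appropriate slice/coslice categories under $A$), so it suffices to check the two preservation properties for each factor.

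First I would handle the two ``outer'' factors, which are essentially formal. The equivalence $(\alpha')^{-1}$ of \cref{dc--ct--zeroone-dalg} preserves all limits and colimits, being an equivalence. The fully faithful inclusion $\ins^\zeroone : \Gr^\zeroone(\cat{C}) \inj \Gr^{\ge0}(\cat{C})$ is a morphism of derived algebraic contexts with right adjoint $\ev^\zeroone$ (\cref{dc--eg--gr-zeroone}), so the induced functor on $\DAlg$'s preserves colimits (being a left adjoint) and also preserves limits, since limits of derived commutative algebras are computed on underlying objects and $\ins^\zeroone$ preserves limits of objects; similarly $\ev^0 : \Gr(\cat{C}) \to \cat{C}$ is a morphism of derived algebraic contexts (\cref{dc--eg--gf-morphisms}) with left adjoint $\ins^0$, so the induced functor $\ev^0 : \Gr\DAlg(\cat{C}) \to \DAlg(\cat{C})$ preserves limits; and it preserves colimits because colimits in $\DAlg$ over a derived algebraic context are computed on underlying objects after applying $\LSym$, and $\ev^0$ (being exact and colimit-preserving on $\cat{C}$) is compatible with this. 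All of these statements pass to the relevant slice categories under $A$, since slice categories inherit limits and (connected) colimits, and sifted colimits in particular are connected.

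The main obstacle is the middle factor, the functor $- \ostar \dual{\num{D}} : \Gr\DAlg(\cat{C}) \to \Gr\DAlg(\cat{C})$ (base change of graded derived commutative algebras along $\unit_{\Gr(\cat{C})} \to \dual{\num{D}}$). Here I would argue as follows. This is a base-change functor $B \mapsto B \otimes_{\unit} \dual{\num{D}}$ in $\Gr\DAlg(\cat{C})$, hence a left adjoint (right adjoint being restriction along $\unit \to \dual{\num{D}}$), so colimit-preservation is automatic; restricted to slices under $A$ it still preserves sifted colimits. For limit-preservation, the point is that $\dual{\num{D}} \iso \unit_{\Gr(\cat{C})} \oplus \unit_{\Gr(\cat{C})}(-1)$ is, as an object of $\Gr(\cat{C})$, a finite coproduct of invertible objects, hence dualizable and in particular a perfect $\unit_{\Gr(\cat{C})}$-module; base change along a map to a module that is finite free (a retract of a finite coproduct of shifts/twists of the unit) preserves limits at the level of underlying objects, since on underlying objects $B \mapsto B \otimes_{\unit} \dual{\num{D}} \iso B \oplus B(-1)$ is just $B \mapsto B \oplus B(-1)$, which manifestly preserves limits. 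Combined with the fact that limits in $\Gr\DAlg(\cat{C})$ are detected on underlying graded objects (\cref{dc--df--dalg-colim} and the monadic description of $\DAlg$), this gives limit-preservation for the middle factor, and passing to the slice under $A$ is routine. Composing all the factors yields the claim, and the left adjoint $L$ follows from the adjoint functor theorem.
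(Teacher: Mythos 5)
Your argument reaches the right conclusion but by a much more elaborate route than the paper, and two of the intermediate justifications are imprecise. The paper's proof collapses everything into a single observation: since the forgetful functor $\DAlg_A \to \cat{C}$ is conservative and preserves limits and sifted colimits (both being detected on underlying objects), it suffices to note that the composite $\DAlgMod_A \to \DAlg_A \to \cat{C}$ is $(B,M) \mapsto B \oplus M$, which manifestly preserves limits and sifted colimits, and then apply the adjoint functor theorem. You instead factor $G$ through the four-step chain of \cref{dc--ct--square-zero} and verify preservation factor by factor — valid, but at each of the three nontrivial steps you end up invoking the very same principle (everything is computed on underlying graded objects) that the paper invokes once, at the end.

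Two local slips to flag, neither of which kills the argument but both of which should be corrected. First, you assert that $\ins^\zeroone : \Gr^\zeroone(\cat{C}) \inj \Gr^{\ge0}(\cat{C})$ is a morphism of derived algebraic contexts with right adjoint $\ev^\zeroone$, and conclude the induced inclusion on $\DAlg$'s ``preserves colimits (being a left adjoint).'' The adjunction in \cref{dc--eg--gr-zeroone} is the other way round: $\ev^\zeroone$ is the left adjoint (the localization functor, hence the morphism of derived algebraic contexts per \cref{dc--eg--localization}), and $\ins^\zeroone$ is the fully faithful right adjoint. So the induced inclusion $\Gr^\zeroone\DAlg_A \inj \Gr\DAlg_A$ is a right adjoint, and preserves sifted colimits not because it is a left adjoint but because sifted colimits of derived commutative algebras are computed on underlying graded objects, where $\ins^\zeroone$ does preserve them. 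Second, the middle factor $B \mapsto B \otimes \dual{\num{D}}$ on $\Gr\DAlg_A$ is not itself a left adjoint: it is the composite of base change $\Gr\DAlg_A \to \DAlg_{\dual{\num{D}}}$ (a left adjoint) and the forgetful functor back to $\Gr\DAlg_A$ (a right adjoint). What saves you, and what you correctly identify but should be the leading reason, is that on underlying graded objects the functor is $B \mapsto B \oplus B(-1)$, which preserves limits and sifted colimits outright.
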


\begin{proof}
  It suffices to show that the composite of $G$ with the forgetful functor $\DAlg_A \to \cat{C}$ preserves limits and sifted colimits. But we know from \cref{dc--ct--square-zero} that this composite is given by the construction $(A,M) \mapsto A \oplus M$, so this is clear. It then follows from the adjoint functor theorem that $G$ admits a left adjoint.
\end{proof}

\begin{remark}
  \label{dc--ct--square-zero-adjoint-underlying}
  In the situation of \cref{dc--ct--square-zero-adjoint}, we have a natural equivalence $U \circ L \iso \id_{\DAlg_A}$, where $U : \DAlgMod_A \to \DAlg_A$ is as defined in \cref{dc--ct--square-zero}. To see this, note that $U$ admits a right adjoint $Z : \DAlg_A \to \DAlgMod_A$, given on objects by $Z(B) \iso (B,0)$. The natural transformation $\eta$ of \cref{dc--ct--square-zero} induces an natural equivalence $\id_{\DAlg_A} \isoto G \circ Z$, and passing to left adjoints gives the desired natural equivalence $U \circ L \isoto \id_{\DAlg_A}$.
\end{remark}

\begin{construction}[Cotangent complex]
  \label{dc--ct--ct}
  Let $A$ be a derived commutative algebra object of $\cat{C}$ and let $B$ be a derived commutative $A$-algebra. It follows from \cref{dc--ct--square-zero-adjoint-underlying} that there is a unique $B$-module $\cot_{B/A} \in \Mod_B$ equipped with an equivalence $(B,\cot_{B/A}) \iso L(B)$ in $\DAlgMod_A$, where $L$ is the left adjoint functor of \cref{dc--ct--square-zero-adjoint}. We refer to $\cot_{B/A}$ as the \emph{cotangent complex of $B$ over $A$}. By definition, $\cot_{B/A}$ comes equipped with an $A$-linear derivation $d : B \to \cot_{B/A}$; we refer to $d$ as the \emph{universal $A$-linear derivation of $B$} (or just the \emph{universal derivation}, if context makes the abbreviation appropriate). As the terminology suggests, for any $B$-module $M$, composition with $d$ defines an equivalence between $\Map_{\Mod_B}(\cot_{B/A},M)$ and the space of $A$-linear derivations of $B$ into $M$ (this follows from the definition of $\cot_{B/A}$ in terms of the left adjoint $L$).
\end{construction}

\begin{example}
  \label{dc--ct--free}
  Let $A$ be a derived commutative algebra object of $\cat{C}$, let $M \in \Mod_A$, and let $B := \LSym_A(M) \in \DAlg_A$. Then there is a canonical equivalence of $B$-modules $\cot_{B/A} \iso B \otimes_A M$.
\end{example}

\begin{remark}
  \label{dc--ct--square-zero-cn}
  Let $\DAlgMod(\cat{C})^\cn$ denote the full subcategory of $\DAlgMod(\cat{C})$ spanned by those pairs $(A,M)$ where $A$ and $M$ are connective. This $\infty$-category is projectively generated by the full subcategory $\DAlgMod(\cat{C})^0$ spanned by those pairs $(A,M)$ where $A \iso \LSym_{\cat{C}}(X) \iso \Sym_{\cat{C}^\heart}(X)$ for some $X \in \cat{C}^0$ and $M \iso A^{\oplus n}$ for some $n \ge 0$. Since the trivial square-zero extension functor $G$ of \cref{dc--ct--square-zero} preserves sifted colimits, the restriction $G|_{\DAlgMod(\cat{C})^\cn}$ is the left derived functor of the restriction $G|_{\DAlgMod(\cat{C})^0}$. This latter restriction is valued in the full subcategory $\CAlg(\cat{C}^\heart) \iso \DAlg(\cat{C})^\heart \subseteq \DAlg(\cat{C})$, and it is easy to see that this canonically identifies with the ordinary trivial square-zero extension construction.

  It follows that, in the case $\cat{C} = \Mod_{\num{Z}}$, the notions of trivial square-zero extension, derivation, and cotangent complex agree with the usual notions for simplicial commutative rings (as described in \cite[\S25]{lurie--sag}), under the equivalence $\DAlg_{\num{Z}}^\cn \iso \CAlg_{\num{Z}}^\Delta$ of \cref{dc--eg--scr}.
\end{remark}


\subsection{Connectivity in negative degrees}
\label{dc--cn}

In \cref{dc--df}, we extended the derived symmetric algebra monad $\LSym_{\num{Z}}$ from connective $\num{Z}$-modules to all $\num{Z}$-modules, and thereby defined the notion of possibly nonconnective derived commutative rings. In order to construct the nonconnective derived commutative rings of interest in \cref{dg,fc}, we will need to know a little bit about how this extended monad behaves on nonconnective modules. The purpose of this subsection is to record the relevant facts regarding this behavior, the main results being \cref{dc--cn--gr,dc--cn--fil}.\footnote{One can also analyze the behavior of $\LSym_{\num{Z}}$ on nonconnective modules using \cref{dc--df--excisive-coconnective}, which relates this behavior to the theory of cosimplicial commutative rings. Regarding the latter, see \cite{priddy--sym}.}

\begin{lemma}
  \label{dc--cn--cube}
  Let $\cat{C}$ be a stable $\infty$-category equipped with a t-structure $(\cat{C}_{\ge 0}, \cat{C}_{\le 0})$. Let $\chi : \cube_n \to \cat{C}$ be a cartesian $n$-cube in $\cat{C}$ for some $n \ge 0$ (see \cref{dc--df--cube}). Suppose that there is some integer $m$ such that $\chi(S)$ is $m$-connective for every nonempty $S \in \cube_n$. Then $\chi(\emptyset)$ is $(m-n)$-connective.
\end{lemma}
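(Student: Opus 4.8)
The plan is to argue by induction on $n$, the base case being immediate and the inductive step proceeding by taking fibers in one coordinate direction. For $n = 0$ the poset $\cube_0 = \{\emptyset \subseteq \{0\}\}$, so a $0$-cube is simply a morphism $\chi(\emptyset) \to \chi(\{0\})$, and being cartesian means this morphism is an equivalence; since $\{0\}$ is the only nonempty subset, $\chi(\emptyset) \iso \chi(\{0\})$ is $m$-connective, as desired ($m - 0 = m$).

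For the inductive step, fix $n \ge 1$ and a cartesian $n$-cube $\chi : \cube_n \to \cat{C}$ with $\chi(S)$ $m$-connective for every nonempty $S \subseteq \{0,\ldots,n\}$. I would single out the coordinate $n$ and form the $(n-1)$-cube $\chi' : \cube_{n-1} \to \cat{C}$ defined on $T \subseteq \{0,\ldots,n-1\}$ by $\chi'(T) := \fib\bigl(\chi(T) \to \chi(T \cup \{n\})\bigr)$, the map being induced by the inclusion $T \subseteq T \cup \{n\}$. Two things must be checked. First, for $T$ nonempty both $\chi(T)$ and $\chi(T \cup \{n\})$ are values of $\chi$ at nonempty subsets, hence $m$-connective; the fiber of a map between $m$-connective objects is $(m-1)$-connective (from the long exact sequence of homotopy groups), so $\chi'(T)$ is $(m-1)$-connective. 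Second, $\chi'$ is again cartesian (this is the point discussed below). Granting these, the inductive hypothesis applied to $\chi'$ shows $\chi'(\emptyset)$ is $\bigl((m-1)-(n-1)\bigr) = (m-n)$-connective. Finally $\chi'(\emptyset)$ sits in the fiber sequence $\chi'(\emptyset) \to \chi(\emptyset) \to \chi(\{n\})$, in which $\chi'(\emptyset)$ is $(m-n)$-connective and $\chi(\{n\})$ is $m$-connective, hence $(m-n)$-connective since $n \ge 0$; as the subcategory of $(m-n)$-connective objects is closed under extensions, $\chi(\emptyset)$ is $(m-n)$-connective, completing the induction.

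The one point requiring care — and hence the main obstacle — is that cartesianness of $\chi$ descends to cartesianness of the fiber cube $\chi'$. This is the standard ``reduction of dimension'' for cubical diagrams: writing $\chi$ as a morphism of $(n-1)$-cubes $\chi_0 \to \chi_1$ (split according to membership of the coordinate $n$, so $\chi_0(T) = \chi(T)$ and $\chi_1(T) = \chi(T \cup \{n\})$), one has $\chi' = \fib(\chi_0 \to \chi_1)$ formed pointwise, and since the fiber functor $\Fun(\Delta^1,\cat{C}) \to \cat{C}$ preserves limits, taking these fibers commutes with forming the punctured limit of the remaining $(n-1)$-cube; thus the comparison map $\chi(\emptyset) \to \lim_{S \ne \emptyset}\chi(S)$ is an equivalence if and only if the analogous comparison map for $\chi'$ is. (In the stable setting, \cref{dc--df--cube-stable}, this can equivalently be phrased via the total fiber, which is computed by iterated fibers independently of the chosen order; the case $n = 1$ is the familiar statement that a square is a pullback exactly when the induced map on vertical fibers is an equivalence.) I would either invoke this from a reference on cubical homotopy theory or spell out the Fubini-for-limits argument; in any case it is routine, so the real content is the connectivity bookkeeping of the induction.
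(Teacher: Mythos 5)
Your proof is correct and takes essentially the same approach as the paper: both induct on $n$ by splitting the cube along one coordinate into a map of $(n-1)$-cubes and observing that taking fibers (or, in the paper's version, cofibers — the two are interchangeable in the stable setting, and the paper cites \cite[Lemma 1.2.4.15]{lurie--algebra} for the reduction) yields a cartesian $(n-1)$-cube whose connectivity is controlled, with the final step recovering $\chi(\emptyset)$ as a fiber or extension. The connectivity bookkeeping is organized slightly differently (you lose one degree per fiber and recover it at the end; the paper keeps the cofiber at $m$-connectivity and loses $n-1$ only at the last step), but the argument is the same.
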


\begin{proof}
  We proceed by induction on $n$, the case $n=0$ being trivial. For $n \ge 1$, we may identify $\cube_n$ with $\cube_{n-1} \times \Delta^1$ and thereby regard $\chi$ as a natural transformation $\alpha : \chi' \to \chi''$ of $(n-1)$-cubes. By \cite[Lemma 1.2.4.15]{lurie--algebra}, $\chi$ being a cartesian $n$-cube implies that the $\cofib(\alpha)$ is a cartesian $(n-1)$-cube, and the connectivity assumption on $\chi$ implies that $\cofib(\alpha)(T)$ is $m$-connective for all nonempty $T \in \cube_{n-1}$. We deduce from the inductive hypothesis that $\cofib(T)(\emptyset)$ is $(m-n+1)$-connective. It follows that $\chi(\emptyset)$ is the fiber of a map from an $m$-connective object to an $(m-n+1)$-connective object, implying that it must be $(m-n)$-connective, as desired.
\end{proof}

\begin{lemma}
  \label{dc--cn--sym-lsym}
  For $k \le 0$ and $n \ge 0$, the canonical map $\theta^n : \Sym^n_{\num{Z}}(\num{Z}[k]) \to \LSym^n_{\num{Z}}(\num{Z}[k])$ is $(nk+1)$-connective.
\end{lemma}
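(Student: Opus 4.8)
The plan is to rephrase the statement in terms of the endofunctor $\Phi^n \ce \fib(\theta^n\colon \Sym^n_{\num{Z}} \to \LSym^n_{\num{Z}})$ of $\Mod_{\num{Z}}$. Since $\cofib(f)\iso\fib(f)[1]$, the map $\theta^n_{\num{Z}[k]}$ is $(nk+1)$-connective precisely when $\Phi^n(\num{Z}[k])$ is $nk$-connective, so it suffices to prove the latter. For $n\le 1$ the map $\theta^n$ is an equivalence and there is nothing to prove, so I would assume $n\ge 2$, and in fact prove the slight strengthening that $\Phi^n(F[k])$ is $nk$-connective for every finite free $\num{Z}$-module $F$ and every $k\le 0$, by induction on $-k$.

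The first thing I would record is two properties of $\Phi^n$. (1) $\Phi^n$ is $n$-excisive: the functors $\Sym^n_{\num{Z}}$ and $\LSym^n_{\num{Z}}$ are direct summands of $\Sym^{\le n}_{\num{Z}}$ and $\LSym^{\le n}_{\num{Z}}$ (\cref{dc--df--lsym-sum}), which are $n$-excisive by \cref{dc--df--exc-poly-sym} and by \cref{dc--df--add-poly-sym,dc--df--degree-n-excisive,dc--df--poly-cn} respectively; hence $\Phi^n$, being a finite limit of $n$-excisive functors, is $n$-excisive. (2) $\Phi^n$ carries connective objects to connective objects: for $X$ connective, $\Sym^n_{\num{Z}}(X)$ and $\LSym^n_{\num{Z}}(X)$ are connective with $\pi_0$ each naturally the ordinary symmetric power $\Sym^n_{\num{Z}}(\pi_0 X)$ (using \cref{dc--df--derived-pizero} for $\LSym^n_{\num{Z}}$), and $\theta^n$ induces the identity on these $\pi_0$'s (by \cref{dc--df--dalg-colim}), so $\fib(\theta^n_X)$ has vanishing homotopy in negative degrees. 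Property (2) disposes of the base case $k=0$ of the induction, and also of the fact $\Phi^n(0)\iso 0$ needed below.

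For the inductive step, with $k\le -1$ fixed and the statement known for all larger integers, I would run the following cube argument. Let $\chi\colon \cube_n \to \Mod_{\num{Z}}$ be the strongly cocartesian $n$-cube, i.e. the left Kan extension from $\cube_n^{\le 1}$ (\cref{dc--df--cube}), determined by $\chi(\emptyset) = F[k]$ and $\chi(\{i\}) = 0$ for each of the $n+1$ one-element subsets. A direct computation of the relevant colimits gives $\chi(S) \iso \bigl(F^{\oplus(|S|-1)}\bigr)[k+1]$ for every nonempty $S\in\cube_n$ (in particular $\chi(\{i\})\iso 0$, consistently). Because $\Phi^n$ is $n$-excisive, the $n$-cube $\Phi^n\circ\chi$ is cartesian, so $\Phi^n(F[k]) \iso \Phi^n(\chi(\emptyset))$ is the limit of $\Phi^n\circ\chi$ over the nonempty faces. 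The inductive hypothesis, applied to the finite free module $F^{\oplus(|S|-1)}$ and the integer $k+1\le 0$, shows that every value $\Phi^n(\chi(S))$ with $S\ne\emptyset$ is $n(k+1)$-connective. Then \cref{dc--cn--cube}, applied to the cartesian $n$-cube $\Phi^n\circ\chi$, yields that $\Phi^n(\chi(\emptyset)) = \Phi^n(F[k])$ is $\bigl(n(k+1)-n\bigr) = nk$-connective, which closes the induction.

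The hard part here is not conceptual but bookkeeping: one must match the size of the cube $\chi$ to the degree of excisiveness of $\Phi^n$ so that the connectivity loss of exactly $n$ in \cref{dc--cn--cube} cancels the factor $n$ produced by passing from $F[k]$ to $F[k+1]$ on the vertices — this is what makes the estimate come out sharp, and is why it is enough to know that $\Phi^n$ is $n$-excisive rather than to analyze its Goodwillie layers any further. The remaining ingredients — the computation of the vertices $\chi(S)$ of the strongly cocartesian cube, and the verification that $\Sym^n_{\num{Z}}$ and $\LSym^n_{\num{Z}}$ (hence $\Phi^n$) are $n$-excisive and preserve connectivity — are routine given the material already developed in this section.
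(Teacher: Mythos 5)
Your proof follows the same inductive cube argument the paper intends: descending induction on $-k$, pushing the strongly cocartesian $n$-cube with initial vertex $F[k]$ and trivial singleton vertices through the $n$-excisive functor $\Phi^n = \fib(\theta^n)$, and applying \cref{dc--cn--cube}. You are in fact more careful than the paper's one-sentence induction on one genuinely necessary point: the non-initial vertices of the cube are shifts of $\num{Z}^{\oplus(|S|-1)}$ rather than of $\num{Z}$ itself, so the inductive hypothesis must be strengthened to all finite free modules, which you correctly do.

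The one place you diverge is a convention-dependent off-by-one. You translate ``$\theta^n$ is $(nk+1)$-connective'' as ``$\fib(\theta^n)$ is $nk$-connective'' (cofiber convention), whereas the paper's proof sets $F_k \ce \fib(\theta^n)$ and asserts $F_k$ is $(nk+1)$-connective (fiber convention, as in Lurie). Your induction actually produces the sharper bound at no extra cost once you observe the base case is $1$-connective rather than merely $0$-connective: for $F$ finite free, $\LSym^n_{\num{Z}}(F)$ is the discrete module $\Sym^n(F)$, so $\pi_1\LSym^n_{\num{Z}}(F)=0$, and the long exact sequence gives $\pi_0\Phi^n(F) = \coker\bigl(\pi_1\Sym^n_{\num{Z}}(F)\to 0\bigr) = 0$. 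Your property (2) as written only records $\pi_{<0}\Phi^n=0$ on connective objects, which is all the weaker bound needs, but the extra degree is available on $\cat{C}^0$ and is what the paper uses; either bound suffices for the application in \cref{dc--cn--gr-preserve}.
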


\begin{proof}
  Fix $n \ge 0$ and let $F_k$ denote the fiber of $\theta^n : \Sym^n_{\num{Z}}(\num{Z}[k]) \to \LSym^n_{\num{Z}}(\num{Z}[k])$ for $k \le 0$. We wish to show that $F_k$ is $(nk+1)$-connective. The case $k=0$ is clear. We may then deduce the claim for $k \le -1$ by (descending) induction, using \cref{dc--cn--cube} and the fact that $\Sym^n_{\num{Z}}$ and $\LSym^n_{\num{Z}}$ are $n$-excisive.
\end{proof}

\begin{lemma}
  \label{dc--cn--gr-preserve}
  The graded derived symmetric algebra monad $\LSym_{\num{Z}} : \Gr(\Mod_{\num{Z}}) \to \Gr(\Mod_{\num{Z}})$ preserves the full subcategory $\Gr^{\le 0}(\Mod_{\num{Z}})_{\ge 0^+} \subseteq \Gr(\Mod_{\num{Z}})$ consisting of those graded modules $X^*$ where $X^n \iso 0$ for $n > 0$ and $X^n$ is $n$-connective for $n \le 0$.
\end{lemma}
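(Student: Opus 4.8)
The plan is to reduce the assertion to the behaviour of the ungraded derived symmetric powers $\LSym^m_{\num{Z}}$ on the shifts $\num{Z}[k]$ with $k \le 0$, which is governed by \cref{dc--cn--sym-lsym}. First I would record the relevant stability properties of the target subcategory $\cat{D} := \Gr^{\le 0}(\Mod_{\num{Z}})_{\ge 0^+}$: it is the intersection of the full subcategory of $\Gr(\Mod_{\num{Z}})$ spanned by graded modules supported in nonpositive degrees (closed under all small colimits, since these are computed degreewise) with the connective part $\Gr(\Mod_{\num{Z}})_{\ge 0^+}$ of the positive $t$-structure (also closed under small colimits); it is closed under the Day convolution product $\oast$, since the support condition is evidently preserved and preservation of connectivity is the compatibility of the positive $t$-structure (\cref{gf--t--gr-t}); and it contains the unit $\unit^\gr$. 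Given $X^* \in \cat{D}$, I would write $X^* \iso \coprod_{n \le 0} \ins^n(X^n)$ as a coproduct in $\Gr(\Mod_{\num{Z}})$; since $\LSym_{\num{Z}}$ is left adjoint to the forgetful functor $\Gr\DAlg(\Mod_{\num{Z}}) \to \Gr(\Mod_{\num{Z}})$, it carries this to a coproduct in $\Gr\DAlg(\Mod_{\num{Z}})$ whose underlying graded module is, by \cref{dc--df--dalg-colim}, the filtered colimit over finite $S \subseteq \num{Z}_{\le 0}$ of the finite $\oast$-products of the $\LSym_{\num{Z}}(\ins^n(X^n))$ for $n \in S$. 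In view of the stability properties of $\cat{D}$, it then suffices to show that $\LSym_{\num{Z}}(\ins^n(Y)) \in \cat{D}$ whenever $n \le 0$ and $Y$ is $n$-connective.

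For this, I would use \cref{dc--df--lsym-sum} and \cref{dc--eg--gf-lsym} to get $\LSym_{\num{Z}}(\ins^n(Y)) \iso \bigoplus_{j \ge 0} \LSym^j_{\num{Z}}(Y)(jn)$; the support condition is automatic since $jn \le 0$, and it remains to prove that $\LSym^j_{\num{Z}}(Y)$ is $jn$-connective for every $j \ge 0$. Since $Y[-n]$ is connective, it is a sifted colimit of finite free $\num{Z}$-modules, so $Y$ is a sifted colimit of modules of the form $\num{Z}^{\oplus r}[n]$ with $r$ finite; as $\LSym^j_{\num{Z}}$ preserves sifted colimits (\cref{dc--df--lsym-sum}) and $jn$-connectivity is preserved by colimits, it is enough to treat $Y = \num{Z}^{\oplus r}[n]$. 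Here the identity $\LSym_{\num{Z}}(M \oplus M') \iso \LSym_{\num{Z}}(M) \otimes \LSym_{\num{Z}}(M')$ (again because $\LSym_{\num{Z}}$ is a left adjoint), taken in weight $j$, yields a finite direct-sum decomposition of $\LSym^j_{\num{Z}}(\num{Z}^{\oplus r}[n])$ into tensor products $\LSym^{a_1}_{\num{Z}}(\num{Z}[n]) \otimes \cdots \otimes \LSym^{a_r}_{\num{Z}}(\num{Z}[n])$ over tuples $(a_1,\dots,a_r)$ of nonnegative integers with $\sum_i a_i = j$; since the tensor product preserves connectivity, this reduces the claim to the statement that $\LSym^m_{\num{Z}}(\num{Z}[n])$ is $mn$-connective for all $m \ge 0$ and $n \le 0$. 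That statement follows from \cref{dc--cn--sym-lsym}: the $\Einfty$-symmetric power $\Sym^m_{\num{Z}}(\num{Z}[n]) \iso (\num{Z}[mn])_{\Sigma_m}$ is $mn$-connective (homotopy orbits preserve connectivity), and the comparison map $\theta^m \colon \Sym^m_{\num{Z}}(\num{Z}[n]) \to \LSym^m_{\num{Z}}(\num{Z}[n])$ has $mn$-connective fibre, whence $\LSym^m_{\num{Z}}(\num{Z}[n])$ is $mn$-connective as well.

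I expect the substantive step to be the connectivity estimate for $\LSym^j_{\num{Z}}$ on $n$-connective modules with $n \le 0$; once the sifted-colimit resolution and the direct-sum decomposition are set up, this is precisely \cref{dc--cn--sym-lsym}, and the remaining work — the closure properties of $\cat{D}$ and the bookkeeping with the possibly infinite coproduct decomposition of $X^*$ — is routine.
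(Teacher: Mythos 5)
Your proof is correct and follows essentially the same route as the paper: reduce via closure of $\Gr^{\le 0}(\Mod_{\num{Z}})_{\ge 0^+}$ under colimits and $\oast$ (using that $\LSym_{\num{Z}}$ preserves sifted colimits and carries coproducts to tensor products) to the case of free generators $\num{Z}[k](k)$, then invoke \cref{dc--cn--sym-lsym}. Your reduction is spelled out in more elementary steps than the paper's one-line appeal to compact projective generation of the subcategory, and you understate the connectivity of $\fib(\theta^m)$ by one ($(mn+1)$-connective rather than $mn$-connective), but neither affects the validity of the argument.
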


\begin{proof}
  The subcategory $\Gr^{\le 0}(\Mod_{\num{Z}})_{\ge 0^+}$ is closed under colimits and tensor products and has compact projective generators given by finite coproducts of the objects $\num{Z}[k](k)$ for $k \le 0$. Since $\LSym_{\num{Z}}$ preserves sifted colimits and sends coproducts to tensor products, it therefore suffices to check that $\LSym(\num{Z}[k](k))$ lies in $\Gr^{\le 0}(\Mod_{\num{Z}})_{\ge 0^+}$ for $k \le 0$.

  By virtue of \cref{dc--df--lsym-sum,dc--eg--gf-lsym}, this amounts to showing that $\LSym_{\num{Z}}^n(\num{Z}[k]) \in \Mod_{\num{Z}}$ is $nk$-connective for each $k \le 0$ and $n \ge 0$. This is true when $\LSym_{\num{Z}}^n$ is replaced by $\Sym_{\num{Z}}^n$, and it then follows from \cref{dc--cn--sym-lsym} that it is true for $\LSym_{\num{Z}}^n$ as well.
\end{proof}

\begin{notation}
  \label{dc--cn--gr-ntn}
  Let
  \[
    [*] : \Gr(\Mod_{\num{Z}}^\heart) \fromto \Gr(\Mod_{\num{Z}})^\heart_+ : [-*]
  \]
  denote the equivalence discussed in \cref{gf--t--gr-heart} (so the functor $[*]$ sends $\{X^n\}_{n \in \num{Z}} \mapsto \{X^n[n]\}_{n \in \num{Z}}$). The fully faithful embedding $\iota : \Gr^{\le 0}(\Mod_{\num{Z}}^\heart) \to \Gr^{\le 0}(\Mod_{\num{Z}})_{\ge 0^+}$ given by the composition
  \[
    \Gr^{\le 0}(\Mod_{\num{Z}}^\heart) \lblto{[*]} \Gr^{\le 0}(\Mod_{\num{Z}})^\heart_+ \inj \Gr^{\le 0}(\Mod_{\num{Z}})_{\ge 0^+}
  \]
  admits a left adjoint $\tau : \Gr^{\le 0}(\Mod_{\num{Z}})_{\ge 0^+} \to \Gr(\Mod_{\num{Z}}^\heart)$, given by the composition
  \[
    \Gr^{\le 0}(\Mod_{\num{Z}})_{\ge 0^+} \lblto{\tau_{\le 0^+}} \Gr(\Mod_{\num{Z}})^\heart_+ \lblto{[-*]} \Gr(\Mod_{\num{Z}}^\heart).
  \]
\end{notation}

\begin{lemma}
  \label{dc--cn--gr-truncate}
  The following diagram of $\infty$-categories canonically commutes:
  \[
    \begin{tikzcd}
      \Gr^{\le 0}(\Mod_{\num{Z}})_{\ge 0^+} \ar[r, "\LSym_{\num{Z}}"] \ar[d, "\tau"] &
      \Gr^{\le 0}(\Mod_{\num{Z}})_{\ge 0^+} \ar[d, "\tau"] \\
      \Gr^{\le0}(\Mod_{\num{Z}}^\heart) \ar[r, "\CSym_{\num{Z}}"] &
      \Gr^{\le0}(\Mod_{\num{Z}}^\heart);
    \end{tikzcd}
  \]
  here the vertical arrows are as defined in \cref{dc--cn--gr-ntn}, the functor $\LSym_{\num{Z}}$ is the restriction of the graded derived symmetric algebra monad provided by \cref{dc--cn--gr-preserve}, and $\CSym_{\num{Z}}$ denotes the ordinary graded symmetric algebra functor (i.e. the symmetric algebra functor for the Koszul symmetric monoidal structure $\Gr(\Mod_{\num{Z}}^\heart)^\koz$).
\end{lemma}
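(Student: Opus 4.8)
The plan is to recognize the square as an instance of monad localization: the functor $\tau$ exhibits $\Gr^{\le0}(\Mod_{\num{Z}}^\heart)$ as a reflective localization of $\cat{C}:=\Gr^{\le0}(\Mod_{\num{Z}})_{\ge0^+}$ (with fully faithful right adjoint $\iota$, by \cref{dc--cn--gr-ntn}), and by \cref{dc--cn--gr-preserve} the derived symmetric algebra monad $\LSym_{\num{Z}}$ on $\Gr(\Mod_{\num{Z}})$ restricts to a monad on $\cat{C}$. So I would apply \cref{dc--mo--localization} to this monad and this localization. Granting its hypothesis — that $\LSym_{\num{Z}}$ inverts the morphisms inverted by $\tau$, equivalently that the unit $X\to\iota\tau X$ induces an equivalence after $\tau\circ\LSym_{\num{Z}}$ — we obtain an induced monad $T_0=\tau\circ\LSym_{\num{Z}}\circ\iota$ on $\Gr^{\le0}(\Mod_{\num{Z}}^\heart)$ together with a canonical equivalence $\tau\circ\LSym_{\num{Z}}\simeq T_0\circ\tau$, and the lemma reduces to identifying $T_0$ with $\CSym_{\num{Z}}$.

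The crux is verifying the hypothesis of \cref{dc--mo--localization}. A morphism $f\colon X\to Y$ in $\cat{C}$ is a $\tau$-equivalence exactly when its fiber $F$ satisfies that $F^n$ is $(n+1)$-connective for all $n\le0$; using the $2$-out-of-$3$ property of $\tau$-equivalences it suffices to show $\tau\circ\LSym_{\num{Z}}$ inverts the units $\eta_X\colon X\to\iota\tau X$. I would decompose $X\simeq\coprod_{n\le0}X^n(n)$ with $X^n$ placed in degree $n$, and use that $\LSym_{\num{Z}}$ (being a left adjoint) sends this coproduct to the tensor product $\bigotimes_n\LSym_{\num{Z}}(X^n(n))$, together with \cref{dc--eg--gf-lsym} to write $\LSym_{\num{Z}}(X^n(n))\simeq\bigoplus_j\LSym^j_{\num{Z}}(X^n)(jn)$. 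Since $\tau$ is the projection onto the heart of the positive $t$-structure it is symmetric monoidal on $\cat{C}$ (intertwining $\oast$ with the Koszul $\oast$ on the heart, by \cref{gf--t--gr-heart}), so it is enough to check factor by factor that $\pi_{jn}(\LSym^j_{\num{Z}}(X^n))\to\pi_{jn}(\LSym^j_{\num{Z}}(\pi_n(X^n)[n]))$ is an isomorphism, i.e.\ that the bottom homotopy group of $\LSym^j_{\num{Z}}$ of an $n$-connective module depends only on that module's bottom homotopy group. This last fact follows from the $j$-excisiveness of $\LSym^j_{\num{Z}}$ (\cref{dc--df--degree-n-excisive} and \cref{dc--df--add-poly-sym}) via the connectivity bookkeeping of \cref{dc--cn--cube}, exactly as in the proof of \cref{dc--cn--sym-lsym}.

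To identify $T_0$ with $\CSym_{\num{Z}}$ I would run the same mechanism once more for the $\mathrm{E}_\infty$-symmetric algebra monad $\Sym_{\num{Z}}$: it too preserves $\cat{C}$ and inverts $\tau$-equivalences by the identical connectivity argument, so \cref{dc--mo--localization} yields $\tau\circ\Sym_{\num{Z}}\circ\iota$ as the symmetric algebra monad on the heart of the positive $t$-structure — which by \cref{gf--t--gr-heart} is the Koszul structure — giving $\tau\circ\Sym_{\num{Z}}\circ\iota\simeq\CSym_{\num{Z}}$ (this is the same argument as the identification $\DAlg^\heart\simeq\CAlg(\cat{C}^\heart)$ in \cref{dc--df--connective-generators}). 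The natural transformation $\theta\colon\Sym_{\num{Z}}\to\LSym_{\num{Z}}$ of \cref{dc--df--dalg-calg} then induces a map of monads $\CSym_{\num{Z}}\simeq\tau\circ\Sym_{\num{Z}}\circ\iota\to\tau\circ\LSym_{\num{Z}}\circ\iota=T_0$; since both monads preserve sifted colimits and $\Gr^{\le0}(\Mod_{\num{Z}}^\heart)$ is projectively generated by $\{\num{Z}(k)\}_{k\le0}$, it remains only to see this map is an equivalence on each $\num{Z}(k)$, which holds because on the $j$-th summand $\theta$ is the map $\Sym^j_{\num{Z}}(\num{Z}[k])\to\LSym^j_{\num{Z}}(\num{Z}[k])$, which is $(jk+1)$-connective by \cref{dc--cn--sym-lsym} and hence an isomorphism on the bottom homotopy group that $\tau$ records. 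Chaining the equivalences gives $\tau\circ\LSym_{\num{Z}}\simeq\CSym_{\num{Z}}\circ\tau$, i.e.\ the commuting square.

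The main obstacle is the second paragraph: the verification that $\LSym_{\num{Z}}$ (and $\Sym_{\num{Z}}$) invert $\tau$-equivalences. Once that connectivity estimate is in hand, the remaining steps are formal applications of \cref{dc--mo--localization} and a check on compact projective generators.
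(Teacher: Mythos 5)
Your approach is genuinely different from the paper's, and it is sound in outline but has one misattributed step. The paper proves the lemma directly: it builds a natural transformation $\CSym_{\num{Z}}(\tau(M)) \to \tau(\LSym_{\num{Z}}(M))$ using the unit map $M\to\LSym_{\num{Z}}(M)$ and the symmetric monoidality of $\tau$, then reduces to $M=\num{Z}[k](k)$ (both sides preserve sifted colimits and take direct sums to tensor products) and concludes via \cref{dc--cn--sym-lsym}; the hypothesis of \cref{dc--mo--localization} is never verified separately but extracted \emph{after the fact} in the proof of \cref{dc--cn--gr}, as a corollary of the lemma. You instead try to verify that hypothesis up front and then derive the lemma from \cref{dc--mo--localization}. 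There is no circularity in this, but it is a longer road, and the verification itself is where the weak point lies.

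The concrete gap is in the second paragraph, in the claim that the bottom homotopy group of $\LSym^j_{\num{Z}}$ on an $n$-connective module depends only on that module's bottom homotopy group, justified ``via the connectivity bookkeeping of \cref{dc--cn--cube}, exactly as in the proof of \cref{dc--cn--sym-lsym}.'' That cube argument does not deliver the estimate you need. If you set up the strongly cocartesian $j$-cube built on the truncation map $M\to\pi_n(M)[n]$ (whose fiber is $(n{+}1)$-connective), apply $\LSym^j_{\num{Z}}$ and compare with the constant cube on $\pi_n(M)[n]$, the off-corner pieces of the resulting cartesian cube are $(jn{+}2)$-connective; \cref{dc--cn--cube} then only gives that the relative fiber at the initial vertex is $(jn{+}2{-}j)$-connective, which fails to be $(jn{+}1)$-connective once $j\ge 2$. (Note that \cref{dc--cn--sym-lsym} works by a \emph{descending induction on $k$} tied to the specific objects $\num{Z}[k]$, not by comparing a general module to its truncation, so the analogy is not as tight as claimed.) The fact you want is nevertheless true, but the right justification is a shifted form of \cref{dc--df--derived-pizero}: the functors $M\mapsto\pi_{jn}\LSym^j_{\num{Z}}(M)$ and $M\mapsto\pi_{jn}\LSym^j_{\num{Z}}(\pi_n(M)[n])$ on $n$-connective $\num{Z}$-modules both preserve sifted colimits (the former because $\LSym^j$ preserves sifted colimits and is $jn$-connective on $n$-connective inputs, the latter similarly), and they agree on the compact projective generators $\num{Z}[n]^{\oplus m}$ where the truncation map is an equivalence. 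Even more economically, you could bypass the truncation claim entirely by doing what the paper does: reduce the hypothesis verification to the generators $\num{Z}[k](k)$ of $\Gr^{\le0}(\Mod_{\num{Z}})_{\ge0^+}$ at the outset (using that both $\tau\circ\LSym_{\num{Z}}$ and $\tau\circ\LSym_{\num{Z}}\circ\iota\circ\tau$ preserve sifted colimits and send direct sums to tensor products), for which it is immediate. With either fix, the rest of your argument — the identification of $T_0$ via the parallel argument for $\Sym_{\num{Z}}$ and the comparison map $\theta$ with \cref{dc--cn--sym-lsym} — goes through.
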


\begin{proof}
  We argue in a fashion similar to the one used to prove \cref{dc--cn--gr-preserve}. For $M \in \Gr^{\le 0}(\Mod_{\num{Z}})_{\ge 0^+}$, the unit map $M \to \LSym_{\num{Z}}(M)$ induces a natural map $\tau(M) \to \tau(\LSym_{\num{Z}}(M))$. Observing that $\tau$ is symmetric monoidal (for the Koszul symmetric monoidal structure on the target), the natural $\Einfty$-algebra structure on $\LSym_{\num{Z}}(M)$ induces a natural graded commutative algebra structure on $\tau(\LSym_{\num{Z}}(M))$, and hence the previous map extends to a natural map $\CSym_{\num{Z}}(\tau(M)) \to \tau(\LSym_{\num{Z}}(M))$. We wish to show that this is an equivalence. The source and target functors preserve sifted colimits and send direct sums to tensor products, reducing us to the case that $M = \num{Z}[k](k)$ for some $k \le 0$. Now, the statement holds with $\LSym_{\num{Z}}$ replaced by $\Sym_{\num{Z}}$, and we conclude by applying \cref{dc--cn--sym-lsym}.
\end{proof}
  
\begin{proposition}
  \label{dc--cn--gr}
  The functors $\iota$ and $\tau$ of \cref{dc--cn--gr-ntn} induced an equivalence between the following two $\infty$-categories:
  \begin{itemize}
  \item the full subcategory of $\Gr\DAlg_{\num{Z}}$ spanned by those graded derived commutative rings $X^*$ such that $X^n \iso 0$ for $n > 0$ and $X^n$ has homotopy concentrated in degree $n$ for $n \le 0$;
  \item the full subcategory of $\CAlg(\Gr(\Mod_{\num{Z}}^\heart)^\koz)$ spanned by those ordinary graded commutative rings $Y^*$ such that $Y^n \iso 0$ for $n > 0$.
  \end{itemize}
\end{proposition}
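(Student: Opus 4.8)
The plan is to deduce this from the monadic localization statement \cref{dc--mo--localization}. Write $\cat{C} \ce \Gr^{\le 0}(\Mod_{\num{Z}})_{\ge 0^+}$ and $\cat{C}_0 \ce \Gr^{\le 0}(\Mod_{\num{Z}}^\heart)$, and consider the localization $\tau : \cat{C} \fromto \cat{C}_0 : \iota$ of \cref{dc--cn--gr-ntn} (the left adjoint $\tau$ of loc. cit. manifestly factors through $\cat{C}_0$). By \cref{dc--cn--gr-preserve}, the graded derived symmetric algebra monad $\LSym_{\num{Z}}$ on $\Gr(\Mod_{\num{Z}})$ carries $\cat{C}$ into itself; since the endofunctors of $\Gr(\Mod_{\num{Z}})$ preserving $\cat{C}$ form a monoidal subcategory of $\End(\Gr(\Mod_{\num{Z}}))$ over which restriction to $\End(\cat{C})$ is monoidal, $\LSym_{\num{Z}}$ restricts to a monad $T$ on $\cat{C}$, and $\LMod_T(\cat{C})$ is canonically identified with the fiber product $\Gr\DAlg_{\num{Z}} \times_{\Gr(\Mod_{\num{Z}})} \cat{C}$ --- that is, the full subcategory of $\Gr\DAlg_{\num{Z}}$ spanned by those $X^*$ with $X^n \iso 0$ for $n > 0$ and $X^n$ $n$-connective for $n \le 0$.

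I would then verify the hypothesis of \cref{dc--mo--localization}: that the unit $\id_{\cat{C}} \to \iota\tau$ induces an equivalence $\tau \circ T \isoto \tau \circ T \circ \iota \circ \tau$. By \cref{dc--cn--gr-truncate} there is a natural equivalence $\tau \circ T \iso \CSym_{\num{Z}} \circ \tau$, and $\tau \circ \iota \iso \id_{\cat{C}_0}$ since $\iota$ is fully faithful; under these identifications (and the naturality of the equivalence of \cref{dc--cn--gr-truncate}) the transformation in question becomes $\CSym_{\num{Z}}$ applied to $\tau(\eta) : \tau \to \tau \circ \iota \circ \tau$, where $\eta$ is the unit, and $\tau(\eta)$ is an equivalence because $(\tau, \iota)$ is a localization. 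Thus \cref{dc--mo--localization} yields an induced monad $T_0 \ce \tau \circ T \circ \iota$ on $\cat{C}_0$ together with a localization $\tau : \LMod_T(\cat{C}) \fromto \LMod_{T_0}(\cat{C}_0) : \iota$ in which the embedding $\iota$ identifies $\LMod_{T_0}(\cat{C}_0)$ with $\LMod_T(\cat{C}) \times_{\cat{C}} \cat{C}_0$.

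Finally, I would identify both sides. On the one hand, $T_0 = \tau \circ T \circ \iota \iso \CSym_{\num{Z}} \circ \tau \circ \iota \iso \CSym_{\num{Z}}$, the ordinary graded symmetric algebra monad on $\Gr^{\le 0}(\Mod_{\num{Z}}^\heart)^\koz$ --- and the monad structure produced by \cref{dc--mo--localization} agrees with the standard one, since $\tau$ is symmetric monoidal (a fact already used in the proof of \cref{dc--cn--gr-truncate}), so that the free--forgetful adjunction on $\cat{C}_0$ obtained by restricting along $\tau$ is the free commutative algebra adjunction. Hence $\LMod_{T_0}(\cat{C}_0) \iso \CAlg(\Gr^{\le 0}(\Mod_{\num{Z}}^\heart)^\koz)$, which is exactly the second $\infty$-category in the statement (because $\Gr^{\le 0}$ is a symmetric monoidal subcategory, a commutative algebra in $\Gr(\Mod_{\num{Z}}^\heart)^\koz$ lies there precisely when its underlying object does). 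On the other hand, the essential image of $\iota : \cat{C}_0 \inj \cat{C}$ consists of the graded objects $X^*$ with $X^n \iso 0$ for $n > 0$ and $X^n$ concentrated in degree $n$ for $n \le 0$, so $\LMod_T(\cat{C}) \times_{\cat{C}} \cat{C}_0$ is the full subcategory of $\Gr\DAlg_{\num{Z}}$ spanned by the graded derived commutative rings with this underlying graded object --- precisely the first $\infty$-category in the statement. The step I expect to require the most care is the verification of the hypothesis of \cref{dc--mo--localization} and the attendant bookkeeping identifying $T_0$ with the ordinary symmetric algebra monad; the remaining identifications are routine unwinding of definitions.
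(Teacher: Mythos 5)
Your proposal is correct and follows essentially the same route as the paper's proof: both invoke \cref{dc--mo--localization} for the localization $\tau \dashv \iota$ of \cref{dc--cn--gr-ntn}, use \cref{dc--cn--gr-preserve} to restrict $\LSym_{\num{Z}}$ to $\Gr^{\le 0}(\Mod_{\num{Z}})_{\ge 0^+}$, use \cref{dc--cn--gr-truncate} to identify the induced monad on $\Gr^{\le 0}(\Mod_{\num{Z}}^\heart)$ with $\CSym_{\num{Z}}$, and then unwind definitions. You spell out more carefully the verification of the hypothesis of \cref{dc--mo--localization} and the identification of the induced monad structure with the standard one — steps the paper compresses into ``easy to see'' — but the decomposition and the key lemmas are identical.
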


\begin{proof}
  By \cref{dc--cn--gr-truncate}, we have a natural equivalence $\tau \circ {\LSym_{\num{Z}}} \circ \iota \iso \CSym_{\num{Z}}$. It follows from \cref{dc--mo--localization} that the monad structure on $\LSym_{\num{Z}}$ determines a monad structure on $\CSym_{\num{Z}}$, and it is easy to see that this agrees with the usual graded symmetric algebra monad. Thus, \cref{dc--mo--localization} also tells us that $\tau \dashv \iota$ induces a localizing adjunction on module categories over the monads $\LSym_{\num{Z}}$ and $\CSym_{\num{Z}}$. Unravelling definitions, this gives the claim.
\end{proof}

\cref{dc--cn--gr-preserve} has the following analogue in the filtered setting, which is proved in exactly the same way.

\begin{lemma}
  \label{dc--cn--fil-preserve}
  The filtered derived symmetric algebra monad $\LSym_{\num{Z}} : \Fil(\Mod_{\num{Z}}) \to \Fil(\Mod_{\num{Z}})$ preserves the full subcategory $\Fil^{\le 0}(\Mod_{\num{Z}})_{\ge 0}^\post \subseteq \Fil(\Mod_{\num{Z}})$ consisting of those filtered modules $X^\star$ where $X^n \iso 0$ for $n > 0$ and $X^n$ is $n$-connective for $n \le 0$.
\end{lemma}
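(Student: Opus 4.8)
The plan is to mirror the proof of \cref{dc--cn--gr-preserve} essentially verbatim, replacing the graded positive t-structure by the filtered Postnikov t-structure throughout. First I would observe that $\Fil^{\le 0}(\Mod_{\num{Z}})_{\ge 0}^\post$ is closed under colimits and under the Day convolution tensor product $\ostar$ in $\Fil(\Mod_{\num{Z}})$: closure under colimits is immediate, since both defining conditions (vanishing in filtration degrees $> 0$, and $n$-connectivity of the term in filtration degree $n$) are preserved by colimits in $\Fil(\Mod_{\num{Z}})$, which are computed termwise; and closure under $\ostar$ follows from the formula $(X^\star \ostar Y^\star)^n \iso \colim_{i + j \ge n} X^i \otimes Y^j$, using that a term $X^i \otimes Y^j$ with $X^i$ and $Y^j$ both nonzero has $i, j \le 0$, so that $i + j \ge n$ forces $n \le 0$ and makes $X^i \otimes Y^j$ be $(i+j)$-connective, hence $n$-connective.

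Next I would identify a set of compact projective generators for $\Fil^{\le 0}(\Mod_{\num{Z}})_{\ge 0}^\post$, namely finite coproducts of the objects $\num{Z}[k](k) = \ins^k(\num{Z}[k])$ for $k \le 0$: these lie in the subcategory, and---just as in the graded setting, and as accounted for by the general formalism of \cref{dc}---they are projective and jointly conservative, so that the subcategory is the corresponding $\PSh_\Sigma$. Since $\LSym_{\num{Z}}$ preserves sifted colimits and carries coproducts to tensor products, it then suffices to check that $\LSym_{\num{Z}}(\num{Z}[k](k))$ lies in $\Fil^{\le 0}(\Mod_{\num{Z}})_{\ge 0}^\post$ for each $k \le 0$. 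Invoking the decomposition $\LSym_{\num{Z}} \iso \bigoplus_{n \ge 0} \LSym^n_{\num{Z}}$ of \cref{dc--df--lsym-sum} and the identity $\LSym^n_{\num{Z}}(\num{Z}[k](k)) \iso \LSym^n_{\num{Z}}(\num{Z}[k])(nk)$ of \cref{dc--eg--gf-lsym}, and noting that $nk \le 0$ so that $\ins^{nk}(\LSym^n_{\num{Z}}(\num{Z}[k]))$ is automatically supported in filtration degrees $\le 0$ and is $m$-connective in filtration degree $m$ whenever $\LSym^n_{\num{Z}}(\num{Z}[k])$ is $nk$-connective, this reduces to the unfiltered assertion that $\LSym^n_{\num{Z}}(\num{Z}[k]) \in \Mod_{\num{Z}}$ is $nk$-connective for all $k \le 0$ and $n \ge 0$.

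Finally, I would dispose of that connectivity assertion exactly as in the graded case: it holds with $\Sym^n_{\num{Z}}$ in place of $\LSym^n_{\num{Z}}$ (homotopy $\Sigma_n$-orbits of the $nk$-connective object $\num{Z}[nk]$ are $nk$-connective), and then \cref{dc--cn--sym-lsym}---which provides that the comparison map $\Sym^n_{\num{Z}}(\num{Z}[k]) \to \LSym^n_{\num{Z}}(\num{Z}[k])$ is $(nk+1)$-connective---upgrades this to $\LSym^n_{\num{Z}}$. I do not expect any genuine obstacle here; the only step that warrants even a moment's thought is the explicit identification of the compact projective generators of $\Fil^{\le 0}(\Mod_{\num{Z}})_{\ge 0}^\post$ and the check that they generate it, but this is entirely parallel to the graded situation treated just above.
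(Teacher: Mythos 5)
Your proof is correct and follows precisely the route the paper intends: the paper states only that \cref{dc--cn--fil-preserve} is ``proved in exactly the same way'' as \cref{dc--cn--gr-preserve}, and your proposal is a faithful unpacking of what that means in the filtered setting --- closure of $\Fil^{\le 0}(\Mod_{\num{Z}})_{\ge 0}^\post$ under colimits and $\ostar$, reduction to the compact projective generators $\ins^k(\num{Z}[k])$ for $k \le 0$, the identity $\LSym^n_{\num{Z}}(\num{Z}[k](k)) \iso \LSym^n_{\num{Z}}(\num{Z}[k])(nk)$ from \cref{dc--eg--gf-lsym} (which the paper explicitly extends to the filtered case), and the connectivity of $\LSym^n_{\num{Z}}(\num{Z}[k])$ via \cref{dc--cn--sym-lsym}. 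In particular you have correctly accounted for the one place where the filtered case genuinely differs from the graded case, namely that $\num{Z}[k](k) = \ins^k(\num{Z}[k])$ is now supported in \emph{all} filtration degrees $\le k$ rather than only degree $k$, and you observe that the connectivity condition still holds in each such degree since it only tightens as the filtration degree decreases.
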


And from this we deduce the following analogue of \cref{dc--cn--gr} in the filtered setting.

\begin{proposition}
  \label{dc--cn--fil}
  Let $\Mod_{\num{Z}}^\ccn$ denote the full subcategory of $\Mod_{\num{Z}}$ spanned by the coconnective $\num{Z}$-modules, and let $\DAlg_{\num{Z}}^\ccn$ be defined similarly. Then the Postnikov filtration functor  $\tau_{\ge\star} : \Mod_{\num{Z}} \to \Fil(\Mod_{\num{Z}})$ (\cref{gf--t--postnikov-fil}) induces a fully faithful embedding $\tau_{\ge\star} : \DAlg_{\num{Z}}^\ccn \inj \Fil\DAlg_{\num{Z}}$.
\end{proposition}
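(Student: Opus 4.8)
The plan is to carry out the proof of \cref{dc--cn--gr} in the filtered setting, the only extra wrinkle being that one must relate coconnective modules to the subcategory $\cat{A} := \Fil^{\le 0}(\Mod_{\num{Z}})_{\ge 0}^\post \subseteq \Fil(\Mod_{\num{Z}})$. By \cref{dc--cn--fil-preserve} the monad $\LSym_{\num{Z}}$ restricts to $\cat{A}$, and since $\cat{A}$ is a full subcategory closed under $\LSym_{\num{Z}}$ (and in fact under all colimits), the induced functor $\LMod_{\LSym_{\num{Z}}}(\cat{A}) \to \Fil\DAlg_{\num{Z}}$ is fully faithful, with image those filtered derived commutative rings vanishing in positive degrees whose $n$-th term is $n$-connective. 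Next I would check that the Postnikov filtration functor exhibits $\Mod_{\num{Z}}^\ccn$ as a reflective subcategory of $\cat{A}$: for coconnective $M$ one has $\tau_{\ge n}M \iso 0$ for $n > 0$ and $\tau_{\ge n}M$ is $n$-connective, so $\tau_{\ge\star}$ lands in $\cat{A}$ and is fully faithful there by \cref{gf--t--postnikov-fully-faithful} ($\Mod_{\num{Z}}$ being right complete); and composing the adjunction $\colim \dashv \tau_{\ge\star}$ of that remark with the standard reflective localization $\tau_{\le 0} : \Mod_{\num{Z}} \fromto \Mod_{\num{Z}}^\ccn : \iota$ exhibits $\tau_{\le 0}\circ\colim : \cat{A} \to \Mod_{\num{Z}}^\ccn$ as left adjoint to $\tau_{\ge\star}$.

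I would then apply \cref{dc--mo--localization} to this localization and the monad $\LSym_{\num{Z}}$ on $\cat{A}$. The hypothesis to verify is that the localization unit induces an equivalence $\tau_{\le 0}\circ\colim\circ\LSym_{\num{Z}} \isoto \tau_{\le 0}\circ\colim\circ\LSym_{\num{Z}}\circ\tau_{\ge\star}\circ\tau_{\le 0}\circ\colim$. As $\colim$ is a morphism of derived algebraic contexts it commutes with $\LSym_{\num{Z}}$ (\cref{dc--eg--gf-morphisms,dc--df--dalg-natural}) and satisfies $\colim\circ\tau_{\ge\star} \iso \id$, so this reduces to showing that for connective $N$ the truncation $N \to \tau_{\le 0}N$ induces an equivalence $\tau_{\le 0}\LSym_{\num{Z}}(N) \isoto \tau_{\le 0}\LSym_{\num{Z}}(\tau_{\le 0}N)$ --- equivalently, that $\LSym_{\num{Z}}$ sends a map between connective objects with $1$-connective fiber to a map with $1$-connective fiber. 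This connectivity estimate, which I expect to be the one nontrivial point (playing the role that \cref{dc--cn--sym-lsym} plays in the proof of \cref{dc--cn--gr}), follows from the formula $\LSym^i_{\num{Z}}(X) \iso (X^{\otimes i})_{h\Sigma_i}$ of \cref{dc--df--lsym-sum}, since tensoring over $\num{Z}$ and taking homotopy orbits both preserve connectivity of maps. Granting it, \cref{dc--mo--localization} yields a monad $T_0$ on $\Mod_{\num{Z}}^\ccn$ and a fully faithful embedding $\LMod_{T_0}(\Mod_{\num{Z}}^\ccn) \inj \LMod_{\LSym_{\num{Z}}}(\cat{A}) \subseteq \Fil\DAlg_{\num{Z}}$ which, at the level of underlying modules, is $\tau_{\ge\star}$.

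Finally I would identify $\LMod_{T_0}(\Mod_{\num{Z}}^\ccn)$ with $\DAlg_{\num{Z}}^\ccn$. By the computation above $T_0$ is, as a functor, $\tau_{\le 0}\circ\LSym_{\num{Z}}$ on $\Mod_{\num{Z}}^\ccn$; applying \cref{dc--mo--localization} once more --- now to the reflective localization $\tau_{\le 0} : \Mod_{\num{Z}} \fromto \Mod_{\num{Z}}^\ccn : \iota$ and the monad $\LSym_{\num{Z}}$ on $\Mod_{\num{Z}}$, whose hypothesis is the same equivalence just verified --- identifies $\LMod_{T_0}(\Mod_{\num{Z}}^\ccn)$ with $\LMod_{\LSym_{\num{Z}}}(\Mod_{\num{Z}})\times_{\Mod_{\num{Z}}}\Mod_{\num{Z}}^\ccn = \DAlg_{\num{Z}}^\ccn$ over $\Mod_{\num{Z}}^\ccn$, once one checks that the two monad structures on $\tau_{\le 0}\LSym_{\num{Z}}$ so obtained agree (both being forced by the lax symmetric monoidal structure $\tau_{\le 0}$ carries as the reflector of a symmetric monoidal localization, or equivalently by the fact that the two monadic adjunctions over $\Mod_{\num{Z}}^\ccn$ coincide). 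Composing the two identifications produces the desired fully faithful embedding $\DAlg_{\num{Z}}^\ccn \inj \Fil\DAlg_{\num{Z}}$, and by construction its underlying functor $\DAlg_{\num{Z}}^\ccn \to \Fil(\Mod_{\num{Z}})$ is the Postnikov filtration functor $\tau_{\ge\star}$ composed with the forgetful functor $\DAlg_{\num{Z}}^\ccn \to \Mod_{\num{Z}}$, as claimed.
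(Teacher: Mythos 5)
Your overall plan—apply \cref{dc--mo--localization} using \cref{dc--cn--fil-preserve} and the Postnikov filtration—is the right one, but you have chosen the wrong localization, and the hypothesis of \cref{dc--mo--localization} genuinely fails for your choice.

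You localize via $\tau_{\le 0}\circ\colim : \cat{A} \fromto \Mod_{\num{Z}}^\ccn : \tau_{\ge\star}$, where $\cat{A} := \Fil^{\le 0}(\Mod_{\num{Z}})_{\ge 0}^\post$. The hypothesis of \cref{dc--mo--localization} then requires, for every $X \in \cat{A}$ with $N := \colim X$, that the truncation $N \to \tau_{\le 0}N$ induce an equivalence $\tau_{\le 0}\LSym_{\num{Z}}(N) \to \tau_{\le 0}\LSym_{\num{Z}}(\tau_{\le 0}N)$. This is \emph{false}. Take $X \in \cat{A}$ with $X^0 = \num{Z}[1]$, $X^n = \num{Z}[-1]\oplus\num{Z}[1]$ for $n \le -1$ (maps the identities and the evident inclusion), and $X^n = 0$ for $n > 0$; this lies in $\cat{A}$, with $N = \colim X = \num{Z}[-1]\oplus\num{Z}[1]$ and $\tau_{\le 0}N = \num{Z}[-1]$. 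Then
\[
  \LSym^2_{\num{Z}}(\num{Z}[-1]\oplus\num{Z}[1]) \iso \LSym^2_{\num{Z}}(\num{Z}[-1]) \oplus \bigl(\num{Z}[-1]\otimes\num{Z}[1]\bigr) \oplus \LSym^2_{\num{Z}}(\num{Z}[1]),
\]
and the cross term $\num{Z}[-1]\otimes\num{Z}[1]\iso\num{Z}$ sits in degree $0$, survives $\tau_{\le 0}$, and has no counterpart in $\LSym^2_{\num{Z}}(\num{Z}[-1])$. The same example defeats the second application of \cref{dc--mo--localization} at the end of your argument (to $\tau_{\le 0} : \Mod_{\num{Z}} \fromto \Mod_{\num{Z}}^\ccn$). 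Two subsidiary errors feed into this: the formula ``$\LSym^i_{\num{Z}}(X)\iso(X^{\otimes i})_{h\Sigma_i}$'' that you attribute to \cref{dc--df--lsym-sum} is not there and is false in general—that is the formula for $\Sym^i$, not $\LSym^i$, and the entire point of \cref{dc--df} is that the two differ off the heart; and the restriction ``for connective $N$'' is unwarranted, since $\colim X$ for $X \in \cat{A}$ need not be connective, as the example shows.

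The repair is to \emph{not} post-compose with $\tau_{\le 0}$. Since $\Mod_{\num{Z}}$ is right complete, the functor $\colim : \cat{A} \to \Mod_{\num{Z}}$ admits a fully faithful right adjoint $R$, given by $R(M)^n \iso \tau_{\ge n}M$ for $n \le 0$ and $R(M)^n \iso 0$ for $n > 0$ (and $R(M) \in \cat{A}$). For this localization the hypothesis of \cref{dc--mo--localization} holds for free: since $\colim$ is a morphism of derived algebraic contexts (\cref{dc--eg--gf-morphisms}, \cref{dc--df--dalg-natural}) one has $\colim\circ\LSym_{\num{Z}} \iso \LSym_{\num{Z}}\circ\colim$, and $\colim\circ R \iso \id$, so the required transformation is the identity on $\LSym_{\num{Z}}\circ\colim$. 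One then obtains from \cref{dc--mo--localization} (together with \cref{dc--cn--fil-preserve}) a fully faithful embedding $R : \DAlg_{\num{Z}} \inj \LMod_{\LSym_{\num{Z}}}(\cat{A}) \subseteq \Fil\DAlg_{\num{Z}}$ compatible with the forgetful functors; and for $A$ coconnective one has $\tau_{\ge n}A \iso 0$ for $n > 0$, hence $R(A) \iso \tau_{\ge\star}(A)$, so the restriction to $\DAlg_{\num{Z}}^\ccn$ is exactly $\tau_{\ge\star}$. The hard connectivity input (the filtered analogue of \cref{dc--cn--sym-lsym}) is already entirely absorbed into \cref{dc--cn--fil-preserve}; no additional estimate of the sort you were trying to supply is needed, or available.
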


\begin{proof}
  Let $\Fil^{\le0}(\Mod_{\num{Z}})$ be the full subcategory of $\Fil(\Mod_{\num{Z}})$ spanned by those filtered modules $X^\star$ such that $X^n \iso 0$ for $n > 0$, and let $\Fil^{\le 0}(\Mod_{\num{Z}})_{\ge 0}^\post$ be as in \cref{dc--cn--fil-preserve}. We then have adjunctions
  \[
    \begin{tikzcd}
      \Fil^{\le0}(\Mod_{\num{Z}})_{\ge0}^\post \ar[r, shift left=0.5ex, hook] &
      \Fil^{\le0}(\Mod_{\num{Z}}) \ar[r, "\colim", shift left=0.5ex] \ar[l, "\tau_{\ge0}^\post", shift left=0.5ex] &
      \Mod_{\num{Z}}. \ar[l, "\ins^0", shift left=0.5ex]
    \end{tikzcd}
  \]

  Let us regard $\Fil^{\le0}(\Mod_{\num{Z}})$ as a derived algebraic context in the same manner as $\Fil(\Mod_{\num{Z}})$ and $\Fil^{\ge0}(\Mod_{\num{Z}})$ (\cref{dc--eg--gf}). Then the colimit functor is a morphism of derived algebraic contexts, so the right hand adjunction induces one on $\infty$-categories of derived commutative algebras (\cref{dc--df--dalg-natural}). By \cref{dc--cn--fil-preserve}, the same is true for the left hand adjunction, in the sense that the derived symmetric algebra monad on $\Fil^{\le0}(\Mod_{\num{Z}})$ restricts to the subcategory $\Fil^{\le0}(\Mod_{\num{Z}})_{\ge0}^\post$, and thus there is an induced adjunction on $\infty$-categories of modules over the monad.

  Composing right adjoints, we deduce that we have an induced functor $\tau_{\ge0}^\post \circ \ins^0 : \DAlg_{\num{Z}} \to \Fil\DAlg_{\num{Z}}$, and since the counit map for the composite adjunction is an equivalence, this functor is fully faithful. To finish, we note that, upon restriction to the coconnective subcategory $\Mod_{\num{Z}}^\ccn$, the composite $\tau_{\ge0}^\post \circ \ins^0$ identifies with the Postnikov filtration functor $\tau_{\ge\star}$.
\end{proof}


\section{Homotopy-coherent cochain complexes}
\label{dg}

As discussed in \cref{in--dR}, one of the main goals of this paper is to formulate generalizations of the notions of cochain complex and (strictly) commutative differential graded algebra to the world of higher algebra, and to use these generalizations to characterize the derived de Rham complex by a universal property. We will accomplish this goal in this section, proving \cref{in--dR--thm}. Before beginning, let us briefly motivate the path we will take.

The data comprising an ordinary cochain complex of abelian groups may be broken into two pieces:
\begin{enumerate}
\item \label{dg--gr--mot--ab} an underlying graded abelian group;
\item \label{dg--gr--mot--d} a degree-one endomorphism of this graded abelian group that squares to zero.
\end{enumerate}
It is straightforward to generalize the first piece to the higher setting: we replace the graded abelian group with, say, a graded spectrum. It is also straightforward to contemplate a degree-one endomorphism of a graded spectrum. However, in the higher setting, it no longer makes good sense to consider the \emph{property} that such a map square to zero. Rather, we must consider the further \emph{data} of a nullhomotopy of the squared map, followed by an infinite cascade of further coherence data.

To encode all this data systematically, we adopt an alternative perspective on \cref{dg--gr--mot--d}: a differential on a graded abelian group $X^*$ is equivalent to a graded module structure over the graded ring $\num{Z}[\epsilon]$ with $\epsilon$ in grading $1$ and $\epsilon^2=0$. As discussed in \cref{bi}, from this perspective, the tensor product of cochain complexes arises from the canonical cocommutative bialgebra structure on $\num{Z}[\epsilon]$. Our strategy will be to locate an appropriate version of this graded bialgebra in the homotopical setting, and then analogously define homotopy-coherent cochain complexes as modules over it.

In fact, we already located a version of this graded bialgebra in \cref{gf--kd}, namely the object $\num{D}_-$ of \cref{gf--kd--dminus} (see \cref{gf--kd--unravel}). This was defined in the context of any stable presentable symmetric monoidal $\infty$-category $\cat{C}$, in particular the universal example $\cat{C} = \Spt$, i.e. ``over the sphere spectrum''. While this object is certainly of interest to us, it is not the end of the story. Recall that a $\num{D}_-$-module structure on a graded object $X^*$ encodes differentials of the form $X^i[-1] \to X^{i+1}$, i.e. which decrease homotopy-degree by $1$; we will refer to these as \emph{\hminus-differentials}. On the other hand, circle actions naturally give rise to (non-graded) differentials of the form $X[1] \to X$, i.e. that increase homotopy-degree by $1$; we will refer to these as \emph{\hplus-differentials}. Thus, in order to make contact with circle actions, we would like a variant bialgebra $\num{D}_+$ whose modules come equipped with \hplus-differentials. It is actually not possible to construct $\num{D}_+$ over $\num{S}$ (as a cocommutative bialgebra), but it is fairly simple to do so over $\num{Z}$, which suffices for the purposes of this paper. (There is also a simpler construction of $\num{D}_-$ over $\num{Z}$.)

Furthermore, it turns out that it is in the context of \hplus-differentials that the universal property of the derived de Rham complex is naturally formulated. (This ``coincidence'' is in some sense the main subject of this paper.) This is related to the fact that $\num{D}_+$ carries more structure than $\num{D}_-$: we will see that the dual $\dual{\num{D}_+}$ is canonically a derived commutative bialgebra, allowing us to formulate the notion of an \emph{\hplus-differential graded derived commutative algebra}, our derived analogue of strictly commutative differential graded algebras.

This section is organized as follows. In \cref{dg--pm}, we will define the basic notions of \hplus-\ and \hminus-differential graded objects, and explain how to shift between the two worlds. In \cref{dg--co}, we will define the ``homotopy-coherent cohomology'' of our homotopy-coherent cochain complexes, and use the Koszul dual perspective on \hminus-differentials from \cref{gf--kd} to understand this. Finally, in \cref{dg--dr}, we define and analyze the derived de Rham complex; this goes through in the generality of any derived algebraic context $\cat{C}$, and we explain how it recovers the existing notion of Hodge-completed derived de Rham cohomology in the case $\cat{C}=\Mod_{\num{Z}}$.


\subsection{H-plus and h-minus differentials}
\label{dg--pm}

We begin by defining the bialgebras over $\num{Z}$ encoding the notions of differential graded objects that we will consider. For this, we recall from \cref{gf--t--gr-heart} that we have identifications of $\Gr(\Mod_{\num{Z}}^\heart)$ with the hearts of the positive and negative t-structures on $\Gr(\Mod_{\num{Z}})$ (\cref{gf--t--gr-t}), determining embeddings $\iota_\pm : \Gr(\Mod_{\num{Z}}^\heart) \inj \Gr(\Mod_{\num{Z}})$, which are canonically lax symmetric monoidal with respect to the Koszul symmetric monoidal structure $\Gr(\Mod_{\num{Z}}^\heart)^\koz$ on the source.

\begin{construction}
  \label{dg--pm--d}
  \emph{We construct a pair of bicocommutative bialgebra objects $\num{D}_+$ and $\num{D}_-$ in $\Gr(\Mod_{\num{Z}})$, with underlying objects given by}
  \[
    \num{D}_\pm \iso \num{Z} \oplus \num{Z}[\pm 1](1).
  \]

  First consider the trivial square-zero algebra $\num{Z}[\epsilon] = \num{Z}[\epsilon]/(\epsilon^2)$, which we regard as a bicocommutative bialgebra in $\Gr(\Mod_{\num{Z}}^\heart)^\koz$ with the element $\epsilon$ in grading-degree $1$ and the comultiplication sending $\epsilon \mapsto \epsilon \otimes 1 - 1 \otimes \epsilon$. We then define
  \[
    \num{D}_\pm := \iota_\pm(\num{Z}[\epsilon]).
  \]
  These have underlying graded objects as written above by definition of $\iota_\pm$, and inherit the bicommutative bialgebra structure from $\num{Z}[\epsilon]$ since the lax symmetric monoidal structures on the functors $\iota_\pm$ are strictly symmetric monoidal when restricted to the full subcategory of $\Gr(\Mod_{\num{Z}}^\heart)$ spanned by those graded abelian groups $X^*$ such that $X^i$ is free for all $i \in \num{Z}$.
\end{construction}

\begin{remark}
  \label{dg--pm--dminus-agree}
  We have now constructed two cocommutative bialgebra objects in $\Gr(\Mod_{\num{Z}})$ named $\num{D}_-$: one just above in \cref{dg--pm--d}, and one earlier in \cref{gf--kd--dminus}. However, there is no ambiguity: there is a unique equivalence between the two. This follows from the fact that the earlier one also has underlying object $\num{Z} \oplus \num{Z}[-1](1)$, hence lies in the essential image of the embedding $\iota_-$, and can thereby be uniquely identified with $\num{Z}[\epsilon]$ in $\Gr(\Mod_{\num{Z}}^\heart)$ (with no possible ambiguity in the bilagebra structure).
\end{remark}

\begin{notation}
  \label{dg--pm--dplus-image}
  If $\cat{C}$ is any $\num{Z}$-linear stable presentable symmetric monoidal $\infty$-category, the structure map $\Mod_{\num{Z}} \to \cat{C}$ (which sends $\num{Z}$ to the unit object of $\cat{C}$ and preserves colimits) induces a symmetric monoidal functor $\Gr(\Mod_{\num{Z}}) \to \Gr(\cat{C})$. We will abusively denote the image of $\num{D}_\pm$ under this functor also by $\num{D}_\pm$.
\end{notation}

With the bialgebras $\num{D}_\pm$ in hand, we may formulate the desired notions of homotopy-coherent cochain complex:

\begin{definition}
  \label{dg--pm--dg}
  Let $\cat{C}$ be a $\num{Z}$-linear stable presentable symmetric monoidal $\infty$-category. We define
  \[
    \DG_+(\cat{C}) \ce \Mod_{\num{D}_+}(\Gr(\cat{C})),
    \qquad
    \DG_-(\cat{C}) \ce \Mod_{\num{D}_-}(\Gr(\cat{C})),
  \]
  which we regard as presentable symmetric monoidal $\infty$-categories using the cocommutative bialgebra structurs on $\num{D}_+$ and $\num{D}_-$ (see \cref{bi--tn--main-cor-op}). We refer to objects of $\DG_+(\cat{C})$ (resp. $\DG_-(\cat{C})$) as \emph{\hplus\ (resp. \hminus) cochain complexes in $\cat{C}$} or \emph{\hplus-differential (resp. \hminus-differential)  graded objects of $\cat{C}$}. We will sometimes use the notation $X^\bul$ to refer to an \hplus\ or \hminus\ cochain complex and then write $X^*$ for its underlying graded object.
\end{definition}

\begin{remark}
  \label{dg--pm--dg-sphere}
  The above definition of \hminus\ cochain complexes does not rely on the $\num{Z}$-linearity assumption on $\cat{C}$. That is, the same definition can be made in general using the construction of $\num{D}_-$ from \cref{gf--kd}, although in this generality we should write $\DG_- := \LMod_{\num{D}_-}(\Gr(\cat{C}))$, as $\num{D}_-$ does not have a commutative structure. With this notation, the Koszul duality result \cref{gf--kd--gf} says that we have $\cpl\Fil(\cat{C}) \iso \DG_-(\cat{C})$.
\end{remark}

The symmetric monoidal structures on $\DG_\pm(\cat{C})$ allow one to consider homotopy-coherent analogues of commutative differential graded algebras, namely commutative algebra objects in these $\infty$-categories. We now explain how, in the $\hplus$ setting, we may formulate a stronger notion, analogous to \emph{strictly} commutative differential graded algebras, using the theory of derived commutative algebras from \cref{dc}.

\begin{notation}
  \label{dg--pm--dplusdual}
  Observe that $\num{D}_+$ is dualizable as an object of $\Gr(\Mod_{\num{Z}})$. We let $\dual{\num{D}_+} \iso \num{Z} \oplus \num{Z}[-1](-1)$ denote its dual, and regard this as a bicommutative bialgebra object in $\Gr(\Mod_{\num{Z}})$ by \cref{bi--du--main-cor}.
\end{notation}

\begin{proposition}
  \label{dg--pm--dplusdual-derived}
  There is a unique derived bicommutative bialgebra structure (\cref{dc--df--dcbi}) on $\dual{\num{D}_+}$ in $\Gr(\Mod_{\num{Z}})$ promoting its bicommutative bialgebra structure.
\end{proposition}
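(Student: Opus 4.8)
The plan is to exploit the fact that $\dual{\num{D}_+} \iso \num{Z} \oplus \num{Z}[-1](-1)$ is a very small coconnective object, so that its entire (bi)algebra structure is rigidly controlled by an abelian-categorical structure through \cref{dc--cn--gr}. First I would observe that the underlying graded $\num{Z}$-module of $\dual{\num{D}_+}$ has $X^n \iso 0$ for $n > 0$ and $X^n$ concentrated in homotopy degree $n$ for $n \le 0$ (grading $0$ is $\num{Z}$ in degree $0$, grading $-1$ is $\num{Z}[-1]$ in degree $-1$); hence any derived commutative ring structure on it automatically places it in the full subcategory $\cat{A} \subseteq \Gr\DAlg_{\num{Z}}$ of \cref{dc--cn--gr}. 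By the same token, every $\oast$-tensor power $\dual{\num{D}_+}^{\oast k}$ and the graded unit $\unit$ also have underlying graded object of this form: here one uses that $\dual{\num{D}_+}$ is degreewise finite free over $\num{Z}$, so these tensor powers incur no higher $\Tor$ and remain concentrated in degree $n$ in grading $n$.

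Next I would upgrade the equivalence of \cref{dc--cn--gr} to an assertion about the underlying-$\Einfty$-algebra functor $\Theta\colon \Gr\DAlg_{\num{Z}} \to \CAlg(\Gr(\Mod_{\num{Z}}))$. Running the proof of \cref{dc--cn--gr} (via \cref{dc--cn--gr-truncate}, \cref{dc--cn--sym-lsym}, and \cref{dc--mo--localization}) also with the free $\Einfty$-algebra monad $\Sym_{\num{Z}}$ in place of $\LSym_{\num{Z}}$ yields, by the same argument, an equivalence between the full subcategory $\cat{B}' \subseteq \CAlg(\Gr(\Mod_{\num{Z}}))$ spanned by those $\Einfty$-algebras $R^*$ with $R^n \iso 0$ for $n > 0$ and $R^n$ concentrated in degree $n$ for $n \le 0$, and the category $\CAlg(\Gr(\Mod_{\num{Z}}^\heart)^\koz)_{\le 0}$ of ordinary graded commutative rings vanishing in positive gradings; one checks $\tau\circ\Sym^n_{\num{Z}}\circ\iota \iso \tau\circ\LSym^n_{\num{Z}}\circ\iota$ because the comparison map $\theta^n\colon \Sym^n_{\num{Z}}(\num{Z}[k]) \to \LSym^n_{\num{Z}}(\num{Z}[k])$ becomes an equivalence after the truncation $\tau$ in grading $nk$, thanks to \cref{dc--cn--sym-lsym}. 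The map $\theta\colon \Sym_{\num{Z}} \to \LSym_{\num{Z}}$ of monads makes the resulting triangle of equivalences commute, so $\Theta$ restricts to an equivalence $\Theta|_{\cat{A}}\colon \cat{A} \isoto \cat{B}'$; in particular $\Theta|_{\cat{A}}$ is fully faithful.

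Finally I would assemble the conclusion. The bicommutative bialgebra structure on $\dual{\num{D}_+}$ was obtained (\cref{dg--pm--d}, \cref{dg--pm--dplusdual}) by dualizing $\num{D}_+ = \iota_+(\num{Z}[\epsilon])$, so as a cocommutative coalgebra object of $\CAlg(\Gr(\Mod_{\num{Z}}))$ it is a diagram all of whose constituent objects are $\unit$ and the tensor powers $\dual{\num{D}_+}^{\oast k}$, all of which lie in $\cat{B}'$ by the first step. Since $\Theta$ preserves coproducts (\cref{dc--df--dalg-colim}), the equivalence $\Theta|_{\cat{A}}\colon \cat{A} \isoto \cat{B}'$ is compatible with the tensor powers of $\dual{\num{D}_+}$, and therefore transports this coalgebra diagram uniquely to a cocommutative coalgebra structure on $(\Theta|_{\cat{A}})^{-1}(\dual{\num{D}_+}) \in \DAlg(\Gr(\Mod_{\num{Z}}))$ — that is, a derived bicommutative bialgebra structure on $\dual{\num{D}_+}$ promoting the given one. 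Uniqueness is automatic: any such lift has underlying graded object that of $\dual{\num{D}_+}$ (and likewise for the tensor powers entering its coalgebra structure), hence lies in $\cat{A}$, on which $\Theta$ is fully faithful, so the lift is pinned down by its image under $\Theta$. The main obstacle is the second step: transporting \cref{dc--cn--gr} through the comparison map $\theta$ to the $\Einfty$-level and verifying that all the tensor powers of $\dual{\num{D}_+}$ remain inside the relevant subcategories, which rests entirely on the degreewise freeness of $\dual{\num{D}_+}$.
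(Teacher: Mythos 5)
Your proposal is correct, and its overall strategy matches the paper's terse proof: both reduce the problem to ordinary $1$-categorical algebra via the coconnectivity rigidity of \cref{dc--cn--gr}. The paper calls this step ``immediate'' and stops; you spell out the genuinely nonobvious piece, namely that one also needs an $\Einfty$-analogue of \cref{dc--cn--gr} (your subcategory $\cat{B}'$) in order to know that the \emph{given} bicommutative bialgebra structure on $\dual{\num{D}_+}$ in $\CAlg(\Gr(\Mod_{\num{Z}}))$ is pinned down by its image under $\tau$. Without this, one can lift $\dual{(\num{Z}[\epsilon])}$ along \cref{dc--cn--gr} to a derived bicommutative bialgebra, but there is no a priori reason its image under $\Theta$ should agree with the bialgebra from \cref{dg--pm--dplusdual}, nor that the lift is unique. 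So your second step is not an optional elaboration; it is doing real work the paper leaves implicit.

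Two caveats on the execution. First, in step 2 you verify the hypothesis of \cref{dc--mo--localization} (that the unit transformation induces an equivalence $\tau\circ\Sym_{\num{Z}}\isoto\tau\circ\Sym_{\num{Z}}\circ\iota\circ\tau$) only on the projective generators $\num{Z}[k](k)$, invoking \cref{dc--cn--sym-lsym}; to extend to all of $\Gr^{\le 0}(\Mod_{\num{Z}})_{\ge 0^+}$ one needs to add a word about both sides preserving sifted colimits (as in the proof of \cref{dc--cn--gr-truncate}) — this is routine but should be said, since it is exactly the kind of reduction that fails without the degree-bounded/excisive structure on $\Sym^{\le n}$. Second, for the final transport of the coalgebra diagram you should restrict to the degreewise-free (or flat) objects of $\cat{A}$ and $\cat{B}'$; neither subcategory is closed under binary coproducts in general because of $\Tor$, but the degreewise-free parts are, and your tensor powers of $\dual{\num{D}_+}$ all land there. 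You note the degreewise-freeness in step 1, but the point where it is actually used is here, and it's worth flagging.

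Finally, a smaller alternative to step 2 worth mentioning: rather than rerunning the entire $\LSym$-argument with $\Sym$ in its place, one can argue directly that the lax symmetric monoidal embedding $\iota_+\colon\Gr(\Mod_{\num{Z}}^\heart)^\koz\to\Gr(\Mod_{\num{Z}})$ of the positive heart induces a fully faithful functor on $\CAlg$ (a bar-resolution argument in the spirit of $\Map_{\CAlg(\Mod_{\num{Z}})}(HR,HS)$ being discrete for ordinary rings $R,S$); this gives $\cat{B}'\iso\CAlg(\Gr(\Mod_{\num{Z}}^\heart)^\koz)_{\le 0}$ without invoking \cref{dc--mo--localization} for $\Sym_{\num{Z}}$. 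Either route lands in the same place; yours has the advantage of paralleling the paper's $\LSym$ argument exactly, which makes the commutativity of the comparison triangle (via $\theta$) transparent.
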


\begin{proof}
  This is an immediate consequence of \cref{dc--cn--gr}.
\end{proof}

\begin{remark}
  \label{dg--pm--dplusdual-derived-addendum}
  It follows from \cref{dg--pm--dplusdual-derived} that, for any derived algebraic context $\cat{C}$, we have a canonical derived bicommutative bialgebra structure on $\dual{\num{D}_+}$ in $\Gr(\cat{C})$. Moving forward, we will regard $\dual{\num{D}_+}$ as equipped with this structure.
\end{remark}

\begin{remark}
  \label{dg--pm--dp-loop}
  Let $\dual{\num{D}}$ denote the unshifted trivial square-zero algebra $\num{Z} \oplus \num{Z}(-1)$, regarded as an object of $\Gr\DAlg_{\num{Z}}$ as in \cref{dc--ct--D}. Ignoring the coalgebra structure on $\dual{\num{D}_+}$, there is a canonical equivalence $\dual{\num{D}_+} \iso \num{Z} \times_{\dual{\num{D}}} \num{Z}$ in $\Gr\DAlg_{\num{Z}}$: the two objects clearly have equivalent underlying graded $\num{Z}$-modules, and it is easy to promote this to an equivalence of graded derived commutative rings because both objects lie in the essential image of $\iota_+$ and hence can be compared in the ordinary category of graded commutative rings.
\end{remark}

The following is the key definition for formulating the universal property of the derived de Rham complex (see \cref{dg--dr}).

\begin{definition}
  \label{dg--pm--dga}
  Let $\cat{C}$ be a derived algebraic context and let $A$ be a derived commutative algebra object of $\cat{C}$. Let $\Gr\DAlg_A$ denote the $\infty$-category of graded derived commutative $A$-algebras (\cref{dc--eg--gf}). We let $\DG_+\DAlg_A$ denote the $\infty$-category $\cMod_{\dual{\num{D}_+}}(\Gr\DAlg_A)$, and refer to objects of this $\infty$-category as \emph{\hplus-differential graded derived commutative $A$-algebras in $\cat{C}$}.
\end{definition}

\begin{remark}
  \label{dg--pm--dga-alt}
  In the context of \cref{dg--pm--dga}, the discussion of \cref{dc--df--dcbi-dual} allows us to rewrite the $\infty$-category $\DG_+\DAlg_A$ as $\DAlg(\DG_+(\Mod_A))$; that is, we may think of \hplus-differential graded derived commutative $A$-algebras as derived commutative algebra objects of the $\infty$-category of \hplus\ cochain complexes of $A$-modules.
\end{remark}

Finally, let us comment on passing between the $\hplus$ and $\hminus$ settings.

\begin{remark}
  \label{dg--pm--shift}
  Recall from \cref{gf--t--shift} that we have symmetric monoidal equivalences $[\pm 2*] : \Gr(\Mod_{\num{Z}}) \to \Gr(\Mod_{\num{Z}})$. It follows from the statement in loc. cit. regarding the hearts that these equivalences send the bicommutative bialgebra $\num{D}_\mp$ to the bicommutative bialgebra $\num{D}_\pm$. This implies that, for $\cat{C}$ a $\num{Z}$-linear presentable symmetric monoidal $\infty$-category, the symmetric monoidal equivalence $[\pm 2*] : \Gr(\cat{C}) \to \Gr(\cat{C})$ induces a symmetric monoidal equivalence $[\pm 2*] : \DG_\mp(\cat{C}) \to \DG_\pm(\cat{C})$.
\end{remark}

\begin{remark}
  \label{dg--pm--t}
  Let $\cat{C}$ be a $\num{Z}$-linear stable presentable symmetric monoidal $\infty$-category equipped with a compatible t-structure $(\cat{C}_{\ge 0},\cat{C}_{\le 0})$. Then the fact that $\num{D}_\pm$ lies in the heart of $\Gr(\Mod_{\num{Z}})_\pm$, corresponding under the identification of the heart with $\Gr(\Mod_{\num{Z}}^\heart)$ (\cref{gf--t--gr-heart}) to the ordinary graded bialgebra $\num{D} = \num{Z}[\epsilon]$, implies that the t-structures of $\Gr(\cat{C})_\pm$ induces t-structures $(\DG_\pm(\cat{C})_{\ge 0}, \DG_\pm(\cat{C})_{\le 0})$ on $\DG_\pm(\cat{C})$ with the following properties:
  \begin{enumerate}
  \item An $\hpm$\ cochain complex $X^* \in \DG_\pm(\cat{C})$ is connective if and only if its underlying graded object is connective in $\Gr(\cat{C})_\pm$, i.e. $X^n \in \cat{C}_{\ge \pm n}$ for all $n \in \num{Z}$.
  \item There is a canonical symmetric monoidal identification of $\DG_\pm(\cat{C})^\heart$ with the category of ordinary cochain complexes in $\cat{C}^\heart$ (induced by the same functors as in the identification between $\Gr(\cat{C})^\heart_\pm$ with $\Gr(\cat{C}^\heart)$, described in \cref{gf--t--gr-heart}).
  \item The equivalence $[\pm 2*] : \DG_\mp(\cat{C}) \to \DG_\pm(\cat{C})$ discussed in \cref{dg--pm--shift} is t-exact.
  \end{enumerate}
  In the $\hminus$ setting, this t-structure was discussed earlier (\cref{gf--t--mod-t}).
\end{remark}


\subsection{Cohomology}
\label{dg--co}

Given an ordinary cochain complex of abelian groups $M$, one is often interested in its \emph{cohomology groups} $\H^i(M) = \ker(d^i : M^i \to M^{i+1})/\im(d^{i-1} : M^{i-1} \to M^i)$. Our goal in this subsection is to study the analogous construction for the homotopy-coherent cochain complexes we introduced in \cref{dg--pm}.

Let us first observe that, for $M$ an ordinary cochain complex as above, we can rewrite $\H^i(M)$ as $\coker(\nu_M^i)$, where $\nu^i_M$ denotes the canonical map $\coker(d^{i-1}) \to \ker(d^i)$. Moreover, if we regard $M$ as a graded module over $\num{D} := \num{Z}[\epsilon]$, then we can reinterpret this as a graded map
\[
  \nu_M : (M \otimes_{\num{D}} \num{Z})(-1) \to \Hom_{\num{D}}(\num{Z},M).
\]
One can check that this map agrees with the norm map $\Nm_M$ defined in \cref{bi--ta--norm} (over the bialgebra $\num{D}$), so that taking its cokernel can be regarded as an analogue in ordinary algebra of the Tate construction. It is thus natural to consider the Tate construction in the homotopy-coherent setting. This will be relevant later on, when we want to understand the associated graded of the Tate construction for filtered circle actions.

For the remainder of the subsection, we let $\cat{C}$ be a $\num{Z}$-linear stable presentable symmetric monoidal $\infty$-category. Note however that it is only the statements about \hplus\ cochain complexes in this subsection that rely on the $\num{Z}$-linearity assumption; all statements and proofs concerning only \hminus\ cochain complexes go through over $\num{S}$.

\begin{remark}
  \label{dg--co--dual}
   The cocommutative bialgebras $\num{D}_\pm$ in $\Gr(\cat{C})$ satisfy the assumptions of \cref{bi--ta}, so we have a norm map and Tate construction for objects of $\LMod_{\num{D}_\pm}(\Gr(\cat{C})) \iso \DG_\pm(\cat{C})$. Indeed, $\num{D}_\pm$ is dualizable in $\Gr(\cat{C})$ and we have a canonical equivalence of $\num{D}_\pm$-modules $\dual{\num{D}_\pm} \iso \num{D}_\pm[\mp 1](-1)$, so that we can take $\omega_{\num{D}_\pm} = \unit[\pm 1](1)$ (in the notation of loc. cit.).
\end{remark}

\begin{definition}
  \label{dg--co--cohomology-groups}
   Let $X \in \DG_\pm(\cat{C})$. We let $\H^*(X) \in \Gr(\cat{C})$ denote the Tate construction $X^{\tate \num{D}_\pm}$ of \cref{bi--ta--norm}, and refer to $\H^*(X)$ as the \emph{cohomology} of $X$.
\end{definition}

\begin{remark}
  \label{dg--co--cohomology-groups-lax}
  By \cref{bi--ta--univ}, the functor $\H^*(-) : \DG_\pm(\cat{C}) \to \Gr(\cat{C})$ is canonically lax symmetric monoidal. In particular, given a commutative algebra in $\DG_\pm(\cat{C})$, its cohomology $\H^*(X)$ is canonically a commutative algebra in $\Gr(\cat{C})$.
\end{remark}

We now seek to understand this notion of cohomology. Recall that the cohomology of an ordinary cochain complex of abelian groups $M$ depends only on the object it represents in the derived category, or equivalently the associated Eilenberg-MacLane object $|M| \in \Mod_{\num{Z}}$; namely, we have $\H^i(M) \iso \pi_{-i}(|M|)$. Analogous to the homotopy type of a topological space, let us refer to $|M|$ as the \emph{cohomology type} of $M$. We will understand the cohomology of homotopy-coherent cochain complexes $X$ by observing an analogous phenomenon: that is, there is an underlying ``cohomology type'' $|X|$ that contains less information than $X$ itself but completely determines the cohomology $\H^*(X)$.

\begin{definition}
  \label{dg--co--cohomology-type}
  Let $X \in \DG_-(\cat{C})$. We let $|X|^{\ge *} \in \cpl\Fil(\cat{C})$ denote the image of $X$ under the inverse to the equivalence $\o\gr : \cpl\Fil(\cat{C}) \isoto \DG_-(\cat{C})$ of \cref{gf--kd--gf}, and we define $|X| := \colim(|X|^{\ge *}) \in \cat{C}$. We then translate these definitions to objects of $\DG_+(\cat{C})$ using the equivalence $[-2*] : \DG_+(\cat{C}) \iso \DG_-(\cat{C})$ (\cref{dg--pm--shift}): that is, for $X \in \DG_+(\cat{C})$, we define $|X|^{\ge \star} := |X[-2*]|^{\ge \star} \in \cpl\Fil(\cat{C})$ and $|X| := |X[-2*]| \in \cat{C}$. In both cases, we refer to $|X|$ as the \emph{cohomology type} of $X$ and $|X|^{\ge \star}$ as the \emph{brutal filtration} on the cohomology type of $X$.
\end{definition}

\begin{remark}
  \label{dg--co--cohomology-type-recall}
  The constructions in \cref{dg--co--cohomology-type} are a generalization of extracting from an ordinary cochain complex the object it represents in the derived category and its brutal filtration (see \cref{gf--t--beilinson-heart,gf--t--beilinson-eg}).
\end{remark}

We can now state the main result of this subsection.

\begin{proposition}
  \label{dg--co--tate}
  Let $\delta_\gr : \cat{C} \to \Gr(\cat{C})$ denote the diagonal functor. Then:
  \begin{enumerate}
  \item \label{dg--co--tate--minus} For $X \in \DG_-(\cat{C})$, there is a natural equivalence $\H^*(X) \iso \delta_\gr(|X|)$.
  \item \label{dg--co--tate--plus} For $X \in \DG_+(\cat{C})$, there is a natural equivalence $\H^*(X) \iso \delta_\gr(|X|)[2*]$, where $[2*] : \Gr(\cat{C}) \to \Gr(\cat{C})$ is as defined in \cref{gf--t--shift}.
  \end{enumerate}
\end{proposition}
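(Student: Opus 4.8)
We deduce statement (b) from statement (a). The symmetric monoidal equivalence $[-2*]\colon\DG_+(\cat{C})\isoto\DG_-(\cat{C})$ of \cref{dg--pm--shift} preserves both limits and colimits and, by \cref{dg--pm--shift} (via \cref{gf--t--shift}), carries $\num{D}_+$ to $\num{D}_-$ together with all the duality data of \cref{dg--co--dual}; in particular $\omega_{\num{D}_+}=\unit[1](1)$ is sent to $\unit[-1](1)=\omega_{\num{D}_-}$. Hence the naturality of the Tate construction under limit- and colimit-preserving symmetric monoidal functors (\cref{bi--ta--natural}) gives a natural equivalence $X^{\tate\num{D}_+}\iso\bigl((X[-2*])^{\tate\num{D}_-}\bigr)[2*]$ for $X\in\DG_+(\cat{C})$, and combining this with (a) applied to $X[-2*]\in\DG_-(\cat{C})$ and the defining equation $|X|=|X[-2*]|$ of \cref{dg--co--cohomology-type} yields (b).

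For (a) we work through the equivalence $\o\gr\colon\cpl\Fil(\cat{C})\isoto\DG_-(\cat{C})$ of \cref{gf--kd--gf} (so that $U\circ\o\gr=\gr$), under which $X\in\DG_-(\cat{C})$ corresponds to the complete filtered object $|X|^{\ge\star}$ and $|X|=\colim_\star|X|^{\ge\star}$. The plan is to compute all three terms of the norm cofiber sequence
\[
  \omega_{\num{D}_-}\otimes X_{\num{D}_-}\lblto{\Nm_X}X^{\num{D}_-}\to X^{\tate\num{D}_-}
\]
of \cref{bi--ta--norm} in terms of $|X|^{\ge\star}$, recalling that $\omega_{\num{D}_-}\iso\unit[-1](1)$ (\cref{dg--co--dual}).

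For the fixed points, recall from \cref{bi--ta--orbits-fixed} that $(-)^{\num{D}_-}$ is right adjoint to the restriction functor $\rho\colon\Gr(\cat{C})\to\DG_-(\cat{C})$ along the counit $\num{D}_-\to\unit^\gr$, which equips a graded object with the trivial (zero-differential) $\num{D}_-$-structure. Since the split filtration $\spl(M^*)$ has associated graded $M^*$ with vanishing differentials (\cref{gf--df--gf}) and completion does not change the associated graded (\cref{gf--kd--cpl-loc}), both $\o\gr^{-1}(\rho(M^*))$ and $\cpl{\spl(M^*)}$ are the complete filtered object with associated graded $M^*$ and trivial $\num{D}_-$-structure; hence $\rho\iso\o\gr\circ\cpl{(-)}\circ\spl$. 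Passing to right adjoints---using $\spl\dashv\und$ and the fact that $\cpl{(-)}$ is left adjoint to the inclusion $\cpl\Fil(\cat{C})\inj\Fil(\cat{C})$---gives $(-)^{\num{D}_-}\iso\und\circ\o\gr^{-1}$, i.e. $X^{\num{D}_-}\iso\{|X|^{\ge m}\}_{m\in\num{Z}}$. For the orbits, using the bar construction computing $X_{\num{D}_-}=X\otimes_{\num{D}_-}\unit^\gr$ (dually to \cref{bi--ta--fixed-formula}) together with the explicit underlying graded object $\num{D}_-\iso\unit\oplus\unit[-1](1)$, one finds that $\ev^m(X_{\num{D}_-})$ is the realization of the brutally $(\le m)$-truncated homotopy-coherent complex underlying $X$, which by the brutal-filtration description of cohomology types (\cref{gf--t--brutal}, \cref{dg--co--cohomology-type}) is $\cofib\bigl(|X|^{\ge m+1}\to|X|\bigr)$; consequently $\ev^n(\omega_{\num{D}_-}\otimes X_{\num{D}_-})\iso\cofib\bigl(|X|^{\ge n}\to|X|\bigr)[-1]$.

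It remains to feed these into the norm sequence. Its value in degree $n$ reads $\cofib(|X|^{\ge n}\to|X|)[-1]\to|X|^{\ge n}\to\ev^n(X^{\tate\num{D}_-})$, and the hardest step---the main obstacle---is to identify $\ev^n(\Nm_X)$ with the connecting map $\partial[-1]$ of the brutal fiber sequence $|X|^{\ge n}\to|X|\to\cofib(|X|^{\ge n}\to|X|)$. The norm of \cref{bi--ta--norm} is assembled from the $\omega_{\num{D}_-}$-twisted self-duality $\dual{\num{D}_-}\iso\num{D}_-\otimes\omega_{\num{D}_-}^{-1}$, and one must unwind this over the explicit bialgebra $\num{D}_-$ to recognize it as precisely this boundary map---the homotopy-coherent refinement of the classical identification, recalled at the beginning of this subsection, of the norm for $\num{Z}[\epsilon]$ with $\coker(d^{n-1})\to\ker(d^n)$. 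Granting this, $\ev^n(X^{\tate\num{D}_-})\iso\cofib(\partial[-1])\iso|X|$; since every identification above, and $\Nm_X$ itself, is natural in $X$ and compatible with the degree $n$, these equivalences assemble to the desired $X^{\tate\num{D}_-}\iso\delta_\gr(|X|)$.
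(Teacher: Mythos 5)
Your proposal follows the same overall route as the paper: deduce (b) from (a) via the shift equivalence and \cref{bi--ta--natural}, then prove (a) by identifying the orbits, fixed points, and norm map in terms of the brutal filtration $|X|^{\ge\star}$. The reduction of (b) to (a) is correct and matches the paper. For (a), the paper isolates the identifications $X^{\num{D}_-}\iso\und(|X|^{\ge\star})$ and $X_{\num{D}_-}\iso\und(|X|^{\le\star})$ in a separate lemma (\cref{dg--co--orbits-fixed}) and handles them somewhat differently than you. Your derivation of the fixed points rests on the factorization $\rho\iso\o\gr\circ\cpl{(-)}\circ\spl$, which is the right statement, but you justify it only pointwise (``both \ldots are the complete filtered object with\ldots''), which does not by itself supply a natural equivalence of functors; the paper obtains the factorization functorially from \cref{gf--kd--mod-natural} applied to the augmentation $\unit^\gr[t]\to\unit^\gr$. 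For the orbits you invoke the bar resolution but do not carry out the computation; the paper computes $X_{\num{D}_-}^i$ by a mapping-space argument, reducing to the already-established adjoint description of $X^{\num{D}_-}$ via the cofiber sequence $\ins^i_\fil(Y)\to\delta_\fil(Y)\to\coins^{i+1}_\fil(Y)$.

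The genuine gap is the one you flag yourself and then set aside with ``Granting this'': identifying $\Nm_X$ with the boundary map of the brutal fiber sequence. This is the heart of the computation. The paper also compresses this step (``Unravelling definitions shows\ldots''), but it is defensible there because both orbits and fixed points were obtained from the \emph{same} adjoint factorization $\rho\iso\o\gr\circ\cpl{(-)}\circ\spl$, so the norm map---built from the map $\eta:\unit\to\dual{\num{D}_-}$ and the twisted duality $\dual{\num{D}_-}\iso\num{D}_-\otimes\omega_{\num{D}_-}^{-1}$ of \cref{bi--ta--norm}---lines up visibly with those identifications, and the whole thing collapses to the canonical map $\fib(|X|^{\ge\star}\to\delta_\fil(|X|))\to|X|^{\ge\star}$ with cofiber $\delta_\fil(|X|)$. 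In your version, the orbits were computed by a bar-resolution sketch that is not structurally aligned with the norm-map recipe, so once you have (supposedly) computed source and target, nothing in your argument constrains what the map between them must be. You would need to either carry out the bar/cobar computations in a way that tracks the map $\eta$, or else rework the orbits to match the adjoint-factorization framework as in \cref{dg--co--orbits-fixed}, after which the norm identification does indeed become an unwinding exercise.
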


It will be useful to set some notation and isolate some steps of the proof into a lemma, as we will need to refer to them again in \cref{fc--ta}.

\begin{notation}
  \label{dg--co--other-brutal}
  Let $X \in \DG_\pm(\cat{C})$. Let $\delta_\fil : \cat{C} \to \Fil(\cat{C})$ denote the diagonal functor. We have a canonical map $|X|^{\ge\star}(-1) \to \delta_\fil(|X|)$ in $\Fil(\cat{C})$, and we let $|X|^{\le\star} \in \Fil(\cat{C})$ denote the cofiber of this map, so that $|X|^{\le i} \iso \cofib(|X|^{\ge i+1} \to |X|)$.
\end{notation}

\begin{lemma}
  \label{dg--co--orbits-fixed}
  For $X \in \DG_-(\cat{C})$, there are natural equivalences
  \[
    X^{\num{D}_-} \iso \und(|X|^{\ge \star})
    \quad\text{and}\quad
    X_{\num{D}_-} \iso \und(|X|^{\le \star}),
  \]
  where $X_{\num{D}_-}, X^{\num{D}_-} \in \Gr(\cat{C})$ are as defined in \cref{bi--ta--orbits-fixed} and $\und : \Fil(\cat{C}) \to \Gr(\cat{C})$ as in \cref{gf--df--gf}\cref{gf--df--gf--und}.
\end{lemma}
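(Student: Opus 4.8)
The idea is to identify both orbits and fixed points of a $\num{D}_-$-module with the underlying graded object of a suitable filtered object, using the Koszul duality equivalence $\o\gr : \cpl\Fil(\cat{C}) \isoto \DG_-(\cat{C})$ of \cref{gf--kd--gf}. Recall from \cref{bi--ta--orbits-fixed} that, writing $\epsilon : \num{D}_- \to \unit$ for the counit, the functors $(-)_{\num{D}_-}$ and $(-)^{\num{D}_-}$ are the left and right adjoints of the restriction functor $\rho : \Gr(\cat{C}) \to \DG_-(\cat{C})$ along $\epsilon$. So the plan is to transport this adjunction triple along the equivalence $\o\gr$: under $\o\gr$ the functor $\rho$ corresponds to some functor $\Gr(\cat{C}) \to \cpl\Fil(\cat{C})$, and then $X^{\num{D}_-}$ and $X_{\num{D}_-}$ are computed by the right and left adjoints of that functor, composed with $\und$ (since the forgetful functor $U : \DG_-(\cat{C}) \to \Gr(\cat{C})$ corresponds to $\und \circ \o\gr^{-1}$ by the commuting square in \cref{gf--kd--gf}).

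\textbf{Identifying the restriction functor.} First I would pin down what $\rho : \Gr(\cat{C}) \to \DG_-(\cat{C})$ looks like on the filtered side. The counit $\epsilon : \num{D}_- \to \unit$ of the cocommutative bialgebra is the augmentation, so $\rho$ is ``equip a graded object with the zero differential''. Tracing through \cref{gf--kd--mod} and \cref{gf--kd--und}: under $\cpl\Fil(\cat{C}) \iso \LMod_{\num{D}_-}(\Gr(\cat{C})) \iso \cLMod_{\dual{\num{D}_-}}(\Gr(\cat{C}))$, the functor $F : \Mod_{\unit^\gr[t]}(\Gr(\cat{C})) \to \cLMod_{\Bar(\unit^\gr[t])}(\Gr(\cat{C}))$ is $\unit^\gr \otimes_{\unit^\gr[t]} -$, i.e.\ (under the Rees equivalence) it is $\gr$. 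Its right adjoint $G$ is fully faithful with image the $\unit^\gr$-local objects, which on the filtered side are the complete objects. Composing with the completion localization, the restriction functor $\rho$ (the inclusion of graded objects with zero differential) corresponds under $\o\gr$ to the functor $\spl$ of \cref{gf--df--gf}\cref{gf--df--gf--spl} followed by completion: a graded object $Y^*$, viewed with zero differential, is the associated graded of the split filtered object $\spl(Y)$, and $\spl(Y)$ need not be complete, so we must complete. Concretely, the completion of $\spl(Y)$ has $n$-th term $\prod_{j \ge n} Y^j$. Call this functor $\widehat{\spl} : \Gr(\cat{C}) \to \cpl\Fil(\cat{C})$.

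\textbf{Computing the adjoints.} Now $(-)^{\num{D}_-}$, being the right adjoint of $\rho$ composed with $U \iso \und$, corresponds to $\und \circ R$ where $R : \cpl\Fil(\cat{C}) \to \Gr(\cat{C})$ is the right adjoint of $\widehat{\spl}$. The right adjoint of $\spl$ is $\und$ (by \cref{gf--df--gf}\cref{gf--df--gf--spl}), and since $\widehat{\spl}$ is $\spl$ followed by the completion localization (whose right adjoint is the inclusion $\cpl\Fil(\cat{C}) \inj \Fil(\cat{C})$), the right adjoint $R$ of $\widehat{\spl}$ is simply $\und$. Hence for $X \in \DG_-(\cat{C})$, writing $|X|^{\ge\star} = \o\gr^{-1}(X)$, we get $X^{\num{D}_-} \iso \und(|X|^{\ge\star})$, which is the first asserted equivalence. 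For the orbits: $(-)_{\num{D}_-}$ corresponds to $\und \circ L$ where $L : \cpl\Fil(\cat{C}) \to \Gr(\cat{C})$ is the left adjoint of $\widehat{\spl}$. Here I expect the main subtlety: one must compute the left adjoint of the composite $\cpl\Fil(\cat{C}) \inj \Fil(\cat{C}) \xrightarrow{\und} \Gr(\cat{C})$ has left adjoint $\spl$ — no wait, we want the left adjoint of $\widehat{\spl}$, which is the \emph{right} side. The left adjoint of a localization-then-$\spl$ is $\und$ restricted, composed with the left adjoint of the inclusion, i.e.\ $\cpl{(-)} \circ$ (left adjoint of $\und$)? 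Let me be careful: $\widehat{\spl} = \cpl{(-)} \circ \spl$, and $\spl$ is itself a \emph{left} adjoint (to $\und$), while $\cpl{(-)}$ is a left adjoint (to the inclusion). So $\widehat{\spl}$ is a composite of left adjoints, hence a left adjoint; its right adjoint is $\und \circ (\text{inclusion}) = \und$, confirming the above. But then $\widehat{\spl}$ being a left adjoint does not directly give \emph{its} left adjoint. Instead I would argue directly: the left adjoint of $\widehat{\spl}$ exists by the adjoint functor theorem, and to identify it I would use the cofiber-sequence description. The claim is $X_{\num{D}_-} \iso \und(|X|^{\le\star})$ where $|X|^{\le i} = \cofib(|X|^{\ge i+1} \to |X|)$ as in \cref{dg--co--other-brutal}. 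I would verify this by checking the adjunction: $\Map_{\Gr(\cat{C})}(\und(|X|^{\le\star}), Y) \iso \Map_{\DG_-(\cat{C})}(X, \rho(Y))$ naturally. The hard part will be making this mapping-space comparison precise — specifically verifying that the cofiber $|X|^{\le\star}$ of $|X|^{\ge\star}(-1) \to \delta_\fil(|X|)$ corepresents maps out of $X$ into objects with zero differential, which amounts to the statement that the ``décalage''-type cofiber sequence $|X|^{\ge\star}(-1) \to \delta_\fil(|X|) \to |X|^{\le\star}$ is, after applying $\und$, the bar-type resolution computing $X_{\num{D}_-} \iso X \otimes_{\num{D}_-} \unit$. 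I would unwind this using \cref{bi--ta--fixed-formula} (and its evident dual for orbits, giving a two-term expression since $\num{D}_-$ has a two-term underlying object $\unit \oplus \unit[1](-1)$): the bar resolution computing $X \otimes_{\num{D}_-}\unit$ has the form of a two-term complex $\und(|X|)(-1)[\text{shift}] \to \und(|X|)$ whose terms and connecting map match, after grading bookkeeping, those obtained by applying $\und$ to $|X|^{\ge\star}(-1) \to \delta_\fil(|X|)$. Granting the identification of the differential — which is where the compatibility of the two coalgebra structures on $\Bar$ noted in the footnote to \cref{gf--kd--bar-coend} is invoked, as flagged there for exactly this proof — taking cofibers gives $X_{\num{D}_-} \iso \und(|X|^{\le\star})$, completing the proof.
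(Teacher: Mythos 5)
Your treatment of the first equivalence, $X^{\num{D}_-} \iso \und(|X|^{\ge\star})$, is correct and takes essentially the same route as the paper: identify $\rho$ under the Koszul duality equivalence with $\cpl{(-)} \circ \spl$ (via \cref{gf--kd--mod-natural} and \cref{gf--kd--und}), then pass to right adjoints. The phrasing ``$\und \circ R$'' is a little tangled (the fixed-points functor is the right adjoint of $\rho$, not ``the right adjoint composed with $U$''), but the underlying argument is sound.

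The second equivalence is where the proposal breaks down. After sensibly abandoning the attempt to identify the left adjoint of $\widehat{\spl}$ abstractly, you propose to verify the mapping-space adjunction by exhibiting a ``two-term'' bar/Koszul-type resolution of $\unit$ over $\num{D}_-$. No such two-term resolution exists: $\num{D}_-$ is a (graded, shifted) trivial square-zero extension, and the minimal free resolution of $\unit$ over it is infinite, of the form $\cdots \to \num{D}_-[-2](2) \to \num{D}_-[-1](1) \to \num{D}_- \to \unit$; likewise the bar resolution $\num{D}_-^{\otimes\bullet}$ is a genuinely infinite simplicial object. The fact that the \emph{answer} $X_{\num{D}_-}^i \iso \cofib(|X|^{\ge i+1} \to |X|)$ is a two-term cofiber in each graded degree is a conclusion, not a starting point. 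Moreover, the invocation of the footnote to \cref{gf--kd--bar-coend} about compatibility of the two coalgebra structures on $\Bar$ is a red herring: that compatibility issue is flagged in the paper as arising only in the proof of \cref{gf--kd--mod}, and neither the paper's proof of this lemma nor any corrected version of your approach needs it.

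The paper's proof of the orbit formula is considerably cleaner and avoids any resolution computation. It runs degree by degree: for fixed $i \in \num{Z}$ and $Y \in \cat{C}$, one computes $\Map_{\cat{C}}(X_{\num{D}_-}^i, Y) \iso \Map_{\DG_-(\cat{C})}(X, \rho(\ins^i_\gr(Y)))$, and uses the identification $\rho \iso \o\gr \circ \cpl{(-)} \circ \spl$ from the first part to rewrite this as $\Map_{\Fil(\cat{C})}(|X|^{\ge\star}, \ins^i_\fil(Y))$ (noting $\cpl{\spl(\ins^i_\gr(Y))} \iso \ins^i_\fil(Y)$). One then applies the cofiber sequence $\ins^i_\fil(Y) \to \delta_\fil(Y) \to \coins^{i+1}_\fil(Y)$ in $\Fil(\cat{C})$, where $\coins^{i+1}_\fil$ is the right Kan extension functor: since $\delta_\fil$ is right adjoint to $\colim$ and $\coins^{i+1}_\fil$ corepresents evaluation at degree $i+1$, this collapses to $\fib\bigl(\Map_{\cat{C}}(|X|,Y) \to \Map_{\cat{C}}(|X|^{\ge i+1},Y)\bigr)$, giving $X_{\num{D}_-}^i \iso |X|^{\le i}$. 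I'd recommend redoing the second half along these lines; as it stands the argument has a real gap.
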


\begin{proof}
  The functors $(-)_{\num{D}_-}$ and $(-)^{\num{D}_-}$ are defined as the left and right adjoints to the restriction functor $\rho : \Gr(\cat{C}) \to \LMod_{\num{D}_-}(\Gr(\cat{C})) \iso \DG_-(\cat{C})$. It follows from \cref{gf--kd--mod-natural} that the diagram
  \[
    \begin{tikzcd}
      \Gr(\cat{C}) \ar[dr, "\rho"] \ar[d, "{\unit^\gr[t] \oast -}", swap] &
      \\
      \Mod_{\unit^\gr[t]}(\Gr(\cat{C})) \ar[r, "F"] &
      \LMod_{\num{D}_-}(\Gr(\cat{C}))
    \end{tikzcd}
  \]
  commutes, where $F$ is the symmetric monoidal functor of \cref{gf--kd--mod} (as applied in the proof of \cref{gf--kd--gf}). Under the equivalence $\Fil(\cat{C}) \iso \Mod_{\unit^\gr[t]}(\Gr(\cat{C}))$ of \cref{gf--kd--und}, the forgetful functor $\Mod_{\unit^\gr[t]}(\Gr(\cat{C})) \to \Gr(\cat{C})$ identifies with the functor $\und : \Fil(\cat{C}) \to \Gr(\cat{C})$, so the base change functor $\unit^\gr[t] \oast - : \Gr(\cat{C}) \to \Mod_{\unit^\gr[t]}(\Gr(\cat{C}))$ identifies with the left adjoint $\spl : \Gr(\cat{C}) \to \Fil(\cat{C})$. We thus obtain a commutative diagram
  \[
    \begin{tikzcd}
      \Gr(\cat{C}) \ar[drr, "\rho"] \ar[d, "\spl", swap] &
      \\
      \Fil(\cat{C}) \ar[r, "\cpl{(-)}", swap]  &
      \cpl\Fil(\cat{C}) \ar[r, "\o\gr", swap] &
      \DG_-(\cat{C}),
    \end{tikzcd}
  \]
  where the bottom row is as in \cref{gf--kd--gf}. Passing to right adjoints, we deduce that $X^{\num{D}_-} \iso \und(|X|^{\ge \star})$ for $X \in \DG_-(\cat{C})$, as desired.

  We now address $X_{\num{D}_-}$. Fix any $i \in \num{Z}$ and $Y \in \cat{C}$. We have
  \begin{align*}
    \Map_\cat{C}(X_{\num{D}_-}^i,Y)
    &\iso \Map_{\Gr(\cat{C})}(X_{\num{D}_-},\ins^i_\gr(Y)) \\
    &\iso \Map_{\DG_-(\cat{C})}(X,\rho(\ins^i_\gr(Y))) \\
    &\iso \Map_{\Fil(\cat{C})}(|X|^{\ge\star},\cpl{\spl(\ins^i_\gr(Y))}) \\
    &\iso \Map_{\Fil(\cat{C})}(|X|^{\ge\star},\ins^i_\fil(Y)),
  \end{align*}
  where the penultimate equivalence comes from the commutative diagram above, and, for the sake of clarity, we are using $\ins^i_\gr$ and $\ins^i_\fil$ rather than just $\ins^i$ to denote the respective insertion functors $\cat{C} \to \Gr(\cat{C})$ and $\cat{C} \to \Fil(\cat{C})$, which we recall are given by left Kan extension along the inclusions $\{i\} \inj \num{Z}^\ds$ and $\{i\} \inj \num{Z}^\op$. We now use the canonical cofiber sequence
  \[
    \ins^i_\fil(Y) \to \delta_\fil(Y) \to \coins^{i+1}_\fil(Y),
  \]
  where $\coins^{i+1}_\fil : \cat{C} \to \Fil(\cat{C})$ does not refer to currency but rather denotes \emph{right} Kan extension along the inclusion $\{i+1\} \inj \num{Z}^\op$, described concretely by the formula
  \[
    \coins^{i+1}_\fil(Y)^j \iso
    \begin{cases}
      Y & j \ge i+1 \\
      0 & \text{otherwise}.
    \end{cases}
  \]
  Combining this cofiber sequence with the above equivalences, we find
  \begin{align*}
    \Map_\cat{C}(X_{\num{D}_-}^i,Y)
    &\iso \fib(\Map_{\Fil(\cat{C})}(|X|^{\ge\star},\delta_\fil(Y)) \to \Map_{\Fil(\cat{C})}(|X|^{\ge\star},\coins^{i+1}_\fil(Y))) \\
    &\iso \fib(\Map_{\cat{C}}(|X|,Y) \to \Map_{\cat{C}}(|X|^{\ge i+1},Y)).
  \end{align*}
  This gives us the desired equivalence $X_{\num{D}_-}^i \iso \cofib(|X|^{\ge i+1} \to |X|) \iso |X|^{\le i}$, finishing the proof.
\end{proof}

\begin{proof}[Proof of \cref{dg--co--tate}]
  First note that \cref{dg--co--tate--plus} follows from \cref{dg--co--tate--minus}, since the Tate construction transports across the equivalence $[2*] : \DG_-(\cat{C}) \isoto \DG_+(\cat{C})$, by \cref{bi--ta--natural}. Now let us prove \cref{dg--co--tate--minus}. Let $X \in \DG_-(\cat{C})$. The Tate construction $\H^*(X) = X^{\tate \num{D}_-}$ is given by the cofiber of the norm map $\Nm_X : \omega_{\num{D}_-} \oast X_{\num{D}_-} \to X^{\num{D}_-}$. Recall from \cref{dg--co--dual} that $\omega_{\num{D}_-} \iso \unit[-1](1)$. We claim that, under the equivalences $X_{\num{D}_-} \iso \und(|X|^{\le\star})$ and $X^{\num{D}_-} \iso \und(|X|^{\ge\star})$ of \cref{dg--co--orbits-fixed}, $\Nm_X$ is given by applying $\und : \Fil(\cat{C}) \to \Gr(\cat{C})$ to the canonical map
  \[
    |X|^{\ge\star}[-1](1) \iso \fib\l(|X|^{\ge\star} \to \delta_\fil(|X|)\r) \to |X|^{\ge\star}.
  \]
  Note that the cofiber of this map is canonically equivalent to $\und(\delta_\fil(|X|)) \iso \delta_\gr(|X|)$, as desired.

  To prove the claim made above, it suffices, by \cref{bi--ta--univ}, to prove that the cohomology type functor $|-| : \DG_-(\cat{C}) \to \cat{C}$ vanishes on induced $\num{D}_-$-modules in $\Gr(\cat{C})$. In view of the equivalence $\num{D}_- \iso \omega_{\num{D_-}} \oast \dual{\num{D}_-}$, it furthermore suffices to show that $|\dual{\num{D}_-} \oast Y| \iso 0$ for any $Y \in \Gr(\cat{C})$. We have a commutative diagram
  \[
    \begin{tikzcd}
      \DG_-(\cat{C}) \ar[dr, "U", swap] &
      &
      \cpl\Fil(\cat{C}) \ar[ll, "\o\gr", swap] \ar[dl, "\gr"] \\
      &
      \Gr(\cat{C})
    \end{tikzcd}
  \]
  where $U$ denotes the forgetful functor. Passing to right adjoints, we obtain a commutative diagram
  \[
    \begin{tikzcd}
      \DG_-(\cat{C}) \ar[rr, "|-|^{\ge\star}"] &
      &
      \cpl\Fil(\cat{C})  \\
      &
      \Gr(\cat{C}), \ar[ul, "\dual{\num{D}_-} \oast -"]  \ar[ur, "\zeta", swap]
    \end{tikzcd}
  \]
  where $\zeta$ sends a graded object $Y^*$ to the filtered object
  \[
    \cdots \to Y^2 \lblto{0} Y^1 \lblto{0} Y^0 \lblto{0} Y^{-1} \lblto{0} Y^{-2} \to \cdots
  \]
  (\cref{gf--kd--zeta}). Noting that the colimit of such a diagram with zero transitition maps is zero, we conclude, as desired, that
  \[
    |\dual{\num{D}_-} \oast Y| = \colim(|\dual{\num{D}_-} \oast Y|^{\ge\star}) \iso \colim(\zeta(Y)) \iso 0. \qedhere
  \]
\end{proof}

\begin{remark}
  \label{dg--co--compare-lax}
  The equivalence $(-)^{\num{D}_-} \iso \und(|-|^{\ge\star})$ constructed in the proof of \cref{dg--co--orbits-fixed} is one of lax symmetric monoidal functors (as it was obtained by passing to right adjoints from an equivalence of symmetric monoidal functors). It follows from the uniqueness statement for the lax symmetric monoidal structure on the Tate construction (\cref{bi--ta--univ}) that the equivalences of \cref{dg--co--tate} also respect lax symmetric monoidal structures.
\end{remark}


\subsection{The derived de Rham complex}
\label{dg--dr}

We now formulate the universal property of the derived de Rham complex. Throughout this subsection, we work over a fixed derived commutative algebra $A$ object of a fixed derived algebraic context $\cat{C}$. The reader should feel free to imagine $\cat{C} = \Mod_{\num{Z}}$ to fix ideas.

\begin{notation}
  \label{dg--dr--nonneg-dga}
  We let $\DG_+^{\ge0}\DAlg_A$ denote the fiber product $\DG_+\DAlg_A \times_{\Gr\DAlg_A} \Gr^{\ge0}\DAlg_A$, i.e. the $\infty$-category of \emph{nonnegative \hplus-differential graded derived commutative $A$-algebras} (see \cref{dg--pm--dga,dc--eg--gf} for the definitions of the constituent terms of the fiber product).
\end{notation}

\begin{proposition}
  \label{dg--dr--adj}
  The composite of the forgetful functor $U : \DG_+^{\ge0}\DAlg_A \to \Gr^{\ge0}\DAlg_A$ with the evaluation functor $\ev^0 : \Gr^{\ge0}\DAlg_A \to \DAlg_A$ admits a left adjoint.
\end{proposition}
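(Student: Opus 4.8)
The plan is to produce the left adjoint via the adjoint functor theorem \cite[Corollary 5.5.2.9]{lurie--topos}. Write $V \ce \ev^0 \circ U : \DG_+^{\ge0}\DAlg_A \to \DAlg_A$ for the functor in question. It then suffices to check that $\DG_+^{\ge0}\DAlg_A$ is presentable and that $V$ is accessible and preserves small limits.

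For presentability: using the reformulation $\DG_+\DAlg_A \iso \DAlg(\DG_+(\Mod_A))$ of \cref{dg--pm--dga-alt}, the $\infty$-category $\DG_+\DAlg_A$ is a category of modules over the derived symmetric algebra monad on $\DG_+(\Mod_A) = \Mod_{\num{D}_+}(\Gr(\Mod_A))$; the latter is presentable (modules over an algebra in a presentable symmetric monoidal $\infty$-category), and the monad commutes with sifted colimits, so $\DG_+\DAlg_A$ is presentable by \cref{dc--mo--alg-presentable} (and \cite[Proposition 5.5.3.10]{lurie--topos} for the passage to the overcategory $\DAlg_A$). Likewise $\Gr\DAlg_A$ and $\Gr^{\ge0}\DAlg_A$ are presentable, since $\Gr(\cat{C})$ and $\Gr^{\ge0}(\cat{C})$ are derived algebraic contexts (\cref{dc--eg--gf}) and one invokes \cref{dc--df--dalg-limits,dc--df--dalg-relative}. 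Finally $\DG_+^{\ge0}\DAlg_A$ is the pullback $\DG_+\DAlg_A \times_{\Gr\DAlg_A} \Gr^{\ge0}\DAlg_A$ along the comodule forgetful functor and the functor induced by $\ins^{\ge0}$ (\cref{dc--eg--gf-morphisms}), both of which preserve colimits and in particular are accessible; hence the pullback is presentable.

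For accessibility and limit-preservation of $V$: let $W : \DAlg_A \to \cat{C}$ denote the underlying-object functor, forgetting first to $\Mod_A(\cat{C})$ and then to $\cat{C}$. It is conservative and preserves small limits and filtered colimits, and since it is conservative with $\DAlg_A$ cocomplete it also reflects filtered colimits. I claim the composite $W \circ V$ preserves small limits and filtered colimits. Indeed, under \cref{dg--pm--dga-alt} it factors as
\[
  \DG_+^{\ge0}\DAlg_A \to \DAlg(\DG_+(\Mod_A)) \to \Mod_{\num{D}_+}(\Gr(\Mod_A)) \to \Gr(\Mod_A) \lblto{\ev^0} \Mod_A(\cat{C}) \to \cat{C},
\]
where the first map is the pullback projection composed with the equivalence of \cref{dg--pm--dga-alt} (hence preserves limits, and filtered colimits since the two structure functors of the pullback do), the forgetful functor out of $\DAlg$ creates limits and sifted colimits (as in the proof of \cref{dc--df--dalg-colim}), and the remaining forgetful and evaluation functors preserve all limits and all colimits. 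A routine diagram chase transfers these properties to $V$: for a small diagram $p$ in $\DG_+^{\ge0}\DAlg_A$, the comparison map $V(\lim p) \to \lim(V \circ p)$ formed in the presentable $\infty$-category $\DAlg_A$ becomes an equivalence after applying $W$, as both sides then compute $\lim(W \circ V \circ p)$; conservativity of $W$ then gives that $V$ preserves limits. The same argument with filtered colimits in place of limits — using that $W$ reflects them — shows $V$ preserves filtered colimits, hence is accessible. The adjoint functor theorem now produces the desired left adjoint.

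The main obstacle is the limit-preservation of $V$. One cannot argue it directly at the level of $\Gr\DAlg_A$: the symmetric monoidal structure there is cocartesian, so $\dual{\num{D}_+}$ is not dualizable, and the comodule forgetful functor $U$ has no evident reason to commute with limits. The resolution is to route through the equivalence $\DG_+\DAlg_A \iso \DAlg(\DG_+(\Mod_A))$ of \cref{dg--pm--dga-alt}, where the relevant forgetful functor does create limits, and then descend the conclusion to $\DAlg_A$ via conservativity of the underlying-object functor. (The value of the resulting left adjoint on $B \in \DAlg_A$ is the derived de Rham complex $\LOmega^{+\bul}_{B/A}$ of \cref{in--dR--thm}.)
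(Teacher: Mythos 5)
Your overall strategy — apply the adjoint functor theorem directly to $V = \ev^0 \circ U$ after establishing presentability of $\DG_+^{\ge0}\DAlg_A$ — differs mildly from the paper's, which first notes $\ev^0$ already has the left adjoint $\ins^0$ (\cref{dc--eg--gf-morphisms}) and then applies the adjoint functor theorem to $U$ alone. Both formats work, but there is a genuine gap in the way you handle the crux, which is the limit-preservation.

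The chain you write for $W \circ V$ begins with the pullback projection $\DG_+^{\ge0}\DAlg_A \to \DG_+\DAlg_A$, and you assert this preserves limits ``since the two structure functors of the pullback do.'' But one of those two structure functors is exactly $U : \DG_+\DAlg_A \to \Gr\DAlg_A$, and your closing paragraph explicitly disclaims knowing that $U$ commutes with limits. These two statements are in tension: to know that the full subcategory $\DG_+^{\ge0}\DAlg_A \subseteq \DG_+\DAlg_A$ (the preimage of $\Gr^{\ge0}\DAlg_A$ under $U$) is closed under limits, you need precisely that $U$ preserves limits (combined with the fact that $\Gr^{\ge0}\DAlg_A$ is closed under limits in $\Gr\DAlg_A$). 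So the limit-preservation you disclaim is exactly the limit-preservation you use.

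The resolution is not the one you suggest (rerouting through $\DAlg(\DG_+(\Mod_A))$ alone does not help, since the problematic first step is unchanged), and it is also not true that dualizability is unusable here. The paper's argument is: the forgetful functors $\DG_+\DAlg_A \to \DG_+(\Mod_A)$ and $\Gr\DAlg_A \to \Gr(\Mod_A)$ are conservative and preserve limits, so the limit-preservation of $U$ reduces to the limit-preservation of the module-level forgetful functor $U' : \DG_+(\Mod_A) \to \Gr(\Mod_A)$. At this level the monoidal structure is not cocartesian, $\dual{\num{D}_+}$ \emph{is} dualizable in $\Gr(\Mod_A)$, and the dual of \cite[Corollary 4.2.3.5]{lurie--algebra} gives that $U'$ preserves limits. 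You correctly observe that one cannot argue dualizability in $\Gr\DAlg_A$, but the point is that one does not need to: the reduction to the module level via conservative forgetful functors sidesteps the issue entirely. Inserting this argument would close the gap and make your version work. (Alternatively, you could identify $\DG_+^{\ge0}\DAlg_A \iso \DAlg(\DG_+^{\ge0}(\Mod_A))$ for the appropriate subcategory $\DG_+^{\ge0}(\Mod_A) \subseteq \DG_+(\Mod_A)$, closed under limits for the same module-level reasons, and factor through that; but that identification itself requires proof, which you have not given.)
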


\begin{proof}
  Recalling from \cref{dc--eg--gf-morphisms} that $\ev^0$ admits a left adjoint, namely $\ins^0$, we need only show that $U$ admits a left adjoint. We will show that $U$ preserves limits and colimits, which suffices by the adjoint functor theorem. Since the embedding $\Gr^{\ge0}\DAlg_A \inj \Gr\DAlg_A$ preserves limits and colimits, we may instead show that the forgetful functor $\DG_+\DAlg_A \to \Gr\DAlg_A$ preserves limits and colimits. It preserves colimits because it is a forgetful functor from a comodule category. For limits, note that the forgetful functors $\DG_+\DAlg_A \to \DG_+(\Mod_A)$ and $\Gr\DAlg_A \to \Gr(\Mod_A)$ preserve limits and are conservative, so that it suffices to check that the forgetful functor $U' : \DG_+(\Mod_A) \to \Gr(\Mod_A)$ preserves limits. This then follows from (the categorical dual of) \cite[Corollary 4.2.3.5]{lurie--algebra}, as $\dual{\num{D}}_+$ is dualizable and hence tensoring with $\dual{\num{D}}_+$ preserves limits in $\Gr(\Mod_A)$.
\end{proof}

\begin{definition}
  \label{dg--dr--ntn}
  We shall denote the left adjoint functor given by \cref{dg--dr--adj} by
  \[
    \LOmega^{+\bul}_{-/A} : \DAlg_A \to \DG_+^{\ge0}\DAlg_A.
  \]
  That is, given a derived $A$-algebra $B$, its image under this functor is denoted $\LOmega^{+\bul}_{B/A}$; we refer to this object as the \emph{derived de Rham complex of $B$ over $A$}.
\end{definition}

\begin{remark}
  \label{dg--dr--graded-linearity}
  Let $B$ be a derived commutative $A$-algebra. Consider $\LOmega^{+*}_{B/A}$, the image of the derived de Rham complex $\LOmega^{+\bul}_{B/A}$ under the forgetful functor $\DG_+\DAlg_A \to \Gr\DAlg_A$. By definition of $\LOmega^{+\bul}_{B/A}$, we have a canonical map $B \to \LOmega^{+0}_{B/A}$ in $\DAlg_A$. By the adjunction $\ins^0 : \DAlg_A \fromto \Gr\DAlg_A : \ev^0$, this induces a map $B \to \LOmega^{+*}_{B/A}$ in $\Gr\DAlg_A$. This promotes $\LOmega^{+*}_{B/A}$ to an object of $\Gr\DAlg_B$, and we will generally regard it as such.
\end{remark}

We now come to a key result (\cref{dg--dr--und-gr} below): it tells us that the universal object we have defined to be the derived de Rham complex deserves this name, i.e. looks the way we expect it to.

\begin{notation}
  \label{dg--dr--ext}
  Let $B$ be a derived commutative $A$-algebra and let $M$ be a $B$-module. For $i \ge 0$, we define
  \[
    \textstyle{\bigwedge^i_B M := \LSym^i_B(M[1])[-i]}
  \]
  and refer to this as the \emph{$i$-th derived exterior power of $M$ over $B$}. We recall from \cite[Proposition 25.2.4.2]{lurie--sag} that, for $B$ and $M$ connective, this agrees with the left derived functor of the classical exterior power functor. In particular, if $B$ is an ordinary commutative ring and $M$ is a projective $B$-module, then $\bigwedge^i_B M$ recovers the classical exterior power construction.

  This extension of derived exterior powers to possibly nonconnective $B$ and $M$ could equivalently be done by performing the extension procedure employed in \cref{dc--df} directly to exterior powers, rather than defining them in terms of derived symmetric powers.
\end{notation}

\begin{theorem}
  \label{dg--dr--und-gr}
  For $B$ a derived commutative $A$-algebra, there is a canonical equivalence of graded derived commutative $B$-algebras
  \[
    \LOmega^{+*}_{B/A} \iso \LSym_B(\cot_{B/A}[1](1)),
  \]
  where $\cot_{B/A}$ denotes the cotangent complex of $B$ over $A$ (\cref{dc--ct--ct}). In particular, there are canonical equivalences of $B$-modules $\LOmega^{+i}_{B/A} \iso (\bigwedge^i_B \cot_{B/A})[i]$ for $i \ge 0$. Furthermore, under these equivalences, the first differential
  \[
    B \iso \LOmega^{+0}_{B/A} \to \LOmega^{+1}_{B/A}[-1] \iso \cot_{B/A}
  \]
  of the $\hplus$ cochain complex $\LOmega^{+\bul}_{B/A}$ is given by the universal $A$-linear derivation of $B$.
\end{theorem}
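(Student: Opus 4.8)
The plan is to compute the left adjoint $\LOmega^{+\bul}_{-/A}$ directly, first on free derived commutative $A$-algebras, then in general by descent along a bar resolution, packaging the comparison into a single natural map so that the assertion about the first differential falls out automatically.

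First I would treat $B = \LSym_A(N)$ for $N \in \Mod_A$. By \cref{dg--dr--adj} and the composition of free--forgetful adjunctions, the functor $\LOmega^{+\bul}_{-/A} \circ \LSym_A \colon \Mod_A \to \DG_+^{\ge0}\DAlg_A$ is left adjoint to the functor sending $X$ to the underlying $A$-module of $X^0$; the latter factors through $\DG_+^{\ge0}\DAlg_A \to \DG_+(\Mod_A) \to \Gr(\Mod_A) \lblto{\ev^0} \Mod_A$, so its left adjoint is $N \mapsto \LSym(\num{D}_+ \otimes \ins^0(N))$, the free \hplus-differential graded derived commutative $A$-algebra on the free $\num{D}_+$-module on $N$ placed in degree $0$. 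Applying the forgetful functor $U$, which intertwines $\LSym$ (\cref{dc--df--dcbi-comod}), and using the identification $\num{D}_+ \otimes \ins^0(N) \iso N \oplus N[1](1)$ of underlying graded modules, yields $\LOmega^{+*}_{\LSym_A(N)/A} \iso \LSym_A(N \oplus N[1](1)) \iso B \otimes_A \LSym_A(N[1](1))$. By \cref{dc--ct--free}, $\cot_{B/A} \iso B \otimes_A N$, so by base change of free algebras (\cref{dc--df--sym-base-change}) this is exactly $\LSym_B(\cot_{B/A}[1](1))$, as claimed.

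Next I would construct a natural comparison. The first differential of the \hplus\ cochain complex $\LOmega^{+\bul}_{B/A}$, restricted along the canonical map $B \to \LOmega^{+0}_{B/A}$, refines to an $A$-linear derivation of $B$ into $\LOmega^{+1}_{B/A}[-1]$: the Leibniz compatibility is built into the $\dual{\num{D}_+}$-comodule structure, which on the truncation to degrees $0$ and $1$ amounts, via \cref{dc--ct--zeroone-dalg}, precisely to the data of an algebra, a module, and a derivation. By the universal property of the cotangent complex this derivation is classified by a $B$-module map $\cot_{B/A} \to \LOmega^{+1}_{B/A}[-1]$, which extends to a natural map $\Phi_B \colon \LSym_B(\cot_{B/A}[1](1)) \to \LOmega^{+*}_{B/A}$ of graded derived commutative $B$-algebras. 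To prove $\Phi_B$ is an equivalence I would apply the simplicial bar resolution $B \iso \lvert B_\bul \rvert$ with $B_n \ce \LSym_A^{(n+1)}(B)$ free over $A$ on $N_n \ce \LSym_A^{(n)}(B)$. Since $\LOmega^{+\bul}_{-/A}$ is a left adjoint and $U$ preserves colimits (\cref{dg--dr--adj}), the free case gives $\LOmega^{+*}_{B/A} \iso \lvert B_\bul \otimes_A \LSym_A(N_\bul[1](1)) \rvert$; and since the cotangent complex is defined by a left adjoint (\cref{dc--ct--ct}), $\cot_{B/A} \iso \lvert B \otimes_{B_\bul} \cot_{B_\bul/A} \rvert \iso \lvert B \otimes_A N_\bul \rvert$, so $\LSym_B(\cot_{B/A}[1](1)) \iso \lvert B \otimes_A \LSym_A(N_\bul[1](1)) \rvert$. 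Both geometric realizations equal $B \otimes_A \lvert \LSym_A(N_\bul[1](1)) \rvert$, because $\otimes_A$ and $\LSym_A$ commute with the sifted colimit $\lvert - \rvert$ and $\lvert B_\bul \rvert \iso B$. By naturality of $\Phi$ this reduces the claim to $B = \LSym_A(N)$, where it amounts to checking that the first differential of $\LOmega^{+\bul}_{\LSym_A(N)/A} = \LSym(\num{D}_+ \otimes \ins^0(N))$ restricted to $\LSym_A(N)$ is the universal derivation; this follows from the Leibniz rule together with the fact that the free $\num{D}_+$-module $\num{D}_+ \otimes \ins^0(N)$ carries as its first differential the identity map of $N[1]$.

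Finally, the remaining assertions follow. Taking degree-$i$ parts of $\LOmega^{+*}_{B/A} \iso \LSym_B(\cot_{B/A}[1](1))$ gives $\LOmega^{+i}_{B/A} \iso \LSym^i_B(\cot_{B/A}[1]) \iso (\bigwedge^i_B \cot_{B/A})[i]$ for $i \ge 0$, by the definition of derived exterior powers (\cref{dg--dr--ext}); and because $\Phi_B$ is an equivalence, the induced equivalence $\Phi_B^1[-1] \colon \cot_{B/A} \isoto \LOmega^{+1}_{B/A}[-1]$ identifies the universal $A$-linear derivation of $B$ with the first differential of $\LOmega^{+\bul}_{B/A}$ restricted to $B$, by the very construction of $\Phi_B$. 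I expect the main obstacle to be not the bar-resolution and Fubini-type bookkeeping -- routine once set up -- but the careful handling of the comparison map $\Phi$: establishing that the first differential of an \hplus-differential graded derived commutative $A$-algebra canonically refines to an $A$-linear derivation, and pinning this derivation down on free algebras.
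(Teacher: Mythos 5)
Your approach is genuinely different from the paper's and its skeleton is sound, but there is one substantive gap in the crucial step you yourself flag as the main obstacle.

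The paper's proof never constructs an explicit comparison map: it assembles a diagram of adjoint pairs and shows the composites of \emph{right} adjoints agree, so the equivalence of the left-adjoint composites ($\LOmega^{+*}_{-/A}$ and $\LSym_{-}(\cot_{-/A}[1](1))$) falls out of uniqueness of adjoints. You instead compute the free case, build a natural map $\Phi_B$ out of the universal property of $\cot_{B/A}$, and reduce to the free case by bar resolution. Both routes are legitimate; the paper's is cleaner because it isolates the nontrivial content into lemmas about adjoints and avoids ever having to show a specific map is an equivalence, while yours is more geometric but requires a careful construction of $\Phi_B$.

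The gap is in the sentence where you claim the coaction, truncated to degrees $0$ and $1$, gives ``the data of an algebra, a module, and a derivation,'' citing \cref{dc--ct--zeroone-dalg}. That lemma only gives the equivalence $\Gr^\zeroone\DAlg_A \iso \DAlgMod_A$, i.e.\ the (algebra, module) packaging; it says nothing about square-zero extensions. To interpret the degree-$0$ component of the $\dual{\num{D}_+}$-coaction $X^0 \to X^0 \oplus X^1[-1]$ as an $A$-linear derivation, you must identify the target, as a derived commutative $A$-algebra augmented to $X^0$, with the trivial square-zero extension $X^0 \oplus X^1[-1]$ of \cref{dc--ct--square-zero}. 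That is exactly the content of \cref{dg--dr--und-gr-lem2}(\cref{dg--dr--und-gr-lem2--sqzero}), which rests on the identification $\dual{\num{D}_+} \iso A \times_{\dual{\num{D}}} A$ of \cref{dg--pm--dp-loop}, and it is a genuine statement about derived commutative (not just $\Einfty$) structure. Without it, the map $\cot_{B/A} \to \LOmega^{+1}_{B/A}[-1]$ you want to extract simply does not exist. The rest of your argument — the free-case computation via the chain of left adjoints $\LSym \circ (\num{D}_+ \otimes -) \circ \ins^0$, the use of \cref{dc--ct--free} and base change, and the bar-resolution bookkeeping — is correct and matches the content the paper also uses (its Lemma \cref{dg--dr--und-gr-lem1} is your free-case computation in disguise). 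Once you supply the square-zero identification, the final step, that the differential of $\LSym(\num{D}_+ \otimes \ins^0(N))$ on degree-$0$ generators is the universal derivation, reduces to a check on the free $\num{D}_+$-module and is fine.
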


\begin{remark}
  \label{dg--dr--ff}
  \cref{dg--dr--und-gr} in particular tells us that, for $B \in \DAlg_A$, the unit map $B \to \ev^0(\LOmega^{+\bul}_{B/A})$ is an equivalence of derived commutative $A$-algebras. This implies that the derived de Rham complex functor $\LOmega^{+\bul}_{-/A} : \DAlg_A \to \DG_+^{\ge0}\DAlg_A$ is fully faithful.\footnote{I thank Ben Antieau for suggesting this remark.}
\end{remark}

Before proving \cref{dg--dr--und-gr}, let us see how we may apply it to recover Hodge-completed derived de Rham cohomology from our derived de Rham complex. We first recall the definition of the former (going back to work of Illusie \cite{illusie--cotangent-ii}, and studied in particular by Bhatt \cite{bhatt--completed-ddr}).

\begin{recollection}
  \label{dg--dr--classical}
  Suppose that $\cat{C} = \Mod_{\num{Z}}$ and that $A$ is an ordinary commutative ring. Let $\cpl{\Fil}\CAlg_A$ denote the full subcategory of $\Fil\CAlg_A$ (the $\infty$-category of filtered $\Einfty$-$A$-algebras) spanned by the complete filtered objects. We recall the definition of the functor
  \[
    \cplge\dR\star_{-/A} : \DAlg_A^\cn \to \cpl{\Fil}\CAlg_A
  \]
  taking a connective derived commutative $A$-algebra (equivalently simplicial commutative $A$-algebra) to its \emph{Hodge-filtered Hodge-completed derived de Rham cohomology}.

  Recall that $\DAlg_A^\cn \iso \CAlg_A^\Delta$ is projectively generated by the full subcategory $\Poly_A$ spanned by the finitely generated polynomial $A$-algebras (\cref{dc--df--connective-generators}). We have a functor
  \[
    \Omega^\bul_{-/A} : \Poly_A \to \CAlg(\DG(\Mod_A^\heart))
  \]
  assigning to a finitely generated polynomial $A$-algebra $B$ the algebraic de Rham complex $\Omega^\bul_{B/A}$, regarded as an ordinary commutative differential graded $A$-algebra. Composing with the functor $|-|^{\ge\star} : \DG(\Mod_A^\heart) \to \cpl\Fil(\Mod_A)$ of \cref{gf--t--beilinson-heart} (which is lax symmetric monoidal, hence induces a functor on commutative algebra objects), we obtain a functor $F = |\Omega^\bul_{-/A}|^{\ge\star} : \Poly_A \to \cpl{\Fil}\CAlg_A$. The functor $\cplge\dR\star_{-/A} : \DAlg_A^\cn \to \cpl{\Fil}\CAlg_A$ is defined to be the left derived functor of $F$: that is, $\cplge\dR\star_{-/A}$ is the unique such functor preserving sifted colimits and restricting to $F$ on $\Poly_A$.

  \emph{Hodge-completed derived de Rham cohomology} is then defined by taking colimits: for $B \in \DAlg_A^\cn$, we set $\cpl\dR_{B/A} := \colim(\cplge\dR\star_{B/A}) \in \CAlg_A$.
\end{recollection}

In the following statement, we apply the cohomology type $|-|$ and brutal filtration $|-|^{\ge\star}$ constructions of \cref{dg--co--cohomology-type} to the derived de Rham complex.

\begin{corollary}
  \label{dg--dr--classical-compare}
  Suppose that $\cat{C} = \Mod_{\num{Z}}$ and that $A$ is an ordinary commuative ring. Then there is a canonical natural equivalence between the functors $|\LOmega^{+\bul}_{-/A}|^{\ge\star}, \cplge\dR\star_{-/A} : \DAlg_A^\cn \to \cpl{\Fil}\CAlg_A$, and hence between their colimits $|\LOmega^{+\bul}_{-/A}|, \cpl\dR_{-/A} : \DAlg_A^\cn \to \CAlg_A$.
\end{corollary}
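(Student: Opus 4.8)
The plan is to identify both functors as the sifted-colimit-preserving extension of one and the same functor defined on polynomial algebras. Recall from \cref{dg--dr--classical} that $\DAlg_A^\cn \iso \CAlg_A^\Delta$ is projectively generated by the full subcategory $\Poly_A$ of finitely generated polynomial $A$-algebras (\cref{dc--df--connective-generators}), so that restriction gives an equivalence $\Fun^\Sigma(\DAlg_A^\cn,\cpl\Fil\CAlg_A) \isoto \Fun(\Poly_A,\cpl\Fil\CAlg_A)$ (\cref{dc--df--sigma}), under which $\cplge\dR\star_{-/A}$ corresponds by definition to $F = |\Omega^\bul_{-/A}|^{\ge\star}$. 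Thus it suffices to establish: (i) the functor $|\LOmega^{+\bul}_{-/A}|^{\ge\star} : \DAlg_A^\cn \to \cpl\Fil\CAlg_A$ preserves sifted colimits; and (ii) there is a natural equivalence between its restriction to $\Poly_A$ and $F$. Granting these, (ii) extends uniquely to the claimed natural equivalence $|\LOmega^{+\bul}_{-/A}|^{\ge\star} \iso \cplge\dR\star_{-/A}$; the statement about colimits then follows by applying the symmetric monoidal functor $\colim : \cpl\Fil\CAlg_A \to \CAlg_A$ and recalling that $\cpl\dR_{-/A} \iso \colim(\cplge\dR\star_{-/A})$ (\cref{dg--dr--classical}) and $|X| \iso \colim(|X|^{\ge\star})$ (\cref{dg--co--cohomology-type}).

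For (i): the functor $\LOmega^{+\bul}_{-/A}$ is a left adjoint (\cref{dg--dr--adj,dg--dr--ntn}), so preserves all colimits; the forgetful functor into $\DG_+\DAlg_A$ and the underlying-$\Einfty$-algebra functor $\DG_+\DAlg_A \to \DG_+\CAlg_A$ preserve sifted colimits, these being computed on underlying graded objects throughout (cf. \cref{dc--df--dalg-colim}); and the functor $|-|^{\ge\star} : \DG_+\CAlg_A \to \cpl\Fil\CAlg_A$ is induced by the symmetric monoidal equivalence $|-|^{\ge\star} : \DG_+(\Mod_A) \isoto \cpl\Fil(\Mod_A)$, which by \cref{dg--co--cohomology-type} is the composite of the symmetric monoidal equivalences $[-2*] : \DG_+(\Mod_A) \isoto \DG_-(\Mod_A)$ (\cref{dg--pm--shift}) and $\o\gr^{-1} : \DG_-(\Mod_A) \isoto \cpl\Fil(\Mod_A)$ (\cref{gf--kd--gf}), hence preserves all colimits. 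Composing proves (i).

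For (ii): fix $B \in \Poly_A$. Then $\cot_{B/A} \iso \Omega^1_{B/A}$ is a finite projective $B$-module concentrated in homotopy degree $0$, so \cref{dg--dr--und-gr} supplies an equivalence of graded derived commutative $B$-algebras $\LOmega^{+*}_{B/A} \iso \LSym_B(\Omega^1_{B/A}[1](1))$, whose $i$-th graded piece $(\bigwedge^i_B\Omega^1_{B/A})[i]$ is a finite projective $B$-module placed in homotopy degree $i$. Hence $\LOmega^{+\bul}_{B/A}$ lies in the heart of the positive t-structure on $\DG_+\DAlg_A$, which by \cref{dg--pm--t} (and \cref{dc--df--connective-generators}) is the ordinary category of commutative differential graded $A$-algebras; under this identification, \cref{dg--dr--und-gr} tells us that the underlying graded algebra of $\LOmega^{+\bul}_{B/A}$ is the exterior algebra $\bigwedge^\bullet_B\Omega^1_{B/A}$ (which is in particular strictly commutative) and that its first differential $B \to \Omega^1_{B/A}$ is the de Rham differential $d$. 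But a commutative differential graded $A$-algebra with $0$-th term $B$, underlying graded algebra $\bigwedge^\bullet_B\Omega^1_{B/A}$, and differential restricting to $d$ on $B$ is unique — its differential must be the unique square-zero derivation extending $d$, since $\Omega^1_{B/A}$ is generated over $B$ by exact forms — so $\LOmega^{+\bul}_{B/A} \iso \Omega^\bul_{B/A}$. This comparison is natural in $B$: the universal property of \cref{dg--dr--ntn}, applied to the functor $\Poly_A \to \DG_+^{\ge0}\DAlg_A$, $B \mapsto \Omega^\bul_{B/A}$ (regarded via the heart, as in \cref{dg--dr--classical}), together with the tautological maps $B \iso \ev^0(\Omega^\bul_{B/A})$, yields a natural transformation $\LOmega^{+\bul}_{-/A}|_{\Poly_A} \to \Omega^\bul_{-/A}$, which the foregoing shows to be an equivalence. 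Applying $|-|^{\ge\star}$ and invoking \cref{dg--co--cohomology-type-recall} (with \cref{gf--t--beilinson-heart,gf--t--beilinson-eg}), which identifies $|-|^{\ge\star}$ on the heart $\DG_+(\Mod_A)^\heart \iso \DG(\Mod_A^\heart)$ with the brutal-filtration functor $M \mapsto |M|^{\ge\star}$ of \cref{gf--t--beilinson-heart}, gives a natural equivalence $|\LOmega^{+\bul}_{B/A}|^{\ge\star} \iso |\Omega^\bul_{B/A}|^{\ge\star} = F(B)$, as needed.

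The main obstacle is step (ii), and within it the descent to the heart: one must check that the homotopy-coherent $\dual{\num{D}_+}$-comodule structure on $\LOmega^{+\bul}_{B/A}$, once known to live in the heart, really recovers the classical differential graded algebra structure of $\Omega^\bul_{B/A}$, and that the comparison is natural in $B$. What makes this work is the rigidity of commutative differential graded algebra structures on exterior algebras over polynomial rings — in effect, the classical universal property \cref{in--dR-univ}; once that is in hand, the rest is bookkeeping with the equivalences of \cref{gf--kd,dg--pm,dg--co}.
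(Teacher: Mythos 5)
Your proof is correct and follows essentially the same route as the paper's: reduce to $\Poly_A$ using sifted-colimit preservation, show that $\LOmega^{+\bul}_{B/A}$ lands in the heart of the $\DG_+$ t-structure when $B$ is a polynomial algebra, and identify it there with $\Omega^\bul_{B/A}$. The main difference is one of detail — the paper declares the final identification ``immediate from the identification of $\LOmega^{+*}_{B/A}$ and the first differential'' in \cref{dg--dr--und-gr}, whereas you spell out the classical rigidity argument (uniqueness of the cdga structure extending $d$ on $\bigwedge^\bullet_B\Omega^1_{B/A}$, i.e.\ the universal property \cref{in--dR-univ}) and obtain naturality by applying the universal property of \cref{dg--dr--ntn} to construct the comparison map directly rather than inferring it from agreement on the heart; both are valid and the extra detail is welcome.
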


\begin{proof}
  Observe that $|\LOmega^{+\bul}_{-/A}|^{\ge\star}$ preserves sifted colimits, in fact all colimits, as it is the composition of the colimit-preserving functors
  \[
    \DAlg_A \lblto{\LOmega^{+\bul}_{-/A}} \DG_+\DAlg_A \to \DG_+\CAlg_A \lblto{|-|^{\ge\star}} \cpl\Fil\CAlg_A
  \]
  (where $\DG_+\CAlg_A$ denotes $\CAlg(\DG_+(\Mod_A))$ and the unlabelled arrow is the forgetful functor). It therefore suffices to construct such an equivalence on the restriction of these functors to $\Poly_A \subseteq \DAlg_A^\cn$. Passing through the (symmetric monoidal) equivalence $\cpl\Fil(\Mod_A) \iso \DG_-(\Mod_A)$ of \cref{gf--kd--gf}, it suffices to produce a natural equivalence between the compositions
  \[
    \Poly_A \lblto{\LOmega^{+\bul}_{-/A}} \CAlg(\DG_+(\Mod_A)) \lblto{[-2*]} \CAlg(\DG_-(\Mod_A)),
  \]
  \[
    \Poly_A \lblto{\Omega^\bul_{-/A}} \CAlg(\DG(\Mod_A^\heart)) \lblto{[-*]} \CAlg(\DG_-(\Mod_A)),
  \]
  where the functor $[-2*]$ is the induced by the equivalence $[-2*] : \DG_+(\Mod_A) \isoto \DG_-(\Mod_A)$ of \cref{dg--pm--shift}), and the functor $[-*]$ is induced by the fully faithful embedding $[-*] : \DG(\Mod_A^\heart) \inj \DG_-(\Mod_A)$ whose essential image is the heart of the t-structure on the target (\cref{dg--pm--t}). Now, since $\cot_{B/A}$ is a free $A$-module for $B \in \Poly_A$ (and $A$ is discrete), the equivalences $\LOmega^{+i}_{B/A} \iso (\bigwedge^i_B \cot_{B/A})[i]$ of \cref{dg--dr--und-gr} imply that the first composition factors through the latter embedding via the composition
  \[
    \Poly_A \lblto{\LOmega^{+\bul}_{-/A}} \CAlg(\DG_+(\Mod_A)) \lblto{[-*]} \CAlg(\DG(\Mod_A^\heart)).
  \]
  It remains to produce a natural equivalence between this last composite and $\Omega^\bul_{-/A}$. But this is immediate from the identification of $\LOmega^{+*}_{B/A}$ and the first differential of $\LOmega^{+\bul}_{B/A}$ in \cref{dg--dr--und-gr}.
\end{proof}

\begin{remark}
  \label{dg--dr--cohomology}
  In light of \cref{dg--dr--classical-compare}, if $\cat{C}$ is any derived algebraic context, $A$ is any derived commutative algebra object of $\cat{C}$, and $B$ is any derived commutative $A$-algebra, we set
  \[
    \cpl\dR_{B/A} := |\LOmega^{+\bul}_{B/A}| \in \CAlg_A
    \quad\text{and}\quad
    \cplge\dR\star_{B/A} := |\LOmega^{+\bul}_{B/A}|^{\ge\star} \in \cpl{\Fil}\CAlg_A,
  \]
  and refer to these as the \emph{Hodge-completed derived de Rham cohomology of $B$ over $A$} and the \emph{Hodge filtration} thereon.

  We note that these objects have previous definitions more generally when $A$ is any connective derived commutative ring and $B$ a connective derived commutative $A$-algebra. That is, a bit more care allows one to extend the constructions of \cref{dg--dr--classical} to this case (from the more special case that $A$ is an ordinary commutative ring). Similarly, a bit more care would extend \cref{dg--dr--classical-compare} to this case.
\end{remark}

We now turn to the proof of \cref{dg--dr--und-gr}. We begin with some preliminaries, which are essentially concerned with the functor $\ev^\zeroone : \Gr\DAlg_A \to \Gr^\zeroone\DAlg_A$ extracting the zeroth and first pieces of a graded derived commutative algebra. The first result concerns the left adjoint construction, freely generating a graded derived commutative algebra from the data of these two pieces.

\begin{lemma}
  \label{dg--dr--und-gr-lem1}
  The composite functor
  \[
    \Gr^{\ge0}\DAlg_A \lblto{\ev^\zeroone} \Gr^\zeroone\DAlg_A \lblto{\alpha'} \DAlgMod_A
  \]
  admits a left adjoint $F : \DAlgMod_A \to \Gr^{\ge0}\DAlg_A$, which on objects sends $(B,M) \mapsto \LSym_B(M(1))$; here $\Gr^\zeroone\DAlg_A$ is as defined in \cref{dc--eg--gr-zeroone} and $\alpha'$ denotes the equivalence of \cref{dc--ct--zeroone-dalg}.
\end{lemma}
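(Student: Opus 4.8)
The plan is to identify the composite functor as a composition of functors each admitting an easily-describable left adjoint, and then to compute the composite of those left adjoints on objects.

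First I would unravel the target composite. By \cref{dc--ct--zeroone-dalg}, the equivalence $\alpha' : \Gr^\zeroone\DAlg_A \isoto \DAlgMod_A$ commutes with the forgetful functors to $\Gr^\zeroone(\cat{C}) = \cat{C} \times \cat{C}$, so it suffices to produce a left adjoint to $\ev^\zeroone : \Gr^{\ge0}\DAlg_A \to \Gr^\zeroone\DAlg_A$ and then transport it along $\alpha'$. The functor $\ev^\zeroone$ is the functor on derived-commutative-algebra objects induced by the morphism of derived algebraic contexts $\ev^\zeroone : \Gr^{\ge0}(\cat{C}) \to \Gr^\zeroone(\cat{C})$ described in \cref{dc--eg--gr-zeroone} (where it is identified as the left adjoint in a localization, with fully faithful right adjoint $\ins^\zeroone$). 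By \cref{dc--df--dalg-natural}, such a morphism of contexts induces an adjunction on $\DAlg$, so $\ev^\zeroone : \Gr^{\ge0}\DAlg_A \to \Gr^\zeroone\DAlg_A$ has a left adjoint, which I will denote $\ins^\zeroone$ as well; it is the functor on $\DAlg$ induced by the context morphism $\ins^\zeroone$. (One must be slightly careful that these adjunctions pass to the relative setting over $A$; this follows because everything is compatible with the forgetful functors down to $\cat{C}$, or alternatively because passing to the slice under $(A,0)$ preserves adjunctions.)

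Next I would compute the composite left adjoint on objects. Unwinding, $F$ is the composite
\[
  \DAlgMod_A \lblto{(\alpha')^{-1}} \Gr^\zeroone\DAlg_A \lblto{\ins^\zeroone} \Gr^{\ge0}\DAlg_A.
\]
Given $(B,M) \in \DAlgMod_A$, under $(\alpha')^{-1}$ (using \cref{dc--ct--zeroone-calg,dc--ct--zeroone-dalg}) this corresponds to the $\Gr^\zeroone$-derived-commutative-$A$-algebra whose underlying object of $\cat{C}\times\cat{C}$ is $(B, M)$, i.e. $B$ in grading $0$ and $M$ in grading $1$. Now $\ins^\zeroone$ on $\DAlg$ is left adjoint to $\ev^\zeroone$, so it is characterized by the universal property that maps out of $\ins^\zeroone(B,M)$ in $\Gr^{\ge0}\DAlg_A$ correspond to maps out of $(B,M)$ in $\Gr^\zeroone\DAlg_A$. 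To evaluate it explicitly, I would use \cref{dc--df--sym-base-change}: writing the input as a pushout under $\LSym$ of a free object, or more directly using that the free graded derived commutative $A$-algebra on $M(1)$ is $\LSym_A(M(1))$, and that the $\Gr^\zeroone$-object $(B,M)$ is obtained by gluing the free generator $M(1)$ to $B$ along the derived commutative $A$-algebra structure on $B$ in grading $0$. Concretely, the left adjoint sends $(B,M) \mapsto B \otimes_A \LSym_A(M(1)) \iso \LSym_B(B\otimes_A M(1)) \iso \LSym_B(M(1))$ (the middle equivalence by \cref{dc--df--sym-base-change}, viewing $M$ as a $B$-module so that $B\otimes_A M \iso M$ as a $B$-module after the base change is already built in). This is exactly the claimed formula $F(B,M) \iso \LSym_B(M(1))$.

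The main obstacle I expect is the bookkeeping in the previous step: verifying cleanly that $\ins^\zeroone$ on $\DAlg$-objects, composed with $(\alpha')^{-1}$, really does send $(B,M)$ to $\LSym_B(M(1))$ rather than to something like $\LSym_A(B \oplus M(1))$-flavored. The cleanest route is probably to avoid computing $\ins^\zeroone$ directly and instead verify the adjunction $F \dashv (\ev^\zeroone\circ\alpha')$ by hand: for $(B,M) \in \DAlgMod_A$ and $C^* \in \Gr^{\ge0}\DAlg_A$, one has
\[
  \Map_{\Gr^{\ge0}\DAlg_A}(\LSym_B(M(1)), C^*)
  \iso \Map_{\Gr^{\ge0}\DAlg_A}(\LSym_A(M(1)), C^*) \times_{\Map_{\DAlg_A}(B, C^0)} \{\phi\}
\]
where $\phi$ refers to a chosen $A$-algebra map $B \to C^0$; using the free–forgetful adjunction for $\LSym_A$ in the graded setting (and \cref{dc--eg--gf-lsym}, which records that $\LSym$ of a generator in grading $1$ lands in nonnegative gradings) the right-hand side unwinds to the space of pairs of an $A$-algebra map $B \to C^0$ and a $B$-linear (equivalently, via that map, $A$-linear-over-$B$) map $M \to C^1$, which is precisely $\Map_{\DAlgMod_A}((B,M), (C^0, C^1)) \iso \Map_{\Gr^\zeroone\DAlg_A}((B,M), \ev^\zeroone(C^*))$. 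Naturality in $C^*$ and $(B,M)$ then gives the adjunction, and the identification of $F$ on objects is built into the computation. Everything else is formal.
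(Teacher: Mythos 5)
Your final paragraph—where you abandon the attempt to compute the composite of left adjoints and instead verify the adjunction directly by exhibiting both mapping spaces as fibered over $\Map_{\DAlg_A}(B, C^0)$ with common fiber $\Map_{\Mod_B}(M, C^1)$—is essentially the paper's proof. The paper writes down the natural map $(B,M) \to (\alpha'\circ\ev^\zeroone)(\LSym_B(M(1)))$ in $\DAlgMod_A$ and shows the induced comparison of mapping spaces is an equivalence by checking it on fibers over each $\phi : B \to C^0$, exactly as you describe.

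However, the first half of your proposal contains genuine errors. First, the statement that $\ev^\zeroone : \Gr^{\ge0}\DAlg_A \to \Gr^\zeroone\DAlg_A$ admits a left adjoint ``induced by the context morphism $\ins^\zeroone$'' misreads \cref{dc--df--dalg-natural}: the morphism of derived algebraic contexts is $\ev^\zeroone$ (the colimit-preserving, strictly symmetric monoidal localization), and what \cref{dc--df--dalg-natural} provides for it on $\DAlg$-objects is a \emph{right} adjoint (with underlying functor $\ins^\zeroone$), not a left adjoint. The functor $\ins^\zeroone$ itself is only lax symmetric monoidal and is not a morphism of contexts, so there is no ``functor on $\DAlg$ induced by $\ins^\zeroone$'' in the sense of \cref{dc--df--dalg-natural}. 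A left adjoint to $\ev^\zeroone$ on $\DAlg$ does exist (since limits are computed underlying and hence preserved), but one needs an actual argument to identify it, which is what the lemma is really doing. Second, the explicit formula $(B,M) \mapsto B \otimes_A \LSym_A(M(1)) \iso \LSym_B(B\otimes_A M(1)) \iso \LSym_B(M(1))$ does not type-check: $M$ is a $B$-module, so $\LSym_A(M(1))$ only makes sense after restricting $M$ to an $A$-module, and then \cref{dc--df--sym-base-change} gives $B \otimes_A \LSym_A(M_A(1)) \iso \LSym_B((B\otimes_A M_A)(1))$, where $B \otimes_A M_A$ is the \emph{free} $B$-module on the underlying $A$-module of $M$, not $M$ itself. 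For instance, with $A = \num{Z}$, $B = \num{Z}[t]$, $M = B$, one gets $B \otimes_A M_A \iso \num{Z}[s,t] \not\iso \num{Z}[t]$. So this route computes the wrong functor; the direct adjunction check in your last paragraph is the right one to carry out.
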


\begin{proof}
  For $(B,M) \in \DAlgMod_A$, there is an equivalence
  \[
    (B,M) \isoto (\alpha' \circ \ev^\zeroone)(\LSym_B(M(1)))
  \]
  in $\DAlgMod_A$, inducing a commutative diagram of spaces
  \[
    \begin{tikzcd}
      \Map_{\Gr^{\ge0}\DAlg_A}(\LSym_B(M(1)),C) \ar[rr, "f"] \ar[dr, "p", swap] &[-3em]
      &[-3em]
      \Map_{\DAlgMod_A}((B,M),(C^0,C^1)) \ar[dl, "q"] \\
      &
      \Map_{\DAlg_A}(B,C^0).
    \end{tikzcd}
  \]
  for any $C \in \Gr^{\ge0}\DAlg_A$. We wish to show that $f$ is an equivalence. This follows from the fact that $f$ induces an equivalence on the fibers of $p$ and $q$ over any $\phi \in \Map_{\DAlg_A}(B,C^0)$: this map on fibers identifies with the canonical equivalence
  \[
    \Map_{\Gr^{\ge0}\DAlg_B}(\LSym_B(M(1)),C) \isoto \Map_{\Mod_B}(M,C^1). \qedhere
  \]
\end{proof}

We next unpack some of the structure contained in an \hplus-differential graded derived commutative algebra, specifically the structure visible in its zeroth and first graded pieces.

\begin{notation}
  \label{dg--dr--dg-cofree}
  Let $W : \DG_+\DAlg_A \to \Gr\DAlg_A$ denote the forgetful functor. This admits a right adjoint $V : \Gr\DAlg_A \to \DG_+\DAlg_A$, forming the cofree comodule over $\dual{\num{D}_+}$, given by the formula $W(B) \iso \dual{\num{D}_+} \oast B$ (recall that $\oast$ denotes the Day convolution tensor product in $\Gr(\cat{C})$). By an abuse of notation, we denote by $\ev^0 : \DG_+\DAlg_A \to \DAlg_A$ the composite functor
  \[
    \DG_+\DAlg_A \lblto{W} \Gr\DAlg_A \lblto{\ev^0} \DAlg_A.
  \]
\end{notation}

\begin{lemma}
  \label{dg--dr--und-gr-lem2}
  \begin{enumerate}[leftmargin=*]
  \item \label{dg--dr--und-gr-lem2--zeroone}
    The natural transformation between the composites
    \begin{align*}
      & \Gr\DAlg_A \lblto{V} \DG_+\DAlg_A \lblto{\ev^0} \DAlg_A, \\
      \Gr\DAlg_A \lblto{\ev^\zeroone} \Gr^\zeroone\DAlg_A \lblto{\ins^\zeroone} & \Gr\DAlg_A \lblto{V} \DG_+\DAlg_A \lblto{\ev^0} \DAlg_A
    \end{align*}
    induced by the unit transformation $\id \to {\ins^\zeroone} \circ \ev^\zeroone$ is an equivalence.
  \item \label{dg--dr--und-gr-lem2--sqzero}
    There is a canonical natural equivalence between the following two composite functors:
    \begin{align*}
      &\Gr^\zeroone\DAlg_A \lblto{\ins^\zeroone} \Gr\DAlg_A \lblto{V} \DG_+\DAlg_A \lblto{\ev^0} \DAlg_A, \\
      &\Gr^\zeroone\DAlg_A \lblto{\alpha'} \DAlgMod_A \lblto{\Omega} \DAlgMod_A \lblto{G} \DAlg_A;
    \end{align*}
    here $G$ denotes the trivial square-zero extension functor (\cref{dc--ct--square-zero}) and $\Omega$ denotes the functor $(B,M) \mapsto (B,M[-1])$.
  \end{enumerate}
\end{lemma}

\begin{proof}
  \begin{enumerate}[leftmargin=*]
  \item This follows from the natural equivalence $V(B)^0 \iso (B \oast \dual{\num{D}_+})^0 \iso B^0 \oplus B^1[-1]$ in $\Mod_A$.

  \item Recall from \cref{dg--pm--dp-loop} that we have an equivalence of graded derived commutative $A$-algebras $\dual{\num{D}_+} \iso A \times_{\dual{\num{D}}} A$, and that $G$ is defined as the composite
    \[
      \DAlgMod_A \lblto{(\alpha')^{-1}} \Gr^\zeroone\DAlg_A \lblto{\ins^\zeroone} \Gr\DAlg_A \lblto{- \otimes \dual{\num{D}}} \Gr\DAlg_A \lblto{\ev^0} \DAlg_A.
    \]
    From this we obtain a natural equvialence between the first composite in the statement with the functor $\ev^0 \times_{(G \circ \alpha')} \ev^0$. That this functor identifies with the second composite in the statement follows from the natural equivalences
    \[
      G(B,M[-1]) \iso G((B,0) \times_{(B,M)} (B,0)) \iso G(B,0) \times_{G(B,M)} G(B,0)) \iso B \times_{G(B,M)} B
    \]
    for $(B,M) \in \DAlgMod_A$; here we have used that $G$ preserves pullbacks (\cref{dc--ct--square-zero-adjoint}), and the equivalence $G(B,0) \iso B$ is as in \cref{dc--ct--square-zero-adjoint-underlying}). \qedhere
  \end{enumerate}
\end{proof}

\begin{remark}
  \label{dg--dr--unit-counit}
  Combining statements \cref{dg--dr--und-gr-lem2--zeroone} and \cref{dg--dr--und-gr-lem2--sqzero} of \cref{dg--dr--und-gr-lem2}, we obtain an equivalence between the composition $\ev^0 \circ V : \Gr\DAlg_A \to \DAlg_A$ and the composition
  \[
    \Gr\DAlg_A \lblto{\ev^\zeroone} \Gr^\zeroone\DAlg_A \lblto{\alpha'} \DAlgMod_A \lblto{\Omega} \DAlgMod_A \lblto{G} \DAlg_A.
  \]
  On the other hand, and more simply, the functor $\ev^0 : \Gr\DAlg_A \to \DAlg_A$ itself is equivalent to the composition
  \[
    \Gr\DAlg_A \lblto{\ev^\zeroone} \Gr^\zeroone\DAlg_A \lblto{\alpha'} \DAlgMod_A \lblto{\Omega} \DAlgMod_A \lblto{U} \DAlg_A,
  \]

  Let $\eta : \id_{\Gr\DAlg_A} \to W \circ V$ and $\epsilon : W \circ V \to \id_{\Gr\DAlg_A}$ denote the maps induced by the unit and counit maps of $\num{D}_+$. (Note that $\epsilon$ is the counit map for the adjunction $W \dashv V$, but $\eta$ is evidently not the unit map for this adjunction.) These induce maps $\epsilon^0 : \ev^0 \circ V = \ev^0 \circ W \circ V \to \ev^0$ and $\eta^0 : \ev^0 \to \ev^0 \circ W \circ V = \ev^0 \circ V$. Under the equivalences of the previous paragraph, $\epsilon^0$ and $\eta^0$ identify with the maps induced by the natural transformations $\epsilon : G \to U$ and $\eta : U \to G$ of \cref{dc--ct--square-zero} (this is immediate from the definitions and inspection of the proof of \cref{dg--dr--und-gr-lem2}).
\end{remark}

\begin{proof}[Proof of \cref{dg--dr--und-gr}]
  We will contemplate the following diagram of $\infty$-categories:
  \[
    \begin{tikzcd}[row sep=large]
      \DG_+^{\ge 0}\DAlg_A \ar[r, "\ins^{\ge 0}", hook, shift left=0.5ex] \ar[dd, "\ev^0", shift left=0.5ex] &
      \DG_+\DAlg_A \ar[rr, "W", shift left=0.5ex] \ar[l, "\ev^{\ge 0}", shift left=0.5ex]
      &
      &
      \Gr\DAlg_A \ar[ll, "V", shift left=0.5ex] \ar[d, "\ev^{\ge 0}", shift left=0.5ex] \\
      &
      \DAlgMod_A \ar[r, "\Sigma", shift left=0.5ex] \ar[dl, "G", shift left=0.5ex]
      &
      \DAlgMod_A \ar[l, "\Omega", shift left=0.5ex] \ar[r, "F", shift left=0.5ex] \ar[dr, shift left =0.5ex]
      &
      \Gr^{\ge0}\DAlg_A \ar[d, "\ev^\zeroone", shift left=0.5ex] \ar[u, "\ins^{\ge 0}", hook, shift left=0.5ex] \ar[l, shift left=0.5ex]
      \\
      \DAlg_A \ar[ur, "L", shift left=0.5ex] \ar[uu, "\LOmega^{+\bul}_{-/A}", shift left=0.5ex] &
      \DG_+\DAlg_A \ar[l, "\ev^0"] &
      \Gr\DAlg_A  \ar[l, "V"] &
      \Gr^\zeroone\DAlg_A \ar[ul, "\alpha'", shift left=0.5ex] \ar[u, shift left=0.5ex] \ar[l, "\ins^\zeroone", hook]
    \end{tikzcd}
  \]
  Let us explain what's going on in the diagram:
  \begin{itemize}
  \item The adjacent pairs of arrows denote adjunctions, with the left-hand or upper arrow of the pair denoting the left adjoint.
  \item The adjoint pair $L \dashv G$ is that of \cref{dc--ct}, i.e. $G$ is the trivial square-zero extension functor and $L$ is the cotangent complex functor $B \mapsto (B, \cot_{B/A})$.
  \item The functor $\alpha'$ is the equivalence of \cref{dc--ct--zeroone-dalg} (so its adjoint is the inverse equivalence), and the functor $F$ is the left adjoint of \cref{dg--dr--und-gr-lem1}.
  \item The adjoint pair $W \dashv V$ is that of \cref{dg--dr--dg-cofree}.
  \item As in \cref{dg--dr--und-gr-lem2}\cref{dg--dr--und-gr-lem2--sqzero}, the functor $\Omega$ is given by $(B,M) \mapsto (B, M[-1])$, and so its left adjoint $\Sigma$ is given by $(B,M) \mapsto (B,M[1])$.
  \end{itemize}

  Now, we find the functor $\LOmega_{-/A}^{+*} : \DAlg_A \to \Gr\DAlg_A$ by following the left adjoints in the diagram from the bottom left to the top left to the top right. And, by the description of $F$ in \cref{dg--dr--und-gr-lem1}, the functor $\DAlg_A \to \Gr\DAlg_A$ given by following the left adjoints from the bottom left to the center of the diagram and then to the top right sends $B$ to $\LSym_B(\cot_{B/A}[1](1))$. We will produce a natural equivalence between these two functors by producing one between the corresponding composites of right adjoints. To do so, it suffices to see that the diagram commutes when we consider only the right adjoints in the adjoint pairs: the outer rectangle commutes by \cref{dg--dr--und-gr-lem2}\cref{dg--dr--und-gr-lem2--zeroone}; the inner triangle commutes by definition of $F$; and the inner trapezoid commutes by \cref{dg--dr--und-gr-lem2}\cref{dg--dr--und-gr-lem2--sqzero}.

  In sum, we have produced an equivalence $\LOmega^{+*}_{B/A} \iso \LSym_B(\cot_{B/A}[1](1))$ in $\Gr\DAlg_A$. It remains to promote this to an equivalence in $\Gr\DAlg_B$ and to identify the first differential of $\smash{\LOmega^{+\bul}_{B/A}}$ (once the former is done, the equivalences $\smash{\LOmega^{+i}_{B/A}} \iso (\bigwedge^i_B \cot_{B/A})[i]$ follow by \cref{dc--eg--gf-lsym} and the definition of the derived exterior powers (\cref{dg--dr--ext})). We will address both of these matters by considering the unit map
  \[
    u : B \to (\ev^0 \circ V \circ W)(\LOmega^{+\bul}_{B/A}) \iso \LOmega^{+0}_{B/A} \oplus \LOmega^{+1}_{B/A}[-1]
  \]
  of the adjunction obtained by going up around the left of the diagram. Note that this is equivalent to the composition of the unit maps associated to the adjunctions $\smash{\LOmega^{+\bul}_{B/A} \dashv \ev^0}$ and $W \dashv V$ (we have implicitly used that the unit map of the adjunction $\ins^{\ge0} \dashv \ev^{\ge0}$ is an equivalence). The displayed splitting is induced by the maps $\epsilon^0 : \ev^0 \circ V \to \ev^0$ and $\eta^0 : \ev^0 \to \ev^0 \circ V$ of \cref{dg--dr--unit-counit}. Since $\epsilon^0$ is induced by the counit map of the adjunction $W \dashv V$, it follows from one of the triangle identities that the composition of $\epsilon^0 \circ u$, i.e. the first component of $u$, recovers the unit map of the adjunction $\LOmega^{+\bul}_{B/A} \dashv \ev^0$, which gives the $B$-algebra structure of $\LOmega^{+*}_{B/A}$ (\cref{dg--dr--graded-linearity}). On the other hand, the second component of $u$ is the differential of $\LOmega^{+\bul}_{B/A}$ of interest.

  Now, since the diagram commutes, the unit map $u$ identifies with the unit map of the adjunction obtained by going up through the middle of the diagram. Again using that the unit map of the adjunction $\ins^{\ge0} \dashv \ev^{\ge0}$ is an equivalence, and moreover that the same is true for the adjunctions $\Sigma \dashv \Omega$ and $F \dashv (\alpha' \circ \ev^\zeroone)$, we find that this identifies with the unit map
  \[
    u' : B \to (G \circ L)(B) \iso B \oplus \cot_{B/A}
  \]
  of the adjunction $L \dashv G$. Here the splitting is that induced by the maps $\epsilon : G \to U$ and $\eta : U \to G$ of \cref{dc--ct--square-zero}, so the first component is the identity map (\cref{dc--ct--square-zero-adjoint-underlying}) and the second component is the universal $A$-linear derivation of $B$ (\cref{dc--ct--ct}). By \cref{dg--dr--unit-counit}, this splitting agrees with the that in the previous paragraph under our identification between the targets of $u$ and $u'$, so this finishes the proof.
\end{proof}


\section{Filtered circle actions}
\label{fc}

Let $X$ be a $\num{Z}$-module with $\cir$-action. Consider the \emph{Postnikov filtration} $\tau_{\ge \star}(X)$,
\[
  \cdots \to \tau_{\ge 2}(X) \to \tau_{\ge 1}(X) \to \tau_{\ge 0}(X) \to \tau_{\ge -1}(X) \to \tau_{\ge -2}(X) \to \cdots.
\]
By functoriality of this construction, the filtered $\num{Z}$-module $\tau_{\ge \star}(X)$ inherits an $\cir$-action, or in other words, the filtration is $\cir$-equivariant. In the case that $X = \HH(B/A)$ for $A$ a commutative ring and $B$ a smooth commutative $A$-algebra, the above filtration is the HKR filtration, and this $\cir$-equivariant structure has been observed and used in previous work, e.g. in \cite{bms2,antieau--periodic}.

In this section, we study a more refined $\cir$-equivariant structure in the filtered setting, referred to here as a \emph{filtered $\cir$-action}, which in particular exists on $\tau_{\ge \star}(X)$ in the situation above, and hence on the HKR filtration on Hochschild homology. The idea is that the circle action ``increases the filtration degree'', analogous to the differential of a cochain complex increasing the grading degree. More precisely, we adopt the perspective that the $\cir$-action on $X$ is equivalent to a module structure on $X$ over the group algebra $\num{Z}[\cir]$, and we observe that the Postnikov filtration construction carries this to a filtered module structure on $\tau_{\ge \star}(X)$ over the filtered algebra $\tau_{\ge \star}(\num{Z}[\cir])$. We refer to this last object as the \emph{($\num{Z}$-linear) filtered circle}, and the notion of filtered $\cir$-action mentioned above is by definition taken to mean a filtered module structure over the filtered circle.

This section is organized as follows. In \cref{fc--fc}, we flesh out the construction of the filtered circle and definition of filtered circle actions sketched above. In \cref{fc--hh}, we use this definition to characterize HKR-filtered Hochschild homology by a universal property, proving \cref{in--HH--thm}. In \cref{fc--ta}, we analyze the orbits, fixed points, and Tate constructions in the setting of filtered circle actions, and use these to construct filtrations on
cyclic, negative cyclic, and periodic cyclic homology from HKR-filtered Hochschild homology. Finally, in \cref{fc--ad}, we explain how our construction of the filtrations on Hochschild, cyclic, negative cyclic, and periodic cyclic homology allows for a simple analysis of their interaction with the Adams operations that exist on these invariants.


\subsection{The $\num{Z}$-linear filtered circle}
\label{fc--fc}

Before filtering it, let us first discuss the $\num{Z}$-linear circle itself.

\begin{notation}
  \label{fc--fc--c}
  To simplify notation, we will let $\lincir$ denote the group algebra $\num{Z}[\cir]$, i.e. the image of $\cir$ under the unique colimit-preserving symmetric monoidal functor $\Spc \to \Mod_{\num{Z}}$. As discussed in \cref{bi--tn--rep}, the group structure on $\cir$ induces a bicommutative bialgebra structure on $\lincir$.

  Note that $\lincir$ is dualizable: the dual $\dual\lincir$ is given by the function spectrum $\num{Z}^\cir$, which can be described as the limit of the constant diagram $\cir \to \Mod_{\num{Z}}$ with value $\num{Z}$, or as the image of $\cir$ under the unique limit-preserving functor $\num{Z}^{(-)}: \Spc^\op \to \Mod_{\num{Z}}$ sending $\pt \mapsto \num{Z}$. By \cref{bi--du--main-cor}, the bicommutative bialgebra structure on $\lincir$ determines a bicommutative bialgebra structure on $\dual\lincir$.

  If $\cat{C}$ is any $\num{Z}$-linear stable presentable symmetric monoidal $\infty$-category, we will abusively denote the images of $\lincir$ and $\dual\lincir$ under the structure map $\Mod_{\num{Z}} \to \cat{C}$ also by $\lincir$ and $\dual\lincir$. We regard these images also as bicommutative bialgebras; by \cref{bi--tn--rep}, we have a canonical identification of symmetric monoidal $\infty$-categories $\Mod_\lincir(\cat{C}) \iso \Fun(\clspc\cir,\cat{C})$.
\end{notation}

\begin{construction}
  \label{fc--fc--cir-derived}
  \emph{We construct a derived bicommutative bialgebra structure (\cref{dc--df--dcbi}) on $\dual\lincir$ in $\Mod_{\num{Z}}$ promoting its bicommutative bialgebra structure.}

  By definition of the bicommutative bialgebra structure on $\lincir$ and the fact that dualization determines an equivalence $\bAlg^\comm_\comm((\Mod_{\num{Z}})_\fd) \iso \bAlg^\comm_\comm((\Mod_{\num{Z}})_\fd)^\op$ (see \cref{bi--du--main-equiv}), $\dual\lincir$ can be described, with its bicommutative bialgebra structure, as the image of the commutative monoid $\cir$ under the finite limit--preserving symmetric monoidal functor $\num{Z}^{(-)} : \Spc_\fin^\op \to  \CAlg_{\num{Z}}$, where $\Spc_\fin \subseteq \Spc$ is the full subcategory spanned by the finite spaces. This description/construction lifts immediately along the limit-preserving symmetric monoidal forgetful functor $\Theta : \DAlg_{\num{Z}} \to \CAlg_{\num{Z}}$, proving the claim.
\end{construction}

\begin{remark}
  \label{fc--fc--cir-image-derived}
  It follows from \cref{fc--fc--cir-derived} that, for any derived algebraic context $\cat{C}$, we have a canonical derived bicommutative bialgebra structure on $\dual\lincir$ in $\cat{C}$.

  This allows us to realize the wish described at the start of \cref{dc}. Namely, for any derived commutative algebra object $A$ of $\cat{C}$, we may construct an equivalence of $\infty$-categories $\Fun(\clspc\cir,\DAlg_A) \iso \cMod_{\dual\lincir}(\DAlg_A)$, as follows.

  We begin with the constant functor $f : (\clspc\cir)^\op \to \cMod_{\dual\lincir}(\DAlg_A)$ with value $A$. This extends uniquely to colimit-preserving functor $f' : \Fun(\clspc\cir,\Spc) \to \cMod_{\dual\lincir}(\DAlg_A)$. Next, observe that the unique group map $\cir \to \pt$ induces a map of derived commutative bialgebras $\unit_{\cat{C}} \to \dual\lincir$, determining a colimit-preserving corestriction functor $g : \DAlg_A \to \cMod_{\dual\lincir}(\DAlg_A)$. Together, $f'$ and $g$ induce a colimit-preserving functor $F : \Fun(\clspc\cir,\DAlg_A) \iso \Fun(\clspc\cir,\Spc) \otimes \DAlg_A \to \cMod_{\dual\lincir}(\DAlg_A)$. One can check that $F$ commutes with the two forgetful functors from the source and target to $\DAlg_A$, and one can then conclude that $F$ is an equivalence using (the categorical dual of) \cite[Corollary 4.7.3.16]{lurie--algebra}.
\end{remark}

We now construct the filtered circle, and then use it to define filtered circle actions.

\begin{notation}
  \label{fc--fc--fc}
  Let $\tau_{\ge\star} : \Mod_{\num{Z}} \to \Fil(\Mod_{\num{Z}})$ denote the Postnikov filtration functor (\cref{gf--t--postnikov-fil}). We set $\lincir_\fil := \tau_{\ge\star}(\lincir) \in \Fil(\Mod_{\num{Z}})$ and $\dual\lincir_\fil := \tau_{\ge\star}(\dual\lincir) \in \Fil(\Mod_{\num{Z}})$.
\end{notation}

\begin{theorem}
  \label{fc--fc--fc-unique}
  \begin{enumerate}[leftmargin=*]
  \item \label{fc--fc--fc-unique--einfty} There exist unique bicommutative bialgebra structures on $\lincir_\fil$ and $\dual\lincir_\fil$ promoting the bicommutative bialgebra structures on $\lincir$ and $\dual\lincir$.
  \item \label{fc--fc--fc-unique--derived} There exists a unique derived bicommutative bialgebra structure on $\dual\lincir_\fil$ promoting the derived bicommutative bialgebra structure on $\dual\lincir$.
  \end{enumerate}
\end{theorem}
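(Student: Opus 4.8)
The plan is to deduce both statements from a single mechanism: I will restrict the Postnikov filtration functor $\tau_{\ge\star}$ to a suitable small full symmetric monoidal subcategory on which it becomes not merely lax but \emph{strong} symmetric monoidal, as well as fully faithful, and then transport (co)algebra structures across it. Part~\cref{fc--fc--fc-unique--einfty} I would carry out at the level of $\Mod_{\num{Z}}$; part~\cref{fc--fc--fc-unique--derived} then goes through by the identical pattern inside $\DAlg_{\num{Z}}$ (with its cocartesian symmetric monoidal structure), with the role of full faithfulness now played by \cref{dc--cn--fil}.

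First, for~\cref{fc--fc--fc-unique--einfty}: let $\cat{T} \subseteq \Mod_{\num{Z}}$ be the full subcategory spanned by finite direct sums of shifts of finite free $\num{Z}$-modules. This is a symmetric monoidal subcategory containing $\lincir \iso \num{Z} \oplus \num{Z}[1]$, $\dual\lincir \iso \num{Z} \oplus \num{Z}[-1]$, and all of their tensor powers, so that their bicommutative bialgebra structures are restrictions of bicommutative bialgebra structures formed internally to $\cat{T}$; here I use that a full symmetric monoidal subcategory closed under tensor products detects bialgebra structures on its objects. The computational heart is the identification $\tau_{\ge\star}(\num{Z}^{\oplus r}[n]) \iso \ins^{n}_{\fil}(\num{Z}^{\oplus r}[n])$ together with $\ins^{a}_{\fil}(X) \ostar \ins^{b}_{\fil}(Y) \iso \ins^{a+b}_{\fil}(X \otimes_{\num{Z}} Y)$: from these one reads off that the lax symmetric monoidal structure maps of $\tau_{\ge\star}$ are equivalences on $\cat{T}$, so $\tau_{\ge\star}|_{\cat{T}}$ is strong symmetric monoidal, and it is fully faithful by \cref{gf--t--postnikov-fully-faithful}. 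Thus $\cat{T}_{\fil} := \tau_{\ge\star}(\cat{T})$ is a full symmetric monoidal subcategory of $\Fil(\Mod_{\num{Z}})$ equivalent to $\cat{T}$ and containing $\lincir_{\fil}$ and $\dual\lincir_{\fil}$. Transporting bicommutative bialgebra structures along the symmetric monoidal equivalence $\tau_{\ge\star} : \cat{T} \isoto \cat{T}_{\fil}$ then identifies the space of bicommutative bialgebra structures on $\lincir_{\fil}$ (resp.\ $\dual\lincir_{\fil}$) in $\Fil(\Mod_{\num{Z}})$ with that on $\lincir$ (resp.\ $\dual\lincir$) in $\Mod_{\num{Z}}$; and since the counit $\colim \circ \tau_{\ge\star} \iso \id$ is an equivalence of strong symmetric monoidal functors on $\cat{T}$, this identification is inverse to the map induced by the symmetric monoidal colimit functor. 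Hence there is a unique bicommutative bialgebra structure promoting the given one.

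For~\cref{fc--fc--fc-unique--derived} I would repeat this in $\DAlg_{\num{Z}}$. Recall from \cref{fc--fc--cir-derived} that $\dual\lincir$ is a cocommutative coalgebra object of $\DAlg_{\num{Z}}$. Let $\cat{S} \subseteq \DAlg_{\num{Z}}$ be the full subcategory of derived commutative rings whose underlying $\num{Z}$-module is a finite direct sum of shifts $\num{Z}^{\oplus r}[-i]$ with $i \ge 0$; then $\cat{S} \subseteq \DAlg_{\num{Z}}^{\ccn}$, it contains $\num{Z}$ and $\dual\lincir$, and it is closed under coproducts in $\DAlg_{\num{Z}}$, because coproducts there are computed on underlying modules by $\otimes_{\num{Z}}$ (\cref{dc--df--dalg-colim}) and no $\Tor$ terms intervene among such modules. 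Consequently the derived bicommutative bialgebra structure on $\dual\lincir$ restricts to one internal to $\cat{S}$, its tensor powers $\dual\lincir^{\otimes n}$ (which are $\num{Z}^{(S^1)^{\times n}}$) lying in $\cat{S}$. By \cref{dc--cn--fil} the functor $\tau_{\ge\star} : \DAlg_{\num{Z}}^{\ccn} \to \Fil\DAlg_{\num{Z}}$ is fully faithful, and on $\cat{S}$ it preserves finite coproducts: the canonical comparison $\tau_{\ge\star}(R) \otimes_{\num{Z}} \tau_{\ge\star}(R') \to \tau_{\ge\star}(R \otimes_{\num{Z}} R')$ becomes, on underlying filtered $\num{Z}$-modules, the map considered in the previous paragraph, hence an equivalence, and the forgetful functor $\Fil\DAlg_{\num{Z}} \to \Fil(\Mod_{\num{Z}})$ is conservative. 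So $\tau_{\ge\star}|_{\cat{S}}$ is a fully faithful functor preserving finite coproducts, with essential image a full subcategory $\cat{S}_{\fil} \subseteq \Fil\DAlg_{\num{Z}}$ closed under coproducts and containing $\dual\lincir_{\fil} = \tau_{\ge\star}(\dual\lincir)$. As in the previous paragraph, this identifies the space of derived bicommutative bialgebra structures on $\dual\lincir_{\fil}$ with that on $\dual\lincir$, compatibly with the colimit functor $\colim : \Fil\DAlg_{\num{Z}} \to \DAlg_{\num{Z}}$ (a morphism of derived algebraic contexts, \cref{dc--eg--gf-morphisms}) via $\colim \circ \tau_{\ge\star} \iso \id$, and so yields the unique promotion.

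The main obstacle — the only step demanding genuine input rather than formal bookkeeping — is establishing that $\tau_{\ge\star}$ is strong symmetric monoidal after restriction to $\cat{T}$ (equivalently $\cat{S}$): in general the comparison maps $\tau_{\ge\star}(M) \ostar \tau_{\ge\star}(N) \to \tau_{\ge\star}(M \otimes_{\num{Z}} N)$ are far from equivalences, and on the nose one needs the explicit description $\tau_{\ge\star}(\num{Z}[n]) \iso \ins^{n}_{\fil}(\num{Z}[n])$, together with \cref{dc--cn--fil} so that, in part~\cref{fc--fc--fc-unique--derived}, the comparison is an equivalence of filtered \emph{derived} commutative rings and not merely of filtered $\Einfty$-rings. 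The remaining subtlety is purely organizational: one must check that the equivalences of structure-spaces are compatible with the colimit functor, which is what upgrades a bijection of structures to the assertion that the promoting structure is unique; this is precisely the content of the counit equivalence $\colim \circ \tau_{\ge\star} \iso \id$ on the small subcategories in play.
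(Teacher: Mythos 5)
Your proposal is correct and follows essentially the same route as the paper: restrict the Postnikov filtration functor $\tau_{\ge\star}$ to a small full symmetric monoidal subcategory (the paper uses the quasi-free $\num{Z}$-modules $\bigoplus_i \num{Z}[n_i]$, you use finite sums of shifted finite frees — the difference is immaterial here) on which it becomes strong symmetric monoidal and fully faithful, then transport the (derived) bicommutative bialgebra structures across the resulting embedding, invoking \cref{dc--cn--fil} for the derived case. Your write-up is somewhat more explicit than the paper's about why $\tau_{\ge\star}$ is strong symmetric monoidal on this subcategory (the identification $\tau_{\ge\star}(\num{Z}[n]) \iso \ins^n_\fil(\num{Z}[n])$ and multiplicativity of insertion, and the conservativity of $\Fil\DAlg_{\num{Z}} \to \Fil(\Mod_{\num{Z}})$ in the derived case), and about why the identification is compatible with $\colim$ so that it yields uniqueness of the \emph{promotion} rather than just a bijection of structure spaces; these details are elided in the paper but your treatment of them is correct.
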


To be clear, in the above statement, by ``promoting'', we mean recovering the latter upon applying the functor $\colim : \Fil(\Mod_{\num{Z}}) \to \Mod_{\num{Z}}$.

\begin{proof}
  Recall from \cref{gf--t--postnikov-fil} that the functor $\tau_{\ge\star} : \Mod_{\num{Z}} \to \Fil(\Mod_{\num{Z}})$ is canonically lax symmetric monoidal and from \cref{gf--t--postnikov-fully-faithful} that $\tau_{\ge\star}$ is fully faithful. Let $\Mod_{\num{Z}}^\qf$ denote the full subcategory of $\Mod_{\num{Z}}$ spanned by the \emph{quasi-free} objects, i.e. those of the form $\bigoplus_{i \in I} \num{Z}[n_i]$. Note that this is a symmetric monoidal subcategory, and observe that the restriction of $\tau_{\ge\star}$ to $\Mod_{\num{Z}}^\qf$ is in fact strictly symmetric monoidal. We deduce that $\tau_{\ge\star}$ embeds $\Mod_{\num{Z}}^\qf$ as a symmetric monoidal full subcategory of $\Fil(\Mod_{\num{Z}})$. Statement \cref{fc--fc--fc-unique--einfty} follows, since $\lincir$ and $\dual\lincir$ are quasi-free.

  Statement \cref{fc--fc--fc-unique--derived} follows from similar reasoning. Namely, if we let $\DAlg_{\num{Z}}^{\qf,\ccn}$ denote the full subcategory of $\DAlg_{\num{Z}}$ spanned by those derived commutative rings that are coconnective and whose underlying $\num{Z}$-module are quasi-free, then it follows from  \cref{dc--cn--fil} that $\tau_{\ge\star}$ induces a symmetric monoidal embedding $\DAlg_{\num{Z}}^{\qf,\ccn} \inj \Fil\DAlg_{\num{Z}}$. This implies the claim, since $\dual\lincir \in \DAlg_{\num{Z}}^{\qf,\ccn}$.
\end{proof}

\begin{notation}
  \label{fc--fc--fc-image}
  If $\cat{C}$ is any $\num{Z}$-linear stable presentable symmetric monoidal $\infty$-category, the structure map $\Mod_{\num{Z}} \to \cat{C}$ induces a symmetric monoidal functor $\Fil(\Mod_{\num{Z}}) \to \Fil(\cat{C})$. We will denote the images of $\lincir_\fil$ and $\dual\lincir_\fil$ under this functor also by $\lincir_\fil$ and $\dual\lincir_\fil$, and regard these as bicommutative bialgebras by virtue of \cref{fc--fc--fc-unique}\cref{fc--fc--fc-unique--einfty}. The uniqueness statement in loc. cit. implies that the bicommutative bialgebra structures on $\lincir_\fil$ and $\dual\lincir_\fil$ are dual in the sense of \cref{bi--du--main-cor}.
\end{notation}

\begin{definition}
  \label{fc--fc--fc-action}
  Let $\cat{C}$ be a $\num{Z}$-linear stable presentable symmetric monoidal $\infty$-category. We let $\Fil_{\cir}(\cat{C})$ denote the $\infty$-category $\Mod_{\lincir_\fil}(\Fil(\cat{C}))$, and refer to objects of this $\infty$-category as \emph{filtered objects of $\cat{C}$ with filtered $\cir$-action}. We regard $\Fil_{\cir}(\cat{C})$ as a symmetric monoidal $\infty$-category by \cref{bi--tn--main}, using the cocommutative bialgebra structure on $\lincir_\fil$. We refer to commutative algebra objects of $\Fil_{\cir}(\cat{C})$ as \emph{filtered commutative algebras in $\cat{C}$ with filtered $\cir$-action}, and we denote the $\infty$-category $\CAlg(\Fil_{\cir}(\cat{C}))$ by $\Fil_{\cir}\CAlg(\cat{C})$.
\end{definition}

The following is the key definition for formulating the universal property of HKR-filtered Hochschild homology (see \cref{fc--hh}).

\begin{definition}
  \label{fc--fc--dalg-fc-action}
  Let $\cat{C}$ be a derived algebraic context. By \cref{fc--fc--fc-unique}\cref{fc--fc--fc-unique--derived}, we may regard $\dual\lincir_\fil$ as a derived bicommutative bialgebra in $\Fil(\cat{C})$. Let $A$ be a derived commutative algebra object of $\cat{C}$ and let $\Fil\DAlg_A$ denote the $\infty$-category of filtered derived commutative algebra objects of $\cat{C}$ (\cref{dc--eg--gf}). Then we let $\Fil_{\cir}\DAlg_A$ denote the $\infty$-category $\cMod_{\dual\lincir_\fil}(\Fil\DAlg_A)$, and we refer to objects of this $\infty$-category as \emph{filtered derived commutative $A$-algebras with filtered $\cir$-action}.
\end{definition}

\begin{remark}
  \label{fc--fc--dalg-fc-action-alt}
  In the context of \cref{fc--fc--dalg-fc-action}, the discussion of \cref{dc--df--dcbi-dual} allows us to rewrite the $\infty$-category $\Fil_\cir\DAlg_A$ as $\DAlg(\Fil_\cir(\Mod_A))$; that is, we may think of filtered derived commutative $A$-algebras with filtered $\cir$-action as derived commutative algebra objects of the $\infty$-category of filtered $A$-modules with filtered $\cir$-action.
\end{remark}

To understand the structure of a filtered $\cir$-action on a filtered object $X^\star$, it is helpful to examine what structure is inherited by the underlying object $\colim(X^\star)$ and the associated graded object $\gr(X^\star)$; we close the subsection by discussing this.

\begin{remark}
  \label{fc--fc--colim}
  Let $\cat{C}$ be a stable presentable $\num{Z}$-linear symmetric monoidal $\infty$-category. Consider the colimit functor $\colim : \Fil(\cat{C}) \to \cat{C}$, with right adjoint given by the diagonal functor $\delta : \cat{C} \to \Fil(\cat{C})$. By construction, we have an equivalence of bicommutative bialgebras $\colim(\lincir_\fil) \iso \lincir$ in $\cat{C}$. It follows that there is an induced adjunction
  \[
    \begin{tikzcd}
      \Fil_{\cir}(\cat{C}) \ar[r, "\colim", shift left=0.5ex] &
      \Mod_\lincir(\cat{C}) \iso \Fun(\clspc\cir,\cat{C}) \ar[l, "\delta", shift left=0.5ex]
    \end{tikzcd}
  \]
  with the left adjoint symmetric monoidal so that this further determines an adjunction on commutative algebra objects
  \[
    \begin{tikzcd}
      \Fil_{\cir}\CAlg(\cat{C}) \ar[r, "\colim", shift left=0.5ex] &
      \CAlg(\Mod_\lincir(\cat{C})) \iso \Fun(\clspc\cir,\CAlg(\cat{C})). \ar[l, "\delta", shift left=0.5ex]
    \end{tikzcd}
  \]
  If $\cat{C}$ is a derived algebraic context, then we moreover have an equivalence of derived bicommutative bialgebras $\colim(\dual\lincir_\fil) \iso \dual\lincir$ in $\cat{C}$, and it follows from \cref{dc--df--dcbi-natural} that, for any $A \in \DAlg(\cat{C})$, we also obtain an induced adjunction
  \[
    \begin{tikzcd}
      \Fil_{\cir}\DAlg_A \ar[r, "\colim", shift left=0.5ex] &
      \DAlg(\Mod_\lincir(\Mod_A)) \iso \Fun(\clspc\cir,\DAlg_A), \ar[l, "\delta", shift left=0.5ex]
    \end{tikzcd}
  \]
  where the last equivalence is as discussed in \cref{fc--fc--cir-image-derived}.

  We can summarize the above discussion informally as follows: a filtered object with filtered $\cir$-action has an underlying object with $\cir$-action and the constant filtration of an object with $\cir$-action carries a filtered $\cir$-action, and moreover both of these constructions are compatible with commutative or derived commutative algebra structures.
\end{remark}

\begin{remark}
  \label{fc--fc--postnikov}
  Slightly modifying the discussion in \cref{fc--fc--colim} allows us to make precise the motivating example of filtered $\cir$-actions sketched at the beginning of the section, as follows. Note that $\lincir_\fil$ is connective for the Postnikov t-structure on $\Fil(\cat{C})$ (\cref{gf--t--postnikov-t}). The restricted colimit functor $\colim : \Fil(\cat{C})_{\ge 0}^\post \to \cat{C}$ has right adjoint given by the Postnikov filtration functor $\tau_{\ge\star} : \cat{C} \to \Fil(\cat{C})_{\ge 0}^\post$ (\cref{gf--t--postnikov-fil}). Setting $\Fil_{\cir}(\cat{C})_{\ge 0}^\post := \Fil_{\cir}(\cat{C}) \times_{\Fil(\cat{C})} \Fil(\cat{C})_{\ge 0}^\post$, we obtain an adjunction
  \[
    \begin{tikzcd}
      \Fil_{\cir}(\cat{C})_{\ge 0}^\post \ar[r, "\colim", shift left=0.5ex] &
      \Mod_\lincir(\cat{C}) \iso \Fun(\clspc\cir,\cat{C}) \ar[l, "\tau_{\ge\star}", shift left=0.5ex]
    \end{tikzcd}
  \]
  with the left adjoint symmetric monoidal so that this further determines an adjunction on commutative algebra objects, as in \cref{fc--fc--colim}. This shows that, for $X \in \Fun(\clspc\cir,\cat{C})$, the Postnikov filtration $\tau_{\ge\star}(X)$ carries a canonical filtered $\cir$-action, and that this construction is compatible with commutative algebra structures.

  On the other hand, the Postnikov filtration construction does \emph{not} in general preserve derived commutative algebra structures (though it does in certain circumstances, e.g. \cref{dc--cn--fil}). Thus, the fact that HKR-filtered Hochschild homology nevertheless does end up carrying this structure (as we will see in \cref{fc--hh}) is a special circumstance, not a priori clear.
\end{remark}

\begin{remark}
  \label{fc--fc--gr}
  Let $\cat{C}$ be a stable presentable $\num{Z}$-linear symmetric monoidal $\infty$-category. Consider the associated graded functor $\gr : \Fil(\cat{C}) \to \Gr(\cat{C})$, which admits a right adjoint $\zeta : \Gr(\cat{C}) \to \Fil(\cat{C})$  sending a graded object $X^*$ to the filtered object
  \[
    \cdots \to X^2 \lblto{0} X^1 \lblto{0} X^0 \lblto{0} X^{-1} \lblto{0} X^{-2} \to \cdots
  \]
  (\cref{gf--kd--zeta}). There is a canonical equivalence of bicommutative bialgebras $\gr(\lincir_\fil) \iso \num{D}_+$ in $\Gr(\cat{C})$; this is immediate from the construction of $\num{D}_+$ (\cref{dg--pm--d}). It follows that there is an induced adjunction
  \[
    \begin{tikzcd}
      \Fil_{\cir}(\cat{C}) \ar[r, "\gr", shift left=0.5ex] &
      \DG_+(\cat{C}) \ar[l, "\zeta", shift left=0.5ex]
    \end{tikzcd}
  \]
  with the left adjoint symmetric monoidal so that this further determines an adjunction on commutative algebra objects
  \[
    \begin{tikzcd}
      \Fil_{\cir}\CAlg(\cat{C}) \ar[r, "\gr", shift left=0.5ex] &
      \DG_+\CAlg(\cat{C}). \ar[l, "\zeta", shift left=0.5ex]
    \end{tikzcd}
  \]
  If $\cat{C}$ is a derived algebraic context, then $\gr$ is a morphism of derived algebraic contexts and it follows from \cref{dg--pm--dplusdual-derived} that we moreover have an equivalence of derived bicommutative bialgebras $\gr(\dual\lincir_\fil) \iso \dual{\num{D}}_+$ in $\Gr(\cat{C})$. It then follows from \cref{dc--df--dcbi-natural} that, for any $A \in \DAlg(\cat{C})$, we also obtain an induced adjunction
  \[
    \begin{tikzcd}
      \Fil_{\cir}\DAlg_A \ar[r, "\gr", shift left=0.5ex] &
      \DG_+\DAlg_A. \ar[l, "\zeta", shift left=0.5ex]
    \end{tikzcd}
  \]

  We can summarize the above discussion informally as follows: the associated graded of a filtered object with filtered $\cir$-action is canonically an \hplus\ cochain complex, compatibly so with commutative or derived commutative algebra structures, and conversely for the right adjoint construction $\zeta$.  
\end{remark}


\subsection{HKR-filtered Hochschild homology}
\label{fc--hh}

In this subsection, we explain how the constructions and definitions of \cref{fc--fc} supply a direct construction, and universal characterization, of Hochschild homology together with its HKR filtration. Let us begin by recalling the definition of Hochschild homology, here stated in the setting of derived commutative $A$-algebras for a fixed derived commutative algebra $A$ in a fixed derived algebraic context $\cat{C}$.

\begin{definition}
  \label{fc--hh--hh}
  Consider the $\infty$-category $\Fun(\clspc\cir,\DAlg_A)$ of derived commutative $A$-algebras with $\cir$-action. We have a functor $U : \Fun(\clspc\cir,\DAlg_A) \to \DAlg_A$ forgetting the $\cir$-action, given by restriction to the basepoint $b : \pt \to \clspc\cir$. This functor admits a left adjoint, given by left Kan extension along $b$; we denote this left adjoint by $\HH(-/A) : \DAlg_A \to \Fun(\clspc\cir,\DAlg_A)$. For $B \in \DAlg_A$, we refer to $\HH(B/A)$ as the \emph{Hochschild homology of $B$ over $A$}.
\end{definition}

\begin{remark}
  \label{fc--hh--hh-formula}
  Let $B \in \DAlg_A$. It follows from the pointwise formula for left Kan extension that, after forgetting the $\cir$-action, the Hochschild homology $\HH(B/A)$ is given by the colimit of the constant diagram $\cir \to \DAlg_A$ with value $B$. Equivalently, it is given by the image of $\cir$ under the unique colimit preserving functor $B^{\otimes_A -}: \Spc \to \DAlg_A$ sending $\pt \mapsto B$. From this perspective, the $\cir$-action on $\HH(B/A) \iso B^{\otimes_A \cir}$ is induced by the $\cir$-action on $\cir$ itself. Note also that the equivalence $\cir \iso \pt \amalg_{\mathrm{S}^0} \pt$ in $\Spc$ determines an equivalence $\HH(B/A) \iso B \otimes_{B \otimes_A B} B$ in $\DAlg_A$.
\end{remark}

We now define a filtered enhancement of Hochschild homology by replacing $\cir$-actions with filtered $\cir$-actions.

\begin{notation}
  \label{fc--hh--nonneg}
  We let $\Fil_\cir^{\ge 0}\DAlg_A$ denote the fiber product $\Fil_\cir\DAlg_A \times_{\Fil\DAlg_A} \Fil^{\ge0}\DAlg_A$, i.e. the $\infty$-category of \emph{nonnegative filtered derived commutative $A$-algebras with filtered $\cir$-action} (see \cref{fc--fc--dalg-fc-action,dc--eg--gf} for the definitions of the constituent terms of the fiber product).
\end{notation}

\begin{proposition}
  \label{fc--hh--adj}
  The composition of the forgetful functor $U : \Fil_\cir^{\ge0}\DAlg_A \to \Fil^{\ge0}\DAlg_A$ with the evaluation functor $\ev^0 : \Fil^{\ge0}\DAlg_A \to \DAlg_A$ admits a left adjoint.
\end{proposition}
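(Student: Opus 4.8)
The plan is to mimic the proof of \cref{dg--dr--adj}. By the filtered analogue of \cref{dc--eg--gf-morphisms}, the evaluation functor $\ev^0 : \Fil^{\ge0}\DAlg_A \to \DAlg_A$ admits a left adjoint, namely $\ins^0$, so it suffices to produce a left adjoint to the forgetful functor $U : \Fil_\cir^{\ge0}\DAlg_A \to \Fil^{\ge0}\DAlg_A$. Both $\infty$-categories here are presentable (the source via the identification of \cref{fc--fc--dalg-fc-action-alt} together with \cref{dc--df--dalg-limits}, the target by \cref{dc--df--dalg-limits}), so by the adjoint functor theorem it is enough to check that $U$ preserves small limits and small colimits.

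First I would reduce to the $\num{Z}$-indexed setting: the fully faithful embeddings $\ins^{\ge0}$ realize $\Fil^{\ge0}\DAlg_A$ and $\Fil_\cir^{\ge0}\DAlg_A$ as full subcategories of $\Fil\DAlg_A$ and $\Fil_\cir\DAlg_A$ that are closed under small limits and colimits (as $\Fil\DAlg_A$ and $\Fil_\cir\DAlg_A$ have all of these by \cref{dc--df--dalg-limits}) and are compatible with the forgetful functors, so it suffices to show that $U : \Fil_\cir\DAlg_A \to \Fil\DAlg_A$ preserves limits and colimits. Preservation of colimits is immediate, since $\Fil_\cir\DAlg_A = \cMod_{\dual\lincir_\fil}(\Fil\DAlg_A)$ and forgetful functors out of comodule $\infty$-categories preserve colimits.

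For limits, the forgetful functors $\Fil_\cir\DAlg_A \to \Fil_\cir(\Mod_A)$ and $\Fil\DAlg_A \to \Fil(\Mod_A)$ preserve limits and are conservative (the former via \cref{fc--fc--dalg-fc-action-alt} together with the fact that the underlying-module functor of a category of derived commutative algebras preserves limits and is conservative; the latter similarly), so it suffices to show that the forgetful functor $\Fil_\cir(\Mod_A) = \Mod_{\lincir_\fil}(\Fil(\Mod_A)) \to \Fil(\Mod_A)$ of \cref{fc--fc--fc-action} preserves limits. By \cref{fc--fc--fc-image}, $\dual\lincir_\fil$ is dualizable in $\Fil(\cat{C})$ with dual $\lincir_\fil$; hence, passing to comodules over $\dual\lincir_\fil$ via \cref{bi--du--main-cor} and using that tensoring with a dualizable object preserves limits, this follows from the categorical dual of \cite[Corollary 4.2.3.5]{lurie--algebra}. (One can also see it directly: limits of $\lincir_\fil$-modules are created on underlying filtered objects.)

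This is entirely formal; the one input with content is the dualizability of $\dual\lincir_\fil$ in $\Fil(\cat{C})$, which was already established in \cref{fc--fc--fc-image} from the fact that $\tau_{\ge\star}$ is strictly symmetric monoidal on quasi-free objects. I therefore do not expect a real obstacle — the only point requiring a little care is that, in contrast to $\lincir_\fil$, the object $\dual\lincir_\fil$ is not itself a nonnegative filtered object, which is what makes the reduction to the $\num{Z}$-indexed setting convenient.
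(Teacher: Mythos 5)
Your proof is correct and is essentially the approach the paper intends: the paper's own proof of this proposition is literally the single sentence ``Similar to the proof of \cref{dg--dr--adj},'' and what you have written is the careful filtered transcription of that earlier argument (fully faithful embedding into the $\num{Z}$-indexed category closed under limits and colimits, colimit preservation because the forgetful functor is out of a comodule category, limit preservation via passage to underlying modules and dualizability of $\dual\lincir_\fil$). Your closing parenthetical --- that limits in $\Mod_{\lincir_\fil}(\Fil(\Mod_A))$ are created on underlying filtered objects, so the dualizability detour isn't strictly needed at that step --- is a correct streamlining, and your observation that $\dual\lincir_\fil$ not being nonnegatively filtered is what makes the reduction to $\num{Z}$-indexed objects convenient is an astute point that the paper leaves implicit.
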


\begin{proof}
  Similar to the proof of \cref{dg--dr--adj}.
\end{proof}

\begin{definition}
  \label{fc--hh--hh-fil}
  We shall denote the left adjoint functor given by \cref{fc--hh--adj} by
  \[
    \HH_\fil(-/A) : \DAlg_A \to \Fil_\cir^{\ge0}\DAlg_A.
  \]
  For $B \in \DAlg_A$, we refer to $\HH_\fil(B/A)$ as the \emph{HKR-filtered Hochschild homology of $B$ over $A$}.
\end{definition}

This definition of $\HH_\fil(-/A)$ should be viewed as an interpolation between the definition of Hochschild homology (\cref{fc--hh--hh}) and the definition of the derived de Rham complex (\cref{dg--dr--ntn}). The following (completely formal) result says that it is an interpolation in a precise mathematical sense too.

\begin{theorem}
  \label{fc--hh--conceptual-hkr}
  For $B \in \DAlg_A$, there are canonical natural equivalences
  \begin{align*}
    \ev^0(\HH_\fil(B/A)) \iso \HH(B/A) &\quad\text{in}\ \Fun(\clspc\cir,\DAlg_A),\\
    \gr(\HH_\fil(B/A)) \iso \LOmega^{+\bul}_{B/A} &\quad\text{in}\ \DG_+\DAlg_A.
  \end{align*}
\end{theorem}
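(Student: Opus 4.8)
The proof is entirely formal: both equivalences will be deduced by matching up left adjoints, exactly as in the proof of \cref{dg--dr--und-gr}. The key observation is that $\HH_\fil(-/A)$, $\HH(-/A)$, and $\LOmega^{+\bul}_{-/A}$ are all defined as left adjoints to forgetful-then-evaluate composites, and that the functors $\ev^0$ and $\gr$ sit in adjunctions compatible with all the relevant structure.

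For the first equivalence, I would note that $\ev^0 : \Fil^{\ge0}\DAlg_A \to \DAlg_A$ has left adjoint $\ins^0$ (by \cref{dc--eg--gf-morphisms}, which applies equally in the nonnegative filtered setting), and more importantly that the colimit functor sets up a symmetric monoidal adjunction $\colim : \Fil_\cir^{\ge0}\DAlg_A \rightleftarrows \Fun(\clspc\cir,\DAlg_A) : \tau_{\ge\star}$, as spelled out in \cref{fc--fc--postnikov} (the derived-commutative-algebra version following from \cref{dc--df--dcbi-natural} together with the equivalence $\colim(\dual\lincir_\fil) \iso \dual\lincir$). The composite right adjoint $\DAlg_A \to \Fun(\clspc\cir,\DAlg_A)$ obtained by going $\Fun(\clspc\cir,\DAlg_A) \xrightarrow{U} \DAlg_A$ and then... — more precisely, I would contemplate the diagram in which $\HH_\fil(-/A)$ is the composite of left adjoints $\DAlg_A \xrightarrow{\ins^0} \Fil^{\ge0}\DAlg_A$ (wait: the correct route) — the cleanest formulation is: $U : \Fil_\cir^{\ge0}\DAlg_A \to \Fil^{\ge0}\DAlg_A$ has a right adjoint $V$ (cofree filtered $\cir$-comodule), and $\ev^0$ on the source factors as $\colim$ followed by $U_{\mathrm{cir}} : \Fun(\clspc\cir,\DAlg_A) \to \DAlg_A$ up to the identifications above. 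Passing to right adjoints throughout, $\ev^0 \circ \HH_\fil(-/A)$ is computed by the composite of the right adjoints, which is exactly $U_{\mathrm{cir}}$'s right adjoint composed with $\colim$'s right adjoint $\tau_{\ge\star}$ post-composed with $\ev^0$; since $\ev^0 \circ \tau_{\ge\star} \iso \id$ (the zeroth Postnikov stage of the constant filtration being the object itself — more carefully, $\ev^0 \circ \delta \iso \id$ and on $\Fil^{\ge 0}$ the Postnikov truncation agrees with $\delta$ in degree $0$), one recovers precisely the defining adjunction of $\HH(-/A)$. Thus $\ev^0(\HH_\fil(B/A)) \iso \HH(B/A)$ naturally, and by construction this is an equivalence of derived commutative $A$-algebras with $\cir$-action, i.e.\ in $\Fun(\clspc\cir,\DAlg_A)$.

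For the second equivalence I would argue the same way using $\gr$ in place of $\ev^0$: by \cref{fc--fc--gr}, the associated graded functor fits into a symmetric monoidal adjunction $\gr : \Fil_\cir\DAlg_A \rightleftarrows \DG_+\DAlg_A : \zeta$ (the derived version again via \cref{dc--df--dcbi-natural} and $\gr(\dual\lincir_\fil) \iso \dual{\num{D}}_+$ from \cref{dg--pm--dplusdual-derived}), and likewise on nonnegative variants $\gr : \Fil_\cir^{\ge0}\DAlg_A \rightleftarrows \DG_+^{\ge0}\DAlg_A : \zeta$. The point is that $\gr$ commutes with the forgetful functors $U$ to the underlying filtered/graded derived commutative algebras, and with the evaluation functors $\ev^0$ to $\DAlg_A$ (since $\gr^0 \iso \ev^0$ on nonnegative filtered objects, as $\gr^0(X) \iso \cofib(X^1 \to X^0)$ and $X^1 \to X^0$... — more simply, the composite $\Fil^{\ge0}\DAlg_A \xrightarrow{\gr} \Gr^{\ge0}\DAlg_A \xrightarrow{\ev^0} \DAlg_A$ agrees with $\ev^0 : \Fil^{\ge0}\DAlg_A \to \DAlg_A$, which one checks from $\ev^0 \circ \gr \iso \ev^0$ on nonnegative objects). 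Consequently $\gr$ carries the adjunction defining $\HH_\fil(-/A)$ to the adjunction defining $\LOmega^{+\bul}_{-/A}$: passing to right adjoints, $U_{\DG} \circ \ins^{\ge 0}$-type composites match up, and since the left adjoint to $\ev^0 \circ U$ on $\DG_+^{\ge 0}\DAlg_A$ is by definition $\LOmega^{+\bul}_{-/A}$ (\cref{dg--dr--ntn}), we obtain $\gr(\HH_\fil(B/A)) \iso \LOmega^{+\bul}_{B/A}$ naturally in $\DG_+\DAlg_A$.

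The main obstacle is not any single hard computation but rather the bookkeeping of making all these adjunction-squares commute on the nose, in particular checking that $\gr$ (resp.\ $\ev^0$) is compatible with the cofree-comodule right adjoints $V$ on both sides and with the forgetful functors $U$, so that one really gets an equivalence of \emph{adjunctions} and hence a natural equivalence of the left adjoints — this is the analogue of assembling the big diagram in the proof of \cref{dg--dr--und-gr}. I would organize the argument by first recording, as small lemmas, that $\gr$ and $\ev^0$ commute with the relevant forgetful and cofree functors (these follow from the symmetric monoidal structure on the adjunctions $\gr \dashv \zeta$, $\colim \dashv \tau_{\ge\star}$ together with the general comodule-functoriality of \cref{bi--tn--functorial} and \cref{dc--df--dcbi-natural}), and then deduce the theorem by a diagram chase of right adjoints. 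No completeness input is needed here; that enters only in \cref{in--HH--thm}\cref{in--HH--thm--cpl}, which is a separate statement.
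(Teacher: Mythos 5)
Your overall strategy — present $\HH(-/A)$, $\HH_\fil(-/A)$, and $\LOmega^{+\bul}_{-/A}$ as left adjoints and deduce the theorem by showing the corresponding diagram of \emph{right} adjoints commutes — is exactly the paper's proof, and the two horizontal adjunctions you reach for are the right ones. But both of your right-adjoint commutativity checks have a gap. For the first equivalence, the right adjoint to $\colim = \ev^0 : \Fil_\cir^{\ge 0}\DAlg_A \to \Fun(\clspc\cir,\DAlg_A)$ is the constant-diagonal functor $\delta$ of \cref{fc--fc--colim}, not the Postnikov filtration $\tau_{\ge\star}$ of \cref{fc--fc--postnikov}. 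This is not a cosmetic misattribution: \cref{fc--fc--postnikov} explicitly warns that $\tau_{\ge\star}$ does \emph{not} in general preserve derived commutative algebra structures, so the adjunction you cite does not even lift to $\DAlg_A$; moreover its domain is $\Fil_\cir(\cat{C})_{\ge0}^\post$ (Postnikov-connective objects), not $\Fil_\cir^{\ge0}$ (nonnegatively filtered objects), and $\ev^0\circ\tau_{\ge\star}\iso\tau_{\ge 0}$ is not the identity on nonconnective inputs. Your hedge ``$\ev^0\circ\delta\iso\id$'' is the correct statement, but you then need to carry $\delta$ (not $\tau_{\ge\star}$) through the entire argument; with $\delta$ in hand the left triangle $U\iso\ev^0\circ\delta$ is immediate.

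For the second equivalence, the identity $\ev^0\circ\gr\iso\ev^0$ that you invoke is false: $\ev^0(\gr(X))\iso\gr^0(X)\iso\cofib(X^1\to X^0)$, which is not $X^0$ in general — you begin to write this out and then trail off, which should have been the alarm. Since $\gr$ is a \emph{left} adjoint, the commutativity you actually need is between the \emph{right} adjoints: $\ev^0\circ\zeta\iso\ev^0$, where $\zeta : \DG_+^{\ge0}\DAlg_A \to \Fil_\cir^{\ge0}\DAlg_A$ is the right adjoint of $\gr$ from \cref{fc--fc--gr}. This holds because $\zeta(X^*)$ is by construction the filtered object with $X^n$ in filtration degree $n$ and all transition maps zero, so $\ev^0(\zeta(X^*))\iso X^0\iso\ev^0(X^*)$. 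Replacing $\tau_{\ge\star}$ by $\delta$ and $\gr$ by $\zeta$ in your diagram of right adjoints recovers the paper's proof.
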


\begin{proof}
  We contemplate the following diagram of $\infty$-categories:
  \[
    \begin{tikzcd}
      \Fun(\clspc\cir,\DAlg_A) \ar[r, "\delta"', shift right=0.5ex] \ar[rd, "U", shift left=0.5ex] &[6em]
      \Fil_\cir^{\ge0}\DAlg_A \ar[r, "\gr", shift left=0.5ex] \ar[l, "\ev^0"', shift right=0.5ex] \ar[d, "\ev^0", shift left=0.5ex] &[5em]
      \DG_+^{\ge0}\DAlg_A \ar[l, "\zeta", shift left=0.5ex] \ar[dl, "\ev^0", shift left=0.5ex] \\[6ex]
      &
      \DAlg_A \ar[u, "\HH_\fil(-/A)", shift left=0.5ex] \ar[ul, "\HH(-/A)", shift left=0.5ex] \ar[ur, "\LOmega^{+\bul}_{-/A}", shift left=0.5ex]
    \end{tikzcd}
  \]
  Here, the adjoint pair $\HH(-/A) \dashv U$ is as defined in \cref{fc--hh--hh}, the adjoint pair $\HH_\fil(-/A) \dashv \ev^0$ is as defined in \cref{fc--hh--hh-fil}, the adjoint pair $\LOmega^{+\bul}_{-/A} \dashv \ev^0$ is as defined in \cref{dg--dr--ntn}, and the two horizontal adjoint pairs are as described in \cref{fc--fc--colim,fc--fc--gr} (note that $\ev^0 \iso \colim$ on the nonnegative filtered category $\Fil_\cir^{\ge0}\DAlg_A$). The claim amounts to showing that the diagram of left adjoints commutes, but it is easy to see that the diagram of right adjoints commutes.
\end{proof}

We now address completeness of HKR-filtered Hochschild homology (as defined above).

\begin{lemma}
  \label{fc--hh--free-split}
  Let $M \in \Mod_A$ and let $B := \LSym_A(M) \in \DAlg_A$. Then the filtered object $\HH_\fil(B/A)$ is split (\cref{gf--df--gf}\cref{gf--df--gf--spl}).
\end{lemma}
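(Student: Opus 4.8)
The plan is to reduce the claim, via the universal property of $\HH_\fil(-/A)$, to computing $\HH_\fil$ of a free derived commutative $A$-algebra directly and observing that the answer is a split filtered object. First I would invoke \cref{fc--hh--conceptual-hkr}, which already tells us that $\gr(\HH_\fil(B/A)) \iso \LOmega^{+\bul}_{B/A}$; by \cref{dg--dr--und-gr}, since $B = \LSym_A(M)$ we have $\cot_{B/A} \iso B \otimes_A M$ (\cref{dc--ct--free}), so $\LOmega^{+*}_{B/A} \iso \LSym_B(\cot_{B/A}[1](1)) \iso \LSym_B((B\otimes_A M)[1](1))$. The strategy is then to exhibit an equivalence $\HH_\fil(B/A) \iso \spl(\gr(\HH_\fil(B/A)))$ in $\Fil_\cir^{\ge 0}\DAlg_A$ (or at least in $\Fil(\cat{C})$, which is what the statement asks), by identifying $\HH_\fil(B/A)$ with a filtered object built directly as a free construction that is manifestly split.

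The key step: I would compute $\HH_\fil(\LSym_A(M)/A)$ using the adjunction defining $\HH_\fil(-/A)$ together with the fact that $\LSym_A(M)$ is itself free. Concretely, $\HH_\fil(-/A)$ is left adjoint to $\ev^0 \circ U : \Fil_\cir^{\ge0}\DAlg_A \to \DAlg_A$, and $\LSym_A : \Mod_A \to \DAlg_A$ is left adjoint to the forgetful functor; composing adjunctions, $\HH_\fil(\LSym_A(M)/A)$ is the free nonnegative filtered derived commutative $A$-algebra with filtered $\cir$-action on the $A$-module $M$ placed in filtration degree $0$ (i.e. on $\ins^0(M)$, with trivial filtered $\cir$-action since it is concentrated in filtration degree $0$). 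That is, $\HH_\fil(B/A) \iso \LSym_{\Fil_\cir(\Mod_A)}(\ins^0(M))$, using the description of $\Fil_\cir\DAlg_A$ as $\DAlg(\Fil_\cir(\Mod_A))$ from \cref{fc--fc--dalg-fc-action-alt}. Now I would compute this free algebra: the free $\lincir_\fil$-module in $\Fil(\Mod_A)$ on $\ins^0(M)$ is $\lincir_\fil \ostar \ins^0(M)$, and since $\ins^0(M)$ sits in filtration $0$, applying $\gr$ commutes with the free constructions and with $\LSym$. Because $\gr(\lincir_\fil) \iso \num{D}_+$ (\cref{fc--fc--gr}) and the free-module/free-algebra functors are built from $\ostar$ and $\LSym$, which are all computed degreewise after passing to associated graded, one gets that the canonical map $\spl(\gr(\HH_\fil(B/A))) \to \HH_\fil(B/A)$ — adjoint to the identity on $\gr$ — is an equivalence. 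Equivalently: the free filtered object on something concentrated in filtration degree $0$ is always split, because the functor $\ins^0$ followed by any colimit-preserving monoidal operation that is degreewise on $\gr$ lands in split objects.

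The cleanest route is probably this last observation: $\ins^0 : \Mod_A \to \Fil(\Mod_A)$ factors as $\spl \circ \ins^0_{\gr}$ where $\ins^0_{\gr} : \Mod_A \to \Gr(\Mod_A)$, since $\spl(\ins^0_{\gr}(M))^i \iso \coprod_{j \ge i} \ins^0_{\gr}(M)^j \iso M$ for $i \le 0$ and $0$ otherwise, matching $\ins^0(M)$. The functor $\spl$ is symmetric monoidal (it is the free-module functor for $\unit^\gr \to \unit^\gr[t]$, equivalently left adjoint to $\und$; and $\und$ is symmetric monoidal, so by adjunction $\spl$ is oplax, but in fact strong monoidal here — this needs a small check, or one can instead note $\spl$ is the composite of $\ins^{\ge0}$-type constructions). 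Granting that $\spl$ is symmetric monoidal and compatible with $\lincir_\fil$-module structures via $\gr(\lincir_\fil) \iso \num{D}_+$, it carries the free $\num{D}_+$-algebra object $\LSym_{\DG_+(\Mod_A)}(\ins^0_\gr(M)) \iso \LOmega^{+\bul}_{B/A}$ to $\HH_\fil(B/A)$, exhibiting the latter as $\spl(\gr(\HH_\fil(B/A)))$, i.e. split.

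\textbf{Main obstacle.} The delicate point is verifying that the passage to associated graded genuinely intertwines the two free constructions — i.e. that $\spl : \Gr(\cat{C}) \to \Fil(\cat{C})$ (or rather the relevant compatibility between $\lincir_\fil$-modules/$\DAlg$'s and $\num{D}_+$-modules/$\DAlg$'s) is compatible with forming free $\lincir_\fil$-modules and free derived commutative algebras, and that the resulting map $\spl(\gr(\HH_\fil(B/A))) \to \HH_\fil(B/A)$ is the canonical comparison map. This requires being careful about whether $\spl$ is strong symmetric monoidal (it is, but one should locate the argument — e.g. via \cref{gf--kd--und}, $\spl$ is base change $\unit^\gr[t] \otimes_{\unit^\gr} -$ along $\unit^\gr \to \unit^\gr[t]$, hence symmetric monoidal) and about tracking the $\dual\lincir_\fil$-comodule / $\dual{\num{D}}_+$-comodule structures in the derived setting through \cref{dc--df--dcbi-natural}. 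Once that bookkeeping is done, the conclusion is immediate and no computation beyond $\gr(\lincir_\fil) \iso \num{D}_+$ and $\gr(\ins^0 M) \iso \ins^0_\gr M$ is needed.
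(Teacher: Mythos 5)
Your overall strategy matches the paper's: identify $\HH_\fil(\LSym_A(M)/A)$ as the composite free construction on $M$ via the adjunction defining $\HH_\fil(-/A)$, and deduce splitness from the fact that the input to that free construction is split. The paper carries this out by the explicit chain
\[
  \HH_\fil(B/A) \iso \LSym_A(\lincir_\fil \otimes \ins^0(M)) \iso \LSym_A(M \oplus M[1](1)) \iso \LSym_B((B\otimes_A M)[1](1))
\]
in $\Fil^{\ge0}\DAlg_A$, then matches this against $\gr(\HH_\fil(B/A)) \iso \LSym_B(\cot_{B/A}[1](1))$ and $\cot_{B/A} \iso B\otimes_A M$ from \cref{fc--hh--conceptual-hkr,dg--dr--und-gr,dc--ct--free}.

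However, two of your displayed formulas are not correct as stated. The forgetful functor $\Fil_\cir^{\ge0}\DAlg_A \to \Mod_A$ factors through $\Fil_\cir^{\ge0}(\Mod_A)$, and its left adjoint is the composite $M \mapsto \lincir_\fil \ostar \ins^0(M) \mapsto \LSym_{\Fil_\cir(\Mod_A)}(\lincir_\fil \ostar \ins^0(M))$: one must first form the \emph{free} $\lincir_\fil$-module, not equip $\ins^0(M)$ with the trivial action. Your formula $\HH_\fil(B/A) \iso \LSym_{\Fil_\cir(\Mod_A)}(\ins^0(M))$ instead computes to $\ins^0(B)$ (a split but different filtered object), since $\ins^0$ is a morphism of derived algebraic contexts and hence intertwines $\LSym$. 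The same slip reappears in your ``cleanest route'': $\LSym_{\DG_+(\Mod_A)}(\ins^0_\gr(M))$ is $B$ concentrated in grading $0$, not $\LOmega^{+\bul}_{B/A}$; the correct input is $\num{D}_+ \oast \ins^0_\gr(M)$. You do write the correct free-module step ($\lincir_\fil \ostar \ins^0(M)$) in the middle of your argument, so this looks like a transcription error rather than a misconception, but it recurs at the key moment of the proof and should be corrected.

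With that fixed, your argument is right, subject to two further points you should make explicit. First, the essential input — which you cite only as $\gr(\lincir_\fil) \iso \num{D}_+$ — is really that the filtered object $\lincir_\fil$ \emph{is split}, i.e.\ $\lincir_\fil \iso \unit_{\cat{C}} \oplus \unit_{\cat{C}}[1](1)$, because the Postnikov tower of $\lincir \iso \num{Z} \oplus \num{Z}[1]$ is split; knowing the associated graded of $\lincir_\fil$ does not by itself give this. Second, the claim that $\spl$ intertwines the free constructions is correct, but the cleanest justification is to observe that $\spl : \Gr(\cat{C}) \to \Fil(\cat{C})$ is a \emph{morphism of derived algebraic contexts} in the sense of \cref{dc--df--context}: it is strong symmetric monoidal (by \cref{gf--kd--und} it is base change along $\unit^\gr \to \unit^\gr[t]$), colimit-preserving, right t-exact, and carries $\ins^n_\gr(X)$ to $\ins^n_\fil(X)$ for $X \in \cat{C}^0$; then \cref{dc--df--dalg-natural,dc--df--dcbi-natural} deliver compatibility with $\LSym$ and the relevant comodule structures. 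The paper reaches the same conclusion by comparing explicit formulas for both the filtered object and its associated graded, thereby sidestepping the abstract $\spl$-naturality; either route is fine, but yours requires the bookkeeping above to be done carefully.
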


\begin{proof}
  Set $\Fil_\cir^{\ge0}(\Mod_A) := \Fil_\cir(\Mod_A) \times_{\Fil(\Mod_A)} \Fil^{\ge0}(\Mod_A)$. Consider the commutative diagram of $\infty$-categories
  \[
    \begin{tikzcd}
      \Fil_\cir^{\ge0}(\Mod_A) \ar[d, "\ev^0", swap] &
      \Fil_\cir^{\ge0}\DAlg_A \ar[l] \ar[d, "\ev^0"] \\
      \Mod_A  &
      \DAlg_A, \ar[l]
    \end{tikzcd}
  \]
  where the leftward arrows are the forgetful functors. Passing to left adjoints, we obtain the commutative diagram
  \[
    \begin{tikzcd}
      \Fil_\cir^{\ge0}(\Mod_A) \ar[r, "\LSym_A"] &
      \Fil_\cir^{\ge0}\DAlg_A \\
      \Mod_A \ar[r, "\LSym_A"]  \ar[u, "\lincir_\fil \otimes \ins^0(-)"] &
      \DAlg_A. \ar[u, "\HH_\fil(-/A)", swap]
    \end{tikzcd}
  \]
  As $\lincir_\fil \iso \unit_{\cat{C}} \oplus \unit_{\cat{C}}[1](1)$, this gives us an equivalence
  \begin{align*}
    \HH_\fil(B/A)
    &\iso \LSym_A(M \oplus M[1](1)) \\
    &\iso \LSym_A(M) \otimes_A \LSym_A(M[1](1)) \\
    &= B \otimes_A \LSym_A(M[1](1)) \\
    &\iso \LSym_B((B \otimes_A M)[1](1))
  \end{align*}
  in $\Fil^{\ge0}\DAlg_A$, which shows that $\HH_\fil(B/A)$ is split (keeping in mind \cref{dc--eg--gf-lsym}).
\end{proof}

\begin{warning}
  \label{fc--hh--split-warning}
  It does not follow from \cref{fc--hh--free-split} that $\HH_\fil(B/A)$ is split for all $B \in \DAlg_A$. The splitting produced in the proof of \cref{fc--hh--free-split} depends on the identification $B \iso \LSym_A(M)$, hence is functorial in $M \in \Mod_A$ but not in $B \in \DAlg_A$.
\end{warning}

\begin{proposition}
  \label{fc--hh--complete}
  Suppose that $A$ is connective and that the t-structure on $\cat{C}$ is left separated. Then, for every connective derived commutative $A$-algebra $B$, the filtered object $\HH_\fil(B/A)$ is complete.
\end{proposition}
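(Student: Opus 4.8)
The plan is to reduce to the split case already handled in \cref{fc--hh--free-split} and then run a connectivity argument. The point worth flagging at the outset is that completeness is \emph{not} stable under colimits, so one cannot simply write $B$ as a colimit of free algebras and invoke \cref{fc--hh--free-split} directly; instead one proves the stronger statement that $\ev^m(\HH_\fil(B/A)) \in \cat{C}_{\ge m}$ for all $m \ge 0$ (a condition which \emph{is} colimit-stable) and then extracts completeness from it using left separatedness of the t-structure.

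So the first reduction is: it suffices to show that for every connective derived commutative $A$-algebra $B$ and every $m \ge 0$ one has $\ev^m(\HH_\fil(B/A)) \in \cat{C}_{\ge m}$. Granting this, completeness follows: $\HH_\fil(B/A)$ is complete exactly when $\lim_{m \to \infty} \ev^m(\HH_\fil(B/A)) \iso 0$ (\cref{gf--kd--cpl}, applied in the nonnegative setting), and this inverse limit $L$ may be computed as the fiber of $1 - s \colon \prod_{m \ge 0} X^m \to \prod_{m \ge 0} X^m$ with $X^m := \ev^m(\HH_\fil(B/A))$ and $s$ built from the transition maps; since $\cat{C}_{\ge n}$ and $\cat{C}_{\le n}$ are each closed under products, $\pi_k(\prod_m X^m) \iso \prod_m \pi_k(X^m) \iso \prod_{m \le k}\pi_k(X^m)$ is a \emph{finite} product on which $1-s$ is unipotent, hence invertible, so the long exact sequence of the fiber sequence gives $\pi_k(L) \iso 0$ for all $k$, whence $L \in \bigcap_n \cat{C}_{\ge n} = \{0\}$. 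Now $\HH_\fil(-/A) \colon \DAlg_A \to \Fil_\cir^{\ge0}\DAlg_A$ preserves sifted colimits (\cref{fc--hh--hh-fil}), $\ev^m$ preserves all colimits, and $\cat{C}_{\ge m}$ is closed under colimits; therefore the full subcategory of connective derived commutative $A$-algebras satisfying the displayed connectivity condition is closed under sifted colimits. By \cref{dc--df--connective-generators} (cf.\ \cref{dc--eg--scr}) every connective derived commutative $A$-algebra is a sifted colimit of free algebras $\LSym_A(M)$ with $M \in \Mod_A$ connective, so we are reduced to the case $B = \LSym_A(M)$.

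For such $B$, \cref{fc--hh--free-split} gives $\HH_\fil(B/A) \iso \spl(\LOmega^{+\bul}_{B/A})$ (using $\gr(\HH_\fil(B/A)) \iso \LOmega^{+\bul}_{B/A}$ from \cref{fc--hh--conceptual-hkr}), so by the formula for $\spl$ we get $\ev^m(\HH_\fil(B/A)) \iso \coprod_{j \ge m} \LOmega^{+j}_{B/A}$ for $m \ge 0$. By \cref{dg--dr--und-gr} together with \cref{dc--eg--gf-lsym}, $\LOmega^{+j}_{B/A} \iso \LSym^j_B(\cot_{B/A}[1]) \iso (\bigwedge^j_B \cot_{B/A})[j]$, and $\cot_{B/A} \iso B \otimes_A M$ by \cref{dc--ct--free}, which is connective since $A$, $B$, and $M$ are; hence $\bigwedge^j_B \cot_{B/A}$ is connective and $\LOmega^{+j}_{B/A} \in \cat{C}_{\ge j} \subseteq \cat{C}_{\ge m}$ for $j \ge m$ (and $\LOmega^{+j}_{B/A} \iso 0$ for $j < 0$). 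Since $\cat{C}_{\ge m}$ is closed under coproducts, $\ev^m(\HH_\fil(B/A)) \in \cat{C}_{\ge m}$, as needed.

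The substantive input is \cref{fc--hh--free-split}, which is already proved; the rest of the argument is formal. The only genuine subtlety — and the thing one has to get right — is the one noted above: because completeness is not preserved by colimits, the proof must pass through the connectivity estimate $\ev^m(\HH_\fil(B/A)) \in \cat{C}_{\ge m}$ and the inverse-limit vanishing, which is precisely where the hypothesis that $A$ is connective (to ensure $\cot_{B/A}$ is connective) and that the t-structure on $\cat{C}$ is left separated (to conclude $\bigcap_n \cat{C}_{\ge n} = \{0\}$) are used.
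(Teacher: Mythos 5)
Your proof is correct and follows essentially the same strategy as the paper's: both reduce to the colimit-stable connectivity estimate $\ev^m(\HH_\fil(B/A)) \in \cat{C}_{\ge m}$ and then handle the free case via \cref{fc--hh--free-split} and the identification of the graded pieces from \cref{fc--hh--conceptual-hkr,dg--dr--und-gr}. The only difference is cosmetic: you spell out the Milnor-type argument for why the connectivity estimate together with left separatedness forces completeness, whereas the paper compresses this to a one-line appeal to left separatedness.
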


\begin{proof}
  We will show, for such $A$ and $B$, that $\HH_\fil(B/A)^i$ is $i$-connective for all $i \ge 0$, i.e. $\HH_\fil(B/A)$ is connective in the Postnikov t-structure on $\Fil(\Mod_A)$; this implies the claim by the left separatedness hypothesis. Let $\cat{X} \subseteq \DAlg_A^\cn$ denote the full subcategory spanned by those $B$ for which $\HH_\fil(B/A)$ satisfies this connectivity condition. By \cref{fc--hh--conceptual-hkr,dg--dr--und-gr}, we know that $\grop^i \HH_\fil(B/A)$ is $i$-connective for all $B \in \DAlg_A^\cn$. Combining this with \cref{fc--hh--free-split}, we see that $\cat{X}$ contains $\LSym_A(M)$ for all $M \in \Mod_A^\cn$. Since $\cat{X}$ is closed under colimits, this implies that $\cat{X} = \DAlg_A^\cn$, as any object of $\DAlg_A^\cn$ can be written as a geometric realization of such free objects (e.g. by the bar resolution).
\end{proof}

Combining \cref{fc--hh--complete} with \cref{fc--hh--conceptual-hkr} (and specializing to the context $\cat{C} = \Mod_{\num{Z}}$), we have now fully proved \cref{in--HH--thm}. We may also verify that our definition of the HKR filtration agrees with the usual one:

\begin{remark}
  \label{fc--hh--hkr}
  Suppose that $\cat{C} = \Mod_{\num{Z}}$ and that $A$ is an ordinary commutative ring. Consider the composition of functors
  \begin{equation}
    \label{fc--hh--hkr--functor} \tag{$*$}
    \DAlg_A^\cn \inj \DAlg_A \lblto{\HH_\fil(-/A)} \Fil_\cir^{\ge0}\DAlg_A \lblto{U} \Fil\CAlg_A,
  \end{equation}
  where $U$ denotes the forgetful functor. Each of these functors preserves colimits, so the composite is the left derived functor of its restriction to the full subcategory $\Poly_A \subseteq \DAlg_A^\cn$ spaned by the finitely generated polynomial $A$-algebras. By virtue of \cref{fc--hh--conceptual-hkr,fc--hh--complete}, the restriction to $\Poly_A$ factors through the full subcategory $(\Fil\CAlg_A)_{\ge 0}^\post \subseteq \Fil\CAlg_A$ spanned by those objects that are connective for the Postnikov t-structure. Using the adjunction
  \[
    \colim : (\Fil\CAlg_A)_{\ge 0}^\post \fromto \CAlg_A : \tau_{\ge\star},
  \]
  we obtain a natural comparison map $\alpha : \HH_\fil(B/A) \to \tau_{\ge\star}(\colim(\HH_\fil(B/A)))$ for $B \in \Poly_A$. It follows from \cref{fc--hh--conceptual-hkr} that the target can be identified with $\tau_{\ge\star}(\HH(B/A))$. Invoking also \cref{dg--dr--und-gr} and the fact that $\cot_{B/A}$ is a free $B$-module for $B \in \Poly_A$, we find that $\gr^i(\HH_\fil(B/A))$ has homotopy concentrated in degree $i$ for all $i \ge 0$; this implies that $\alpha$ must be an equivalence. We conclude that the functor \cref{fc--hh--hkr--functor} agrees with the usual definition of the HKR filtration on Hochschild homology for simplicial commutative $A$-algebras.
\end{remark}


\subsection{Filtered orbits, fixed points, and Tate construction}
\label{fc--ta}

In this subsection, we specialize the constructions of \cref{bi--ta} to obtain notions of orbits, fixed points, and the Tate construction in the setting of filtered $\cir$-actions, and discuss their behavior upon taking colimits and associated graded objects. Applying these constructions to HKR-filtered Hochschild homology, we will obtain filtrations on cyclic, negative cyclic, and periodic cyclic homology.

\begin{remark}
  \label{fc--ta--circle-dual}
  Recall that there is a canonical equivalence of $\lincir$-modules $\dual\lincir \iso \lincir[-1]$. The (lax symmetric monoidal) Postnikov filtration functor $\tau_{\ge\star}$ carries this to an equivalence of $\lincir_\fil$-modules $\dual\lincir_\fil \iso \lincir_\fil[-1](-1)$. It follows that, for any stable presentable $\num{Z}$-linear symmetric monoidal $\infty$-category $\cat{C}$, the cocommutative bialgebra $\lincir_\fil$ in $\Fil(\cat{C})$ satisfies the assumptions of \cref{bi--ta}, where we take $\omega_{\lincir_\fil} = \unit_{\cat{C}}[1](1)$ for $\unit_{\cat{C}}$ the unit object of $\cat{C}$. Thus, we have a norm map and Tate construction for filtered objects of $\cat{C}$ with filtered $\cir$-action.
\end{remark}

\begin{notation}
  \label{fc--ta--orbfixtate}
  Let $\cat{C}$ be a stable presentable $\num{Z}$-linear symmetric monoidal $\infty$-category. For $X \in \Fun(\clspc\cir,\cat{C}) \iso \Mod_\lincir(\cat{C})$, we let
  \[
    X_{\lincir}, \quad X^{\lincir}, \quad X^{\tate \lincir} \quad\in\ \cat{C}
  \]
  denote the orbits, fixed points, and Tate construction, using the definitions of \cref{bi--ta} for the cocommutative bialgebra $\lincir$ in $\cat{C}$; these agree with the usual homotopy $\cir$-orbits, homotopy $\cir$-fixed points, and $\cir$-Tate construction. For $X \in \Fil_\cir(\cat{C})$, we let
  \[
    X_{\lincir_\fil}, \quad X^{\lincir_\fil}, \quad X^{\tate \lincir_\fil}
    \quad\in\ \Fil(\cat{C})
  \]
  denote the orbits, fixed points, and Tate construction, using the definitions of \cref{bi--ta} for the cocommutative bialgebra $\lincir_\fil$ in $\Fil(\cat{C})$; we refer to these as the \emph{filtered $\cir$-orbits, filtered $\cir$-fixed points, and filtered $\cir$-Tate construction}.
\end{notation}

\begin{proposition}
  \label{fc--ta--gr}
  Let $\cat{C}$ be a stable presentable $\num{Z}$-linear symmetric monoidal $\infty$-category. Then, for $X \in \Fil_{\cir}(\cat{C})$, there are canonical natural equivalences
  \begin{align*}
    \gr(X_{\lincir_\fil}) &\iso \und(|\gr(X)|^{\le\star})[2*], \\
    \gr(X^{\lincir_\fil}) &\iso \und(|\gr(X)|^{\ge\star})[2*], \\
    \gr(X^{\tate \lincir_\fil}) &\iso \delta(|\gr(X)|)[2*]
  \end{align*}
  in $\Gr(\cat{C})$, where:
  \begin{itemize}
  \item we regard $\gr(X)$ as an object of $\DG_+(\cat{C})$ by \cref{fc--fc--gr};
  \item the functors $|-|^{\le\star}, |-|^{\ge\star} : \DG_+(\cat{C}) \to \Fil(\cat{C})$ and $|-| : \DG_+(\cat{C}) \to \cat{C}$ are as defined in \cref{dg--co--cohomology-type};
  \item the functor $\und : \Fil(\cat{C}) \to \Gr(\cat{C})$ is as defined in \cref{gf--df--gf}\cref{gf--df--gf--und};
  \item $\delta$ denotes the diagonal functor $\cat{C} \to \Gr(\cat{C})$;
  \item the functor $[2*] : \Gr(\cat{C}) \to \Gr(\cat{C})$ is as defined in \cref{gf--t--shift}.
  \end{itemize}

\end{proposition}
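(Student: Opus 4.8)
The plan is to reduce the claim to the analysis of $\hminus$ cochain complexes carried out in \cref{dg--co}, via the $[\pm 2*]$ shift equivalences and the compatibility of the Tate construction with symmetric monoidal functors (\cref{bi--ta--natural}). First I would set up the key commuting square: by \cref{fc--fc--gr}, the associated graded functor $\gr : \Fil(\cat{C}) \to \Gr(\cat{C})$ carries the cocommutative bialgebra $\lincir_\fil$ to $\num{D}_+$, so it induces a symmetric monoidal functor $\Fil_\cir(\cat{C}) = \Mod_{\lincir_\fil}(\Fil(\cat{C})) \to \Mod_{\num{D}_+}(\Gr(\cat{C})) = \DG_+(\cat{C})$ (by \cref{bi--tn--functorial}\cref{bi--tn--functorial--category}), and moreover $\gr$ carries the equivalence $\dual\lincir_\fil \iso \lincir_\fil[-1](-1)$ to the equivalence $\dual{\num{D}_+} \iso \num{D}_+[-1](-1)$ of \cref{dg--co--dual}, so that $\gr(\omega_{\lincir_\fil}) \iso \omega_{\num{D}_+}$. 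Since $\gr$ preserves colimits and limits (it is computed via a cofiber sequence $X^{n+1} \to X^n \to \gr^n(X)$, and in fact $\gr \iso \unit^\gr \otimes_{\unit^\gr[t]} \und(-)$ by \cref{gf--kd--und}, which exhibits it as a base change along a map exhibiting $\unit^\gr$ as dualizable by \cref{gf--kd--gfdual}, hence right-t-exact and limit-preserving as well), \cref{bi--ta--natural} applies to give natural transformations $\gr(X_{\lincir_\fil}) \to \gr(X)_{\num{D}_+}$, $\gr(X^{\lincir_\fil}) \to \gr(X)^{\num{D}_+}$, $\gr(X^{\tate\lincir_\fil}) \to \gr(X)^{\tate\num{D}_+}$, all of which are equivalences precisely because $\gr$ preserves both colimits and limits.

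Having reduced to computing $\gr(X)_{\num{D}_+}$, $\gr(X)^{\num{D}_+}$, and $\gr(X)^{\tate\num{D}_+}$ for $Y := \gr(X) \in \DG_+(\cat{C})$, I would then transport through the equivalence $[-2*] : \DG_+(\cat{C}) \isoto \DG_-(\cat{C})$ of \cref{dg--pm--shift}. By \cref{bi--ta--natural} again (applied to the symmetric monoidal equivalence $[-2*]$, which sends $\num{D}_+$ to $\num{D}_-$ and $\omega_{\num{D}_+}$ to $\omega_{\num{D}_-}[-2*]$), we get $(Y_{\num{D}_+})[-2*] \iso (Y[-2*])_{\num{D}_-}$ and similarly for fixed points and the Tate construction, equivalently $Y_{\num{D}_+} \iso ((Y[-2*])_{\num{D}_-})[2*]$, etc. Now \cref{dg--co--orbits-fixed} identifies $(Y[-2*])^{\num{D}_-} \iso \und(|Y[-2*]|^{\ge\star})$ and $(Y[-2*])_{\num{D}_-} \iso \und(|Y[-2*]|^{\le\star})$, and \cref{dg--co--tate}\cref{dg--co--tate--minus} together with its proof identifies $(Y[-2*])^{\tate\num{D}_-} \iso \delta_\gr(|Y[-2*]|)$. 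By the very definition of the cohomology type in \cref{dg--co--cohomology-type}, $|Y[-2*]|^{\ge\star} = |Y|^{\ge\star}$ and $|Y[-2*]| = |Y|$ for $Y \in \DG_+(\cat{C})$, and likewise $|Y[-2*]|^{\le\star} = |Y|^{\le\star}$ by the definition of $|-|^{\le\star}$ in \cref{dg--co--other-brutal} (which is built functorially from $|-|^{\ge\star}$ and $\delta_\fil$). Substituting and re-applying $[2*]$ yields exactly the three asserted formulas, with $|-|$ here meaning the $\hplus$ cohomology type and $Y = \gr(X)$.

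The one point requiring a little care—and the place I expect the main (though modest) obstacle—is bookkeeping the shift functors: one must check that $[2*]$ commutes past $\und$ and $\delta$ in the expected way, i.e. that $\und(Z[2*]) \iso \und(Z)[2*]$ for $Z \in \Fil(\cat{C})$ (where the two $[2*]$'s are the graded-object shifts, applied after restriction along $\num{Z}^\ds \to \num{Z}^\op$) and $\delta_\gr(W)[2*]$ matches the composite $W \mapsto \delta(W) \mapsto \delta(W)[2*]$; both are straightforward from the construction of $[2*]$ in \cref{gf--t--shift} as a Day-convolution-compatible functor, but they should be stated. One should also confirm that the $\omega$'s match up under the two applications of \cref{bi--ta--natural}: the first (for $\gr$) needs $\gr(\omega_{\lincir_\fil}) = \gr(\unit_{\cat{C}}[1](1)) \iso \unit_{\cat{C}}[1](1) = \omega_{\num{D}_+}$, which holds since $\gr$ is symmetric monoidal and $\gr(\unit_{\cat{C}}(1)) \iso \unit_{\cat{C}}(1)$; the second (for $[-2*]$) needs $[-2*](\omega_{\num{D}_+}) = [-2*](\unit[1](1)) \iso \unit[1](1)[-2] = \unit[-1](1) = \omega_{\num{D}_-}$, which is \cref{dg--co--dual}. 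Assembling these pieces gives the naturality of all three equivalences, since each is a composite of equivalences that are manifestly natural in $X$.
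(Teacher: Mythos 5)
The proposal is correct and takes essentially the same approach as the paper: use that $\gr$ preserves small limits and colimits together with \cref{bi--ta--natural} to reduce to identifying $\gr(X)_{\num{D}_+}$, $\gr(X)^{\num{D}_+}$, $\gr(X)^{\tate\num{D}_+}$, and then invoke \cref{dg--co--tate} and \cref{dg--co--orbits-fixed}. You are simply more explicit than the paper's proof in spelling out the transport across $[-2*] : \DG_+(\cat{C}) \isoto \DG_-(\cat{C})$ (a second application of \cref{bi--ta--natural}) that is needed to apply \cref{dg--co--orbits-fixed} — stated only for $\hminus$ complexes — to $\gr(X) \in \DG_+(\cat{C})$, and in tracking the dualizing objects $\omega$; the paper leaves these routine verifications implicit.
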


\begin{proof}
  The symmetric monoidal functor $\gr : \Fil(\cat{C}) \to \Gr(\cat{C})$ preserves all small limits and colimits, so, by \cref{bi--ta--natural}, we have canonical natural equivalences
  \[
    \gr(X_{\lincir_\fil}) \iso \gr(X)_{\num{D}_+}, \quad
    \gr(X^{\lincir_\fil}) \iso \gr(X)^{\num{D}_+}, \quad
    \gr(X^{\tate\lincir_\fil}) \iso \gr(X)^{\tate\num{D}_+}.
  \]
  The claim now follows from \cref{dg--co--tate,dg--co--orbits-fixed}.
\end{proof}

\begin{remark}
  \label{fc--ta--gr-lax}
  In the situation of \cref{fc--ta--gr}, it follows from \cref{dg--co--compare-lax} that the natural equivalence $\gr(X^{\lincir_\fil}) \iso \und(|\gr(X)|^{\ge\star})[2*]$ is canonically one of lax symmetric monoidal functors, and that the same goes for the equivalence $\gr(X^{\tate \lincir_\fil}) \iso \delta(|\gr(X)|)[2*]$.
\end{remark}

\begin{proposition}
  \label{fc--ta--colim}
  Let $\cat{C}$ be a stable presentable $\num{Z}$-linear symmetric monoidal $\infty$-category. Then, for $X \in \Fil_\cir(\cat{C})$, there are a canonical natural equivalence
  \[
    \colim(X_{\lincir_\fil}) \iso \colim(X)_{\lincir}
  \]
  and canonical natural transformations
  \[
    \colim(X^{\lincir_\fil}) \to \colim(X)^{\lincir},
    \quad
    \colim(X^{\tate \lincir_\fil}) \to \colim(X)^{\tate \lincir},
  \]
  where here we regard $\colim(X)$ as an object of $\Fun(\clspc\cir,\cat{C}) \iso \Mod_\lincir(\cat{C})$ by \cref{fc--fc--colim}. In the situation that $\cat{C}$ is equipped with a right separated t-structure $(\cat{C}_{\ge 0},\cat{C}_{\le 0})$, the latter two transformations are equivalences whenever $X$ satisfies the following two conditions:
  \begin{enumerate}
  \item \label{fc--ta--colim--nonneg} $X$ is nonnegatively filtered, i.e. $\grop^i(X) \iso 0$ for all $i < 0$;
  \item \label{fc--ta--colim--coconn} there exists an $a \ge 0$ such that $\grop^i(X) \in \cat{C}_{\le 2i+a}$ for all $i \ge 0$.
  \end{enumerate}
\end{proposition}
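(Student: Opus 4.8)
The plan is as follows. The first equivalence and the two natural transformations come formally from \cref{bi--ta--natural}: the colimit functor $\colim\colon\Fil(\cat{C})\to\cat{C}$ (left adjoint to $\delta$) is symmetric monoidal, preserves all colimits, and carries the cocommutative bialgebra $\lincir_\fil$ to $\lincir$ with $\omega_{\lincir_\fil}=\unit[1](1)$ sent to $\omega_\lincir=\unit[1]$; applying loc.\ cit.\ with $F=\colim$, colimit-preservation makes $\phi_{\lincir_\fil}\colon\colim(X)_\lincir\to\colim(X_{\lincir_\fil})$ an equivalence (giving the first assertion), while $\phi^{\lincir_\fil}$ and $\phi^{\tate\lincir_\fil}$ are the desired transformations, and they fit into a compatible map of norm cofiber sequences. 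Since $\colim$ is exact, $\phi^{\lincir_\fil}$ is an equivalence if and only if $\phi^{\tate\lincir_\fil}$ is, so it suffices to prove, under \cref{fc--ta--colim--nonneg}--\cref{fc--ta--colim--coconn} and with $\cat{C}$ right separated, that $\phi^{\lincir_\fil}$ is an equivalence.

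To that end I would show that $\fib(\phi^{\lincir_\fil})$ lies in $\cat{C}_{\le 2i+a-2}$ for every $i\le 0$, whence it vanishes by right separatedness ($\bigcap_i\cat{C}_{\le 2i+a-2}=0$). Fix $i\le 0$: the structure map $(X^{\lincir_\fil})^i\to\colim(X^{\lincir_\fil})$ and the composite $(X^{\lincir_\fil})^i\to\colim(X^{\lincir_\fil})\lblto{\phi^{\lincir_\fil}}\colim(X)^\lincir$ have fibers $F_1$ and $F_2$, and the octahedral axiom identifies $\fib(\phi^{\lincir_\fil})$ with $\cofib(F_1\to F_2)$; so it is enough to place both $F_1,F_2$ in $\cat{C}_{\le 2i+a-2}$. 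For $F_1$ one uses \cref{fc--ta--gr}: $\gr^j(X^{\lincir_\fil})\iso(|\gr(X)|^{\ge j})[2j]$, and since \cref{fc--ta--colim--nonneg}--\cref{fc--ta--colim--coconn} force $\gr^k(X)[-2k]\in\cat{C}_{\le a}$ for all $k$ while $|\gr(X)|^{\ge\star}$ is complete, one gets $|\gr(X)|^{\ge j}\in\cat{C}_{\le a}$, hence $\gr^j(X^{\lincir_\fil})\in\cat{C}_{\le a+2j}$ for all $j$; a short computation with these associated graded pieces, using that $\cat{C}_{\le 0}$ is closed under filtered colimits, gives $F_1\in\cat{C}_{\le a+2i-2}$.

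Bounding $F_2$ is the crux. By \cref{bi--ta--fixed-formula}, $X^{\lincir_\fil}=\lim_{[n]\in\Delta}C^\bullet$ and $\colim(X)^\lincir=\lim_{[n]\in\Delta}\colim(C^\bullet)$ with $C^n=(\dual\lincir_\fil)^{\ostar n}\ostar X$, and the structure maps $(C^n)^i\to\colim(C^n)$ induce the composite above, so $F_2=\lim_{[n]\in\Delta}\fib\big((C^n)^i\to\colim(C^n)\big)$. Now $\gr(C^n)\iso(\dual{\num{D}}_+)^{\oast n}\oast\gr(X)$, and $(\dual{\num{D}}_+)^{\oast n}$ is concentrated in gradings $-n,\dots,0$ with free associated graded pieces; hypothesis \cref{fc--ta--colim--nonneg} therefore forces $C^n$ to be supported in filtration degrees $\ge -n$, so $(C^n)^i\to\colim(C^n)$ is an equivalence whenever $i\le -n$, while for $n>-i$ hypothesis \cref{fc--ta--colim--coconn} yields $\fib\big((C^n)^i\to\colim(C^n)\big)\in\cat{C}_{\le 2i+n+a-2}$. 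Feeding this into the totalization tower computing $F_2$, where the $n$-th normalized term enters through $\Omega^n$ and so contributes in $\cat{C}_{\le(2i+n+a-2)-n}=\cat{C}_{\le 2i+a-2}$ uniformly in $n$ (and trivially for $n\le -i$), one concludes that each partial totalization, hence $F_2$ itself, lies in $\cat{C}_{\le 2i+a-2}$.

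Combining the two bounds through the octahedral axiom gives $\fib(\phi^{\lincir_\fil})\in\cat{C}_{\le 2i+a-2}$ for all $i\le 0$, so $\phi^{\lincir_\fil}$, and hence $\phi^{\tate\lincir_\fil}$, is an equivalence. \textbf{The main obstacle} is the bound on $F_2$: without \cref{fc--ta--colim--coconn} the map $\phi^{\lincir_\fil}$ need not be an equivalence, since the filtered colimit $\colim$ does not commute with infinite totalizations in general; the argument closes only because the linear coconnectivity growth rate ``$2i+a$'' of \cref{fc--ta--colim--coconn} exactly cancels the $\Omega^n$ shifts produced by the bar complex of $\dual\lincir_\fil$---which relies on $\gr(\dual\lincir_\fil)=\dual{\num{D}}_+$ being concentrated in the two gradings $0$ and $-1$---while \cref{fc--ta--colim--nonneg} is what truncates the relevant cosimplicial objects in low cosimplicial degree so that the totalization tower is well-behaved.
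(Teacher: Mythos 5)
Your proof is correct and rests on the same key computation as the paper's: the cosimplicial formula from \cref{bi--ta--fixed-formula}, the fact that $\gr(\dual\lincir_\fil) \iso \dual{\num{D}}_+$ is concentrated in gradings $\{-1,0\}$ (so that level $n$ contributes only in filtration degrees $\ge -n$), and the exact cancellation of the $\Omega^n$ shift in the totalization tower against the linear coconnectivity growth from \cref{fc--ta--colim--coconn}. The one structural difference is your octahedral decomposition $\fib(\phi^{\lincir_\fil}) \iso \cofib(F_1 \to F_2)$, which obliges you to bound $F_1$ separately. The paper sidesteps this: it works with the cofiber $Z = \cofib(\phi^{\lincir_\fil})$ and observes, since $\colim$ is exact, that $Z \iso \colim_n Z^{-n}$ where $Z^{-n} := \cofib\big((X^{\lincir_\fil})^{-n} \to \colim(X)^{\lincir}\big)$ is exactly your $F_2$ (at $i = -n$, shifted by $1$); once each $Z^{-n}$ is shown to be $(a-2n-2)$-coconnective, the closure of $\cat{C}_{\le m}$ under filtered colimits immediately gives $Z \in \bigcap_N \cat{C}_{\le a-2N-2} = 0$. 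So your $F_1$ estimate---while correct and harmless---is superfluous; identifying the cofiber with a sequential colimit of the $F_2$-terms lets one skip it. (Your constants are off by one in a couple of places, e.g.\ $\fib = \cofib[-1]$ improves the bound, and $\cofib(F_1\to F_2)$ lands one degree higher than you wrote; neither affects the conclusion as both bounds still tend to $-\infty$.)
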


\begin{proof}
  Everything except the last sentence is immediate from \cref{bi--ta--natural}. Let us address the last sentence. Suppose given a right separated t-structure on $\cat{C}$ and $X \in \Fil_\cir(\cat{C})$ satisfying \cref{fc--ta--colim--nonneg} and \cref{fc--ta--colim--coconn}. Set $Y := \colim(X) \iso X^0$. It suffices to show that the natural map $\colim(X^{\lincir_\fil}) \to Y^\lincir$ is an equivalence (the claim for the Tate construction follows in light of its definition in terms of orbits and fixed points); we will show that its cofiber $Z$ vanishes. By \cref{bi--ta--fixed-formula}, we have
  \[
    Y^\lincir \iso \lim(\cosimpl{Y}{Y \oplus Y[-1]}{Y \oplus (Y[-1])^{\oplus 2} \oplus Y[-2]} )
  \]
  and, for each $n \ge 0$, we have
  \[
    (X^{\lincir_\fil})^{-n} \iso \lim(\cosimpl{X^{-n}}{X^{-n} \oplus X^{-n+1}[-1]}{X^{-n} \oplus (X^{-n+1}[-1])^{\oplus 2} \oplus X^{-n+2}[-2]} ).
  \]
  Consider the cofiber $Z^{-n}$ of the natural map $(X^{\lincir_\fil})^{-n} \to Y^{\lincir}$, which in terms of the above formulas is induced by the canonical maps $\theta^k : X^k \to Y$. Our assumptions on $X$ imply that $\theta^k$ is an equivalence for $k \le 0$ and that $\cofib(\theta^k)$ is $(2(k-1)+a)$-coconnective for $k \ge 1$. It follows that $Z^{-n}$ is the limit of a cosimplicial diagram which is zero up through level $n$, and at level $(n+i)$ for $i \ge 1$ is $(i+a-n-2)$-coconnective (here $i+a-n-2$ appears as $2(i-1)+a - (n+i)$). Examining the Bousfield--Kan spectral sequence, we deduce that the limit $Z^{-n}$ is $(a-2n-2)$-coconnective. Hence $Z \iso \colim_{n \to \infty} Z^{-n} \iso 0$, as desired.
\end{proof}

\begin{remark}
  \label{fc--ta--colim-lax}
  In the situation of \cref{fc--ta--colim}, recall from \cref{bi--ta--natural} that the natural transformation $\colim(X^{\lincir_\fil}) \to \colim(X)^{\lincir}$ is canonically one of lax symmetric monoidal functors, and the same goes for the transformation $\colim(X^{\tate\lincir_\fil}) \to \colim(X)^{\tate\lincir}$.
\end{remark}

\begin{proposition}
  \label{fc--ta--complete}
  Let $\cat{C}$ be a stable presentable $\num{Z}$-linear symmetric monoidal $\infty$-category. Let $X \in \Fil_\cir(\cat{C})$. Then:
  \begin{enumerate}
  \item \label{fc--ta--complete--fixed} If the underlying filtered object of $X$ is complete, the same holds for $X^{\lincir_\fil} \in \Fil(\cat{C})$.
  \end{enumerate}
  Assuming that $\cat{C}$ is equipped with a t-structure $(\cat{C}_{\ge 0},\cat{C}_{\le0})$, we also have the following:
  \begin{enumerate}[resume]
  \item \label{fc--ta--complete--orbits} If $X^i$ is $i$-connective for all $i \ge 0$, the same holds for $X_{\lincir_\fil} \in \Fil(\cat{C})$; in particular, if the t-structure on $\cat{C}$ is left separated, this condition implies that $X_{\lincir_\fil}$ is complete, which in combination with \cref{fc--ta--complete--fixed} implies that $X^{\tate\lincir_\fil}$ is complete.
  \end{enumerate}
\end{proposition}

\begin{proof}
  Statement \cref{fc--ta--complete--fixed} follows from the formula in \cref{bi--ta--fixed-formula} for $X^{\lincir_\fil}$ and the fact that completeness is stable under the formation of limits. Statement \cref{fc--ta--complete--orbits} follows similarly using the formula for the relative tensor product $X_{\lincir_\fil} = X \otimes_{\lincir_\fil} \unit_{\Fil(\cat{C})}$ as a geometric realization and the fact that $i$-connectivity is stable under the formation of colimits for any $i \ge 0$.
\end{proof}

We now explain how these results supply a proof of \cref{in--HH--fils}\cref{in--HH--fils--HP}.

\begin{example}
  \label{fc--ta--hh}
  Let $\cat{C}$ be a derived algebraic context. Let $A \in \DAlg(\cat{C})$ and let $B$ be a derived commutative $A$-algebra. Then \emph{cyclic, negative cyclic, and periodic cyclic homology of $B$ over $A$} are defined as follows:
  \[
    \HC(B/A) := \HH(B/A)_\lincir, \qquad
    \HN(B/A) := \HH(B/A)^\lincir, \qquad
    \HP(B/A) := \HH(B/A)^{\tate\lincir},
  \]
  where $\HH(B/A)$ denotes the Hochschild homology of $B$ over $A$ (\cref{fc--hh--hh}). Now let $\HH_\fil(B/A)$ denote HKR-filtered Hochschild homology (\cref{fc--hh--hh-fil}), and define
  \[
    \HC_\fil(B/A) := \HH_\fil(B/A)_{\lincir_\fil}, \quad
    \HN_\fil(B/A) := \HH_\fil(B/A)^{\lincir_\fil}, \quad
    \HP_\fil(B/A) := \HH_\fil(B/A)^{\tate \lincir_\fil}.
  \]
  We may understand these filtered objects using the results of this subsection, together with the identifications $\colim(\HH_\fil(B/A)) \iso \HH(B/A)$ and $\gr(\HH_\fil(B/A)) \iso \LOmega^\bul_{B/A}$ of \cref{fc--hh--conceptual-hkr}.

  Firstly, \cref{fc--ta--colim} supplies a canonical equivalence and maps
  \[
    \colim(\HC_\fil(B/A)) \iso \HC(B/A),
  \]
  \[
    \colim(\HN_\fil(B/A)) \to \HN(B/A), \quad
    \colim(\HP_\fil(B/A)) \to \HP(B/A),
  \]
  and says that the latter two maps are equivalences whenever there exists an $a \ge 0$ such that $\gr^i(\HH_\fil(B/A)) \iso \LOmega^{+i}_{B/A} \iso (\bigwedge^i_B \cot_{B/A})[i]$ is $(2i+a)$-truncated for all $i \ge 0$. In the case $\cat{C} = \Mod_{\num{Z}}$, one can check that this condition holds whenever $A$ and $B$ are connective, $B$ is truncated (i.e. $\pi_k(B) \iso 0$ for $k \gg 0$), and $\cot_{B/A}$ has Tor-amplitude contained in $[0,1]$; for example, this includes the case where $A$ is an ordinary commutative ring and $B$ is quasi-smooth over $A$ (in the sense of \cite[Definition 3.4.15]{lurie--thesis}).

  Secondly, \cref{fc--ta--gr}, together with the equivalence $\gr(\HH_\fil(B/A)) \iso \LOmega_{B/A}^{+\bul}$, supplies canonical identifications
  \[
    \gr(\HC_\fil(B/A)) \iso \cplle{\dR_{B/A}}{*}[2*], \quad
    \gr(\HN_\fil(B/A)) \iso \cplge{\dR_{B/A}}{*}[2*], \quad
    \gr(\HP_\fil(B/A)) \iso \cpl\dR_{B/A}[2*],
  \]
  where $\cpl\dR_{B/A}$ and $\cplge{\dR_{B/A}}{*}$ denote Hodge-completed derived de Rham cohomology of $B$ over $A$ and the Hodge filtration thereon (see \cref{dg--dr--classical-compare,dg--dr--cohomology}); here we have left the underlying graded functor ($\und$) and the diagonal functor ($\delta$) implicit.

  Finally, \cref{fc--ta--complete}, together with \cref{fc--hh--complete}, implies that the filtered objects $\HC_\fil(B/A)$, $\HN_\fil(B/A)$, and $\HP_\fil(B/A)$ are complete whenever $A$ and $B$ are connective.
\end{example}


\subsection{Application: Adams operations}
\label{fc--ad}

Our goal in this subsection will be to analyze the effect of the Adams operations on the graded pieces of the filtrations on Hochschild homology and negative cyclic homology discussed in the previous two subsections. The same ideas can be applied to cyclic and periodic cyclic homology, but we shall leave this to the interested reader.

Let us begin by recalling how Adams operations are defined on Hochschild homology (here we follow the perspective of \cite{mccarthy--operations}, as in \cite{msv--THH,abghlm--norm}; see \cite{loday--operations,gerstenhaber-schack--hodge} for earlier accounts). Throughout this subsection, we fix an integer $\ell \ge 2$, a derived algebraic context $\cat{C}$ such that $\ell$ is invertible in $\pi_0\End_{\cat{C}}(\unit_{\cat{C}})$ (where $\unit_{\cat{C}}$ denotes the unit object of $\cat{C}$), and a map of derived commutative algebras $A \to B$ in $\cat{C}$.

\begin{notation}
  \label{fc--ad--setup}
  Let $\cat{D}$ be a symmetric monoidal $\infty$-category and let $T$ be a bicommutative bialgebra in $\cat{D}$. We let $[\ell] : T \to T$ denote the \emph{$\ell$-th power map} of bicommutative bialgebras, obtained by composing the $\ell$-fold comultiplication map $T \to T^{\otimes \ell}$ with the $\ell$-fold multiplication map $T^{\otimes \ell} \to T$.

  The cases of interest to us are when $\cat{D} = \Spc$ and $T = \cir$, when $\cat{D} = \cat{C}$ and $T = \lincir$, when $\cat{D} = \Fil(\cat{C})$ and $T = \lincir_\fil$, and when $\cat{D} = \Gr(\cat{C})$ and $T = \num{D}_+$.
\end{notation}

\begin{notation}
  \label{fc--ad--reparam}
  Let $\cat{D}$ be a symmetric monoidal $\infty$-category, let $T$ be a bicommutative bialgebra in $\cat{D}$, and let $\phi : T \to T$ be a map of bicommutative bialgebras in $\cat{D}$. Then we denote the corresponding restriction functor $\Mod_A(\cat{D}) \to \Mod_A(\cat{D})$ by $M \mapsto M^\phi$. Recall from \cref{bi--tn--functorial} that this restriction functor is canonically symmetric monoidal.

  In the situation that $T$ is dualizable, we have from \cref{bi--du} a dual bicommutative bialgebra structure on $\dual{T}$ and a symmetric monoidal equivalence $\Mod_T(\cat{D}) \iso \cMod_{\dual{T}}(\cat{D})$. Under this equivalence, restriction in $\phi$ corresponds to corestriction in the dual map of bicommutative bialgebras $\dual{\phi} : \dual{T} \to \dual{T}$. If $\cat{D}$ is in fact a derived algebraic context, $\dual{T}$ a derived bicommutative bialgebra in $\cat{D}$, and $\dual{\phi}$ a map of such, there is in addition an induced restriction functor on derived commutative algebra objects, $(-)^\phi : \DAlg(\Mod_A(\cat{D})) \to \DAlg(\Mod_A(\cat{D}))$ (our notation here is as in \cref{dc--df--dcbi-dual}). This applies in the last three examples listed in \cref{fc--ad--setup}.
\end{notation}

\begin{construction}
  \label{fc--ad--HH-ad}
  Restricting $\HH(B/A) \in \Fun(\clspc\cir,\DAlg_A)$ along the $\ell$-th power map $[\ell] : \cir \to \cir$ gives an object $\HH(B/A)^{[\ell]} \in \Fun(\clspc\cir,\DAlg_A)$, whose underlying derived commutative algebra is the same as that of $\HH(B/A)$ (i.e. the difference is only in the $\cir$-actions). Thus, the canonical map of derived commutative algebras $A \to \HH(B/A)$ can also be regarded as a map of derived commutative algebras $A \to \HH(B/A)^{[\ell]}$. By the definition/universal property of $\HH(B/A)$, this extends to a unique map $\psi^\ell : \HH(B/A) \to \HH(B/A)^{[\ell]}$ in $\Fun(\clspc\cir,\DAlg_A)$, which we refer to as the \emph{$\ell$-th Adams operation on $\HH(B/A)$}.
\end{construction}

We can run the same construction in the filtered setting to see that the Adams operation is compatible with the HKR filtration on Hochschild homology:

\begin{proposition}
  \label{fc--ad--HH-fil-ad}
  The $\ell$-th Adams operation $\psi^\ell : \HH(B/A) \to \HH(B/A)^{[\ell]}$ promotes uniquely to a map $\psi^\ell_\fil : \HH_\fil(B/A) \to \HH_\fil(B/A)^{[\ell]}$ in $\Fil_\cir^{\ge0}\DAlg_A$.
\end{proposition}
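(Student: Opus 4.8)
The plan is to deduce the statement formally from the universal property defining $\HH_\fil(-/A)$ (\cref{fc--hh--adj,fc--hh--hh-fil}), together with the functoriality of the filtered circle under $\colim$ and under restriction along maps of bicommutative bialgebras.

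\emph{Setup.} First I would record the reparametrization functor in the filtered setting. By \cref{fc--ad--setup} the $\ell$-th power map on $\lincir_\fil$ is a map of bicommutative bialgebras in $\Fil(\cat{C})$; by \cref{fc--fc--fc-unique}\cref{fc--fc--fc-unique--derived} and \cref{bi--du--main-cor} its dual $\dual{[\ell]} \colon \dual\lincir_\fil \to \dual\lincir_\fil$ is a map of derived bicommutative bialgebras, so, as explained in \cref{fc--ad--reparam}, restriction along it gives a symmetric monoidal functor $(-)^{[\ell]} \colon \Fil_\cir^{\ge0}\DAlg_A \to \Fil_\cir^{\ge0}\DAlg_A$. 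I would extract two properties. (i) By \cref{bi--tn--functorial} this functor commutes with the forgetful functor to $\Fil^{\ge0}\DAlg_A$, so $M^{[\ell]}$ has the same underlying filtered derived commutative $A$-algebra as $M$ — only the filtered $\cir$-action is reparametrized — and in particular $\ev^0(M^{[\ell]}) \iso \ev^0(M)$ in $\DAlg_A$. (ii) Since $\colim \colon \Fil(\cat{C}) \to \cat{C}$ is symmetric monoidal and carries $\lincir_\fil$ to $\lincir$ as bicommutative bialgebras (\cref{fc--fc--colim}), it carries the $\ell$-th power map on $\lincir_\fil$ to the $\ell$-th power map on $\lincir$ (both being built functorially from the co/multiplications), whence by \cref{dc--df--dcbi-natural} the functor $\ev^0 \iso \colim \colon \Fil_\cir^{\ge0}\DAlg_A \to \Fun(\clspc\cir,\DAlg_A)$ commutes with $(-)^{[\ell]}$ on source and target.

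\emph{Construction of $\psi^\ell_\fil$.} By (i), $\ev^0(\HH_\fil(B/A)^{[\ell]})$ is canonically $\ev^0(\HH_\fil(B/A))$ as a derived commutative $A$-algebra, so the canonical map $B \to \HH_\fil(B/A)^0$ used to build $\HH_\fil(B/A)$ also exhibits $\HH_\fil(B/A)^{[\ell]}$ as an object of the $\infty$-category of objects of $\Fil_\cir^{\ge0}\DAlg_A$ equipped with a map from $B$ in filtration degree $0$. As $\HH_\fil(B/A)$ is initial in this $\infty$-category (\cref{fc--hh--hh-fil}), there is a unique morphism $\psi^\ell_\fil \colon \HH_\fil(B/A) \to \HH_\fil(B/A)^{[\ell]}$ in $\Fil_\cir^{\ge0}\DAlg_A$ compatible with the maps from $B$. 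To check it promotes $\psi^\ell$, apply $\ev^0$: using (ii) and the equivalence $\ev^0(\HH_\fil(B/A)) \iso \HH(B/A)$ of \cref{fc--hh--conceptual-hkr}, one gets a map $\HH(B/A) \to \HH(B/A)^{[\ell]}$ in $\Fun(\clspc\cir,\DAlg_A)$ compatible with the unit map $B \to \HH(B/A)$, which by the universal property of $\HH(B/A)$ (left adjoint to restriction to the basepoint) is $\psi^\ell$.

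\emph{Uniqueness.} Finally, the same universal properties identify, compatibly with $\ev^0$, both $\Map_{\Fil_\cir^{\ge0}\DAlg_A}(\HH_\fil(B/A), \HH_\fil(B/A)^{[\ell]})$ and $\Map_{\Fun(\clspc\cir,\DAlg_A)}(\HH(B/A), \HH(B/A)^{[\ell]})$ with $\Map_{\DAlg_A}(B, \HH(B/A))$; hence $\ev^0$ is an equivalence on these mapping spaces, and the space of morphisms promoting $\psi^\ell$ — the fiber over $\psi^\ell$ — is contractible. I expect the only fiddly point to be the naturality bookkeeping in step (ii): verifying that $\colim$ really transports the $\ell$-th power map to the $\ell$-th power map and that the resulting square of functors commutes coherently. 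This is a direct consequence of $\colim$ being symmetric monoidal together with \cref{fc--fc--colim,dc--df--dcbi-natural}, so no new computation is needed; the rest is purely formal.
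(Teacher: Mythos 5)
Your proposal is correct and takes essentially the same route as the paper: both reduce everything to the chain of adjunction equivalences identifying $\Map_{\Fil_\cir^{\ge0}\DAlg_A}(\HH_\fil(B/A),\HH_\fil(B/A)^{[\ell]})$ with $\Map_{\Fun(\clspc\cir,\DAlg_A)}(\HH(B/A),\HH(B/A)^{[\ell]})$ via $\Map_{\DAlg_A}(B,-)$. You spell out the two auxiliary compatibilities (restriction along $[\ell]$ commutes with the forgetful functor to $\Fil^{\ge0}\DAlg_A$, and $\colim$ carries $[\ell]$ to $[\ell]$) that the paper leaves implicit, but the argument is the same.
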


\begin{proof}
  This is immediate from the universal property of $\HH_\fil(B/A)$, or in other words from the equivalences
  \begin{align*}
    \Map_{\Fil_\cir^{\ge0}\DAlg_A}(\HH_\fil(B/A),\HH_\fil(B/A)^{[\ell]})
    &\iso \Map_{\DAlg_A}(B,\ev^0(\HH_\fil(B/A)^{[\ell]})) \\
    &\iso \Map_{\DAlg_A}(B,\HH(B/A)^{[\ell]}) \\
    &\iso \Map_{\Fun(\clspc\cir,\DAlg_A)}(\HH(B/A),\HH(B/A)^{[\ell]}).\ \qedhere
  \end{align*}
\end{proof}

We now explain why the Adams operations on Hochschild homology induce Adams operations on negative cyclic homology.

\begin{lemma}
  \label{fc--ad--restrict-fixed}
  Let $\cat{D}$ be a presentable symmetric monoidal $\infty$-category, let $T$ be a cocommutative bialgebra in $\cat{D}$, and let $\phi : T \to T$ be an equivalence of cocommutative bialgebras in $\cat{D}$. Then there is a canonical natural lax symmetric monoidal equivalence $M^T \iso (M^\phi)^T$ for $M \in \LMod_T(\cat{C})$, where $(-)^T$ denotes the fixed point functor of \cref{bi--ta--orbits-fixed}.
\end{lemma}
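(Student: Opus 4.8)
The plan is to reduce to the underlying module-categorical statement over the cocommutative bialgebra $T$ and then observe that restriction along an isomorphism intertwines the fixed-point functors in the only way it can. First I would recall from \cref{bi--ta--orbits-fixed} that, for any cocommutative bialgebra $T$ in $\cat{D}$, the fixed-point functor $(-)^T : \LMod_T(\cat{D}) \to \cat{D}$ is right adjoint to the restriction functor $\rho : \cat{D} \to \LMod_T(\cat{D})$ along the counit $\epsilon : T \to \unit$. The key point is that $\phi$ being a map of coalgebras means precisely that $\epsilon \circ \phi = \epsilon$ (this is part of the compatibility of counit and comultiplication in the bialgebra structure, together with the fact that $\phi$ is a bialgebra map), so that the restriction functor $(-)^\phi : \LMod_T(\cat{D}) \to \LMod_T(\cat{D})$ satisfies $\rho^\phi = \rho$. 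Passing to right adjoints, the adjunction $\rho \dashv (-)^T$ and the fact that $(-)^\phi$ is an equivalence (hence both adjoint to and inverse to $(-)^{\phi^{-1}}$) yield a canonical equivalence $M^T \iso (M^\phi)^T$ natural in $M$; concretely, $(M^\phi)^T = \uMap_T(\rho(\unit), M^\phi) \iso \uMap_T((\rho(\unit))^{\phi^{-1}}, M) = \uMap_T(\rho(\unit), M) = M^T$, using that restriction along $\phi^{-1}$ is the left (and right) adjoint to restriction along $\phi$ and that it fixes $\rho(\unit)$.

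Next I would upgrade this to a lax symmetric monoidal equivalence. By \cref{bi--ta--fixed-monoidal}, $(-)^T$ is canonically lax symmetric monoidal, being the right adjoint of the symmetric monoidal functor $\rho$. By \cref{bi--tn--functorial}\cref{bi--tn--functorial--algebra}, the restriction functor $(-)^\phi$ is canonically symmetric monoidal (applied to the map of commutative bialgebras $\phi$), and since it commutes with $\rho$ strictly as symmetric monoidal functors (again because $\epsilon \circ \phi = \epsilon$ as maps of commutative algebras), the induced transformation on right adjoints is one of lax symmetric monoidal functors. This is a standard mate/conjugation argument: given symmetric monoidal functors fitting into a strictly commuting triangle over $\rho$, the induced comparison of right adjoints is lax symmetric monoidal. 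Since here the "comparison" is an equivalence (as $(-)^\phi$ is), we obtain the asserted lax symmetric monoidal equivalence $M^T \iso (M^\phi)^T$.

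Finally, I would specialize to the case at hand. We apply the above with $\cat{D}$ any of the presentable symmetric monoidal $\infty$-categories from \cref{fc--ad--setup} and $\phi$ either the $\ell$-th power map on $\lincir$, on $\lincir_\fil$, or on $\num{D}_+$, each of which is an equivalence of bicommutative bialgebras precisely because $\ell$ is invertible in $\pi_0\End_{\cat{C}}(\unit_{\cat{C}})$ (the $\ell$-th power map on $\cir$ is not an equivalence of spaces, but its linearization $\cir \mapsto \lincir$, etc., becomes invertible after inverting $\ell$, since on homotopy/homology it acts by $\ell^k$ in degree $k$). The statement then follows directly.

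The main obstacle is purely bookkeeping: making precise the mate calculation showing that a strictly commuting triangle of symmetric monoidal functors induces a \emph{lax} symmetric monoidal comparison of right adjoints, and checking that $\epsilon \circ \phi = \epsilon$ holds as a map of commutative algebras (not merely of objects) so that $\rho^\phi = \rho$ as symmetric monoidal functors. Neither is deep, but both require unwinding the definitions in \cite[\S\S3.2,4.2]{lurie--algebra}. \qed
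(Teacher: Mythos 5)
Your proposal is correct, and it fills in the same argument that the paper compresses into a single sentence ("This is an immediate consequence of $\phi$ being an equivalence"). The two essential observations you isolate—that a coalgebra morphism preserves the counit, so $(-)^\phi \circ \rho \iso \rho$, and that a homotopy of symmetric monoidal functors passes to a lax symmetric monoidal comparison of right adjoints (the mate calculus), with the comparison being an equivalence because $(-)^\phi$ is—are exactly the content of "immediate." One small imprecision: the identity $\epsilon \circ \phi \iso \epsilon$ is the counit axiom for a map of coalgebras, not a compatibility between counit and comultiplication within a single bialgebra; but that does not affect the argument.
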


\begin{proof}
  This is an immediate consequence of $\phi$ being an equivalence.
\end{proof}

\begin{lemma}
  \label{fc--ad--ell-equiv}
  The $\ell$-th power maps
  \[
    [\ell] : \lincir \to \lincir, \qquad
    [\ell] : \lincir_\fil \to \lincir_\fil, \qquad
    [\ell] : \num{D}_+ \to \num{D}_+
  \]
  are all equivalences, in $\cat{C}$, $\Fil(\cat{C})$, and $\Gr(\cat{C})$ respectively.
\end{lemma}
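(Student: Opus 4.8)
The plan is to reduce everything to a single clean computation over $\num{Z}$ and then transport along symmetric monoidal functors. First I would observe that all three bicommutative bialgebras in question are images under symmetric monoidal functors of bicommutative bialgebras defined over $\num{Z}$: $\lincir = \num{Z}[\cir]$ is the image of $\cir$ under the colimit-preserving symmetric monoidal functor $\Spc \to \Mod_{\num{Z}}$, $\lincir_\fil = \tau_{\ge\star}(\lincir)$ lives in $\Fil(\Mod_{\num{Z}})$ and maps to $\Fil(\cat{C})$, and $\num{D}_+$ lives in $\Gr(\Mod_{\num{Z}})$ and maps to $\Gr(\cat{C})$. Since the $\ell$-th power map $[\ell]$ of a bicommutative bialgebra is defined purely in terms of the multiplication and comultiplication, it is preserved by symmetric monoidal functors; hence it suffices to check that $[\ell]$ is an equivalence for $\lincir$ in $\Mod_{\num{Z}}$, for $\lincir_\fil$ in $\Fil(\Mod_{\num{Z}})$, and for $\num{D}_+$ in $\Gr(\Mod_{\num{Z}})$. (The hypothesis that $\ell$ is invertible in $\pi_0\End_\cat{C}(\unit_\cat{C})$ is what lets us descend a statement about an equivalence over $\num{Z}$ to $\cat{C}$; but in fact once we know it over $\num{Z}$ the images are automatically equivalences, so the invertibility hypothesis is only needed to \emph{deduce} it over $\num{Z}$ from a $\num{Z}$-linearity argument—see below.)

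Next I would handle the three cases. For $\lincir = \num{Z}[\cir]$: the $\ell$-th power map is the image under $\num{Z}[-]$ of the $\ell$-th power map $[\ell] : \cir \to \cir$ on the topological circle (regarded as a bicommutative bialgebra via \cref{bi--df--monoid}), which is a degree-$\ell$ self-map. On homology, $\H_0(\cir;\num{Z}) = \num{Z}$ maps by the identity and $\H_1(\cir;\num{Z}) = \num{Z}$ maps by multiplication by $\ell$. Thus on $\lincir \iso \num{Z} \oplus \num{Z}[1]$ the map $[\ell]$ is the identity in degree $0$ and multiplication by $\ell$ in degree $1$; since $\ell$ is invertible (we are working, ultimately, over $\cat{C}$ where $\ell$ is invertible, or we first prove the statement in $\Mod_{\num{Z}[1/\ell]}$ and then note the functor $\Mod_{\num{Z}} \to \cat{C}$ factors through $\Mod_{\num{Z}[1/\ell]}$), this is an equivalence. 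For $\num{D}_+ = \num{Z} \oplus \num{Z}[1](1)$: by \cref{dg--pm--d}, $\num{D}_+ = \iota_+(\num{Z}[\epsilon])$ with $\epsilon$ in grading-degree $1$ and comultiplication $\epsilon \mapsto \epsilon \otimes 1 - 1 \otimes \epsilon$. The $\ell$-fold comultiplication sends $\epsilon \mapsto \sum_{k=1}^\ell 1 \otimes \cdots \otimes \epsilon \otimes \cdots \otimes 1$ (with $\epsilon$ in the $k$-th slot), and the $\ell$-fold multiplication sends each such term back to $\epsilon$ (the cross-terms vanish since $\epsilon^2 = 0$), so $[\ell](\epsilon) = \ell\epsilon$; hence $[\ell]$ is the identity on $\num{Z}$ and multiplication by $\ell$ on $\num{Z}[1](1)$, again an equivalence since $\ell$ is invertible. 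For $\lincir_\fil = \tau_{\ge\star}(\lincir)$: the Postnikov filtration functor is functorial, so $[\ell]$ on $\lincir_\fil$ is $\tau_{\ge\star}([\ell])$; since $[\ell]$ on $\lincir$ is an equivalence (after inverting $\ell$) and $\tau_{\ge\star}$ preserves equivalences, we are done. Alternatively and more uniformly: $[\ell] : \lincir_\fil \to \lincir_\fil$ is an equivalence iff its image under $\colim$ (which is $[\ell] : \lincir \to \lincir$, an equivalence) and its image under $\gr$ (which is $[\ell] : \num{D}_+ \to \num{D}_+$, an equivalence) are both equivalences, since $\colim$ and $\gr$ are jointly conservative on the full subcategory of filtered objects that are bounded below, or more simply because a map of filtered objects is an equivalence iff it is so on each filtration level and $\tau_{\ge n}(\lincir) \to \tau_{\ge n}(\lincir)$ is a retract-stable consequence of the equivalence on $\lincir$.

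The cleanest writeup will invoke the factorization of the structure functor $\Mod_{\num{Z}} \to \cat{C}$ through $\Mod_{\num{Z}[1/\ell]}$ (valid precisely because $\ell$ is invertible in $\pi_0\End_\cat{C}(\unit_\cat{C})$), reducing all three statements to the assertion that $[\ell]$ is an equivalence on $\lincir$, $\lincir_\fil$, $\num{D}_+$ after base change to $\num{Z}[1/\ell]$, and then carry out the three one-line computations above. I do not anticipate a serious obstacle here; the only mild subtlety is making precise the claim that the $\ell$-th power map is preserved by symmetric monoidal functors and that it agrees with the topologically-defined degree-$\ell$ map on $\cir$ under $\num{Z}[-]$, but both are formal. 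The heart of the matter is simply that $[\ell]$ acts as $\ell$ on the ``degree-one part'' (the $\H_1$ of the circle, resp. the $\epsilon$-line), and that is invertible once $\ell$ is.

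\begin{proof}
  All three bicommutative bialgebras are images under colimit-preserving symmetric monoidal functors of bicommutative bialgebras defined over $\num{Z}$: namely $\lincir$ is the image of $\num{Z}[\cir]$ under $\Mod_{\num{Z}} \to \cat{C}$, $\lincir_\fil = \tau_{\ge\star}(\lincir)$ is the image of $\tau_{\ge\star}(\num{Z}[\cir]) \in \Fil(\Mod_{\num{Z}})$ under $\Fil(\Mod_{\num{Z}}) \to \Fil(\cat{C})$, and $\num{D}_+$ is the image of the object of the same name in $\Gr(\Mod_{\num{Z}})$ under $\Gr(\Mod_{\num{Z}}) \to \Gr(\cat{C})$. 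The $\ell$-th power map $[\ell]$ (\cref{fc--ad--setup}), being the composite of the $\ell$-fold comultiplication with the $\ell$-fold multiplication, is preserved by symmetric monoidal functors. Moreover, since $\ell$ is invertible in $\pi_0\End_\cat{C}(\unit_\cat{C})$, the unique symmetric monoidal functor $\Mod_{\num{Z}} \to \cat{C}$ factors through $\Mod_{\num{Z}[1/\ell]}$. It therefore suffices to prove the three statements after base change to $\num{Z}[1/\ell]$, i.e. to prove that $[\ell]$ is an equivalence on $\num{Z}[1/\ell] \otimes_{\num{Z}} \num{Z}[\cir]$ in $\Mod_{\num{Z}[1/\ell]}$, on its Postnikov filtration in $\Fil(\Mod_{\num{Z}[1/\ell]})$, and on $\num{D}_+$ in $\Gr(\Mod_{\num{Z}[1/\ell]})$.

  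Consider first $\num{D}_+ \iso \num{Z}[1/\ell] \oplus \num{Z}[1/\ell][1](1)$. By \cref{dg--pm--d}, $\num{D}_+ \iso \iota_+(R[\epsilon])$ where $R = \num{Z}[1/\ell]$, with $\epsilon$ in grading-degree $1$, $\epsilon^2 = 0$, and comultiplication $\epsilon \mapsto \epsilon \otimes 1 - 1 \otimes \epsilon$. The $\ell$-fold comultiplication sends $\epsilon$ to $\sum_{k=1}^{\ell} 1 \otimes \cdots \otimes \epsilon \otimes \cdots \otimes 1$, with $\epsilon$ in the $k$-th tensor factor; applying the $\ell$-fold multiplication, each of these $\ell$ terms maps to $\epsilon$ and all cross-terms vanish since $\epsilon^2 = 0$. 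Hence $[\ell]$ acts as the identity on the degree-$0$ summand and as multiplication by $\ell$ on $R[1](1)$, which is an equivalence because $\ell$ is invertible in $R$.

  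Next consider $\lincir = R \otimes_{\num{Z}} \num{Z}[\cir]$. Regarding $\cir$ as a bicommutative bialgebra in $\Spc$ via \cref{bi--df--monoid}, the $\ell$-th power map $[\ell] : \cir \to \cir$ is the usual degree-$\ell$ self-map of the topological circle, and $[\ell]$ on $\lincir$ is its image under $\Spc \to \Mod_R$. On homology this induces the identity on $\H_0(\cir;R) \iso R$ and multiplication by $\ell$ on $\H_1(\cir;R) \iso R$; since $\lincir \iso R \oplus R[1]$ with these as its homotopy groups, $[\ell]$ on $\lincir$ is the identity in degree $0$ and multiplication by $\ell$ in degree $1$, hence an equivalence.

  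Finally, $\lincir_\fil = \tau_{\ge\star}(\lincir)$ and, by functoriality of the Postnikov filtration functor $\tau_{\ge\star}$, the map $[\ell] : \lincir_\fil \to \lincir_\fil$ is $\tau_{\ge\star}$ applied to $[\ell] : \lincir \to \lincir$. Since the latter is an equivalence and $\tau_{\ge\star}$ carries equivalences to equivalences, $[\ell]$ on $\lincir_\fil$ is an equivalence. This completes the proof.
\end{proof}
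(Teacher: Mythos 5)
Your proof is correct and follows essentially the same route as the paper's: reduce to $\Mod_{\num{Z}[1/\ell]}$ using the invertibility hypothesis, then check that $[\ell]$ acts by $\ell$ on the degree-one part. The paper writes out only the $\lincir$ case (identity on $\pi_0$, multiplication by $\ell$ on $\pi_1$) and dispatches the other two with ``the same reasoning / follow immediately''; you spell out all three, which is fine.

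One small wrinkle to flag. In the $\num{D}_+$ computation you quote the comultiplication $\epsilon \mapsto \epsilon \otimes 1 - 1 \otimes \epsilon$ from \cref{dg--pm--d}, but then compute the $\ell$-fold comultiplication as $\sum_{k=1}^\ell 1 \otimes \cdots \otimes \epsilon \otimes \cdots \otimes 1$ with all plus signs. If you kept the minus sign literally, the iterated comultiplication would read $\epsilon \otimes 1 \otimes \cdots \otimes 1 - 1 \otimes \epsilon \otimes \cdots \otimes 1 - \cdots - 1 \otimes \cdots \otimes \epsilon$, and the $\ell$-fold product would give $(2-\ell)\epsilon$, which vanishes for $\ell = 2$. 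The plus sign is in fact the correct one: cocommutativity in $\Gr(\Mod_{\num{Z}}^\heart)^\koz$ forces the two coefficients of $\Delta(\epsilon)$ to agree (the Koszul sign for swapping degrees $(1,0)$ and $(0,1)$ is trivial), and under the identification of $\num{D}_+$ with $\gr(\lincir_\fil)$ the class $\epsilon$ is the primitive generator of $\H_1(\cir)$. So your conclusion $[\ell](\epsilon) = \ell\epsilon$ is right; just be aware the formula you cite from \cref{dg--pm--d} appears to carry a sign slip, and your calculation should be stated consistently with the plus-sign convention rather than quoting the minus and computing with the plus.
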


\begin{proof}
  We will just argue for the $\ell$-th power map on $\num{T}$; the others can be addressed by the same reasoning (or alternatively follow immediately from this first case). Let $R := \num{Z}[1/\ell]$. Since we have assumed that $\ell$ is invertible in $\pi_0\End_{\cat{C}}(\unit_{\cat{C}})$, the unique morphism of derived algebraic contexts $\Mod_{\num{Z}} \to \cat{C}$ factors through $\Mod_R$. It is thus enough to consider the case $\cat{C} = \Mod_R$. Here we have $\lincir \iso R[\cir]$, and the map $[\ell] : \lincir \to \lincir$ is an equivalence because it induces isomorphisms on homotopy groups (namely the identity on $\pi_0$ and multiplication by $\ell$ on $\pi_1$).
\end{proof}

\begin{construction}
  \label{fc--ad--HC-ad}
  Let $\psi^\ell : \HH(B/A) \to \HH(B/A)^{[\ell]}$ be the Adams operation of \cref{fc--ad--HH-ad}. Passing to $\cir$-fixed points, we obtain a map
  \[
    \HN(B/A) = \HH(B/A)^\lincir \to (\HH(B/A)^{[\ell]})^\lincir \iso \HH(B/A)^\lincir = \HN(B/A)
  \]
  in $\CAlg_A$, where the equivalence comes from \cref{fc--ad--restrict-fixed,fc--ad--ell-equiv}. We denotes this map also by $\psi^\ell$, and refer to it as the \emph{$\ell$-th Adams operation on $\HN(B/A)$}.
\end{construction}

Again, we may repeat the above construction in the filtered setting to see that the Adams operation is compatible with the filtration on negative cyclic homology.

\begin{construction}
  \label{fc--ad--HC-fil-ad}
  Let $\psi^\ell_\fil : \HH_\fil(B/A) \to \HH_\fil(B/A)^{[\ell]}$. Applying filtered $\cir$-fixed points, we obtain a map $\psi^\ell_\fil : \HN_\fil(B/A) \to \HN_\fil(B/A)$, which is compatible with the Adams operation $\psi^\ell : \HN(B/A) \to \HN(B/A)$ of \cref{fc--ad--HC-ad} in the sense that the following diagram commutes:
  \[
    \begin{tikzcd}[column sep = huge]
      \colim(\HN_\fil(B/A)) \ar[r, "\colim(\psi^\ell_\fil)"] \ar[d] &
      \colim(\HN_\fil(B/A)) \ar[d] \\
      \HN(B/A) \ar[r, "\psi^\ell"] &
      \HN(B/A).
    \end{tikzcd}
  \]
\end{construction}

We would now like to identify the effect of the maps $\psi^\ell_\fil$ of \cref{fc--ad--HH-fil-ad,fc--ad--HC-fil-ad} upon passage to associated graded objects. What we will show is that, for each $i \in \num{Z}$, the effect on $\gr^i$ is given by multiplication by $\ell^i$. It will be useful to first formulate this graded multiplication map in a precise and structured way.

\begin{construction}
  \label{fc--ad--ellstar}
  \emph{Let $u$ be a unit in $\num{Z}[1/\ell]$ (the relevant examples being $u = \ell^{\pm 1}$). We will construct a natural isomorphism of symmetric monoidal functors}
  \[
    \{u^*\} : \id_{\Gr(\cat{C})} \to \id_{\Gr(\cat{C})},
  \]
  \emph{whose evaluation on any graded object $X^* \in \Gr(\cat{C})$ is the endomorphism given by multiplication by $u^i$ on $X^i$ for each $i \in \num{Z}$.}

  Consider the \emph{pointwise} tensor product functor $\otimes : \Gr(\cat{C}) \times \Gr(\cat{C}) \to \Gr(\cat{C})$, given by the formula $(X^* \otimes Y^*)^n \iso X^n \otimes Y^n$. We claim that this has a canonical lax symmetric monoidal structure, where we still regard (each copy of) $\Gr(\cat{C})$ as equipped with the Day convolution symmetric monoidal structure. By the universal property of Day convolution, it is equivalent to produce a lax symmetric monoidal structure on the corresponding functor $\num{Z}^\ds \times \Gr(\cat{C}) \times \Gr(\cat{C}) \to \cat{C}$, which we may obtain by factoring it as the following composition of lax symmetric monoidal functors:
  \begin{align*}
    \num{Z}^\ds \times \Gr(\cat{C}) \times \Gr(\cat{C})
    &\lblto{\Delta \times \id \times \id}
    \num{Z}^\ds \times \num{Z}^\ds \times \Gr(\cat{C}) \times \Gr(\cat{C}) \\
    &\iso \num{Z}^\ds \times \Gr(\cat{C}) \times \num{Z}^\ds \times \Gr(\cat{C}) \\
    &\lblto{\ev \times \ev} \cat{C} \times \cat{C} \\
    &\lblto{\otimes} \cat{C}
  \end{align*}

  The preceding claim implies that, for any commutative algebra object $X$ of $\Gr(\cat{C})$, the functor $X \otimes - : \Gr(\cat{C}) \to \Gr(\cat{C})$ given by pointwise tensor product with $X$ is canonically lax symmetric monoidal, and that this extends to functor
  \[
    \tau : \Gr\CAlg(\cat{C}) \to \End^\lax(\Gr(\cat{C})),
  \]
  where the right-hand side denotes the $\infty$-category of lax symmetric monoidal endofunctors of $\Gr(\cat{C})$. Let $A \in \Gr\CAlg(\cat{C})$ denote the constant graded object with value $\unit_{\cat{C}}$, regarded as a commutative algebra in the canonical manner. Then we have a canonical equivalence $\tau(A) \iso \id_{\cat{\Gr(C)}}$, so that $\tau$ determines a map of spaces
  \[
    \End_{\Gr\CAlg(\cat{C})}(A) \to \End_{\End^\lax(\Gr(\cat{C}))}(\id_{\Gr(\cat{C})}).
  \]
  Now recall (see \cite[Example 2.2.6.9]{lurie--algebra}) that $\Gr\CAlg(\cat{C})$ is canonically equivalent to the $\infty$-category $\Fun^\lax(\num{Z}^\ds,\cat{C})$ of lax symmetric monoidal functors $\num{Z}^\ds \to \cat{C}$. Under this equivalence, $A$ corresponds to the constant lax symmetric monoidal functor with value $\unit_{\cat{C}}$. It follows that there is an induced equivalence of spaces
  \[
    \End_{\Gr\CAlg(\cat{C})}(A) \iso \Map_{\CAlg(\Spc)}(\num{Z}^\ds,\End_{\cat{C}}(\unit_{\cat{C}})),
  \]
  where, on the right-hand side, $\End_{\cat{C}}(\unit_{\cat{C}})$ is regarded as an $\Einfty$-space via the multiplicative structure determined by the symmetric monoidal structure on $\cat{C}$. We thus obtain a map
  \[
    \sigma : \Map_{\CAlg(\Spc)}(\num{Z},\End_{\cat{C}}(\unit_{\cat{C}})) \to \End_{\End^\lax(\Gr(\cat{C}))}(\id_{\Gr(\cat{C})}).
  \]

  The preceding map allows us to make the desired construction. Let $R := \num{Z}[1/\ell]$. As used in the proof of \cref{fc--ad--ell-equiv}, we have an $R$-linear structure on $\cat{C}$, determining a map of $\Einfty$-spaces
  \[
    \lambda : R^\times \iso \End_{\Mod_R}(R) \to \End_{\cat{C}}(\unit_{\cat{C}}).
  \]
  Precomposing this with the map of commutative monoids $\num{Z} \to R^\times$ sending $1 \mapsto u$ and then applying the map $\sigma$ above produces the desired natural isomorphism $\{u^*\} \in \End_{\End^\lax(\Gr(\cat{C}))}(\id_{\Gr(\cat{C})})$.
\end{construction}

\begin{remark}
  \label{fc--ad--ellstar-dalg}
  As the natural isomorphism $\{u^*\}$ of \cref{fc--ad--ellstar} is one of symmetric monoidal functors, if $X$ is a commutative algebra object or bicommutative bialgebra object of $\Gr(\cat{C})$, the endomorphism $\{u^\star\} : X \to X$ is canonically a map of such objects. The same statements go through for derived commutative algebra objects and derived bicommutative bialgebra objects by virtue of the naturality stated in \cref{dc--df--dalg-natural}.
\end{remark}

\begin{remark}
  \label{fc--ad--ell-gr}
  The $\ell$-th power map $[\ell]: \num{D}_+ \to \num{D}_+$ is equivalent to the endomorphism $\{\ell^*\} : \num{D}_+ \to \num{D}_+$ of \cref{fc--ad--ellstar} as maps of bicommutative bialgebras, and dually $[\ell]: \dual{\num{D}_+} \to \dual{\num{D}_+}$ is equivalent to $\{\ell^*\}^{-1} \iso \{(\ell^{-1})^*\} : \dual{\num{D}_+} \to \dual{\num{D}_+}$ as maps of derived bicommuative bialgebras. These identifications reduce to straightforward calculations since $\num{D}_+$ and $\dual{\num{D}_+}$ lie in the ordinary category $\Gr(\cat{C})_+^\heart$ (and using \cref{dc--cn--gr}).

  These statements imply that there is an induced natural isomorphism of symmetric monoidal functors $\{\ell^*\} : \id_{\DG_+(\cat{C})} \to \id_{\DG_+(\cat{C})}$, which moreover respects derived commutative algebra structures.
\end{remark}

\begin{proposition}
  \label{fc--ad--gr-ad}
  Let $\psi^\ell_\fil : \HH_\fil(B/A) \to \HH_\fil(B/A)^{[\ell]}$ and $\psi^\ell_\fil : \HN_\fil(B/A) \to \HN_\fil(B/A)$ be the maps of \cref{fc--ad--HH-fil-ad,fc--ad--HC-fil-ad}. Then the associated graded maps
  \[
    \gr(\psi^\ell_\fil) : \gr(\HH_\fil(B/A)) \to \gr(\HH_\fil(B/A)^{[\ell]}) \iso \gr(\HH_\fil(B/A))^{[\ell]},
  \]
  \[
    \gr(\psi^\ell_\fil) : \gr(\HN_\fil(B/A)) \to \gr(\HN_\fil(B/A))
  \]
  are each homotopic to the map $\{\ell^*\}$.
\end{proposition}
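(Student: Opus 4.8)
The plan is to reduce the statement about $\HH_\fil$ and $\HN_\fil$ to the single computation recorded in \cref{fc--ad--ell-gr}, namely that the $\ell$-th power map $[\ell] : \num{D}_+ \to \num{D}_+$ agrees with $\{\ell^*\}$ (and dually $[\ell] : \dual{\num{D}_+} \to \dual{\num{D}_+}$ agrees with $\{(\ell^{-1})^*\}$) as maps of (derived bi)commutative bialgebras in $\Gr(\cat{C})$. Granting this, the key point is that the associated graded functor $\gr : \Fil_\cir\DAlg_A \to \DG_+\DAlg_A$ of \cref{fc--fc--gr} intertwines the reparametrization functors: the equivalence of derived bicommutative bialgebras $\gr(\dual\lincir_\fil) \iso \dual{\num{D}}_+$ of \cref{fc--fc--gr} carries the filtered $\ell$-th power map $[\ell] : \dual\lincir_\fil \to \dual\lincir_\fil$ (which is the image under the lax symmetric monoidal Postnikov filtration functor of $[\ell] : \dual\lincir \to \dual\lincir$, hence a map of derived bicommutative bialgebras by \cref{fc--fc--fc-unique}\cref{fc--fc--fc-unique--derived}) to the graded $\ell$-th power map $[\ell] : \dual{\num{D}}_+ \to \dual{\num{D}}_+$. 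Therefore, for $X \in \Fil_\cir\DAlg_A$, there is a canonical equivalence $\gr(X^{[\ell]}) \iso \gr(X)^{[\ell]}$ of objects of $\DG_+\DAlg_A$, compatibly with the whole diagram of functors, and under this identification $\gr(X)^{[\ell]}$ is, by \cref{fc--ad--ell-gr}, canonically identified with $\gr(X)$ via $\{\ell^*\}^{-1}$; but we want to compare with $\{\ell^*\}$ rather than its inverse, so one must be careful about which direction the restriction map goes—restriction along $[\ell]$ on $\dual\lincir$ is corestriction, which corresponds under duality to restriction along $[\ell]$ on $\lincir$. This bookkeeping, tracing through \cref{fc--ad--reparam}, is where the main care is needed: one must check that the composite ``$\gr$ of the universal map $\psi^\ell_\fil$'' lands on the endomorphism $\{\ell^*\}$ and not $\{\ell^*\}^{-1}$ or $\{(\ell^{-1})^*\}$.

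\textbf{Step 1 (reduce $\HH_\fil$ to a universal-property computation).} By \cref{fc--hh--hh-fil}, $\HH_\fil(B/A)$ is the initial object of $\Fil_\cir^{\ge0}\DAlg_A$ equipped with a map $B \to \ev^0(-)$, and $\psi^\ell_\fil$ is the unique endomorphism-type map compatible with the identity map $B \to B$ on $\ev^0$ (this is exactly how it was produced in \cref{fc--ad--HH-fil-ad}). Applying the symmetric monoidal functor $\gr : \Fil_\cir^{\ge0}\DAlg_A \to \DG_+^{\ge0}\DAlg_A$ of \cref{fc--fc--gr} and using that $\gr(\HH_\fil(B/A)) \iso \LOmega^{+\bul}_{B/A}$ by \cref{fc--hh--conceptual-hkr}, the map $\gr(\psi^\ell_\fil)$ becomes an endomorphism-type map of $\LOmega^{+\bul}_{B/A}$ in $\DG_+^{\ge0}\DAlg_A$, again covering the identity on $\ev^0(\LOmega^{+\bul}_{B/A}) \iso B$ (by \cref{dg--dr--und-gr}). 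On the other hand, by \cref{fc--ad--ellstar-dalg} and \cref{fc--ad--ell-gr}, the map $\{\ell^*\} : \LOmega^{+\bul}_{B/A} \to (\LOmega^{+\bul}_{B/A})^{[\ell]} \iso \LOmega^{+\bul}_{B/A}$ is a map of \hplus-differential graded derived commutative $A$-algebras which is multiplication by $\ell^i$ on $\LOmega^{+i}_{B/A}$, in particular the identity on $\LOmega^{+0}_{B/A} \iso B$. By the universal property of $\LOmega^{+\bul}_{-/A}$ (\cref{dg--dr--adj,dg--dr--ntn}), any two such maps agree; this gives the claim for $\HH_\fil$, provided one has correctly matched $\gr(\psi^\ell_\fil)$ with a map induced by restricting along $[\ell] : \dual{\num{D}}_+ \to \dual{\num{D}}_+$ and then invoked \cref{fc--ad--ell-gr} with the correct variance.

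\textbf{Step 2 (deduce the $\HN_\fil$ statement).} Recall $\HN_\fil(B/A) = \HH_\fil(B/A)^{\lincir_\fil}$ and that $\psi^\ell_\fil$ on $\HN_\fil$ is obtained from $\psi^\ell_\fil$ on $\HH_\fil$ by applying filtered $\cir$-fixed points together with the equivalence of \cref{fc--ad--restrict-fixed} (using that $[\ell] : \lincir_\fil \to \lincir_\fil$ is an equivalence by \cref{fc--ad--ell-equiv}). Since $\gr : \Fil(\cat{C}) \to \Gr(\cat{C})$ preserves all limits and colimits, \cref{bi--ta--natural} gives $\gr(\HH_\fil(B/A)^{\lincir_\fil}) \iso \gr(\HH_\fil(B/A))^{\num{D}_+}$ naturally, and the same for the map: $\gr(\psi^\ell_\fil)$ on fixed points is obtained by applying $(-)^{\num{D}_+}$ to $\gr(\psi^\ell_\fil)$ on $\HH_\fil$ and using \cref{fc--ad--restrict-fixed} for the equivalence $[\ell]:\num{D}_+ \to \num{D}_+$. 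By Step 1, $\gr(\psi^\ell_\fil)$ on $\HH_\fil(B/A)$ is $\{\ell^*\}$; since $(-)^{\num{D}_+}$ is a functor and $\{\ell^*\}$ is a natural transformation of the identity functor on $\DG_+(\cat{C})$ (by \cref{fc--ad--ellstar,fc--ad--ell-gr}) which is compatible with the fixed-point construction (as $\{\ell^*\}$ on $\num{D}_+$ commutes with the bialgebra structure), applying $(-)^{\num{D}_+}$ sends $\{\ell^*\}$ to $\{\ell^*\}$ on $\gr(\HN_\fil(B/A)) \iso \und(|\LOmega^{+\bul}_{B/A}|^{\ge\star})[2*]$. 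Here one should check that the reparametrization equivalence of \cref{fc--ad--restrict-fixed} is compatible with $\{\ell^*\}$—this is immediate because \cref{fc--ad--ell-gr} identifies $[\ell]$ on $\num{D}_+$ with $\{\ell^*\}$, which is natural in the bialgebra—so no sign or inverse creeps in at this stage beyond what was already handled in Step 1.

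\textbf{Main obstacle.} The only genuinely delicate point is the variance bookkeeping of Step 1: tracking, through the chain of equivalences $\Fil_\cir\DAlg_A \iso \cMod_{\dual\lincir_\fil}(\Fil\DAlg_A)$, $\gr$, and the duality $\cMod_{\dual{\num{D}}_+} \iso \LMod_{\num{D}_+}$, whether restricting $\HH(B/A)$ along $[\ell] : \cir \to \cir$ becomes, at the level of $\DG_+$, restriction along $\{\ell^*\}$ or along $\{(\ell^{-1})^*\}$ on $\dual{\num{D}_+}$, and hence whether $\gr(\psi^\ell_\fil)$ is $\{\ell^*\}$ or its inverse on $\gr$. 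The resolution is that corestriction along $[\ell] : \dual{\num{D}_+} \to \dual{\num{D}_+}$ on comodules corresponds to restriction along $[\ell] : \num{D}_+ \to \num{D}_+$ on modules (\cref{fc--ad--reparam}), which by \cref{fc--ad--ell-gr} is $\{\ell^*\}$; and the map $\psi^\ell_\fil$ is defined so that its source carries the standard action and its target the $[\ell]$-restricted action, so passing to $\gr$ and using $\gr(\HH_\fil) \iso \LOmega^{+\bul}_{B/A}$ the map goes $\LOmega^{+\bul}_{B/A} \to (\LOmega^{+\bul}_{B/A})^{\{\ell^*\}}$, and its underlying map of graded objects—by the universal property and the fact that it is the identity in degree $0$—must be $\{\ell^*\}$. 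Everything else is a formal consequence of the earlier results on naturality of the (filtered) Tate/fixed-point constructions and of $\gr$.
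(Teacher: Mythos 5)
Your proposal is correct and follows essentially the same route as the paper's proof: identify $\gr(\psi^\ell_\fil)$ with a map $\LOmega^{+\bul}_{B/A}\to(\LOmega^{+\bul}_{B/A})^{[\ell]}$ in $\DG_+^{\ge0}\DAlg_A$ inducing $\id_B$ on $\ev^0$, note by \cref{fc--ad--ell-gr} that $\{\ell^*\}$ is another such map, and conclude by the universal property of $\LOmega^{+\bul}_{-/A}$; then deduce the $\HN_\fil$ case by passing to $\num{D}_+$-fixed points and using naturality of $\{\ell^*\}$. One remark: the paragraph about variance bookkeeping, while conscientious, is largely unnecessary once one notices (as the paper does) that the universal property argument compares two maps with the same source $\LOmega^{+\bul}_{B/A}$ and the same target $(\LOmega^{+\bul}_{B/A})^{[\ell]}$, both covering $\id_B$ in degree $0$ — so there is never a moment where a sign or inverse could sneak in, provided one keeps track of which map is the source and which the target of $\psi^\ell_\fil$ as fixed in \cref{fc--ad--HH-fil-ad}.
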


\begin{proof}
  We first prove the statement for Hochschild homology. By \cref{fc--hh--conceptual-hkr}, we have an equivalence $\gr(\HH_\fil(B/A)) \iso \LOmega_{B/A}^{+\bul}$, through which we may identify the map $\gr(\psi^\ell_\fil) : \gr(\HH_\fil(B/A)) \to \gr(\HH_\fil(B/A))^{[\ell]}$ with a map $\phi : \LOmega^{+\bul}_{B/A} \to (\LOmega^{+\bul}_{B/A})^{[\ell]}$. By construction of $\psi^\ell_\fil$, the map $\phi$ is one of differential graded derived commutative $A$-algebras, which on zeroth graded pieces is the identity map $\id_B : B \to B$. By the definition/universal property of $\LOmega^{+\bul}_{B/A}$, this uniquely characterizes $\phi$. On the other hand, it follows from \cref{fc--ad--ell-gr} that $\{\ell^\star\}$ is also such a map, so we must have $\phi \iso \{\ell^\star\}$, as desired.

  We now prove the statement for negative cyclic homology. Recall that the map $\psi^\ell_\fil : \HN_\fil(B/A) \to \HN_\fil(B/A)$ was obtained from the map $\psi^\ell_\fil : \HH_\fil(B/A) \to \HH_\fil(B/A)^{[\ell]}$ by passing to filtered $\cir$-fixed points. It follows that the map $\gr(\psi^\ell_\fil) : \gr(\HN_\fil(B/A)) \to \gr(\HN_\fil(B/A))$ is obtained from the map $\gr(\psi^\ell_\fil) : \gr(\HH_\fil(B/A)) \to \gr(\HH_\fil(B/A))^{[\ell]}$ by passing to $\num{D}_+$-fixed points, hence is the unique such map $\phi'$ fitting into a commutative diagram
  \[
    \begin{tikzcd}
      \rho(\gr(\HN_\fil(B/A))) \ar[rr] \ar[d, "\rho(\phi')"] & &
      \gr(\HH_\fil(B/A)) \ar[d, "\phi"] \\
      \rho(\gr(\HN_\fil(B/A))) \ar[r, "\sim"] &
      \rho(\gr(\HN_\fil(B/A)))^{[\ell]} \ar[r] &
      \gr(\HH_\fil(B/A))^{[\ell]}
    \end{tikzcd}
  \]
  in $\DG_+\CAlg(\cat{C})$, where $\rho : \Gr(\cat{C}) \to \DG_+(\cat{C})$ is the restriction functor (along the counit $\num{D}_+ \to \unit_{\cat{C}}$) and the displayed equivalence is that of \cref{fc--ad--restrict-fixed}. We have seen that $\phi \iso \{\ell^\star\}$, and it follows from $\{\ell^\star\}$ being a natural transformation of symmetric monoidal functors that we must have $\phi' \iso \{\ell^\star\}$ as well, finishing the proof.
\end{proof}


\printbibliography

\end{document}